\DeclareMathOperator{\dive}{div}
\newtheorem{theorem}{Theorem}[section]
\newtheorem{lemma}{Lemma}[section]
\newtheorem{prop}{Proposition}[section]
\newtheorem{remark}{Remark}[section]
\newtheorem{defn}{Definition}[section]
\def\bma#1\ema{{\allowdisplaybreaks\begin{split}#1\end{split}}}
\numberwithin{equation}{section}
\begin{document}
\title{{\LARGE \textbf{Global existence of weak solutions to the drift-flux system for general pressure laws}}}
\author[a,b]{Hai-Liang  Li  \thanks{
E-mail:		hailiang.li.math@gmail.com (H.-L. Li).}}

\author[a,b]{Ling-Yun Shou \thanks{Corresponding author. E-mail: shoulingyun11@gmail.com(L.-Y Shou).}}
    \affil[a]{School of Mathematical Sciences,
	Capital Normal University, Beijing 100048, P.R. China}
\affil[b]{Academy for Multidisciplinary Studies, Capital Normal University, Beijing 100048, P.R. China}

\date{}
\renewcommand*{\Affilfont}{\small\it}
\maketitle
\begin{abstract}
The initial value problem of the multi-dimensional drift-flux model for two-phase flow is investigated in this paper, and the global existence of weak solutions with finite energy is established for general pressure-density functions without the monotonicity assumption.
\end{abstract}
\noindent{\textbf{Key words:} Two-phase flow, drift-flux model, global weak solutions, mon-monotone pressure laws, quantitative regularity estimate }
\section{Introduction}

The drift-flux model for two-phase flow can be used widely in many applied scientific areas, such as nuclear, chemical-process, petroleum, cryogenic, medicine, oil-and-gas, microtechnology, sprays \cite{brennen1,desv1,evje2,gidaspow1,ishii1,ishii2,wallis1,zuber1,mellet1} and so on. In this paper, we consider the initial value problem (IVP) for the drift-flux system in the periodic domain $\mathbb{T}^d:=\mathbb{R}^{d}/\mathbb{Z}^d$ ($d\geq2$):
\begin{equation}\label{two}
\left\{
\begin{split}
&\partial_{t}\rho+\dive(\rho u)=0,\\
&\partial_{t}n+\dive(n u)=0,\\
&\partial_{t}\big{(}(\rho+n) u\big{)}+\dive\big{(}(\rho+n) u\otimes u\big{)}+\nabla P(\rho,n)=\mu\Delta u+(\mu+\lambda)\nabla\dive u,\quad x\in \mathbb{T}^d,\quad t>0,
\end{split}
\right.
\end{equation}
with the initial data
\begin{equation}
\begin{split}
(\rho, n, (\rho+n)u)(x,0)=(\rho_{0},n_{0},m_{0})(x),\quad x\in \mathbb{T}^d,\label{d}
\end{split}
\end{equation}
where $\rho=\rho(x,t)\in\mathbb{R}_{+}$ and $n=n(x,t)\in\mathbb{R}_{+}$ denote the densities of two fluids, respectively, $u=u(x,t)\in \mathbb{R}^d$ stands for the mixed velocity, the shear viscosity coefficient $\mu$ and the bulk viscosity coefficient $\lambda$ satisfy
\begin{equation}\label{mu}
\begin{split}
\mu>0,\quad 2\mu+\lambda>0,
\end{split}
\end{equation}
and the pressure $P(\rho,n)$ is assumed to be the general laws
\begin{equation}\label{P}
\left\{
\begin{split}
&P(\rho,n)\in C^1(\mathbb{R}_{+}\times\mathbb{R}_{+}),\\
&\frac{1}{C_{0}}(\rho^{\gamma}+n^{\alpha})-C_{1}\leq P(\rho,n)\leq C_{0}(\rho^{\gamma}+n^{\alpha})+C_{1},\\
&|\partial_{\rho} P(\rho,n)|+ |\partial_{n} P(\rho,n)|\leq C_{2}((\rho^{\widetilde{\gamma}-1}+n^{\widetilde{\alpha}-1}+1),
\end{split}
\right.
\end{equation}
with the constants $C_{i}$ $(i=0,1,2,3)$, $\widetilde{\gamma}, \widetilde{\alpha}$ and $\beta$ satisfying
\begin{equation}
\begin{split}
C_{0}\geq1,\quad C_{1}\geq 0,\quad C_{2}>0,\quad C_{3}>0,\quad  \widetilde{\gamma},\widetilde{\alpha}\geq 1.
\end{split}
\end{equation}

There are many significant progress made recently on the mathematical analysis of global solutions of the drift-flux system (\ref{two})  \cite{evje1,evje4,hcc1,guo1,yu1,yao1,yao2,yao3,yao4,evje3,evje2,yao2,yao3,bresch3,vasseur1,wen1,wen2,wen20,novotny2,zhang1,wangw1,chens1}. Among them, for the pressure 
$$
P(\rho,n)=C(-b(\rho,n)+\sqrt{b^2(\rho,n)+c(\rho,n)}),
$$
 the global existence of weak solutions in the one-dimensional case for general initial data was established in \cite{evje1,evje4}, and the global well-posedness and large time behaviors of weak and strong solutions around the constant equilibrium state in the multi-dimensional case were investigated in \cite{hcc1,guo1,yu1,yao1,yao4,wen20}. For the pressure 
 $$
 P(\rho,n)=C\rho_{l}^{\gamma}(\frac{n}{\rho_{l}}-\rho)^{\gamma},
 $$
   the well-posedness and dynamics of global weak solutions to the one-dimensional free boundary value problem were studied in \cite{evje3,evje2,yao2,yao3}. As for the pressure 
   $$
   P(\rho,n)=\rho^{\gamma}+n^{\alpha},
   $$
    the global weak solutions with finite energy for the three-dimensional drift-flux equations (\ref{two}) was obtained by Bresch-Mucha-Zatorska \cite{bresch3} for $\gamma, \alpha>1$ in a semi-stationary Stokes regime, by Vasseur-Wen-Yu \cite{vasseur1} for $\gamma> \frac{9}{5}$ and $\alpha$ satisfying that either $\alpha$ is close to $\gamma$ enough or $\alpha\geq 1$ if the following domination condition holds:
\begin{align}
\underline{c}\rho_{0}\leq n_{0}\leq \overline{c}\rho_{0},\label{dom}
\end{align}
with two constants $0<\underline{c}\leq \overline{c}<\infty$, and then by Wen \cite{wen1} for two independent adiabatic constants $\gamma,\alpha \geq\frac{9}{5}$. For more general pressures laws $P(\rho,n)$ which can be non-monotone on a compact set of the two variables $\rho$ and $n$, the global existence of weak solutions to (\ref{two}) has been proved by Novotn$\rm{\acute{y}}$-Pokorn$\rm{\acute{y}}$ \cite{novotny2} in 3D under the domination condition (\ref{dom}) with two constants $0\leq \underline{c}\leq \overline{c}<\infty$, and by Wen-Zhu \cite{wen2} in 1D without the domination condition (\ref{dom}). In addition, there are many interesting studies on the global existence of weak solutions to some models related to (\ref{two}), such as compressible Oldroyd-B model \cite{barrett1}, two-dimensional non-resistive compressible MHD equations \cite{ly1}, compressible Navier-Stokes equations with entropy transport \cite{maltese1}, compressible Navier-Stokes-Vlasov-Fokker-Planck system \cite{mellet1}, etc. The reader can refer to the review papers \cite{wen0,wen2}. 

The drift-flux system (\ref{two}) can be viewed as a simplified model of the compressible nonconservative two-fluid equations (cf. \cite{bresch4,bresch2,ishii2,wen0}):
\begin{equation}\label{ntf}
\left\{
\begin{split}
&\alpha^{+}+\alpha^{-}=1,\\
&\partial_{t}(\alpha^{\pm}\rho^{\pm})+\dive (\alpha^{\pm}\rho^{\pm} u^{\pm})=0,\\
&\partial_{t}(\alpha^{\pm}\rho^{\pm}u^{\pm})+\dive (\alpha^{\pm} \rho^{\pm} u^{\pm}\otimes u^{\pm})+\alpha^{\pm}\nabla P^{\pm}(\rho^{\pm})=\dive (\alpha^{\pm}\tau^{\pm})+\sigma^{\pm}\alpha^{\pm}\rho^{\pm}\nabla\Delta(\alpha^{\pm}\rho^{\pm}),\\
\end{split}
\right.
\end{equation}
where $\alpha^{\pm}$, $\rho^{\pm}, u^{\pm},$ $P^{\pm}(\rho^{\pm})$ and $\tau^{\pm}$ denote the volume fractions, densities, velocities, pressures and stress tensors of two fluids, respectively. The global existence of weak solutions to (\ref{ntf}) with degenerate viscosity coefficients was proved in \cite{bresch4,bresch5} for $P^{+}(\rho^{+})=P^{-}(\rho^{-})$, and the global well-posedness and optimal time-decay rates of strong solutions near the constant equilibrium state to (\ref{ntf}) were studied in \cite{evje6,wang1} for $P^{+}(\rho^{+})\neq P^{-}(\rho^{-})$.

The drift-flux system (\ref{two}) for either $\rho= 0$ or $\rho= n$ can reduce to the baratropic compressible Navier-Stokes equations:
\begin{equation}\label{ns}
\left\{
\begin{split}
&\partial_{t}\varrho+\dive(\varrho v)=0,\\
&\partial_{t}(\varrho v)+\dive(\varrho v\otimes v)+\nabla \pi(\varrho)=\mu\Delta v+(\mu+\lambda)\nabla \dive v,
\end{split}
\right.
\end{equation}
where $\varrho, v,$ and $\pi(\varrho)$ denote the density, the velocity and the pressure function, respectively. There are many important investigations about global weak solutions to compressible Navier-Stokes equations (\ref{ns}) with constant viscosity coefficients \cite{lions2,feireisl1,jiang1,plotnikov1,hu1,feireisl2,bresch1}. For instance, concerning the $\gamma$-law $\pi(\varrho)=\varrho^{\gamma}$, the existence of global weak solutions with finite energy to (\ref{ns}) has been established by Lions \cite{lions2} for $\gamma\geq \frac{3d}{d+2}$ $(d=2,3)$ and $\gamma>\frac{d}{2}$ $(d\geq 4)$, then by Feireisl-Novotn$\rm{\acute{y}}$-Petzeltov$\rm{\acute{a}}$ \cite{feireisl1} for $\gamma>\frac{d}{2}$ $(d\geq2)$, by Jiang-Zhang \cite{jiang1} for $\gamma>1$ subject to spherically symmetric initial data and by Plotnikov-Weigant \cite{plotnikov1} for $\gamma=1$ in two dimension. Indeed, the concentration phenomenon of the convective term $\rho u\otimes u$ may occur for $\gamma\in [1,\frac{d}{2}]$ \cite{hu1}. Feireisl \cite{feireisl2} proved the global existence of weak solutions to (\ref{ns}) for the pressure laws allowed to be non-monotone on a compact set. Bresch-Jabin \cite{bresch1} developed new compactness tools to obtain global weak solutions to (\ref{ns}) for more general stress tensors, including thermodynamically unstable pressure laws and anisotropic viscosity coefficients. Moreover, some important progress has been made about global weak solutions to compressible Navier-Stokes equations with degenerate viscosity coefficients, refer to \cite{bresch10,bresch20,bresch30,bresch40,l1,mellet10,vasseur10} and references therein.

However, there is no any result about the global existence problem of the multi-dimensional drift-flux model (\ref{two}) without the monotonicity assumption of the pressure as we know so far. The purpose of this paper is to extend the compactness techniques in \cite{bresch1} to the two-phase case and establish the global existence of weak solutions to the IVP (\ref{two})-(\ref{d}) for the general pressure laws (\ref{P}).

First, we give the definition of global weak solutions to the IVP $(\ref{two})$-$(\ref{d})$ as follows.
\begin{defn}\label{defn11}
 $(\rho,n,(\rho+n)u)$ is said to be a global weak solution to the IVP $(\ref{two})$-$(\ref{d})$ if for any time $T>0$, the following properties hold$:$

{\rm(1)}. It holds 
\begin{equation}\nonumber
\left\{
\begin{split}
&0\leq\rho\in C([0,T];L^{\gamma}_{weak}(\mathbb{T}^{d} )),\quad 0\leq n\in C([0,T];L^{\alpha}_{weak}(\mathbb{T}^{d}) ), \\
&u\in L^2(0,T;H^1(\mathbb{T}^{d}) ), \quad \sqrt{\rho+n} u\in L^{\infty}(0,T, L^2(\mathbb{T}^{d})), \quad (\rho+n) u \in C([0,T];L^{ \frac{2\min{\{\gamma,\alpha\}}}{\min{\{\gamma,\alpha\}}+1}}_{weak}(\mathbb{T}^{d}) ),\\
&(\rho,n,(\rho+n)u)(x,0)=(\rho_{0},n_{0},m_{0})(x),\quad\text{a.e.}~x\in\mathbb{T}^{d}.
\end{split}
\right.
\end{equation}

{\rm(2)}. The continuity equations $(\ref{two})_{1}$-$(\ref{two})_{2}$ are satisfied in the sense of renormalized solutions$:$
 \begin{equation}
 \left\{
 \begin{split}
 &\partial_{t}b(\rho)+\dive(b(\rho) u)+[b'(\rho)\rho-b(\rho)]\dive u=0\quad~\text{in}~\mathcal{D}(\mathbb{T}^{d}\times(0,T)),\\
  &\partial_{t}b(n)+\dive(b(n) u)+[b'(n)n-b(n)]\dive u=0\quad\text{in}~\mathcal{D}(\mathbb{T}^{d}\times(0,T)),\nonumber
 \end{split}
 \right.
 \end{equation}
where $b\in C^{1}(\mathbb{R})$ satisfies $b'(z)=0$ for all $z\in\mathbb{R}$ large enough.

{\rm(3)}. The momentum equation $(\ref{two})_{3}$ is satisfied in $\mathcal{D}'(\mathbb{T}^{d}\times(0,T))$.

{\rm(4)}. The energy inequality holds for a.e. $t\in(0,T)$$:$
\begin{equation}
\begin{split}
&\int_{\mathbb{T}^d} [\frac{1}{2}(\rho+n) |u|^2+G(\rho,n)] dx+\int_{0}^{t}\int_{\mathbb{T}^d}[\mu|\nabla u|^2+(\mu+\lambda)(\dive{u})^2]dxd\tau\\
&\quad\leq \int_{\mathbb{T}^d} [\frac{1}{2}\frac{|m_{0}|^2}{\rho_{0}+n_{0}}+G(\rho_{0},n_{0})] dx,\label{energy}
\end{split}
\end{equation}
where the Helmholtz free energy function $G(\rho,n)$ is defined by
\begin{eqnarray}
\label{G} G(\rho,n):=
\begin{cases}
(\rho+n)\int_{1}^{\rho+n}P(\frac{\rho s}{\rho+n},\frac{n}{\rho+n}s)s^{-2}ds,\quad\quad
& \mbox{if\quad$ \rho+ n>0,$ } \\
0,
& \mbox{if ~~$\rho=n=0.$}
\end{cases}
\end{eqnarray}
\end{defn}

\vspace{2ex}

Then, we have the global existence of weak solutions to the IVP (\ref{two})-(\ref{d}) below.
\begin{theorem}\label{theorem11}
Suppose that the initial data $(\rho_{0},n_{0},m_{0})$ satisfies
\begin{equation}\label{a1}
\left\{
\begin{split}
&0\leq \underline{c}\rho_{0}(x)\leq n_{0}(x)\leq \overline{c}\rho_{0}(x),\quad\text{a.e.}~x\in\mathbb{T}^{d},\\
&\frac{m_{0}(x)}{\sqrt{(\rho_{0}+n_{0})(x)}}=0,\quad\text{if}~ (\rho_{0}+n_{0})(x)=0,\quad\text{a.e.}~x\in\mathbb{T}^{d},\\
&(\rho_{0},n_{0},\frac{m_{0}}{\sqrt{\rho_{0}+n_{0}}})\in L^{\gamma}(\mathbb{T}^{d})\times L^{\alpha}(\mathbb{T}^{d})\times L^2(\mathbb{T}^{d}),
\end{split}
\right.
\end{equation}
with two constants $0<\underline{c}\leq \overline{c}<\infty$. Then, if it holds for the pressure $P(\rho,n)$ given by $(\ref{P})$ that
\begin{equation}\label{gamma1}
\left\{
\begin{split}
&\gamma\geq \frac{3d}{d+2}~(d=2,3),\quad\quad~ \gamma>\frac{d}{2}~(d>4),\quad\quad  \alpha\geq 1,\\
& \widetilde{\gamma},\widetilde{\alpha}\leq\max{\{\gamma,\alpha\}}+\theta,\quad \quad\theta:=\frac{2}{d}\max{\{\gamma,\alpha\}}-1>0,
\end{split}
\right.
\end{equation}
the IVP $(\ref{two})$-$(\ref{d})$ admits a global weak solution $(\rho, n, (\rho+n)u)$ in the sense of Definition \ref{defn11}.
\end{theorem}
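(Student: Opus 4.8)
\emph{Approach and Step 1 (approximation).} The plan is to produce the weak solution as the limit of a doubly regularized approximation and to extend the Bresch--Jabin quantitative compactness method to the coupled two-density setting, so as to pass to the limit in the non-monotone pressure. For $\varepsilon,\delta>0$ I would first solve, by a Faedo--Galerkin scheme together with a fixed-point argument in the spirit of Feireisl, Novotn\'y and Petzeltov\'a, the regularized system in which the continuity equations are made parabolic, $\partial_t\rho+\dive(\rho u)=\varepsilon\Delta\rho$ and $\partial_t n+\dive(n u)=\varepsilon\Delta n$, the pressure is replaced by $P_\delta(\rho,n):=P(\rho,n)+\delta(\rho^{\beta}+n^{\beta})$ with $\beta$ large, and the initial data are mollified. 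A point to verify here is that the domination condition propagates: the quantities $w:=n_\varepsilon-\underline c\,\rho_\varepsilon$ and $\overline c\,\rho_\varepsilon-n_\varepsilon$ solve $\partial_t w+\dive(w u_\varepsilon)=\varepsilon\Delta w$, so the parabolic maximum principle combined with a Gr{\"o}nwall argument in $\|\dive u_\varepsilon\|_{L^\infty}$ keeps them nonnegative, whence $\underline c\,\rho_\varepsilon\leq n_\varepsilon\leq\overline c\,\rho_\varepsilon$.

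\emph{Step 2 (uniform estimates).} The energy identity yields, uniformly in $\varepsilon,\delta$, the bounds $\sqrt{\rho_\varepsilon+n_\varepsilon}\,u_\varepsilon\in L^\infty_tL^2_x$, $u_\varepsilon\in L^2_tH^1_x$, $\rho_\varepsilon\in L^\infty_tL^\gamma_x$ and $n_\varepsilon\in L^\infty_tL^\alpha_x$ (with the usual $n\log n$ substitute when $\alpha=1$), using the lower bound in (\ref{P}) and the coercivity of $G$; the Lions-type restriction $\gamma\geq\frac{3d}{d+2}$ $(d=2,3)$, $\gamma>\frac d2$ $(d\geq4)$ is exactly what makes $(\rho_\varepsilon+n_\varepsilon)u_\varepsilon\otimes u_\varepsilon$ integrable. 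Testing the momentum equation against the Bogovskii-type potentials $\nabla\Delta^{-1}[\rho_\varepsilon^{\theta}-\langle\rho_\varepsilon^{\theta}\rangle]$ and $\nabla\Delta^{-1}[n_\varepsilon^{\theta}-\langle n_\varepsilon^{\theta}\rangle]$, with $\theta=\frac2d\max\{\gamma,\alpha\}-1>0$, upgrades these to the uniform bounds $\rho_\varepsilon\in L^{\gamma+\theta}_{t,x}$, $n_\varepsilon\in L^{\alpha+\theta}_{t,x}$ and $\delta\iint(\rho_\varepsilon^{\beta+\theta}+n_\varepsilon^{\beta+\theta})\leq C$; in particular $\delta(\rho_\varepsilon^{\beta}+n_\varepsilon^{\beta})\to0$ in $L^1$. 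One also records the elliptic bound for the effective viscous flux $F_\varepsilon:=(2\mu+\lambda)\dive u_\varepsilon-P_\delta(\rho_\varepsilon,n_\varepsilon)$, which is spatially a bit more regular than $\dive u_\varepsilon$ itself.

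\emph{Step 3 (weighted compactness of the densities --- the main difficulty).} Following Bresch--Jabin, I would introduce weights $w_\varepsilon\in[0,1]$ with $w_\varepsilon|_{t=0}=1$ solving a transport inequality $\partial_t w_\varepsilon+u_\varepsilon\cdot\nabla w_\varepsilon\leq-\Lambda\,\mathcal M[\nabla u_\varepsilon]\,w_\varepsilon$, where $\mathcal M$ is a maximal-type operator and $\Lambda$ is large; these weights enjoy a uniform lower bound $\|w_\varepsilon(t)\|_{L^p}\geq c(p)>0$ for each $p<\infty$, coming from $u_\varepsilon\in L^2_tH^1_x$. Then, for a truncation level $k$ and the family of kernels $K_h$ with $\|K_h\|_{L^1}\to\infty$ as $h\to0$, one monitors
\[
\mathcal R_{h,k}(t)=\iint K_h(x-y)\,\big|T_k(\rho_\varepsilon)(x)-T_k(\rho_\varepsilon)(y)\big|\,w_\varepsilon(x)w_\varepsilon(y)\,dx\,dy
\]
and its analogue $\mathcal S_{h,k}$ built from $n_\varepsilon$. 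Differentiating in time and using the renormalized continuity equations, the transport commutators couple with $\nabla K_h$; the resulting anomalous terms are dominated via $|u_\varepsilon(x)-u_\varepsilon(y)|\lesssim|x-y|(\mathcal M[\nabla u_\varepsilon](x)+\mathcal M[\nabla u_\varepsilon](y))$ and absorbed by the weight dissipation once $\Lambda$ is large, while the compression term $[b'\rho-b]\dive u_\varepsilon$ is controlled by $\|\dive u_\varepsilon\|_{L^2}$ and $\mathcal R_{h,k}$ itself. The delicate term carries $\dive u_\varepsilon(x)-\dive u_\varepsilon(y)$: writing $\dive u_\varepsilon=(2\mu+\lambda)^{-1}(F_\varepsilon+P_\delta)$, the flux part is controlled by the improved regularity of $F_\varepsilon$ recorded in Step 2, and the pressure part $P_\delta(\rho_\varepsilon(x),n_\varepsilon(x))-P_\delta(\rho_\varepsilon(y),n_\varepsilon(y))$ --- which has \emph{no sign} since $P$ need not be monotone --- is estimated on the truncated region by $C_k\big(|T_k\rho_\varepsilon(x)-T_k\rho_\varepsilon(y)|+|T_k n_\varepsilon(x)-T_k n_\varepsilon(y)|\big)$ using $P\in C^1$, and therefore absorbed into a Gr{\"o}nwall inequality for the \emph{pair} $\mathcal R_{h,k}+\mathcal S_{h,k}$; it is the presence of two densities sharing a single velocity and coupled through $P$ that forces this coupled estimate, and the domination condition is what keeps the coupled system closed. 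Both the density truncation error and the high-density part of the pressure difference are $O(k^{-c})$ for some $c>0$, uniformly in $\varepsilon$ by Step 2, precisely under the hypothesis $\widetilde\gamma,\widetilde\alpha\leq\max\{\gamma,\alpha\}+\theta$. Sending first $\varepsilon\to0$ (with $\delta$ fixed), then $\delta\to0$, then $k\to\infty$ and $h\to0$, and using that $\mathcal R_{h,k}(0)=o(\|K_h\|_{L^1})$ for the fixed datum $\rho_0$, one obtains the precompactness of $\{\rho_\varepsilon\}$ and $\{n_\varepsilon\}$ in $L^1((0,T)\times\mathbb{T}^d)$. Closing this coupled weighted Gr{\"o}nwall estimate in the non-monotone regime is the step I expect to be the hardest.

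\emph{Step 4 (passage to the limit).} With $\rho_\varepsilon\to\rho$ and $n_\varepsilon\to n$ a.e., together with the uniform $L^{\gamma+\theta}$ and $L^{\alpha+\theta}$ bounds and the growth condition in (\ref{P}) (which, under $\widetilde\gamma,\widetilde\alpha\leq\max\{\gamma,\alpha\}+\theta$, gives equi-integrability of $P_\delta(\rho_\varepsilon,n_\varepsilon)$), one gets $P_\delta(\rho_\varepsilon,n_\varepsilon)\to P(\rho,n)$ in $L^1$; the artificial viscosity and artificial pressure terms vanish; the convective term passes to the limit by compactness of $(\rho+n)u$ in a negative Sobolev space combined with $u_\varepsilon\rightharpoonup u$ in $L^2_tH^1_x$. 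The limit thus satisfies the momentum equation in $\mathcal D'$, the continuity equations in the renormalized sense (DiPerna--Lions, legitimate here because $\rho,n$ have the required integrability and $u\in L^2_tH^1_x$), the domination condition, and --- by weak lower semicontinuity --- the energy inequality (\ref{energy}); the stated weak time-continuity properties follow from the equations. This produces a global weak solution in the sense of Definition \ref{defn11}.
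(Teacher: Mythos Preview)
Your outline follows the right overall architecture (double approximation, energy and higher-integrability bounds, Bresch--Jabin-type weighted compactness, then passage to the limit), but Step~3 contains a genuine gap that the paper is specifically designed to overcome.

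You propose to run the weighted compactness argument for $\rho$ and $n$ \emph{separately}, bounding the non-monotone pressure difference on the truncation by the crude Lipschitz estimate
\[
|P_\delta(\rho(x),n(x))-P_\delta(\rho(y),n(y))|\leq C_k\big(|T_k\rho(x)-T_k\rho(y)|+|T_kn(x)-T_kn(y)|\big),
\]
and then closing a Gr\"onwall inequality for $\mathcal R_{h,k}+\mathcal S_{h,k}$. The constant $C_k$ here is of order $k^{\widetilde\gamma-1}+k^{\widetilde\alpha-1}$ (and $k^{p_0-1}$ for the artificial pressure), so the Gr\"onwall constant blows up with $k$. Your subsequent truncation errors are $O(k^{-c})$, but multiplied by $\exp(C_kT)$ they diverge; the limit $k\to\infty$ cannot be taken. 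In the single-density Bresch--Jabin argument the pressure term is \emph{not} handled by a Lipschitz bound and Gr\"onwall: it is analyzed case-by-case so that the surviving term has the form $C\big((\rho^x)^{\gamma}+\cdots\big)\chi(\Lambda[\rho])w^x$, with a constant \emph{independent of $k$}, and is absorbed by the damping in the weight equation. Your coupled estimate does not reproduce this structure, and the paper itself emphasizes that treating $\rho$ and $n$ separately does not work for non-monotone $P$.

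The paper's remedy is to reduce to a \emph{single} scalar $\vartheta_\delta:=\rho_\delta+n_\delta$. It first proves (Lemma~\ref{lemma43}) that strong convergence of $\vartheta_\delta$ is equivalent to that of $(\rho_\delta,n_\delta)$, via the ratio functions $A_{\rho,n}=\rho/(\rho+n)$, $B_{\rho,n}=n/(\rho+n)$. The pressure difference is then decomposed as in (\ref{ErPdelta}): three of the four pieces are shown, using (\ref{strongABdelta}) and Egorov's theorem, to be small on a set of nearly full measure (these are the $Ck^{p_0}\zeta$ and $Ck^{p_0}L_{h,1}(A_{\rho,n})$ terms); the remaining piece is $P(A^x\vartheta^x,B^x\vartheta^x)-P(A^x\vartheta^y,B^x\vartheta^y)$, a difference in the single variable $\vartheta$ with $A^x,B^x$ frozen, to which the Bresch--Jabin case analysis applies verbatim and yields the $k$-independent bound absorbed by the weight. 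Your proposal is missing this reduction entirely; without it the coupled Gr\"onwall does not close. (As smaller points: the $\varepsilon\to0$ limit in the paper is handled by the classical Lions--Feireisl effective-viscous-flux argument, not Bresch--Jabin; and the effective-flux contribution at the $\delta$-level is controlled not by an abstract ``extra regularity'' but by writing $F_\delta=(-\Delta)^{-1}\dive[\partial_t(\vartheta u)+\dive(\vartheta u\otimes u)]$ and using the Coifman--Meyer commutator estimates for $\mathcal R_i\mathcal R_j$.)
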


\vspace{2ex}

Without the domination condition $(\ref{dom})$, we also have the global existence of weak solutions to the IVP (\ref{two})-(\ref{d}) as follows.
\begin{theorem}\label{theorem12}
 Suppose that the initial data $(\rho_{0},n_{0},m_{0})$ satisfies
\begin{equation}\label{a2}
\left\{
\begin{split}
&\rho_{0}(x)\geq 0,\quad  n_{0}(x)\geq 0,\quad\text{a.e.}~x\in\mathbb{T}^{d},\\
&\frac{m_{0}(x)}{\sqrt{(\rho_{0}+n_{0})(x)}}=0,\quad\text{if}~ (\rho_{0}+n_{0})(x)=0,\quad\text{a.e.}~x\in\mathbb{T}^{d},\\
&(\rho_{0},n_{0},\frac{m_{0}}{\sqrt{\rho_{0}+n_{0}}})\in L^{\gamma}(\mathbb{T}^{d})\times L^{\alpha}(\mathbb{T}^{d})\times L^2(\mathbb{T}^{d}).
\end{split}
\right.
\end{equation}
 Then, if it holds for the pressure $P(\rho,n)$ given by $(\ref{P})$ that
\begin{equation}\label{gamma2}
\left\{
\begin{split}
&\gamma,\alpha\geq \frac{3d}{d+2}~(d=2,3),\quad\quad \gamma,\alpha>\frac{d}{2}~(d\geq4),\\
&1\leq \widetilde{\gamma}\leq \gamma+\theta_{1},  \quad\quad \quad~~  \quad \quad \theta_{1}:=\frac{2}{d}\gamma-\frac{\gamma}{\min{\{\gamma,\alpha\}}}>0,\\
&1\leq \widetilde{\alpha}\leq\alpha+\theta_{2}, \quad   \quad\quad  ~~ \quad \quad\theta_{2}:=\frac{2}{d}\alpha-\frac{\alpha}{\min{\{\gamma,\alpha\}}}>0,
\end{split}
\right.
\end{equation}
the IVP $(\ref{two})$-$(\ref{d})$ admits a global weak solution $(\rho, n, (\rho+n)u)$ in the sense of Definition \ref{defn11}.
\end{theorem}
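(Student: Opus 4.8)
The plan is to build approximate solutions by a two-parameter regularization together with a Faedo--Galerkin scheme, establish uniform energy and higher-integrability estimates, and pass to the limit; the decisive point is the strong compactness of the two densities $\rho$ and $n$, for which the effective-viscous-flux and monotonicity method is unavailable since $P$ need not be monotone, so we adapt the kernel-and-weight compactness method of Bresch--Jabin \cite{bresch1} to the two-phase system. Concretely, for $\varepsilon,\delta>0$ I would solve the regularized problem in which both continuity equations carry the artificial viscosities $\varepsilon\Delta\rho$, $\varepsilon\Delta n$, the momentum equation is $\varepsilon$-regularized in the standard way (adding $\varepsilon\nabla u\cdot\nabla(\rho+n)$ so that the energy identity closes), and $P(\rho,n)$ is replaced by $P(\rho,n)+\delta(\rho^{\beta}+n^{\beta})$ with $\beta$ large (larger than $\max\{\widetilde\gamma,\widetilde\alpha\}$ and than the exponents needed below). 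Existence for fixed $\varepsilon,\delta$ follows from the classical Galerkin argument of Lions and Feireisl--Novotn\'y--Petzeltov\'a, using the parabolic smoothing and positivity of $\rho^{\varepsilon},n^{\varepsilon}$ and the boundedness of $\rho+n$ away from vacuum along the Galerkin flow; one then sends $\varepsilon\to0$ keeping $\delta>0$, where the monotone dominant part $\delta(\rho^{\beta}+n^{\beta})$ is handled classically and the non-monotone $P(\rho,n)$ by the estimate of Step 3 below (which is in fact easier for $\varepsilon>0$, thanks to the extra regularity of the densities).

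\textbf{Step 2 (Uniform estimates).} From the energy inequality (\ref{energy}) and (\ref{P}) one gets $\sqrt{\rho+n}\,u\in L^\infty(0,T;L^2)$, $u\in L^2(0,T;H^1)$, $\rho\in L^\infty(0,T;L^\gamma)$, $n\in L^\infty(0,T;L^\alpha)$ uniformly in $\delta$. Testing the momentum equation with the Bogovskii-type multipliers $\nabla\Delta^{-1}[\rho^{\theta_1}]$ and $\nabla\Delta^{-1}[n^{\theta_2}]$ and absorbing the convective term $(\rho+n)u\otimes u$ --- where, compared with single-fluid Navier--Stokes, the extra density $n$ (resp.\ $\rho$) forces the shifted gain exponents $\theta_1,\theta_2$ of (\ref{gamma2}), and the thresholds $\gamma,\alpha\ge\frac{3d}{d+2}$ ($d=2,3$), $\gamma,\alpha>\frac d2$ ($d\ge4$) are exactly what make this absorption possible --- yields $\rho\in L^{\gamma+\theta_1}$ and $n\in L^{\alpha+\theta_2}$ on $(0,T)\times\mathbb T^d$, uniformly in $\delta$; note that $\gamma+\theta_1,\alpha+\theta_2\ge2$ at the threshold, which is needed for the DiPerna--Lions renormalization of the limit continuity equations. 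Condition (\ref{gamma2}) on $\widetilde\gamma,\widetilde\alpha$ then gives $P(\rho,n)$ and the relevant pressure increments bounded in some $L^p$, $p>1$, uniformly in $\delta$.

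\textbf{Step 3 (Density compactness --- the main step).} Having extracted weak limits $\rho_\delta\rightharpoonup\rho$, $n_\delta\rightharpoonup n$, $u_\delta\rightharpoonup u$, the obstruction is to upgrade these to strong convergence of $\rho_\delta,n_\delta$ in $L^1((0,T)\times\mathbb T^d)$. I would exploit that $\rho$ and $n$ are transported by the \emph{same} velocity, so the total density $\sigma:=\rho+n$ satisfies the continuity equation while the mass fraction $c:=n/(\rho+n)\in[0,1]$ (defined arbitrarily on the vacuum set) solves the pure transport equation $\partial_t c+u\cdot\nabla c=0$; this recasts (\ref{two}) as a compressible Navier--Stokes system for $(\sigma,u)$ with the \emph{nonmonotone} pressure $\widetilde P(\sigma,c)=P((1-c)\sigma,c\sigma)$ depending on the passively advected field $c$. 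Following Bresch--Jabin, I would introduce a smoothing kernel $K_h$ and a nonnegative weight $w_h$ solving a transport inequality of the form $\partial_t w_h+u\cdot\nabla w_h\le-\Lambda\,w_h\,\mathcal M(\nabla u)$, with $\mathcal M$ a truncated maximal operator and $\Lambda$ large, and control
\[
\int_0^T\!\!\int_{\mathbb T^d}\!\!\int_{\mathbb T^d}K_h(x-y)\,w_h(t,x)\,w_h(t,y)\Big(|\sigma_\delta(t,x)-\sigma_\delta(t,y)|+|c_\delta(t,x)-c_\delta(t,y)|\Big)\,dx\,dy\,dt ,
\]
showing it is $o(1)$ as $h\to0$ uniformly in $\delta$: the kernel and the bound $|u(x)-u(y)|\lesssim|x-y|\big(\mathcal M(\nabla u)(x)+\mathcal M(\nabla u)(y)\big)$ reduce the transport terms to quantities that the decay of the weight absorbs, while the anti-monotone part of $\widetilde P$ is absorbed because $|\widetilde P(\sigma(x),c(x))-\widetilde P(\sigma(y),c(y))|$ is bounded by a density power (controlled via Step 2 precisely because $\widetilde\gamma\le\gamma+\theta_1$ and $\widetilde\alpha\le\alpha+\theta_2$) times the increments of $\sigma$ and of $c$ --- which is why both increments sit in the functional. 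Since $\log w_h$ is an $L^1$-controlled quantity, the set where $w_h$ is small has vanishing measure, and one concludes $\sigma_\delta\to\sigma$ and $c_\delta\to c$ strongly in $L^1$, hence $\rho_\delta=(1-c_\delta)\sigma_\delta\to\rho$, $n_\delta=c_\delta\sigma_\delta\to n$ strongly, and $P(\rho_\delta,n_\delta)\to P(\rho,n)$. This is the heart of the argument and the main obstacle: the two-phase feature --- a nonmonotone pressure in \emph{two} independent densities coupled through a single momentum equation, \emph{without} the domination condition $n\sim\rho$ used in all earlier multi-dimensional works --- is precisely what forces the joint functional above and the careful tracking of how $\widetilde P(\sigma,c)$ fails to be increasing in $\sigma$ for frozen $c$, in tandem with the (only weak) compactness of $c$.

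\textbf{Step 4 (Passage to the limit).} With $\rho_\delta,n_\delta$ (and $c_\delta$) converging strongly, the DiPerna--Lions theory gives that both continuity equations hold in the renormalized sense in the limit; the momentum equation passes to the limit since $(\rho_\delta+n_\delta)u_\delta\otimes u_\delta$ is equi-integrable by Steps 2--3, $\nabla P(\rho_\delta,n_\delta)\to\nabla P(\rho,n)$ in $\mathcal D'$, and the viscous terms are linear; the energy inequality survives by weak lower semicontinuity together with the compactness of the densities. Carrying this out first for the $\varepsilon\to0$ limit (with $\delta>0$) and then, using the uniform-in-$\delta$ bounds of Step 2 and the genuine pressure $P$, for the $\delta\to0$ limit produces a global weak solution in the sense of Definition \ref{defn11}; the verification of the regularity class in part (1) of the definition is routine once all these limits are in hand. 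Throughout, the only genuine difficulty is Step 3, everything else being an adaptation of the standard machinery for compressible Navier--Stokes.
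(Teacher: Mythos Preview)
Your overall architecture (two-parameter approximation, energy and Bogovskii estimates, Bresch--Jabin kernel compactness for the densities) matches the paper, but Step~3 as written has two genuine gaps.

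\medskip
\textbf{Gap 1: handling the second density.} You propose a joint Bresch--Jabin functional in the increments of $\sigma_\delta=\rho_\delta+n_\delta$ and of the mass fraction $c_\delta=n_\delta/\sigma_\delta$. The paper does \emph{not} do this: it runs the kernel argument only for $\sigma_\delta$ and eliminates the second variable \emph{a priori}. The key input (Lemma~\ref{lemma43}, following Vasseur--Wen--Yu) is that, using the conserved quantity $\rho^2/(\rho+n)$ together with $C([0,T];H^{-1})$ compactness, one has $\rho_\delta-(\rho_\delta+n_\delta)A_{\rho,n}\to 0$ in $L^1$, where $A_{\rho,n}=\rho/(\rho+n)$ is the \emph{limit} fraction, a fixed function independent of $\delta$. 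Then in the kernel estimate the pressure difference is decomposed as in (\ref{ErPdelta}): the first three pieces vanish via Egorov and via $L_{h,1}(A_{\rho,n})\to 0$ (trivial since $A_{\rho,n}$ is fixed), and only the last piece, $P_\delta(A_{\rho^x,n^x}\vartheta_\delta^x,B_{\rho^x,n^x}\vartheta_\delta^x)-P_\delta(A_{\rho^x,n^x}\vartheta_\delta^y,B_{\rho^x,n^x}\vartheta_\delta^y)$, is a genuine \emph{one-variable} pressure in $\vartheta_\delta$ on which the Bresch--Jabin sign analysis (the three cases of Lemma~\ref{lemma47}) applies. Your route instead requires (i) making sense of $c_\delta$ on the vacuum set $\{\sigma_\delta=0\}$, which is nonempty at the $\delta$-level, (ii) proving uniform-in-$\delta$ kernel compactness of $c_\delta$ from a transport equation that is only formal across vacuum, and (iii) carrying out the sign analysis for $\widetilde P(\sigma,c)$ when both $\sigma$ and $c$ vary --- your sentence ``bounded by a density power times the increments of $\sigma$ and of $c$'' is a Lipschitz bound, not the sign mechanism Bresch--Jabin actually relies on, and would not close without the good-sign case.

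\medskip
\textbf{Gap 2: the effective viscous flux.} In the kernel estimate one writes $\dive u_\delta=\tfrac{1}{2\mu+\lambda}(P_\delta-F_\delta)$ with $F_\delta$ the effective viscous flux (\ref{effect1}), and the contribution of $\Lambda[F_\delta]$ must be shown to be $o(1)$ as $h\to 0$ uniformly in $\delta$. You do not address this at all. In the paper this is a separate and nontrivial step (Lemma~\ref{lemma48}): one uses the momentum equation to write $F_\delta=(-\Delta)^{-1}\dive[\partial_t(\vartheta_\delta u_\delta)+\dive(\vartheta_\delta u_\delta\otimes u_\delta)]$, integrates by parts against the transport equation for the weighted test function $\Psi_{\delta,k}$, and closes the resulting commutators via the Coifman--Meyer/Coifman--Rochberg--Weiss estimates for $[\mathcal R_i\mathcal R_j,u^j]$, with an additional truncation $\mathbf 1_{\vartheta_\delta\le L}$ to cope with the low integrability of $\vartheta_\delta u_\delta$ when $\gamma_0\le d$. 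This is one of the paper's main technical contributions and cannot be absorbed into the vague phrase ``the transport terms are reduced to quantities that the decay of the weight absorbs''.

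\medskip
A smaller point: your weight solves $\partial_t w+u\cdot\nabla w\le -\Lambda w\,\mathcal M(\nabla u)$, but the pressure terms in Lemma~\ref{lemma47} require the damping to also contain $\rho^\gamma+\rho^{\widetilde\gamma}+n^\alpha+n^{\widetilde\alpha}$ (cf.\ $\Xi_\delta$ in (\ref{w})); without this the non-monotone pressure cannot be absorbed. Also, at the $\varepsilon\to 0$ level the paper does \emph{not} use Bresch--Jabin but the classical Feireisl argument for non-monotone pressures, exploiting the decomposition $P_\delta=P_{1,\delta}-P_{2,\delta}$ with $P_{1,\delta}$ monotone and $P_{2,\delta}$ compactly supported (Lemma~\ref{lemma34}); your claim that Step~3 ``is in fact easier for $\varepsilon>0$'' is not how the paper proceeds.
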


\begin{remark}
There are many interesting pressure laws satisfying $(\ref{P})$, for instance,

(1) We can take
 $$
 P(\rho,n)=\rho^{\gamma}+n^{\alpha}+\sum_{i=1}^{N} c_{i}\rho^{\gamma_{i}}n^{\alpha_{i}},
 $$
where the constants $N$, $\gamma_{i}$ and $\alpha_{i}$ $(i=1,...,N)$ satisfy $N\geq1$, $0\leq \gamma_{i}<\gamma$, $0\leq \alpha_{i}<\alpha$ and $\gamma_{i}+\alpha_{i}<\max\{\gamma,\alpha\}$ and the constants $c_{i}$ $(i=1,...,N)$ are allowed to be negative such that $P(\rho,n)$ can be non-monotone. In particular, one can choose the monotone pressure
$$
P(\rho,n)=\rho^{\gamma}+n^{\alpha}.
$$

(2)  Let~$\Pi_{1}(\rho)$ and $\Pi_{2}(n)$ be two non-monotone pressure laws for compressible Navier-Stokes equations given in {\rm{\cite{bresch1}}}, for instance, the virial expansion with high order terms, which can be thermodynamically unstable. Then the pressure 
$$
P(\rho,n)=\Pi_{1}(\rho)+\Pi_{2}(n)
$$
satisfies {\rm{(\ref{P})}}. In particular, the pressure function $P(\rho,n)=\Pi_{1}(\rho)+n^2$ corresponds to the two-dimensional non-resistive compressible MHD equations {\rm\cite{ly1}}.

(3) An example of pressure laws is
$$
P(\rho,n)=\rho^{\gamma}(1+2\cos \rho)+n^{\alpha}(1+2\cos n),
$$
which is oscillatory even for large $\rho$ and $n$.
\end{remark}

\begin{remark}
If the domination condition $(\ref{dom})$ holds, then one can show $0\leq \underline{c}n(x,t)\leq \rho(x,t)\leq \overline{c}\rho(x,t)$ for a.e. $(x,t)\in\mathbb{T}^{d}\times(0,T)$ due to the comparsion principle for $(\ref{two})_{1}$-$(\ref{two})_{2}$ so that $G(\rho,n)$ satisfies 
\begin{equation}\label{P1}
\begin{split}
\frac{1}{C_{\underline{c},\overline{c}}}(\rho+n)^{\max\{\gamma,\alpha\}}-C_{\underline{c},\overline{c}}\leq G(\rho,n)\leq C_{\underline{c},\overline{c}} (\rho+n)^{\max\{\gamma,\alpha\}}+C_{\underline{c},\overline{c}},
\end{split}
\end{equation}
where $C_{\underline{c},\overline{c}}\geq 1$ is a constant. Indeed, we can prove under the assumptions $(\ref{a1})$-$(\ref{gamma1})$ that both $\rho$ and $n$ are bounded in $L^{\max\{\gamma,\alpha\}+\theta}(0,T;L^{\max\{\gamma,\alpha\}+\theta}(\mathbb{T}^{d}))$ for $\max\{\gamma,\alpha\}+\theta\geq 2$ so as to apply the arguments of renormalized solutions.
\end{remark}

\begin{remark}
Theorem \ref{theorem12} is the first result on the global existence of weak solutions to the multi-dimensional drift-flux model $(\ref{two})$ for non-monotonous pressure laws without any requirement on the relation between $\rho_{0}$ and $n_{0}$ as in $(\ref{dom})$. As pointed in {\rm \cite{evje4,wen1}}, the system $(\ref{two})$ without the condition $(\ref{dom})$ is more realistic in some physical situations has and more "two-phase" properties from mathematical points of view.
\end{remark}

\begin{remark}
The adiabatic constants $\gamma$, $\alpha$ can take $\frac{3d}{d+2}$ for $d=2$, $3,$ which extends the previous work by Bresch-Jabin {\rm\cite{bresch1}}  about global existence of weak solutions to compressible Navier-Stokes equations $(\ref{ns})$ for thermodynamically unstable pressure laws.
\end{remark}

\begin{remark}
If $(\ref{P})$ holds, then we have 
\begin{equation}\label{P2}
\begin{split}
\frac{1}{C_{0}}(\frac{\rho^{\gamma}}{\gamma-1}+\frac{n^{\alpha}}{\alpha-1})\leq G(\rho,n)\leq C_{0}(\frac{\rho^{\gamma}}{\gamma-1}+\frac{n^{\alpha}}{\alpha-1}).
\end{split}
\end{equation}
This implies under the assumptions $(\ref{a2})$-$(\ref{gamma2})$ that $\rho$ and $n$ belong to  $L^{\gamma+\theta_{1}}(0,T;L^{\gamma+\theta_{1}}(\mathbb{T}^{d}))$ for $\gamma+\theta_{1}\geq 2$ and $L^{\alpha+\theta_{2}}(0,T;L^{\alpha+\theta_{2}}(\mathbb{T}^{d}))$ for $\alpha+\theta_{2}\geq 2$, respectively.
\end{remark}

\vspace{2ex}

We explain the main strategies to prove Theorem \ref{theorem11} and Theorem \ref{theorem12}. By the Faedo-Galerkin approximation and vanishing artificial viscosity, one can construct the approximate sequence $(\rho_{\delta},n_{\delta},(\rho_{\delta}+n_{\delta})u_{\delta})$ of the IVP $(\ref{two})$-$(\ref{d})$ with the artificial pressure.

Then, we pass to the limit as $\delta\rightarrow 0$. As emphasized in many related papers \cite{novotny2, vasseur1,wen2}, the key point is to show the strong convergence of the two densities $\rho_{\delta}$ and $n_{\delta}$. Due to the possible non-monotonicity of $P(\rho,n)$ in the two variables $\rho$ and $n$, it is difficult to apply the techniques by Bresch-Jabin {\rm\cite{bresch1}} to estimate $\rho_{\delta}$ and $n_{\delta}$ separately. To overcome this difficulty, we first make use of the ideas inspired by \cite{vasseur1} to deduce 
\begin{equation}\nonumber
\begin{split}
&\quad\quad~(\rho_{\delta},n_{\delta})\rightarrow (\rho,n)\quad\text{in}~L^1(0,T;L^1(\mathbb{T}^{d}))\times L^1(0,T;L^1(\mathbb{T}^{d}))\\
&\Longleftrightarrow \rho_{\delta}+n_{\delta}\rightarrow \rho+n\quad\text{in}~L^1(0,T;L^1(\mathbb{T}^{d}) )\quad\text{as}~\delta\rightarrow 0.
\end{split}
\end{equation}
By virtue of the compactness criterion introduced in \cite{belgacem1,bresch1}, one needs to derive the quantitative regularity estimate of the sum $\vartheta_{\delta}:=\rho_{\delta}+n_{\delta}$:
\begin{equation}\nonumber
\begin{split}
&\frac{1}{\|\mathcal{K}_{h}\|_{L^1}}\int_{0}^{T}\int_{\mathbb{T}^{2d}} \mathcal{K}_{h}(x-y) |\vartheta_{\delta}^{x}-\vartheta_{\delta}^{y}|dxdydt \rightarrow 0 \quad \text{uniformly in}~\delta\quad\text{as}~h\rightarrow 0,
\end{split}
\end{equation}  
with $\mathcal{K}_{h}$ the periodic symmetric kernel given by $(\ref{mathcalK})$. By the structures of the mass equations, we turn to estimate the term associated with $\dive u_{\delta}^{x}-\dive u_{\delta}^{y}$, which can be decomposed into the pressure part and the effective viscous flux part.

The key pressure part can be analyzed as follows:
\begin{equation}\label{ErPdelta}
\begin{split}
&P_{\delta}(\rho_{\delta}^{x},n_{\delta}^{x})-P_{\delta}(\rho_{\delta}^{y},n_{\delta}^{y})=P_{\delta}(\rho_{\delta}^{x},n_{\delta}^{x})-P_{\delta}(A_{\rho^x,n^x}\vartheta _{\delta}^x,B_{\rho^x,n^x}\vartheta _{\delta}^x)\\
&\quad\quad\quad\quad\quad\quad\quad\quad\quad\quad~+P_{\delta}(A_{\rho^y,n^y}\vartheta _{\delta}^y,B_{\rho^y,n^y}\vartheta _{\delta}^y)-P_{\delta}(\rho_{\delta}^{y},n_{\delta}^{y})\\
&\quad\quad\quad\quad\quad\quad\quad\quad\quad\quad~+P_{\delta}(A_{\rho^x,n^x}\vartheta _{\delta}^y,B_{\rho^x,n^x}\vartheta _{\delta}^y)-P_{\delta}(A_{\rho^y,n^y}\vartheta _{\delta}^y,B_{\rho^y,n^y}\vartheta _{\delta}^y)\\
&\quad\quad\quad\quad\quad\quad\quad\quad\quad\quad~+P_{\delta}(A_{\rho^x,n^x}\vartheta _{\delta}^x,B_{\rho^x,n^x}\vartheta _{\delta}^x)-P_{\delta}(A_{\rho^x,n^x}\vartheta _{\delta}^y,B_{\rho^x,n^x}\vartheta _{\delta}^y),
\end{split}
\end{equation}
where $(A_{\rho,n},B_{\rho,n})$ is defined by
\begin{equation}\label{AB}
(A_{\rho,n},B_{\rho,n}):=
\begin{cases}
(\frac{\rho}{\rho+n},\frac{n}{\rho+n}),
& \mbox{if $\rho+n>0,$ } \\
(0,0),
& \mbox{if $\rho+n=0.$}
\end{cases}
\end{equation}
We can prove that for some domain $\mathbb{Q}\subset \mathbb{T}^{2d}\times(0,T)$ with $\mathbb{T}^{2d}\times(0,T)/\mathbb{Q}$ arbitrarily small, the first three terms on the right-hand side of (\ref{ErPdelta}) on $\mathbb{Q}$ tend to $0$ as $h\rightarrow 0$ uniformly with respect to $\delta$ and the last term associated with $P_{\delta}(A_{\rho,n} \vartheta_{\delta}, B_{\rho,n}\vartheta_{\delta})$ in the one variable $\vartheta_{\delta}$ can be estimated by the arguments as used in\cite{bresch1} with some modifications.

Meanwhile, to deal with the effective viscous flux part, we make use of the structures of the momentum equation and the commutator estimates of Riesz operator, which is different from the previous analysis \cite{bresch1} and may be applicable to other related models in fluid dynamics.

\vspace{2ex}

The rest part of this paper is arranged as follows. The global existence of weak solutions for the approximate system with two small parameters $\varepsilon\in(0,1)$ and $\delta\in (0,1)$ is obtained in Section 2. In Section 3, we pass to the limit as $\varepsilon\rightarrow 0$ by the compactness theorem of Lions-Feireisl. In Section 4, we prove the strong convergence of two densities by deriving quantitative regularity estimate of their sum and show the convergence of approximate sequence to an expected weak solution as $\delta\rightarrow 0$. In Appendix, we recall and prove some technical lemmas which are used in this paper.

\section{Faedo–Galerkin approximation}

 We are ready to solve the following approximate system for $\varepsilon\in(0,1)$ and $\delta\in(0,1)$:
\begin{equation}\label{twoapp}
\left\{
\begin{split}
&\partial_{t}\rho+\dive(\rho u)=\varepsilon \Delta \rho,\\
&\partial_{t}n+\dive(n u)=\varepsilon\Delta n,\\
&\partial_{t}\big{(}(\rho+n) u\big{)}+\dive\big{(}(\rho+n)  u\otimes u\big{)}+\nabla P_{\delta}(\rho,n)+\varepsilon\nabla u\cdot\nabla (\rho+n)\\
&\quad=\mu\Delta u+(\mu+\lambda)\nabla\dive u,\quad x\in\mathbb{T}^{d},\quad t>0,
\end{split}
\right.
\end{equation}
with the initial data
\begin{equation}\label{dapp}
\begin{split}
(\rho,n, u)(x,0)=(\rho_{0,\delta},n_{0,\delta},u_{0,\delta})(x),\quad x\in\mathbb{T}^{d},
\end{split}
\end{equation}
where $P_{\delta}(\rho,n)$ is the artificial pressure
 \begin{equation}\label{Pvar}
\begin{split}
P_{\delta}(\rho,n):=\mathbf{1}_{\rho+n\geq \delta}P(\rho,n)+\delta (\rho+n)^{p_{0}},\quad p_{0}>\gamma+\widetilde{\gamma}+\alpha+\widetilde{\alpha}+1,
\end{split}
\end{equation}
and $(\rho_{0,\delta},n_{0,\delta},u_{0,\delta})$ is the regularized initial data 
 \begin{equation}\label{dd}
 \begin{split}
 (\rho_{0,\delta},n_{0,\delta}, u_{0,\delta})(x):=(\rho_{0}\ast j_{\delta}+\delta, n_{0}\ast j_{\delta}+\delta, \frac{\frac{m_{0}}{\sqrt{\rho_{0}+n_{0}}}\ast j_{\delta}}{\sqrt{\rho_{0}\ast j_{\delta}+n_{0}\ast j_{\delta}+2\delta}} )(x).
 \end{split}
 \end{equation}
 Here $\mathbf{1}_{s\geq k}$ is a cut-off function satisfying
 \begin{equation}\nonumber
\mathbf{1}_{s\leq k}:=
\begin{cases}
1,
& \mbox{if\quad $0\leq s \leq k,$ } \\
\text{smooth},
& \mbox{if\quad$k\leq s\leq 2k,$}\\
0,
& \mbox{if\quad$s\geq 2k$,}
\end{cases}
\quad \quad \mathbf{1}_{s\geq k}:=1-\mathbf{1}_{s\leq k},
\end{equation}
and $j_{\delta}$ is a function such that
 \begin{equation}\label{jdelta}
 \left\{
 \begin{split}
 &j_{\delta}\in C^{\infty}(\mathbb{T}^{d}),\quad \|j_{\delta}\|_{L^1 }=1,\quad 0\leq j_{\delta} \leq \delta ^{-\frac{1}{2p_{0}}},\\
 & j_{\delta}\ast f\rightarrow f \quad\text{in}~ L^{p}(\mathbb{T}^{d})\quad\text{as}\quad \delta\rightarrow 0,\quad\forall f\in L^{p}(\mathbb{T}^{d}) ,\quad p\in [1,\infty).
 \end{split}
 \right.
 \end{equation}
 It is easy to verify under the assumptions of either Theorem \ref{theorem11} or Theorem \ref{theorem12} that $(\rho_{0,\delta},n_{0,\delta}, (\rho_{0,\delta}+n_{0,\delta})u_{0,\delta})$ converges to $(\rho_{0},u_{0}, m_{0})$ strongly in $L^{\gamma}(\mathbb{T}^{d})\times L^{\alpha}(\mathbb{T}^{d})\times L^{\frac{2\min\{\gamma,\alpha\}}{\min\{\gamma,\alpha\}+1}}(\mathbb{T}^{d})$ and satisfies
\begin{equation}\label{dappu}
\left\{
\begin{split}
&\|\rho_{0,\delta}\|_{L^{\gamma}}\leq \|\rho_{0}\|_{L^{\gamma}},\quad \|n_{0,\delta}\|_{L^{\alpha}}\leq \|n_{0}\|_{L^{\alpha}},\quad \|\sqrt{\rho_{0,\delta}+ n_{0,\delta}} u_{0,\delta}\|_{L^2}\leq \|\frac{m_{0}}{\sqrt{\rho_{0}+n_{0}}}\|_{L^2},\\
& 0<\delta\leq \rho_{0,\delta}(x),n_{0,\delta}(x)\leq C\delta^{-\frac{1}{2p_{0}}},\quad  \frac{1}{c_{*,\delta}}\leq \frac{n_{0,\delta}(x)}{\rho_{0,\delta}(x)}\leq c_{*,\delta},\quad~ x\in\mathbb{T}^{d},\quad c_{*,\delta}:=C\delta^{-\frac{1}{2p_{0}}-1}>1,\\
&\underline{c}\rho_{0,\delta}(x)\leq n_{0,\delta}(x)\leq \overline{c}\rho_{0,\delta}(x),\quad\text{if}~~ \underline{c}\rho_{0}(x)\leq n_{0}(x)\leq \overline{c} \rho_{0}(x),\quad x\in\mathbb{T}^{d}.
\end{split}
\right.
\end{equation}

Without loss of generalization, we assume that $P(\rho,n)$ satisfies
\begin{align}
P(\rho,n)\in C^2(\mathbb{R}_{+}\times\mathbb{R}_{+}),\quad\quad |\partial_{nn}^2 P(\rho,n)|\leq C_{\delta}(1+\rho^{p_{0}-3}+n^{p_{0}-3}),\label{pppaa}
\end{align}
where $C_{\delta}>0$ is a constant independent of $\varepsilon$.  Indeed, if this is not the case, one may approximate $P(\rho,n)$ by a regular sequence with respect to the parameter $\delta$. We can compute a constant $c_{\delta}\rightarrow \infty$ as $\delta\rightarrow 0$ such that both $\partial_{\rho}P_{\delta}(\rho,n)> 0$ and $\partial_{n}P_{\delta}(\rho,n)> 0$ hold for any $(\rho,n)\in\mathbb{R}_{+}\times\mathbb{R}_{+}$ satisfying $\rho+n\geq c_{\delta}>0$ and then set set
\begin{equation}\label{P1P2}
\left\{
\begin{split}
&P_{1,\delta}(\rho,n):=P_{\delta}(\rho,n)+C_{*}\mathbf{1}_{\rho+n\leq C_{*}c_{\delta}}\big{(} (\rho+n)^{\widetilde{\gamma}+\widetilde{\alpha}}+\rho+n\big{)},\\
&P_{2,\delta}(\rho,n):=C_{*}\mathbf{1}_{\rho+n\leq C_{*}c_{\delta}}\big{(} (\rho+n)^{\widetilde{\gamma}+\widetilde{\alpha}}+\rho+n\big{)},
\end{split}
\right.
\end{equation}
with $C_{*}>1$ a sufficiently large constant. Then the pressure $P_{\delta}(\rho,n)$ can be re-written as
\begin{equation}\label{PP1P2}
\begin{split}
P_{\delta}(\rho,n)=P_{1,\delta}(\rho,n)-P_{2,\delta}(\rho,n).
\end{split}
\end{equation}
Note that $P_{1,\delta}(\rho,n)\in C^2(\mathbb{R}_{+}\times\mathbb{R}_{+})$ is monotonically increasing with respect to both $\rho\geq0$ and $n\geq0$ and $P_{2,\delta}(\rho,n)\in C^{\infty}(\mathbb{R}_{+}\times\mathbb{R}_{+})$ satisfies $P_{2,\delta}(\rho,n)\geq0$ and $P_{2,\delta}(\rho,n)=0$ for $\rho+n\geq 2C_{*}c_{\delta}$, so we can employ the classical Lions-Feireisl approach \cite{lions2,feireisl2} to solve the approximate system (\ref{twoapp}).

Let $\psi_{\ell}\in C^{\infty}(\mathbb{T}^{d})$ $(\ell=1,2,…)$ be an orthonormal basis in $L^2$, and denote the finite dimensional spaces $X_{\ell}\subset L^2$ and its projector $\mathbf{P}_{\ell}: L^2\rightarrow X_{\ell}$ by
\begin{equation}\nonumber
\begin{split}
&X_{\ell}:=\text{span}\{\psi_{j}\}_{j=1}^{\ell},\quad \mathbf{P}_{\ell}f:=\sum_{i=1}^{\ell} \psi_{i}\int_{\mathbb{T}^{d}} \psi_{i} fdx,\quad\forall f\in X_{\ell}. 
\end{split}
\end{equation}
\begin{prop}\label{prop21}
Let $p_{0}>\gamma+\widetilde{\gamma}+\alpha+\widetilde{\alpha}+1$, $\delta\in (0,1)$, $\varepsilon\in (0,1)$ and integer $\ell\geq 1$. Then, under the assumptions of either Theorem \ref{theorem11} or Theorem \ref{theorem12}, there exists a unique global regular solution $(\rho_{\ell},n_{\ell}, u_{\ell})$ for $(\ref{twoapp})$ with the initial data $(\rho_{0,\delta},n_{0,\delta},\mathbf{P}_{\ell} u_{0,\delta})$. In addition, it holds for any $T>0$ that
\begin{equation}\label{rk1}
\left\{
\begin{split}
&\|(\rho_{\ell},n_{\ell})\|_{L^{\infty}(0,T;L^{p_{0}})}+\varepsilon^{\frac{1}{2}}\| (\nabla \rho_{\ell},\nabla n_{\ell})\|_{L^2(0,T;L^2)}\leq C_{\delta},\\
&\|\sqrt{\rho_{\ell}+n_{\ell}}u_{\ell}\|_{L^{\infty}(0,T;L^2)}+\|u_{\ell}\|_{L^2(0,T;H^1)}\leq C_{\delta},\\
&0\leq \frac{1}{c_{*,\delta}}\rho_{\ell}(x,t)\leq n_{\ell}(x,t)\leq c_{*,\delta}\rho_{\ell}(x,t),\quad \text{a.e.}~(x,t)\in \mathbb{T}^{d}\times(0,T),\\
&0\leq\underline{c}\rho_{\ell}(x,t)\leq n_{\ell}(x,t)\leq \overline{c}\rho_{\ell}(x,t), \quad\text{if}~~\underline{c}\rho_{0}(x)\leq n_{0}(x)\leq \overline{c}\rho_{0}(x),\quad\text{a.e.}~(x,t)\in \mathbb{T}^{d}\times(0,T),
\end{split}
\right.
\end{equation}
where  $C_{\delta}>0$ is a constant independent of $\varepsilon$ and $\ell$ and $c_{*,\delta}>1$ is the constant given by $(\ref{dappu})_{2}$.

 Furthermore, we have
\begin{equation}\label{energyk}
\begin{split}
&\int_{\mathbb{T}^{d}}[\frac{1}{2}(\rho_{\ell}+n_{\ell})|u_{\ell}|^2+G_{\delta}(\rho_{\ell},n_{\ell})]dx+\int_{0}^{t}\int_{\mathbb{T}^d}[\mu|\nabla u_{\ell}|^2+(\mu+\lambda)(\dive{u_{\ell}})^2]dxd\tau\\
&\quad \leq \int_{\mathbb{T}^{d}}[\frac{1}{2}(\rho_{0,\delta}+n_{0,\delta})|u_{0,\delta}|^2+G_{\delta}(\rho_{0,\delta},n_{0,\delta})]dx+C_{\delta}\varepsilon,
\end{split}
\end{equation}
where $G_{\delta}(\rho,n)$ is defined by
\begin{eqnarray}
\label{Gdelta} G_{\delta}(\rho,n):=(\rho+n)\int_{1}^{\rho+n}P(\frac{\rho}{\rho+n} s,\frac{n}{\rho+n}s)s^{-2}ds=\rho\int_{\frac{\rho}{\rho+n}}^{\rho}P(s, \frac{n}{\rho} s)s^{-2}ds ,
\end{eqnarray}
\end{prop}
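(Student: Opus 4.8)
The plan is to run the classical Faedo--Galerkin scheme of Lions \cite{lions2} and Feireisl \cite{feireisl2,feireisl1}, treating $u$ by Galerkin projection and the two densities by solving regularized continuity equations, and paying attention only to the coupling of $\rho$ and $n$ and to the non-monotonicity of $P_\delta$. I would start by fixing a velocity $u\in C([0,T];X_\ell)$; since $X_\ell$ is finite dimensional and spanned by smooth functions, $u(\cdot,t)$ is smooth in $x$ with all its $x$-derivatives controlled by $\|u\|_{C([0,T];L^2)}$. For such $u$ the equations $(\ref{twoapp})_1$ and $(\ref{twoapp})_2$ are linear uniformly parabolic equations for $\rho$ and $n$, so the classical $L^p$-parabolic theory gives unique solutions $\rho=\rho[u]$, $n=n[u]$ with data $(\rho_{0,\delta},n_{0,\delta})$, smooth for $t>0$, and the maximum principle yields
\[
\delta\, e^{-\int_0^t\|\dive u(\cdot,s)\|_{L^\infty}ds}\le \rho[u],\ n[u]\le C\delta^{-\frac{1}{2p_0}}\,e^{\int_0^t\|\dive u(\cdot,s)\|_{L^\infty}ds}.
\]
Crucially, $\rho$ and $n$ solve the \emph{same} linear equation, so for any $\kappa\ge 0$ the function $n-\kappa\rho$ solves it as well; applying the comparison principle to $n-\underline{c}\rho$, $\overline{c}\rho-n$, $n-\frac{1}{c_{*,\delta}}\rho$ and $c_{*,\delta}\rho-n$ and using $(\ref{dappu})$ I would obtain $(\ref{rk1})_3$ and $(\ref{rk1})_4$ at the level of the map $u\mapsto(\rho[u],n[u])$.

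Next I would substitute $(\rho[u],n[u])$ into $(\ref{twoapp})_3$, apply the projector $\mathbf{P}_\ell$, and integrate in time to recast the momentum equation as a fixed-point equation for $u\in C([0,T_0];X_\ell)$. Since $\rho[u]+n[u]$ is bounded below by a strictly positive constant depending only on $\delta$ and $\|u\|$, the linear operator $v\mapsto\mathbf{P}_\ell\big((\rho[u]+n[u])v\big)$ on $X_\ell$ is boundedly invertible and depends continuously on $u$, so the equation becomes an ODE in $X_\ell$ with locally Lipschitz right-hand side. The Banach fixed-point theorem then produces, on a short interval $[0,T_0]$, a unique solution $(\rho_\ell,n_\ell,u_\ell)$; once $u_\ell\in C([0,T_0];X_\ell)$ is known, a parabolic bootstrap shows $\rho_\ell,n_\ell$ are smooth and then $u_\ell$ is a classical ODE solution, so the triple is a regular solution of $(\ref{twoapp})$. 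Uniqueness on $[0,T_0]$ comes from the contraction, and it propagates by a standard Gronwall estimate.

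To globalize and to get the $\ell$-independent bounds I would derive the energy balance: multiply $(\ref{twoapp})_3$ by $u_\ell$ and integrate over $\mathbb{T}^d$; the convective term cancels against the part of $\partial_t\big((\rho_\ell+n_\ell)|u_\ell|^2\big)$ coming from $-\dive((\rho_\ell+n_\ell)u_\ell)$, while the artificial term $\varepsilon\nabla u_\ell\cdot\nabla(\rho_\ell+n_\ell)$ is designed precisely to cancel the $\varepsilon$-contribution of $\varepsilon\Delta(\rho_\ell+n_\ell)$ in $\partial_t(\rho_\ell+n_\ell)$, leaving a clean kinetic-energy identity. The pressure work $\int\nabla P_\delta(\rho_\ell,n_\ell)\cdot u_\ell$ is handled by testing $(\ref{twoapp})_{1,2}$ with $\partial_\rho G_\delta$ and $\partial_n G_\delta$ and using $(\ref{Gdelta})$, which converts it into $-\partial_t\!\int G_\delta(\rho_\ell,n_\ell)$ plus an $\varepsilon$-remainder that is $O(C_\delta\varepsilon)$ thanks to the Hessian bound $(\ref{pppaa})$ and the gradient estimates obtained by testing $(\ref{twoapp})_{1,2}$ with $\rho_\ell,n_\ell$ and with $(\rho_\ell+n_\ell)^{p_0-1}$. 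This yields $(\ref{energyk})$; combined with $P_\delta(\rho,n)\ge\delta(\rho+n)^{p_0}$ and $G_\delta(\rho,n)\ge\frac{1}{C_0}\big(\frac{\rho^\gamma}{\gamma-1}+\frac{n^\alpha}{\alpha-1}\big)$ from $(\ref{P})$ it gives $(\ref{rk1})_{1,2}$, and since $\int_0^T\|\dive u_\ell\|_{L^2}\le T^{1/2}\|u_\ell\|_{L^2(0,T;H^1)}\le C_\delta$, the exponents in the Step-1 bounds stay finite on every $[0,T]$, so the local solution extends to $[0,T]$ for all $T>0$.

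The main obstacle, beyond routine bookkeeping, is that the Lions--Feireisl existence argument relies on monotonicity of the pressure, which $P_\delta$ need not have; the fix is the decomposition $(\ref{PP1P2})$, $P_\delta=P_{1,\delta}-P_{2,\delta}$, with $P_{1,\delta}$ monotone in each variable and $P_{2,\delta}\in C^\infty$ compactly supported, treating $P_{2,\delta}$ as a bounded lower-order perturbation throughout. I would also have to check that the two-sided and comparison bounds of the density step survive the fixed point, so that $G_\delta(\rho_\ell,n_\ell)$ and the $\varepsilon$-remainders in the energy estimate remain controlled; this is automatic since those bounds are preserved along the iteration defining the fixed point. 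Everything else reduces to the standard computations of \cite{lions2,feireisl2}.
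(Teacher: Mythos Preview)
Your proposal is correct and follows the paper's proof almost step for step: Faedo--Galerkin fixed point for local existence, the comparison principle applied to $n-\kappa\rho$ for $(\ref{rk1})_{3,4}$, the energy identity obtained by testing $(\ref{twoapp})_{1,2,3}$ with $\partial_\rho G_\delta,\partial_n G_\delta,u_\ell$, and the auxiliary $L^2$-gradient estimate from testing the continuity equations with $\rho_\ell,n_\ell$ (the paper's (\ref{energyk1111})) to absorb the $C_\delta\varepsilon\!\int(|\nabla\rho_\ell|^2+|\nabla n_\ell|^2)$ remainder before closing by Gr\"onwall.

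The one misplaced remark is your final paragraph. At the Galerkin level, with $\ell,\varepsilon,\delta$ fixed, monotonicity of $P_\delta$ plays no role whatsoever: the fixed-point argument and the energy estimates need only $C^1$ regularity and the growth bounds $(\ref{P})$, $(\ref{pppaa})$. The decomposition $P_\delta=P_{1,\delta}-P_{2,\delta}$ in $(\ref{PP1P2})$ is not used in the proof of Proposition~\ref{prop21}; it enters only later, in the vanishing-viscosity step (Lemma~\ref{lemma34}), where convexity/monotonicity is needed for the weak-convergence arguments. So what you flag as ``the main obstacle'' for this proposition is in fact no obstacle at all here.
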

\begin{proof}
By the Faedo-Galerkin approximation and fixed point arguments \cite{feireisl1,feireisl3,novotny1}, there is a time $T_{\ell}\in (0,T]$ such that the system $(\ref{twoapp})$ with the initial data $(\rho_{0,\delta},n_{0,\delta},\mathbf{P}_{\ell} u_{0,\delta})$ can be solved uniquely on $[0,T_{\ell}]$. The details are omitted here.

To show $T_{\ell}=T$, we need to establish the a-priori estimates (\ref{rk1}) on $\mathbb{T}^{d}\times (0,T_{\ell})$ uniformly in $\ell$. Note that $u_{\ell}$ is regular since all the Sobolev norms in the finite dimensional space $X_{\ell}$ are equivalent. By the comparison principle for $(\ref{twoapp})_{1}$-$(\ref{twoapp})_{2}$, we have
\begin{equation}\label{rk11}
\begin{split}
&0\leq \frac{1}{c_{*,\delta}}\rho_{\ell}(x,t)\leq n_{\ell}(x,t)\leq c_{*,\delta}\rho_{\ell}(x,t),\quad  (x,t)\in\mathbb{T}^{d}\times (0,T_{\ell}).
\end{split}
\end{equation}
Multiplying $(\ref{two})_{1}$, $(\ref{two})_{2}$ and $(\ref{two})_{3}$ by $\partial_{\rho_{\ell}}G_{\delta}(\rho_{\ell},n_{\ell})$, $\partial_{n_{\ell}}G_{\delta}(\rho_{\ell},n_{\ell})$ and $u_{\ell}$, respectively, and using the fact
$$
\rho\partial_{\rho}G_{\delta}(\rho,n)+n\partial_{n}G_{\delta}(\rho,n)-G_{\delta}(\rho,n)=P_{\delta}(\rho,n),
$$
we show for $t\in[0,T_{\ell}]$ that
\begin{equation}\label{energyk1}
\begin{split}
&\frac{d}{dt}\int_{\mathbb{T}^d}\Big{(} \frac{1}{2}(\rho_{\ell}+n_{\ell}) |u_{\ell}|^2+G_{\delta}(\rho_{\ell},n_{\ell})\Big{)}dx\\
&\quad+\int_{\mathbb{T}^d}\Big{(}\mu|\nabla u_{\ell}|^2+(\mu+\lambda)(\dive{u_{\ell}})^2+\varepsilon\delta p_{0}  \rho_{\ell}^{p_{0}-2}|\nabla \rho_{\ell}|^2+\varepsilon\delta p_{0} n_{\ell}^{p_{0}-2}|\nabla n_{\ell}|^2\Big{)} dx\\
&=-\varepsilon\int_{\mathbb{T}^{d}}\Big{(}\partial^2_{\rho_{\ell}\rho_{\ell}}G_{\delta}(\rho_{\ell},n_{\ell})|\nabla \rho_{\ell}|^2+2\partial_{\rho_{\ell}n_{\ell}}^2 G_{\delta}(\rho_{\ell},n_{\ell})\nabla \rho_{\ell}\cdot \nabla n_{\ell}+\partial_{n_{\ell}n_{\ell}}^2G_{\delta}(\rho_{\ell},n_{\ell})|\nabla n_{\ell}|^2\Big{)}dx.
\end{split}
\end{equation}
 It follows from $(\ref{P1})$, $(\ref{Pvar})$, \eqref{pppaa} and $(\ref{rk11})$ that
\begin{equation}\label{energyk11}
\left\{
\begin{split}
&G_{\delta}(\rho_{\ell},n_{\ell})\geq \frac{\delta}{p_{0}-1}(\rho_{\ell}+n_{\ell})^{p_{0}}-C_{\delta},\\
&\big{(} |\partial^2_{\rho_{\ell}\rho_{\ell}}G_{\delta}|+|\partial_{\rho_{\ell}n_{\ell}}^2 G_{\delta}|+|\partial_{n_{\ell}n_{\ell}}^2G_{\delta}|\big{)}(\rho_{\ell},n_{\ell})\leq C_{\delta} (\rho_{\ell}+n_{\ell})^{p_{0}-2}+C_{\delta}.
\end{split}
\right.
\end{equation}
By $(\ref{energyk11})_{2}$ and the Young inequality, the right-hand side of (\ref{energyk1}) can be estimates as
\begin{equation}\label{rightk}
\begin{split}
&-\varepsilon\int_{\mathbb{T}^{d}}\Big{(}\partial^2_{\rho_{\ell}\rho_{\ell}}G_{\delta}(\rho_{\ell},n_{\ell})|\nabla \rho_{\ell}|^2+2\partial_{\rho_{\ell}n_{\ell}}^2 G_{\delta}(\rho_{\ell},n_{\ell})\nabla \rho_{\ell}\cdot \nabla n_{\ell}+\partial_{n_{\ell}n_{\ell}}^2G_{\delta}(\rho_{\ell},n_{\ell})|\nabla n_{\ell}|^2\Big{)}dx\\
&\quad\leq \frac{\varepsilon\delta p_{0} }{2}  \int_{\mathbb{T}^{d}} \big{(} \rho_{\ell}^{p_{0}-2}|\nabla \rho_{\ell}|^2+n_{\ell}^{p_{0}-2}|\nabla n_{\ell}|^2\big{)}dx +C_{\delta}\varepsilon \int_{\mathbb{T}^{d}}(|\nabla \rho_{\ell}|^2+|\nabla n_{\ell}|^2)dx+C_{\delta}\varepsilon.
\end{split}
\end{equation}
To control the second term on the right-hand side of $(\ref{rightk})$, we deduce from $(\ref{twoapp})_{1}$-$(\ref{twoapp})_{2}$ that
\begin{equation}\label{energyk1111}
\begin{split}
&C_{\delta}\frac{d}{dt}\int_{\mathbb{T}^{d}}(|\rho_{\ell}|^2+|n_{\ell}|^2)dx+2C_{\delta}\varepsilon\int_{\mathbb{R}^{d}}(|\nabla \rho_{\ell}|^2+|\nabla n_{\ell}|^2)dx\\
&\quad=-2C_{\delta}\int_{\mathbb{T}^{d}} (\rho_{\ell}+n_{\ell})\dive u_{\ell}dx\\
&\quad\leq \frac{1}{2}\int_{\mathbb{T}^{d}}\big{(}     \mu|\nabla u_{\ell}|^2+(\mu+\lambda)(\dive{u_{\ell}})^2    \big{)}dx+C_{\delta} \int_{\mathbb{T}^{d}}(|\rho_{\ell}|^2+|n_{\ell}|^2)dx.
\end{split}
\end{equation}
Adding $(\ref{energyk1})$ and (\ref{rightk})-(\ref{energyk1111}) together and making use of (\ref{mu}), $(\ref{dappu})$, $(\ref{energyk11})_{1}$ and the Gr${\rm{\ddot{o}}}$nwall inequality, we obtain
\begin{equation}\label{r111k}
\begin{split}
&\sup_{t\in[0,T]}\int_{\mathbb{T}^d}[ (\rho_{\ell}+n_{\ell}) |u_{\ell}|^2+\rho_{\ell}^{p_{0}}+n_{\ell}^{p_{0}}]dx\\
&\quad+\int_{0}^{T}\int_{\mathbb{T}^{d}}[|\nabla u_{\ell}|^2+\varepsilon(1+\rho_{\ell}^{p_{0}-2}+n_{\ell}^{p_{0}-2})(|\nabla \rho_{\ell}|^2+|\nabla n_{\ell}|^2)]dxdt\leq C_{\delta},
\end{split}
\end{equation}
which together with \cite[Lemma 3.2]{feireisl3} leads to
\begin{equation}\label{ukL2}
\begin{split}
\|u_{\ell}\|_{L^2(0,T;L^2)}\leq C_{\delta}.
\end{split}
\end{equation}
By the Sobolev inequality, it also holds that
\begin{equation}\label{r112k} 
\begin{split}
&\|\rho_{\ell}+n_{\ell}\|_{L^{p_{0}+1}(0,T;L^{p_{0}+1})}^{p_{0}-1}=\|(\rho_{\ell}+n_{\ell})^{p_{0}-1}\|_{L^{\frac{p_{0}+1}{p_{0}-1}}(0,T;L^{\frac{p_{0}+1}{p_{0}-1}})}\\
&\quad\quad\quad\quad\quad\quad\quad\quad\quad\quad\quad~\leq \|(\rho_{\ell}+n_{\ell})^{p_{0}-1}\|_{L^{1}(0,T;L^{1})}+C\|\nabla (\rho_{\ell}+n_{\ell})^{p_{0}-1}\|_{L^2(0,T;L^2)}.
\end{split}
\end{equation}
The combination of (\ref{mu}), (\ref{rk11}) and (\ref{r111k})-(\ref{r112k}) gives rise to $T_{\ell}=T$ and $(\ref{rk1})$. By $(\ref{energyk1})$, (\ref{rightk}) and $(\ref{r111k})$, one can show $(\ref{energyk})$. The proof of Proposition \ref{prop21} is complete.
\end{proof}

With the help of Proposition \ref{prop21} and standard arguments (cf. \cite{novotny2,vasseur1}), we have the following global existence of weak solutions to the IVP $(\ref{twoapp})$-$(\ref{dapp})$.
\begin{prop}\label{prop22}
Let $p_{0}>\gamma+\widetilde{\gamma}+\alpha+\widetilde{\alpha}+1$, $\delta\in (0,1)$ and $\varepsilon\in (0,1)$. Then, under the assumptions of either Theorem \ref{theorem11} or Theorem \ref{theorem12}, there exists a global weak solution $(\rho_{\varepsilon}, n_{\varepsilon}, (\rho_{\varepsilon}+n_{\varepsilon})u_{\varepsilon})$ to the IVP $(\ref{twoapp})$-$(\ref{dapp})$ satisfying for any $T>0$ that
\begin{equation}\label{rvar1}
\left\{
\begin{split}
&\|(\rho_{\varepsilon},n_{\varepsilon})\|_{L^{\infty}(0,T;L^{p_{0}} )}+\varepsilon^{\frac{1}{2}}\| (\nabla \rho_{\varepsilon}, \nabla n_{\varepsilon})\|_{L^2(0,T;L^2)}\leq C_{\delta},\\
&\|\sqrt{\rho_{\varepsilon}+n_{\varepsilon}}u_{\varepsilon}\|_{L^{\infty}(0,T;L^2)}+\|u_{\varepsilon}\|_{L^2(0,T;H^1)}\leq C_{\delta},\\
&\|(\rho_{\varepsilon},n_{\varepsilon})\|_{L^{p_{0}+1}(0,T;L^{p_{0}+1} )}\leq C_{\delta},\\
&0\leq \frac{1}{c_{*,\delta}}\rho_{\varepsilon}(x,t)\leq n_{\varepsilon}(x,t)\leq c_{*,\delta}\rho_{\varepsilon}(x,t),\quad\text{a.e.}~(x,t)\in\mathbb{T}^{d}\times(0,T),\\
&0\leq \underline{c}\rho_{\varepsilon}(x,t)\leq n_{\varepsilon}(x,t)\leq \overline{c}\rho_{\varepsilon}(x,t), \quad\text{if}~~\underline{c}\rho_{0}(x)\leq n_{0}(x)\leq \overline{c}\rho_{0}(x),\quad\text{a.e.}~(x,t)\in\mathbb{T}^{d}\times(0,T),
\end{split}
\right.
\end{equation}
where $C_{\delta}>0$ is a constant independent of $\varepsilon$ and $c_{*,\delta}>1$ is the constant given by $(\ref{dappu})_{2}$.

Furthermore, we have
\begin{equation}\label{energyvar}
\begin{split}
&\int_{\mathbb{T}^{d}}[\frac{1}{2}(\rho_{\varepsilon}+n_{\varepsilon})|u_{\varepsilon}|^2+G_{\delta}(\rho_{\varepsilon},n_{\varepsilon})]dx+\int_{0}^{t}\int_{\mathbb{T}^d}[\mu|\nabla u_{\varepsilon}|^2+(\mu+\lambda)(\dive{u_{\varepsilon}})^2]dxd\tau\\
&\quad \leq \int_{\mathbb{T}^{d}}[\frac{1}{2}(\rho_{0,\delta}+n_{0,\delta})|u_{0,\delta}|^2+G_{\delta}(\rho_{0,\delta},n_{0,\delta})]dx+C_{\delta}\varepsilon,\quad \text{a.e.} ~t\in (0,T),
\end{split}
\end{equation}
with $G_{\delta}(\rho,n)$ defined by $(\ref{Gdelta})$.
\end{prop}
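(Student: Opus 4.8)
The plan is to pass to the limit $\ell\rightarrow\infty$ in the Faedo--Galerkin solutions $(\rho_{\ell},n_{\ell},u_{\ell})$ furnished by Proposition \ref{prop21}, all of whose bounds (\ref{rk1}), (\ref{r112k}) are uniform in $\ell$. Extracting a subsequence (not relabeled), one has $\rho_{\ell}\rightharpoonup\rho_{\varepsilon}$, $n_{\ell}\rightharpoonup n_{\varepsilon}$ weakly-$\ast$ in $L^{\infty}(0,T;L^{p_{0}})$ and weakly in $L^{p_{0}+1}(0,T;L^{p_{0}+1})$, with $\sqrt{\varepsilon}\nabla\rho_{\ell}$, $\sqrt{\varepsilon}\nabla n_{\ell}$ weakly convergent in $L^{2}(0,T;L^{2})$, $u_{\ell}\rightharpoonup u_{\varepsilon}$ weakly in $L^{2}(0,T;H^{1})$, and $\sqrt{\rho_{\ell}+n_{\ell}}\,u_{\ell}\rightharpoonup \sqrt{\rho_{\varepsilon}+n_{\varepsilon}}\,u_{\varepsilon}$ weakly-$\ast$ in $L^{\infty}(0,T;L^{2})$.

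First I would upgrade the convergence of the two densities. For fixed $\varepsilon$ the equations $(\ref{twoapp})_{1}$--$(\ref{twoapp})_{2}$ are uniformly parabolic, and $\rho_{\ell},n_{\ell}\in L^{\infty}(0,T;L^{p_{0}})\cap L^{2}(0,T;H^{1})$ together with $\rho_{\ell}u_{\ell},n_{\ell}u_{\ell}\in L^{2}(0,T;L^{r})$ for some $r>1$ give $\partial_{t}\rho_{\ell},\partial_{t}n_{\ell}$ bounded in $L^{2}(0,T;W^{-1,r})$; the Aubin--Lions lemma then yields $\rho_{\ell}\rightarrow\rho_{\varepsilon}$, $n_{\ell}\rightarrow n_{\varepsilon}$ in $C([0,T];L^{p_{0}}_{weak})$ and strongly in $L^{2}(0,T;L^{2})$, hence a.e.\ in $\mathbb{T}^{d}\times(0,T)$ (and, by interpolation with the $L^{p_{0}+1}$ bound, in $L^{q}(0,T;L^{q})$ for every $q<p_{0}+1$). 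Passing to the limit in $(\ref{twoapp})_{1}$--$(\ref{twoapp})_{2}$ shows that $\rho_{\varepsilon},n_{\varepsilon}$ solve the parabolic problems with right-hand sides $\varepsilon\Delta\rho_{\varepsilon}-\dive(\rho_{\varepsilon}u_{\varepsilon})$ and $\varepsilon\Delta n_{\varepsilon}-\dive(n_{\varepsilon}u_{\varepsilon})$; subtracting these from the $\ell$-equations and performing an $L^{2}$ energy estimate (equivalently, invoking $L^{2}$-parabolic regularity) promotes the convergence to $\nabla\rho_{\ell}\rightarrow\nabla\rho_{\varepsilon}$, $\nabla n_{\ell}\rightarrow\nabla n_{\varepsilon}$ strongly in $L^{2}(0,T;L^{2})$. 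This strong gradient convergence is exactly what is needed to pass to the limit in the artificial term $\varepsilon\nabla u_{\ell}\cdot\nabla(\rho_{\ell}+n_{\ell})$, by pairing the strongly convergent $\nabla(\rho_{\ell}+n_{\ell})$ against the weakly convergent $\nabla u_{\ell}$.

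Next I would treat the momentum, following the Lions--Feireisl scheme as in \cite{novotny2,vasseur1}. Writing $(\ref{twoapp})_{3}$ as $\partial_{t}\big((\rho_{\ell}+n_{\ell})u_{\ell}\big)$ equal to a sum of spatial derivatives of terms that are uniformly bounded (the lowest-order contributions only in $L^{1}(0,T;L^{1})$, the others in better spaces), one obtains a uniform bound on $\partial_{t}\big((\rho_{\ell}+n_{\ell})u_{\ell}\big)$ in $L^{1}(0,T;H^{-k})$ for a suitable $k$; since $(\rho_{\ell}+n_{\ell})u_{\ell}$ is bounded in $L^{\infty}(0,T;L^{\frac{2p_{0}}{p_{0}+1}})$ with $L^{\frac{2p_{0}}{p_{0}+1}}$ compactly embedded into $H^{-1}$ (for $p_{0}$ large), Aubin--Lions gives $(\rho_{\ell}+n_{\ell})u_{\ell}\rightarrow(\rho_{\varepsilon}+n_{\varepsilon})u_{\varepsilon}$ in $C([0,T];L^{\frac{2p_{0}}{p_{0}+1}}_{weak})$ and strongly in $L^{2}(0,T;H^{-1})$. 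Pairing this strong limit against the weak $L^{2}(0,T;H^{1})$-limit $u_{\ell}$ identifies $(\rho_{\ell}+n_{\ell})u_{\ell}\otimes u_{\ell}\rightharpoonup(\rho_{\varepsilon}+n_{\varepsilon})u_{\varepsilon}\otimes u_{\varepsilon}$ and, together with the strong $L^{q}$-convergence of $\sqrt{\rho_{\ell}+n_{\ell}}$, also $(\rho_{\ell}+n_{\ell})|u_{\ell}|^{2}\rightharpoonup(\rho_{\varepsilon}+n_{\varepsilon})|u_{\varepsilon}|^{2}$. The pressure term passes since $P_{\delta}(\rho_{\ell},n_{\ell})\rightarrow P_{\delta}(\rho_{\varepsilon},n_{\varepsilon})$ a.e.\ and, by the growth in (\ref{Pvar}) with $p_{0}>\gamma+\widetilde{\gamma}+\alpha+\widetilde{\alpha}+1$ together with the uniform $L^{p_{0}+1}$ bound, is equi-integrable, so $P_{\delta}(\rho_{\ell},n_{\ell})\rightarrow P_{\delta}(\rho_{\varepsilon},n_{\varepsilon})$ in $L^{1}(0,T;L^{1})$ by Vitali's theorem; the viscous terms, being linear in $u_{\ell}$, pass by weak convergence. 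Testing against smooth functions then yields that $(\rho_{\varepsilon},n_{\varepsilon},(\rho_{\varepsilon}+n_{\varepsilon})u_{\varepsilon})$ solves $(\ref{twoapp})$--$(\ref{dapp})$ in the weak sense.

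It remains to collect the uniform estimates (\ref{rvar1}): the $L^{\infty}(L^{p_{0}})$, $L^{2}(H^{1})$, $L^{p_{0}+1}$ and $L^{\infty}(L^{2})$ bounds follow from weak lower semicontinuity, and the pointwise comparison relations $(\ref{rvar1})_{4}$--$(\ref{rvar1})_{5}$ are inherited from $(\ref{rk1})$ through the a.e.\ convergence of $\rho_{\ell},n_{\ell}$; the energy inequality (\ref{energyvar}) follows from (\ref{energyk}) by Fatou's lemma applied to $G_{\delta}(\rho_{\ell},n_{\ell})\rightarrow G_{\delta}(\rho_{\varepsilon},n_{\varepsilon})$ a.e., lower semicontinuity of the viscous dissipation, the identification of $(\rho_{\ell}+n_{\ell})|u_{\ell}|^{2}$, and $\mathbf{P}_{\ell}u_{0,\delta}\rightarrow u_{0,\delta}$ in $L^{2}$. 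I expect the main technical obstacle to be the strong convergence $\nabla\rho_{\ell},\nabla n_{\ell}\rightarrow\nabla\rho_{\varepsilon},\nabla n_{\varepsilon}$ in $L^{2}$ required to handle the artificial term $\varepsilon\nabla u\cdot\nabla(\rho+n)$, together with the usual care needed to identify the convective limit; note that no effective-viscous-flux identity is needed here because the densities already converge strongly thanks to the $\varepsilon$-regularization, so the argument is by now routine and the detailed verification may be omitted.
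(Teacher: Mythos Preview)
Your proposal is correct and is precisely the ``standard argument'' the paper invokes by reference to \cite{novotny2,vasseur1}; the paper itself gives no details for Proposition~\ref{prop22} beyond citing Proposition~\ref{prop21} and those references. Your outline---Aubin--Lions compactness for the densities from the parabolic $\varepsilon\Delta$-regularization, upgrading to strong $L^{2}$-convergence of $\nabla\rho_{\ell},\nabla n_{\ell}$ to pass the artificial term $\varepsilon\nabla u_{\ell}\cdot\nabla(\rho_{\ell}+n_{\ell})$, Aubin--Lions on the momentum to identify the convective limit, Vitali for the pressure, and lower semicontinuity for the bounds and the energy inequality---is exactly the Lions--Feireisl scheme at this level, and your observation that no effective-viscous-flux identity is needed here (because the $\varepsilon$-diffusion already yields strong convergence of the densities) is the correct structural point.
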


\section{Vanishing artificial viscosity}

In this section, we aim to study the following IVP: 
\begin{equation}\label{twodelta}
\left\{
\begin{split}
&\partial_{t}\rho+\dive(\rho u)=0,\\
&\partial_{t}n+\dive(n u)=0,\\
&\partial_{t}\big{(}(\rho+n) u\big{)}+\dive\big{(}(\rho+n)  u\otimes u\big{)}+\nabla P_{\delta}(\rho,n)=\mu\Delta u+(\mu+\lambda)\nabla\dive u,\quad x\in\mathbb{T}^{d},\quad t>0,
\end{split}
\right.
\end{equation}
with the initial data
\begin{equation}\label{ddelta}
\begin{split}
(\rho,n, u)(x,0)=(\rho_{0,\delta},n_{0,\delta},u_{0,\delta})(x),\quad x\in\mathbb{T}^{d},
\end{split}
\end{equation}
where $P_{\delta}(\rho,n)$ and $(\rho_{0,\delta},n_{0,\delta},u_{0,\delta})$ are given by $(\ref{Pvar})$ and $(\ref{dd})$, respectively.

First, by (\ref{rvar1}) and standard compactness arguments, we have the following convergence of weak solutions to the IVP $(\ref{twodelta})$-$(\ref{ddelta})$.

\begin{lemma}\label{lemma31}
Let $T>0$, $p_{0}>\gamma+\widetilde{\gamma}+\alpha+\widetilde{\alpha}+1$ and $(\rho_{\varepsilon},n_{\varepsilon},(\rho_{\varepsilon}+n_{\varepsilon})u_{\varepsilon})$ be the weak solution to the IVP $(\ref{twodelta})$-$(\ref{ddelta})$ for $\varepsilon\in(0,1)$ given by Proposition \ref{prop22}. Then, under the assumptions of either Theorem \ref{theorem11} or Theorem \ref{theorem12}, there is a limit $(\rho,n,(\rho+n)u)$ such that as $\varepsilon\rightarrow 0$, it holds up to a subsequence (still denoted by $(\rho_{\varepsilon},n_{\varepsilon},(\rho_{\varepsilon}+n_{\varepsilon})u_{\varepsilon}))$ that
\begin{equation}\label{limitvar1}
\left\{
\begin{split}
&(\rho_{\varepsilon}, n_{\varepsilon})\rightharpoonup (\rho, n)\quad\quad\quad\quad\text{in}~L^{p_{0}+1}(0,T;L^{p_{0}+1}(\mathbb{T}^{d}) )\times L^{p_{0}+1}(0,T;L^{p_{0}+1}(\mathbb{T}^{d}) ),\\
&\varepsilon (\nabla\rho_{\varepsilon}, \nabla n_{\varepsilon})\rightarrow 0\quad\quad\quad~~\text{in}~L^2(0,T;L^2(\mathbb{T}^{d})),\\
&u_{\varepsilon}\rightharpoonup u\quad\quad\quad\quad\quad\quad\quad\quad\text{in}~L^2(0,T;H^1(\mathbb{T}^{d}) ),\\
&(\rho_{\varepsilon}, n_{\varepsilon})\rightarrow (\rho, n)\quad\quad\quad\quad\text{in}~C([0,T];L^{p_{0}}_{weak}(\mathbb{T}^{d}) )\times C([0,T];L^{p_{0}}_{weak}(\mathbb{T}^{d}) ),\\
&(\rho_{\varepsilon}, n_{\varepsilon})\rightarrow (\rho, n)\quad\quad\quad\quad\text{in}~C([0,T];H^{-1}(\mathbb{T}^{d}))\times C([0,T];H^{-1}(\mathbb{T}^{d}) ),\\
&(\rho_{\varepsilon}+n_{\varepsilon}) u_{\varepsilon}\rightarrow (\rho+n) u\quad\text{in}~C([0,T];L^{\frac{2p_{0}}{p_{0}+1}}_{weak}(\mathbb{T}^{d}) )\cap C([0,T];H^{-1}(\mathbb{T}^{d}) ).
\end{split}
\right.
\end{equation}
\end{lemma}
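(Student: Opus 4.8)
The plan is to combine the $\varepsilon$-uniform estimates of Proposition \ref{prop22} with weak compactness and an Aubin--Lions--Simon type argument. Since Lemma \ref{lemma31} only asks for the extraction of convergent subsequences, no renormalization of the continuity equations or identification of the limit pressure is required at this stage, which keeps the proof essentially routine.

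First I would use $(\ref{rvar1})_{1}$--$(\ref{rvar1})_{3}$ and the Banach--Alaoglu theorem to extract a subsequence (not relabelled) along which $(\rho_{\varepsilon},n_{\varepsilon})\rightharpoonup(\rho,n)$ weakly in $L^{p_{0}+1}(0,T;L^{p_{0}+1}(\mathbb{T}^{d}))^{2}$ and weakly-$*$ in $L^{\infty}(0,T;L^{p_{0}}(\mathbb{T}^{d}))^{2}$, and $u_{\varepsilon}\rightharpoonup u$ weakly in $L^{2}(0,T;H^{1}(\mathbb{T}^{d}))$; this defines $(\rho,n,u)$ and gives the first and third lines of $(\ref{limitvar1})$. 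The second line is immediate, since $\|\varepsilon\nabla\rho_{\varepsilon}\|_{L^{2}(0,T;L^{2})}=\varepsilon^{1/2}\bigl(\varepsilon^{1/2}\|\nabla\rho_{\varepsilon}\|_{L^{2}(0,T;L^{2})}\bigr)\le C_{\delta}\varepsilon^{1/2}\to 0$, and similarly for $n_{\varepsilon}$. For the time compactness I would use the regularized mass equations $\partial_{t}\rho_{\varepsilon}=-\dive(\rho_{\varepsilon}u_{\varepsilon})+\varepsilon\Delta\rho_{\varepsilon}$ (and analogously for $n_{\varepsilon}$): since $\rho_{\varepsilon}$ is bounded in $L^{\infty}(0,T;L^{p_{0}})$ and $u_{\varepsilon}$ in $L^{2}(0,T;L^{2^{*}})$ by Sobolev embedding, the flux $\rho_{\varepsilon}u_{\varepsilon}$ is bounded in $L^{2}(0,T;L^{r}(\mathbb{T}^{d}))$ for some $r>1$, and $\varepsilon\nabla\rho_{\varepsilon}\to 0$ in $L^{2}(0,T;L^{2})$, so $\partial_{t}\rho_{\varepsilon}$ is bounded in $L^{2}(0,T;W^{-1,r}(\mathbb{T}^{d}))$. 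Combining this with the $L^{\infty}(0,T;L^{p_{0}})$ bound and the Arzel\`a--Ascoli argument for weakly continuous functions (cf. \cite{feireisl1,novotny2,vasseur1}) yields $\rho_{\varepsilon}\to\rho$ in $C([0,T];L^{p_{0}}_{weak}(\mathbb{T}^{d}))$, and likewise for $n_{\varepsilon}$, i.e. the fourth line of $(\ref{limitvar1})$; since $p_{0}$ is large the embedding $L^{p_{0}}(\mathbb{T}^{d})\hookrightarrow H^{-1}(\mathbb{T}^{d})$ is compact, so this convergence together with the equicontinuity of $\rho_{\varepsilon}$ in $H^{-1}$ upgrades to $\rho_{\varepsilon}\to\rho$ strongly in $C([0,T];H^{-1}(\mathbb{T}^{d}))$ (the fifth line), and the same for $n_{\varepsilon}$.

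For the momentum, H\"older's inequality with $(\ref{rvar1})_{1,2}$ shows that $(\rho_{\varepsilon}+n_{\varepsilon})u_{\varepsilon}=\sqrt{\rho_{\varepsilon}+n_{\varepsilon}}\cdot\sqrt{\rho_{\varepsilon}+n_{\varepsilon}}\,u_{\varepsilon}$ is bounded in $L^{\infty}(0,T;L^{\frac{2p_{0}}{p_{0}+1}}(\mathbb{T}^{d}))$. From the momentum equation, written as $\partial_{t}\bigl((\rho_{\varepsilon}+n_{\varepsilon})u_{\varepsilon}\bigr)=-\dive\bigl((\rho_{\varepsilon}+n_{\varepsilon})u_{\varepsilon}\otimes u_{\varepsilon}\bigr)-\nabla P_{\delta}(\rho_{\varepsilon},n_{\varepsilon})+\mu\Delta u_{\varepsilon}+(\mu+\lambda)\nabla\dive u_{\varepsilon}$ modulo the $\varepsilon$-terms (which are $O(\varepsilon^{1/2})$ in $L^{1}(0,T;L^{1})$), I would bound $\partial_{t}\bigl((\rho_{\varepsilon}+n_{\varepsilon})u_{\varepsilon}\bigr)$ in a negative Sobolev space $L^{q}(0,T;W^{-k,s}(\mathbb{T}^{d}))$ using $(\ref{rvar1})$: the convective term lies in $L^{q}(0,T;L^{s})$ for suitable $q,s>1$ via $(\rho_{\varepsilon}+n_{\varepsilon})u_{\varepsilon}\otimes u_{\varepsilon}=(\sqrt{\rho_{\varepsilon}+n_{\varepsilon}}\,u_{\varepsilon})\otimes(\sqrt{\rho_{\varepsilon}+n_{\varepsilon}}\,u_{\varepsilon})$ interpolated against $u_{\varepsilon}\in L^{2}(0,T;L^{2^{*}})$, the pressure term is bounded in $L^{\frac{p_{0}+1}{p_{0}}}(0,T;L^{\frac{p_{0}+1}{p_{0}}})$ by $(\ref{rvar1})_{3}$ and $(\ref{Pvar})$, and the viscous terms are bounded in $L^{2}(0,T;H^{-1})$. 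The same Arzel\`a--Ascoli argument then produces a limit $\mathbf{m}$ with $(\rho_{\varepsilon}+n_{\varepsilon})u_{\varepsilon}\to\mathbf{m}$ in $C([0,T];L^{\frac{2p_{0}}{p_{0}+1}}_{weak}(\mathbb{T}^{d}))$, and since $\tfrac{2p_{0}}{p_{0}+1}$ is close to $2$ for $p_{0}$ large, so that $L^{\frac{2p_{0}}{p_{0}+1}}(\mathbb{T}^{d})\hookrightarrow H^{-1}(\mathbb{T}^{d})$ is compact, also in $C([0,T];H^{-1}(\mathbb{T}^{d}))$. Finally I would identify $\mathbf{m}=(\rho+n)u$: as $\rho_{\varepsilon}+n_{\varepsilon}\to\rho+n$ strongly in $C([0,T];H^{-1})\subset L^{2}(0,T;H^{-1})$ while $u_{\varepsilon}\rightharpoonup u$ weakly in $L^{2}(0,T;H^{1})$, for $\varphi\in C^{\infty}(\mathbb{T}^{d}\times[0,T])$ one has $\varphi u_{\varepsilon}\rightharpoonup\varphi u$ in $L^{2}(0,T;H^{1})$ and hence $\int_{0}^{T}\!\!\int_{\mathbb{T}^{d}}(\rho_{\varepsilon}+n_{\varepsilon})u_{\varepsilon}\varphi\,dx\,dt\to\int_{0}^{T}\!\!\int_{\mathbb{T}^{d}}(\rho+n)u\varphi\,dx\,dt$, so $\mathbf{m}=(\rho+n)u$, which is the last line of $(\ref{limitvar1})$.

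The only point that is not purely mechanical is the passage to the limit in the product $(\rho_{\varepsilon}+n_{\varepsilon})u_{\varepsilon}$: one needs the strong $H^{-1}$-compactness of the sum $\rho_{\varepsilon}+n_{\varepsilon}$ to be paired against the weak $H^{1}$-convergence of $u_{\varepsilon}$, which is precisely why the exact Lebesgue exponent $\tfrac{2p_{0}}{p_{0}+1}$ in the momentum bound must be tracked and why it matters that $p_{0}$ is taken large; everything else is bookkeeping of the integrability exponents coming from $(\ref{rvar1})$.
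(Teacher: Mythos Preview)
Your proposal is correct and follows exactly the route the paper has in mind: the paper itself gives no detailed proof of Lemma~\ref{lemma31}, stating only that it follows ``by $(\ref{rvar1})$ and standard compactness arguments'' (with implicit reference to \cite{feireisl1,novotny2,vasseur1}), and what you have written is precisely a careful unpacking of those standard arguments. The one place worth a second look is the identification $\mathbf{m}=(\rho+n)u$, but your pairing of the strong $C([0,T];H^{-1})$ convergence of $\rho_{\varepsilon}+n_{\varepsilon}$ against the weak $L^{2}(0,T;H^{1})$ convergence of $u_{\varepsilon}$ is exactly the mechanism used in the references and is valid as stated.
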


Before showing the strong convergence of two densities, we need to have the following lemma.
\begin{lemma}\label{lemma33}
Let $T>0$, $(\rho_{\varepsilon},n_{\varepsilon},(\rho_{\varepsilon}+n_{\varepsilon})u_{\varepsilon})$ be the weak solution to the IVP $(\ref{twoapp})$-$(\ref{dapp})$ for $\varepsilon\in(0,1)$ given by Proposition \ref{prop22}, and $(\rho,n,(\rho+n)u)$ be the limit obtained by Lemma \ref{lemma31}. Then, under the assumptions of either Theorem \ref{theorem11} or Theorem \ref{theorem12}, it holds
\begin{equation}\label{strongABvar}
\left\{
\begin{split}
&\lim_{\varepsilon\rightarrow0}\int_{0}^{T}\int_{\mathbb{T}^{d}}(\rho_{\varepsilon}+n_{\varepsilon})|A_{\rho_{\varepsilon},n_{\varepsilon}}-A_{\rho,n}|^{p}dxdt=0,\quad\quad p\in[1,\infty),\\
&\lim_{\varepsilon\rightarrow0}\int_{0}^{T}\int_{\mathbb{T}^{d}}(\rho_{\varepsilon}+n_{\varepsilon})|B_{\rho_{\varepsilon},n_{\varepsilon}}-B_{\rho,n}|^{p}dxdt=0,\quad\quad p\in[1,\infty),
\end{split}
\right.
\end{equation}
where $(A_{\rho,n},B_{\rho,n})$ is defined by $(\ref{AB})$.

Furthermore, as $\varepsilon\rightarrow0$, we have
\begin{equation}\label{rhonifvar}
\begin{split}
&\quad\quad~(\rho_{\varepsilon},n_{\varepsilon})\rightarrow (\rho,n)\quad\text{in}~L^1(0,T;L^1 (\mathbb{T}^{d}))\times L^1(0,T;L^1(\mathbb{T}^{d}) )\\
&\Longleftrightarrow \rho_{\varepsilon}+n_{\varepsilon}\rightarrow \rho+n\quad\text{in}~L^1(0,T;L^1(\mathbb{T}^{d}) ).
\end{split}
\end{equation}
\end{lemma}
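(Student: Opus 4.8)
\textbf{Proof proposal for Lemma \ref{lemma33}.}

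The plan is to exploit the special algebraic structure of the two-phase system: the transport structure of the two mass equations guarantees that the \emph{ratio} of the two densities is essentially preserved, which is what makes $A_{\rho,n}$ and $B_{\rho,n}$ behave well. First I would note that, thanks to the comparison-principle bounds $(\ref{rvar1})_4$, the quantities $A_{\rho_\varepsilon,n_\varepsilon}=\rho_\varepsilon/(\rho_\varepsilon+n_\varepsilon)$ and $B_{\rho_\varepsilon,n_\varepsilon}=n_\varepsilon/(\rho_\varepsilon+n_\varepsilon)$ are uniformly bounded in $L^\infty$, taking values in a fixed compact subinterval of $(0,1)$ when $\rho_\varepsilon+n_\varepsilon>0$ (this is where the two-sided domination $\underline c\rho\le n\le\overline c\rho$, propagated to the approximations, is used; in the setting of Theorem \ref{theorem12} one works instead with the weighted quantities and the sets where $\rho_\varepsilon+n_\varepsilon$ is small contribute negligibly). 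The key observation is that $w_\varepsilon:=\rho_\varepsilon-c_\star n_\varepsilon$ (for any constant $c_\star$), being a difference of two solutions of the \emph{same} linear transport–diffusion equation $\partial_t f+\dive(fu_\varepsilon)=\varepsilon\Delta f$, satisfies that same equation; combined with the fact that $\rho_\varepsilon$ and $n_\varepsilon$ already converge weakly (Lemma \ref{lemma31}), one can upgrade to strong convergence of $A_{\rho_\varepsilon,n_\varepsilon}$, $B_{\rho_\varepsilon,n_\varepsilon}$ by a renormalization/DiPerna–Lions argument applied to the bounded quantity $A_{\rho_\varepsilon,n_\varepsilon}$ itself, which solves a pure transport equation $\partial_t A_{\rho_\varepsilon,n_\varepsilon}+u_\varepsilon\cdot\nabla A_{\rho_\varepsilon,n_\varepsilon}=\varepsilon(\cdots)$ with a right-hand side that vanishes as $\varepsilon\to0$ by $(\ref{rvar1})_1$–$(\ref{rvar1})_2$.

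Concretely, for the first two displays in $(\ref{strongABvar})$ I would argue as follows. Write $(\rho_\varepsilon+n_\varepsilon)|A_{\rho_\varepsilon,n_\varepsilon}-A_{\rho,n}|^p$; since $A$ is bounded it suffices (after interpolating with the $L^{p_0}$ bound on $\rho_\varepsilon+n_\varepsilon$) to prove $A_{\rho_\varepsilon,n_\varepsilon}\to A_{\rho,n}$ in, say, $L^q(0,T;L^q(\mathbb T^d))$ for every finite $q$, together with identifying the limit with $A_{\rho,n}$. The convergence is obtained from compactness in time of $A_{\rho_\varepsilon,n_\varepsilon}$: from the transport equation for $A_{\rho_\varepsilon,n_\varepsilon}$ one gets a uniform bound on $\partial_t A_{\rho_\varepsilon,n_\varepsilon}$ in, e.g., $L^2(0,T;H^{-1})+$ (an $\varepsilon$-small term), while $A_{\rho_\varepsilon,n_\varepsilon}$ is uniformly bounded in $L^\infty$; an Aubin–Lions–Simon argument then yields strong convergence of $A_{\rho_\varepsilon,n_\varepsilon}$ in $C([0,T];H^{-1})$, and bounded sequences converging in $H^{-1}$ and bounded in $L^\infty$ converge strongly in every $L^q$. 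To identify the limit as $A_{\rho,n}$ I would use the strong $H^{-1}$ convergence of $\rho_\varepsilon,n_\varepsilon$ from Lemma \ref{lemma31} together with the $L^\infty$ bound on the ratios: one shows $\rho_\varepsilon=A_{\rho_\varepsilon,n_\varepsilon}(\rho_\varepsilon+n_\varepsilon)$ passes to the limit and equals $(\lim A_{\rho_\varepsilon,n_\varepsilon})(\rho+n)$, forcing $\lim A_{\rho_\varepsilon,n_\varepsilon}=\rho/(\rho+n)=A_{\rho,n}$ a.e.\ on $\{\rho+n>0\}$, and on $\{\rho+n=0\}$ the weight $\rho_\varepsilon+n_\varepsilon$ kills the contribution. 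The same argument handles $B_{\rho_\varepsilon,n_\varepsilon}$, or one just uses $B=1-A$ where $\rho+n>0$.

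For the equivalence $(\ref{rhonifvar})$, the implication $\Rightarrow$ is trivial. For $\Leftarrow$, suppose $\rho_\varepsilon+n_\varepsilon\to\rho+n$ in $L^1_{t,x}$. Write $\rho_\varepsilon=A_{\rho_\varepsilon,n_\varepsilon}(\rho_\varepsilon+n_\varepsilon)$ and $\rho=A_{\rho,n}(\rho+n)$, so that
\begin{equation*}
\rho_\varepsilon-\rho=A_{\rho_\varepsilon,n_\varepsilon}\big((\rho_\varepsilon+n_\varepsilon)-(\rho+n)\big)+\big(A_{\rho_\varepsilon,n_\varepsilon}-A_{\rho,n}\big)(\rho+n).
\end{equation*}
The first term is bounded by $\|(\rho_\varepsilon+n_\varepsilon)-(\rho+n)\|_{L^1_{t,x}}\to0$ since $|A_{\rho_\varepsilon,n_\varepsilon}|\le1$; the second term tends to $0$ by the already-established strong convergence $(\ref{strongABvar})$ (one splits $\rho+n$ into a bounded part and a tail of small $L^1$ norm, or directly invokes $(\ref{strongABvar})$ with $p=1$ after noting $(\rho+n)|A_{\rho_\varepsilon,n_\varepsilon}-A_{\rho,n}|\le \liminf(\rho_\varepsilon+n_\varepsilon)|\cdots|$ via weak lower semicontinuity plus $(\rho_\varepsilon+n_\varepsilon)\to\rho+n$). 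Hence $\rho_\varepsilon\to\rho$ in $L^1_{t,x}$, and symmetrically $n_\varepsilon\to n$.

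\textbf{Main obstacle.} The delicate point is establishing strong compactness of $A_{\rho_\varepsilon,n_\varepsilon}$ rigorously: $A_{\rho_\varepsilon,n_\varepsilon}$ is only a ratio of weakly-converging quantities, so one cannot naively pass to the limit in the quotient. The crux is to derive an honest equation for $A_{\rho_\varepsilon,n_\varepsilon}$ (a transport equation with $\varepsilon$-small and controllable forcing coming from the artificial viscosity terms $\varepsilon\Delta\rho_\varepsilon$, $\varepsilon\Delta n_\varepsilon$), get a uniform time-derivative bound in a negative Sobolev space, and then combine Aubin–Lions with the uniform $L^\infty$ bound. Care is needed near the vacuum set $\{\rho_\varepsilon+n_\varepsilon=0\}$ (or where it is small), but here the two-sided bound $0\le\underline c\rho_\varepsilon\le n_\varepsilon\le\overline c\rho_\varepsilon$ forces $\rho_\varepsilon$ and $n_\varepsilon$ to vanish simultaneously and comparably, so $A_{\rho_\varepsilon,n_\varepsilon}$ stays bounded away from $0$ and $1$ on the relevant set and the definition $(\ref{AB})$ at the vacuum is consistent with the limit.
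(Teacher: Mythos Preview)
Your compactness step for $A_{\rho_\varepsilon,n_\varepsilon}$ has a genuine gap. The assertion that ``bounded sequences converging in $H^{-1}$ and bounded in $L^\infty$ converge strongly in every $L^q$'' is false: take $f_n(x)=\sin(nx)$ on $\mathbb{T}^1$, which is bounded in $L^\infty$, satisfies $\|f_n\|_{H^{-1}}\sim n^{-1}\to 0$, yet $\|f_n\|_{L^2}$ does not tend to $0$. Aubin--Lions requires a \emph{compact} spatial embedding, but $L^\infty(\mathbb{T}^d)$ does not embed compactly into any $L^q(\mathbb{T}^d)$. The transport (or transport--diffusion) equation you derive for $A_{\rho_\varepsilon,n_\varepsilon}$ provides no uniform-in-$\varepsilon$ spatial regularity once $\varepsilon\to 0$, so there is nothing to feed into Aubin--Lions on the space side. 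In short, you are trying to get strong convergence of a bounded ratio of merely weakly convergent quantities, and oscillations are not ruled out by your argument.

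The paper avoids this entirely by never seeking strong convergence of $A_{\rho_\varepsilon,n_\varepsilon}$ alone; it proves the \emph{weighted} statement $(\ref{strongABvar})$ directly for $p=2$ via an $L^2$ expansion. Using $(\ref{AB1})$ one writes
\[
(\rho_\varepsilon+n_\varepsilon)\,|A_{\rho_\varepsilon,n_\varepsilon}-A_{\rho,n}|^2
=\frac{\rho_\varepsilon^2}{\rho_\varepsilon+n_\varepsilon}-2\rho_\varepsilon A_{\rho,n}+(\rho_\varepsilon+n_\varepsilon)A_{\rho,n}^2.
\]
The two cross terms pass to the limit by the weak convergences in Lemma~\ref{lemma31}, and their sum becomes $-\int\!\!\int \rho^2/(\rho+n)$. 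The only nontrivial point is the first term: applying the renormalization argument to $F_\sigma(\rho,n)=\rho^2/(\rho+n+\sigma)$, whose Hessian is positive semidefinite (so the $\varepsilon\Delta$ terms have the good sign), yields $\int_0^T\!\!\int \rho_\varepsilon^2/(\rho_\varepsilon+n_\varepsilon)\le T\int \rho_{0,\delta}^2/(\rho_{0,\delta}+n_{0,\delta})$, while the limit $(\rho,n)$ satisfies the same identity with equality. Combining, the $\limsup$ of the weighted square is $\le 0$, hence $=0$; H\"older and $|A|\le 1$ then give all $p\in[1,\infty)$. Your treatment of the equivalence $(\ref{rhonifvar})$ is fine and matches the paper's, once $(\ref{strongABvar})$ is in hand.
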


\begin{proof}
The proof of (\ref{strongABvar}) can be found in \cite{vasseur1}. Denote $F_{\sigma}(\rho,n):=\frac{\rho^2}{\rho+n+\sigma}$ for any $\sigma>0$ so that it holds
\begin{equation}\label{F}
\left\{
\begin{split}
&|\rho\partial_{\rho} F_{\sigma}(\rho,n)+n\partial_{n} F_{\sigma}(\rho,n)-F_{\sigma}(\rho,n)|=\frac{\sigma \rho^2}{(\rho+n+\sigma)^2}\leq \sigma,\\
& \partial_{\rho \rho}^2 F_{\sigma}(\rho,n)|\nabla \rho|^2+2\partial_{\rho n}^2F_{\sigma}(\rho,n) \nabla \rho\cdot \nabla n+\partial_{n n}^2 F_{\sigma}(\rho,n)|\nabla n|^2\geq0,\\
&|\partial_{\rho} F_{\sigma}(\rho,n)|+|\partial_{n} F_{\sigma}(\rho,n)|\leq 4.
\end{split}
\right.
\end{equation}
 Let $\eta_{\sigma}\in C^{\infty}_{c}(\mathbb{T}^{d}) $ for any $\sigma>0$ be the Friedrichs mollifier. Applying $\eta_{\sigma}\ast $ to both sides of $(\ref{twoapp})_{1}$-$(\ref{twoapp})_{2}$, we obtain
\begin{equation}\label{renrho}
\left\{
\begin{split}
&\partial_{t}\eta_{\sigma}\ast\rho_{\varepsilon}+\dive\big{(} (\eta_{\sigma}\ast\rho_{\varepsilon}) u_{\varepsilon}\big{)}=\varepsilon \Delta (\eta_{\sigma}\ast\rho_{\varepsilon})+r_{1,\sigma},\\
& r_{1,\sigma}:=\dive\big{(}(\eta_{\sigma}\ast\rho_{\varepsilon}) u_{\varepsilon}\big{)}-\eta_{\sigma}\ast\dive (\rho_{\varepsilon} u_{\varepsilon}),\\
&\partial_{t}\eta_{\sigma}\ast n_{\varepsilon}+\dive\big{(}(\eta_{\sigma}\ast n_{\varepsilon}) u_{\varepsilon}\big{)}=\varepsilon \Delta (\eta_{\sigma}\ast n_{\varepsilon})+r_{2,\sigma},\\
&r_{2,\sigma}:=\dive\big{(}(\eta_{\sigma}\ast n_{\varepsilon}) u_{\varepsilon}\big{)}-\eta_{\sigma}\ast\dive (n_{\varepsilon} u_{\varepsilon}).
\end{split}
\right.
\end{equation}
By the commutator estimates of Friedrichs mollifier (cf. \cite{diperna1,lions1}), the terms $r_{1,\sigma}$ and $r_{2,\sigma}$ converge to $0$ strongly in $L^1(0,T;L^1(\mathbb{T}^{d}) )$. Multiplying $(\ref{renrho})_{1}$ and $(\ref{renrho})_{3}$ by $\partial_{\eta_{\sigma}\ast\rho_{\varepsilon}}F_{\sigma}(\eta_{\sigma}\ast\rho_{\varepsilon},\eta_{\sigma}\ast n_{\varepsilon})$ and $\partial_{\eta_{\sigma}\ast n_{\varepsilon}}F_{\sigma}(\eta_{\sigma}\ast\rho_{\varepsilon},\eta_{\sigma}\ast n_{\varepsilon})$, respectively, adding the resulting equations together and then making use of (\ref{F}), we have
\begin{equation}\nonumber
\begin{split}
&\partial_{t}F_{\sigma}(\eta_{\sigma}\ast\rho_{\varepsilon},\eta_{\sigma}\ast n_{\varepsilon})+\dive \big{(} F_{\sigma}(\eta_{\sigma}\ast\rho_{\varepsilon},\eta_{\sigma}\ast n_{\varepsilon}) u_{\varepsilon}\big{)} \\
&\quad\leq  \sigma|\dive u_{\varepsilon}|+ \varepsilon\Delta F_{\sigma}(\eta_{\sigma}\ast\rho_{\varepsilon},\eta_{\sigma}\ast n_{\varepsilon})+4(|r_{1,\sigma}|+|r_{2,\sigma}|) ,
\end{split}
\end{equation}
which gives rise to
\begin{equation}\label{Fsigmalim}
\begin{split}
&\int_{0}^{T}\int_{\mathbb{T}^{d}}F_{\sigma}(\eta_{\sigma}\ast\rho_{\varepsilon},\eta_{\sigma}\ast n_{\varepsilon})dxdt\\
&\quad\leq T\int_{\mathbb{T}^{d}} F_{\sigma}(\eta_{\sigma}\ast\rho_{0,\delta},\eta_{\sigma}\ast n_{0,\delta})dx+\sigma T^{\frac{3}{2}}\|\dive u_{\varepsilon}\|_{L^2(0,T;L^2)}+4T\|(r_{1,\sigma},r_{2,\sigma})\|_{L^1(0,T;L^1 )}.
\end{split}
\end{equation}
By the dominated convergence theorem, we take the limit as $\sigma\rightarrow 0$ in (\ref{Fsigmalim}) to derive
\begin{equation}\label{rho2n}
\begin{split}
&\int_{0}^{T}\int_{\mathbb{T}^{d}}\frac{\rho_{\varepsilon}^2}{\rho_{\varepsilon}+n_{\varepsilon}}dxdt\leq T\int_{\mathbb{T}^{d}} \frac{\rho_{0,\delta}^2}{\rho_{0,\delta}+ n_{0,\delta}}dx.
\end{split}
\end{equation}
Similarly, one has
\begin{equation}\label{rho2n1}
\begin{split}
&\int_{0}^{T}\int_{\mathbb{T}^{d}}\frac{\rho^2}{\rho+n}dxdt= T\int_{\mathbb{T}^{d}} \frac{\rho_{0,\delta}^2}{\rho_{0,\delta}+ n_{0,\delta}}dx.
\end{split}
\end{equation}
 By using (\ref{limitvar1}), $(\ref{rho2n})$-$(\ref{rho2n1})$ and
 \begin{equation}\label{AB1}
 \left\{
 \begin{split}
 &\rho=(\rho+n)A_{\rho,n},\quad \frac{\rho^2}{\rho+n}=\rho A_{\rho,n}= (\rho+n) A_{\rho,n}^2,\\
 &n=(\rho+n)B_{\rho,n},\quad \frac{n^2}{\rho+n}=n B_{\rho,n}=(\rho+n) B_{\rho,n}^2,
 \end{split}
 \right.
 \end{equation}
 we obtain
\begin{equation}\nonumber
\begin{split}
& \lim_{\varepsilon\rightarrow 0}\int_{0}^{T}\int_{\mathbb{T}^{d}}(\rho_{\varepsilon}+n_{\varepsilon})|A_{\rho_{\varepsilon},n_{\varepsilon}}-A_{\rho,n}|^2dxdt\\
&\quad=\lim_{\varepsilon\rightarrow 0}\int_{0}^{T}\int_{\mathbb{T}^{d}}\Big{(}\frac{\rho_{\varepsilon}^2}{\rho_{\varepsilon}+n_{\varepsilon}}-2\rho_{\varepsilon}A_{\rho,n}+(\rho_{\varepsilon}+n_{\varepsilon})A_{\rho,n}^2\Big{)}dxdt\\
&\quad \leq \int_{0}^{T}\int_{\mathbb{T}^{d}}\Big{(}\frac{\rho^2}{\rho+n}-2\rho A_{\rho,n}+(\rho+n)A_{\rho,n}^2\Big{)}dxdt =0,
\end{split}
\end{equation}
which together with the H$\rm{\ddot{o}}$lder inequality and the fact
 \begin{equation}\label{AB2}
 \begin{split}
 & 0\leq A_{\rho,n}, B_{\rho,n}\leq 1
 \end{split}
 \end{equation}
leads to $(\ref{strongABvar})_{1}$. Similarly, one can verify $(\ref{strongABvar})_{2}$.

Then, we are ready to prove $(\ref{rhonifvar})$. Obviously, as $\varepsilon\rightarrow0$, the strong convergence of both $\rho_{\varepsilon}$ and $n_{\varepsilon}$ in $L^1(0,T;L^1(\mathbb{T}^{d}))$ implies the  strong convergence of $\rho_{\varepsilon}+n_{\varepsilon}$ in $L^1(0,T;L^1(\mathbb{T}^{d}))$. Assume $\rho_{\varepsilon}+n_{\varepsilon}\rightarrow \rho+n$ in $L^1(0,T;L^1(\mathbb{T}^{d}) )$ as $\varepsilon\rightarrow 0$. By virtue of $(\ref{strongABvar})_{1}$ for $p=1$ and $(\ref{AB1})$-$(\ref{AB2})$, we have
\begin{equation}\nonumber
\begin{split}
&\lim_{\varepsilon\rightarrow0}\|\rho_{\varepsilon}-\rho\|_{L^1(0,T;L^1 )}\\
&\quad=\lim_{\varepsilon\rightarrow0}\|(\rho_{\varepsilon}+n_{\varepsilon})A_{\rho_{\varepsilon},n_{\varepsilon}}-(\rho_{\varepsilon}+n_{\varepsilon})A_{\rho,n}+(\rho_{\varepsilon}+n_{\varepsilon})A_{\rho,n}-(\rho+n)A_{\rho,n}\|_{L^1(0,T;L^1 )}\\
&\quad\leq \lim_{\varepsilon\rightarrow0} \big{(} \|\rho_{\varepsilon}-(\rho_{\varepsilon}+n_{\varepsilon})A_{\rho,n}\|_{L^1(0,T;L^1 )}+\|\rho_{\varepsilon}+n_{\varepsilon}-\rho-n\|_{L^1(0,T;L^1 )}\big{)}= 0,
\end{split}
\end{equation}
where we have used $\rho=(\rho+n)A_{\rho,n}$. Similarly, one can show that $n_{\varepsilon}$ converges to $n$ strongly in $L^1(0,T;L^1(\mathbb{T}^{d}) )$ as $\varepsilon\rightarrow 0$. The proof of Lemma \ref{lemma33} is complete.
\end{proof}

We consider the cut-off function
 \begin{equation}\label{Tk}
T_{k}(s):=
\begin{cases}
s,
& \mbox{if\quad$0\leq s \leq k,$ } \\
\text{smooth and concave},
& \mbox{if\quad$k\leq s\leq 3k,$}\\
2k,
& \mbox{if\quad$s\geq 3k$,}
\end{cases}
\end{equation}
which was introduced in \cite{feireisl1}  and references therein. As in \cite{feireisl1,lions2}, we can prove the following property of effect viscous flux.
\begin{lemma}\label{lemma32}
Let $T>0$, $\delta\in(0,1)$, $(\rho_{\varepsilon},n_{\varepsilon},(\rho_{\varepsilon}+n_{\varepsilon})u_{\varepsilon})$ be the weak solution to the IVP $(\ref{twodelta})$-$(\ref{ddelta})$ for $\varepsilon\in(0,1)$ given by Proposition \ref{prop22}, and $(\rho,n,(\rho+n)u)$ be the limit obtained by Lemma \ref{lemma31}. Then, under the assumptions of either Theorem \ref{theorem11} or Theorem \ref{theorem12}, for any $k>0$, it holds up to a subsequence (still labelled by $(\rho_{\varepsilon},n_{\varepsilon},(\rho_{\varepsilon}+n_{\varepsilon})u_{\varepsilon}) )$ that
\begin{equation}\label{effectvar}
\begin{split}
&\lim_{\varepsilon\rightarrow 0}\int_{0}^{T}\int_{\mathbb{T}^{d}}\big{(}P_{\delta}(\rho_{\varepsilon},n_{\varepsilon})-(2\mu+\lambda)\dive u_{\varepsilon}\big{)}T_{k}(\rho_{\varepsilon}+n_{\varepsilon})dxdt\\
&~~=\int_{0}^{T}\int_{\mathbb{T}^{d}}\big{(}\overline{P_{\delta}(\rho,n)}-(2\mu+\lambda)\dive u\big{)}\overline{T_{k}(\rho+n)}dxdt,
\end{split}
\end{equation}
where $\overline{f}$ denotes the weak limit of $f_{\varepsilon}$ as $\varepsilon\rightarrow0$.
\end{lemma}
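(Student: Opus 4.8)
The plan is to prove the effective‑viscous‑flux identity \eqref{effectvar} by the classical Lions–Feireisl scheme, adapted to the sum $\vartheta_\varepsilon:=\rho_\varepsilon+n_\varepsilon$, which (thanks to the domination bounds in \eqref{rvar1}) satisfies the same transport structure as a single density: adding $(\ref{twodelta})_1$ and $(\ref{twodelta})_2$ gives $\partial_t\vartheta_\varepsilon+\dive(\vartheta_\varepsilon u_\varepsilon)=0$, and likewise $\partial_t\vartheta+\dive(\vartheta u)=0$ in the renormalized sense. First I would introduce the double Riesz operator $\mathcal{A}_i:=(-\Delta)^{-1}\partial_i$ and the test functions
\begin{equation}\nonumber
\varphi_\varepsilon(x,t):=\phi(t)\,\zeta(x)\,\mathcal{A}_i\big[T_k(\vartheta_\varepsilon)\big],\qquad
\varphi(x,t):=\phi(t)\,\zeta(x)\,\mathcal{A}_i\big[\overline{T_k(\vartheta)}\big],
\end{equation}
with $\phi\in C_c^\infty(0,T)$, $\zeta\in C^\infty(\mathbb{T}^d)$ (eventually $\zeta\equiv1$), and plug $\varphi_\varepsilon$ into the momentum equation $(\ref{twodelta})_3$ and $\varphi$ into its weak limit. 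Subtracting the two identities, the whole argument reduces to showing that the "commutator" terms coming from the convective term $\dive(\vartheta_\varepsilon u_\varepsilon\otimes u_\varepsilon)$ cancel in the limit.

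The key steps, in order: (i) record that by Lemma \ref{lemma31} one has $\vartheta_\varepsilon\rightharpoonup\vartheta$ in $L^{p_0+1}$, $\vartheta_\varepsilon\to\vartheta$ in $C([0,T];L^{p_0}_{\mathrm{weak}})\cap C([0,T];H^{-1})$, $u_\varepsilon\rightharpoonup u$ in $L^2H^1$, and $\vartheta_\varepsilon u_\varepsilon\to\vartheta u$ in $C([0,T];H^{-1})$; also $T_k(\vartheta_\varepsilon)\rightharpoonup\overline{T_k(\vartheta)}$ in every $L^p$ and, by the renormalized continuity equation for $\vartheta_\varepsilon$ together with the Aubin–Lions lemma applied to $\partial_t T_k(\vartheta_\varepsilon)=-\dive(T_k(\vartheta_\varepsilon)u_\varepsilon)-(T_k'(\vartheta_\varepsilon)\vartheta_\varepsilon-T_k(\vartheta_\varepsilon))\dive u_\varepsilon$ (bounded in $L^2W^{-1,q}$ for suitable $q$), $T_k(\vartheta_\varepsilon)\to\overline{T_k(\vartheta)}$ in $C([0,T];L^p_{\mathrm{weak}})\cap C([0,T];H^{-1})$; hence $\mathcal{A}_i[T_k(\vartheta_\varepsilon)]\to\mathcal{A}_i[\overline{T_k(\vartheta)}]$ strongly in $C([0,T];W^{1,p})$ and a.e., uniformly enough to pass to the limit in the "easy" terms (pressure times $T_k$, the $\partial_t$ term, and the viscous term — the latter producing exactly the $-(2\mu+\lambda)\dive u_\varepsilon\,T_k(\vartheta_\varepsilon)$ contribution after using the identity $\mu\Delta\mathcal{A}_i+(\mu+\lambda)\partial_i\dive\mathcal{A}_i=(2\mu+\lambda)\partial_i$ on the curl‑free part). (ii) For the convective term, write
\begin{equation}\nonumber
\int \vartheta_\varepsilon u_\varepsilon\otimes u_\varepsilon:\nabla\mathcal{A}[T_k(\vartheta_\varepsilon)]
-\int \vartheta u\otimes u:\nabla\mathcal{A}[\overline{T_k(\vartheta)}]
=\int u_\varepsilon\cdot\Big(T_k(\vartheta_\varepsilon)\,\mathcal{A}_j[\partial_i(\vartheta_\varepsilon u_{\varepsilon,i})]-\vartheta_\varepsilon u_{\varepsilon,i}\,\mathcal{A}_j[\partial_i T_k(\vartheta_\varepsilon)]\Big)+(\text{lower order}),
\end{equation}
and invoke the div–curl / commutator lemma of Feireisl (the $L^p$–$L^q$ commutator property of $\mathcal{A}_i$) together with the strong $H^{-1}$ convergence of $\vartheta_\varepsilon u_\varepsilon$ and of $T_k(\vartheta_\varepsilon)$ to show the bracket converges weakly in some $L^r$, $r>1$, to the corresponding expression with $\vartheta,u,\overline{T_k(\vartheta)}$, against which $u_\varepsilon\rightharpoonup u$ in $L^2$. (iii) Finally take $\zeta\uparrow 1$ and collect terms to obtain \eqref{effectvar}, choosing the subsequence so that all weak limits $\overline{P_\delta(\rho,n)}$, $\overline{T_k(\rho+n)}$ exist simultaneously.

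The main obstacle is step (ii): making the commutator cancellation rigorous requires that $\mathcal{A}_i[\partial_i(\vartheta_\varepsilon u_{\varepsilon,i})]$ and $\mathcal{A}_i[\partial_i T_k(\vartheta_\varepsilon)]$ pair correctly in the limit, which hinges on upgrading the weak convergence $\vartheta_\varepsilon u_\varepsilon\rightharpoonup\vartheta u$ and $T_k(\vartheta_\varepsilon)\rightharpoonup\overline{T_k(\vartheta)}$ to \emph{strong} convergence in $H^{-1}$ (equivalently $W^{-1,2}$, or in a negative Sobolev space with the right exponents). For $\vartheta_\varepsilon u_\varepsilon$ this follows from the momentum equation giving $\partial_t(\vartheta_\varepsilon u_\varepsilon)$ bounded in a negative Sobolev space plus the spatial bound in $L^{\frac{2p_0}{p_0+1}}$, exactly as in Lemma \ref{lemma31}; for $T_k(\vartheta_\varepsilon)$ it follows from the renormalized equation as in (i). One must also check the exponent arithmetic — that $p_0>\gamma+\widetilde\gamma+\alpha+\widetilde\alpha+1$ makes the pressure $P_\delta(\rho_\varepsilon,n_\varepsilon)$ lie in a space (better than $L^1$, via \eqref{rvar1}$_3$ and \eqref{Pvar}) against which the strongly convergent $T_k(\vartheta_\varepsilon)\in L^\infty$ can be tested, and that $\nabla\mathcal{A}_i[T_k(\vartheta_\varepsilon)]$, being bounded in every $L^q$, pairs with $\vartheta_\varepsilon u_\varepsilon\otimes u_\varepsilon\in L^1\cap(\text{better})$. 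Since $T_k$ is bounded these integrability requirements are mild, so once the $H^{-1}$ compactness is in hand the remainder is the standard Lions–Feireisl bookkeeping, and I would simply cite \cite{feireisl1,lions2} for the routine parts while spelling out the passage to the sum variable $\vartheta_\varepsilon$.
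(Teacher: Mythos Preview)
Your outline is correct and matches the paper's approach exactly: the paper does not give a detailed proof of this lemma but simply writes ``As in \cite{feireisl1,lions2}, we can prove the following property of effect viscous flux,'' i.e., it invokes precisely the classical Lions--Feireisl test-function argument with $\mathcal{A}_i[T_k(\vartheta_\varepsilon)]$ that you describe. One small correction: since $(\rho_\varepsilon,n_\varepsilon,u_\varepsilon)$ solves the \emph{parabolic} approximation \eqref{twoapp} (the reference to \eqref{twodelta} in the lemma statement is a typo), the continuity equation for $\vartheta_\varepsilon$ reads $\partial_t\vartheta_\varepsilon+\dive(\vartheta_\varepsilon u_\varepsilon)=\varepsilon\Delta\vartheta_\varepsilon$, so the renormalized equation for $T_k(\vartheta_\varepsilon)$ and the momentum equation carry extra $\varepsilon$-terms which vanish in the limit thanks to the bound $\varepsilon^{1/2}\|(\nabla\rho_\varepsilon,\nabla n_\varepsilon)\|_{L^2(0,T;L^2)}\le C_\delta$ in \eqref{rvar1}; this is routine but should be mentioned.
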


Finally, we prove the strong convergence of two densities.
\begin{lemma}\label{lemma34}
Let $T>0$, $p_{0}>\gamma+\widetilde{\gamma}+\alpha+\widetilde{\alpha}+1$, $(\rho_{\varepsilon},n_{\varepsilon},(\rho_{\varepsilon}+n_{\varepsilon})u_{\varepsilon})$ be the weak solution to the IVP $(\ref{twoapp})$-$(\ref{dapp})$ for $\varepsilon\in(0,1)$ given by Proposition \ref{prop22}, and $(\rho,n,(\rho+n)u)$ be the limit obtained by Lemma \ref{lemma31}. Then, under the assumptions of either Theorem \ref{theorem11} or Theorem \ref{theorem12}, as $\varepsilon\rightarrow0$, it holds up to a subsequence (still denoted by $(\rho_{\varepsilon},n_{\varepsilon}))$ that
\begin{equation}\label{strongpnvar}
\begin{split}
&(\rho_{\varepsilon}, n_{\varepsilon})\rightarrow (\rho, n)\quad\text{in}~L^{p}(0,T;L^{p}(\mathbb{T}^{d}))\times L^{p}(0,T;L^{p}(\mathbb{T}^{d})),\quad p\in[1, p_{0}+1).
\end{split}
\end{equation}
\end{lemma}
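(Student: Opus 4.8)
The plan is to reduce the strong convergence of $\rho_\varepsilon$ and $n_\varepsilon$ to that of their sum $\vartheta_\varepsilon:=\rho_\varepsilon+n_\varepsilon$, and then to run the Lions--Feireisl compactness scheme (effective viscous flux together with renormalized continuity equations) on the single quantity $\vartheta_\varepsilon$, the non-monotonicity of $P_\delta$ being absorbed by transferring the pressure onto $\vartheta_\varepsilon$ along the limiting mass fractions. First, by Lemma~\ref{lemma33} it is enough to show $\vartheta_\varepsilon\to\vartheta:=\rho+n$ in $L^1(0,T;L^1(\mathbb{T}^d))$, since interpolating with the uniform bound $\|(\rho_\varepsilon,n_\varepsilon)\|_{L^{p_0+1}(0,T;L^{p_0+1})}\leq C_\delta$ of Proposition~\ref{prop22} then upgrades this to $(\ref{strongpnvar})$. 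From Lemma~\ref{lemma31} the limit $(\rho,n,u)$ solves $(\ref{twodelta})_{1}$--$(\ref{twodelta})_{2}$ exactly and $(\ref{twodelta})_{3}$ with $\overline{P_\delta(\rho,n)}$ replacing $P_\delta(\rho,n)$; in particular $\vartheta$ satisfies $\partial_t\vartheta+\dive(\vartheta u)=0$ and, since $\vartheta\in L^{p_0+1}\subset L^2$, it is a renormalized solution.

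\noindent\textbf{Fiber reduction of the pressure.} I would write $\rho_\varepsilon=A_{\rho_\varepsilon,n_\varepsilon}\vartheta_\varepsilon$, $n_\varepsilon=B_{\rho_\varepsilon,n_\varepsilon}\vartheta_\varepsilon$, and compare with the limiting fibers. The mean value theorem together with $|\partial_\rho P_\delta|+|\partial_n P_\delta|\leq C_\delta(1+(\rho+n)^{p_0-1})$ (which follows from $(\ref{P})$, $(\ref{Pvar})$ and $p_0>\gamma+\widetilde\gamma+\alpha+\widetilde\alpha+1$), H\"older's inequality and the Cauchy--Schwarz inequality give
\[
\int_0^T\!\!\!\int_{\mathbb{T}^d}\big|P_\delta(\rho_\varepsilon,n_\varepsilon)-P_\delta(A_{\rho,n}\vartheta_\varepsilon,B_{\rho,n}\vartheta_\varepsilon)\big|\,dxdt\leq C_\delta\|\vartheta_\varepsilon\|_{L^{p_0+1}}^{p_0-1}\Big(\int_0^T\!\!\!\int_{\mathbb{T}^d}\vartheta_\varepsilon^{p_0}\Big)^{\frac12}\Big(\int_0^T\!\!\!\int_{\mathbb{T}^d}\vartheta_\varepsilon|A_{\rho_\varepsilon,n_\varepsilon}-A_{\rho,n}|^{p_0+1}\Big)^{\frac12}+\cdots,
\]
which tends to $0$ as $\varepsilon\to0$ by Lemma~\ref{lemma33}. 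Hence $\overline{P_\delta(\rho,n)}$ equals the weak limit of $P_\delta(A_{\rho,n}\vartheta_\varepsilon,B_{\rho,n}\vartheta_\varepsilon)$, a weak limit of functions of the single variable $\vartheta_\varepsilon$ with $(x,t)$-dependent, $\varepsilon$-independent coefficients; by the splitting $(\ref{P1P2})$ this scalar map is $\delta\vartheta^{p_0}+\mathbf{1}_{\rho+n\geq\delta}P(A_{\rho,n}\vartheta,B_{\rho,n}\vartheta)$, i.e.\ a strictly convex increasing leading term $\delta\vartheta^{p_0}$ plus a lower-order term growing at most like $\vartheta^{\max\{\gamma,\alpha\}}$ with $\max\{\gamma,\alpha\}<p_0$.

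\noindent\textbf{Oscillation defect estimate.} Starting from the pointwise inequality $\delta|T_k(\vartheta_\varepsilon)-T_k(\vartheta)|^{p_0+1}\leq(\delta\vartheta_\varepsilon^{p_0}-\delta\vartheta^{p_0})(T_k(\vartheta_\varepsilon)-T_k(\vartheta))$, writing $\delta\vartheta^{p_0}=P_\delta(\rho,n)-\mathbf{1}_{\rho+n\geq\delta}P(\rho,n)$, invoking the effective viscous flux identity of Lemma~\ref{lemma32} to bound the $P_\delta$-contribution by $C\limsup_\varepsilon\|T_k(\vartheta_\varepsilon)-T_k(\vartheta)\|_{L^{p_0+1}}$, and estimating the $\mathbf{1}_{\rho+n\geq\delta}P$-contribution directly by $C_\delta$ (uniformly in $k$, using $\max\{\gamma,\alpha\}<p_0$ and the uniform $L^{p_0+1}$ bound), one absorbs via Young's inequality to reach
\[
\sup_{k\geq1}\ \limsup_{\varepsilon\to0}\ \int_0^T\!\!\!\int_{\mathbb{T}^d}|T_k(\vartheta_\varepsilon)-T_k(\vartheta)|^{p_0+1}\,dxdt\leq C_\delta<\infty.
\]

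\noindent\textbf{Conclusion.} Applying the renormalized continuity equation with the renormalizers $L_k$ ($L_k$ convex, $\vartheta L_k'(\vartheta)-L_k(\vartheta)=T_k(\vartheta)$) to $\vartheta_\varepsilon$ and to $\vartheta$, integrating over $\mathbb{T}^d$ and subtracting, and using the convexity of $L_k$, the concavity of $T_k$, the effective viscous flux identity and the monotonicity of the leading part $\delta\vartheta^{p_0}$, one obtains $\int_{\mathbb{T}^d}(\overline{L_k(\vartheta)}-L_k(\vartheta))(t)\,dx\leq r(k)$ with $r(k)\to0$ as $k\to\infty$, the control of $r(k)$ being precisely where the oscillation defect bound above is used. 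Letting $k\to\infty$ yields $\overline{\vartheta\ln\vartheta}=\vartheta\ln\vartheta$ a.e.\ on $\mathbb{T}^d\times(0,T)$, hence by strict convexity of $z\mapsto z\ln z$ one has $\vartheta_\varepsilon\to\vartheta$ a.e.; the uniform $L^{p_0+1}$ bound and Vitali's theorem then give $\vartheta_\varepsilon\to\vartheta$ in $L^1(0,T;L^1(\mathbb{T}^d))$, and Lemma~\ref{lemma33} together with interpolation finishes the proof. I expect the main obstacle to be the oscillation defect step, namely transferring the two-phase pressure $P_\delta(\rho_\varepsilon,n_\varepsilon)$ onto the single variable $\vartheta_\varepsilon$ along the limiting fibers with an error controlled by Lemma~\ref{lemma33}, which is what makes it possible to recover enough monotonicity from $(\ref{P1P2})$ to run the single-fluid argument despite $P_\delta$ being non-monotone in each of $\rho$ and $n$.
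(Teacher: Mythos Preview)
Your fiber-reduction step and the overall plan of reducing to strong convergence of $\vartheta_\varepsilon=\rho_\varepsilon+n_\varepsilon$ match the paper exactly. The difference is in how you close: you go through an oscillation-defect bound and the $L_k$/$T_k$ machinery, whereas the paper works directly with $b(s)=s\log s$ (legitimate here since the artificial pressure forces $\vartheta_\varepsilon\in L^{p_0+1}\subset L^2$, so DiPerna--Lions applies without truncation) and then runs a Gr\"onwall argument in the spirit of Feireisl's non-monotone paper.

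The gap in your sketch is the \textbf{Conclusion} step. You write that the monotonicity of the leading part $\delta\vartheta^{p_0}$ together with the oscillation-defect bound gives $\int(\overline{L_k(\vartheta)}-L_k(\vartheta))\le r(k)\to 0$. But after the fiber reduction the scalar pressure is $s\mapsto \delta s^{p_0}+\mathbf 1_{s\ge\delta}P(A_{\rho,n}s,B_{\rho,n}s)$, and the commutator contribution of the second, non-monotone piece does \emph{not} vanish as $k\to\infty$: it converges to a fixed, possibly positive quantity (roughly $\overline{P_{\mathrm{bad}}\,\vartheta}-\overline{P_{\mathrm{bad}}}\,\vartheta$). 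The oscillation-defect bound only tells you this quantity is finite, not small. So you cannot conclude $r(k)\to 0$ from it; something must absorb the non-monotone remainder.

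The paper's fix is to use the splitting $P_\delta=P_{1,\delta}-P_{2,\delta}$ with $P_{1,\delta}$ monotone in both variables and $P_{2,\delta}$ \emph{compactly supported} in $\{s\le 2C_*c_\delta\}$. After the fiber reduction, the $P_{1,\delta}$-commutator has the good sign, while for $P_{2,\delta}$ compact support in $s$ allows one to pick $C_{*,\delta}$ so that both $C_{*,\delta}s\log s\pm P_{2,\delta}(A_{\rho,n}s,B_{\rho,n}s)$ are convex; this converts the bad commutator into $C_\delta\int_0^t\!\int(\overline{\vartheta\log\vartheta}-\vartheta\log\vartheta)$, and Gr\"onwall closes. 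If you want to stay with your $L_k$ route you would need exactly the same compact-support/Gr\"onwall device at the final step; the oscillation-defect estimate alone is not enough here (and, given $\vartheta_\varepsilon\in L^2$, it is also unnecessary for renormalization).
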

\begin{proof}
Let $\vartheta_{\varepsilon}:=\rho_{\varepsilon}+n_{\varepsilon}$, $\vartheta:=\rho+n$, and $b_{\sigma}(s):=(s+\sigma)\log{(s+\sigma)}$ for $s\geq0$ and $\sigma>0$. By $(\ref{renrho})$, one has
\begin{equation}\label{varthetavar}
\begin{split}
\partial_{t}(\eta_{\sigma}\ast\vartheta_{\varepsilon})+\dive\big{(} (\eta_{\sigma}\ast\vartheta_{\varepsilon}) u_{\varepsilon}\big{)}=\varepsilon \Delta (\eta_{\sigma}\ast\vartheta_{\varepsilon})+r_{1,\sigma}+r_{2,\sigma},
\end{split}
\end{equation}
where $\eta_{\sigma}\in C^{\infty}_{c}(\mathbb{T}^{d}) $ for $\sigma>0$ is the Friedrichs mollifier and $r_{i,\sigma}$ $(i=1,2)$ are given by $(\ref{renrho})$. Multiplying $(\ref{varthetavar})$ by $b'(\eta_{\sigma}\ast\vartheta_{\varepsilon})=1+\log{(\eta_{\sigma}\ast\vartheta_{\varepsilon}+\sigma)}$, we get
\begin{equation}\label{341}
\begin{split}
&\partial_{t}\big{(} b(\eta_{\sigma}\ast\vartheta_{\varepsilon}) \big{)}+\dive\big{(} b(\eta_{\sigma}\ast\vartheta_{\varepsilon}) u_{\varepsilon}\big{)}+\big{(}b'(\eta_{\sigma}\ast\vartheta_{\varepsilon})\eta_{\sigma}\ast\vartheta_{\varepsilon}-b(\eta_{\sigma}\ast\vartheta_{\varepsilon}) \big{)} \dive u_{\varepsilon}\\
&\quad\leq \varepsilon \Delta \big{(} b(\eta_{\sigma}\ast\vartheta_{\varepsilon}) \big{)}+b'(\eta_{\sigma}\ast\vartheta_{\varepsilon})(r_{1,\sigma}+r_{2,\sigma}).
\end{split}
\end{equation}
We integrate $(\ref{341})$ over $\mathbb{T}^{d}\times[0,t]$ for $t\in[0,T]$ and take the limit as $\sigma\rightarrow 0$ to derive
\begin{equation}\label{342}
\begin{split}
&\int_{\mathbb{T}^{d}}\vartheta_{\varepsilon}\log{\vartheta_{\varepsilon}}dx\leq \int_{\mathbb{T}^{d}}(\rho_{0,\delta}+n_{0,\delta})\log{(\rho_{0,\delta}+n_{0,\delta})}dx-   \int_{0}^{t}\int_{\mathbb{T}^{d}}\vartheta_{\varepsilon}\dive u_{\varepsilon} dxd\tau,\quad\text{a.e.}~t\in(0,T).
\end{split}
\end{equation}
Similarly, one has
\begin{equation}\label{343}
\begin{split}
&\int_{\mathbb{T}^{d}} \vartheta\log{\vartheta}dx=\int_{\mathbb{T}^{d}}(\rho_{0,\delta}+n_{0,\delta})\log{(\rho_{0,\delta}+n_{0,\delta})}dx-\int_{0}^{t}\int_{\mathbb{T}^{d}}\vartheta\dive u dxd\tau,\quad~~\quad\text{a.e.}~t\in(0,T).
\end{split}
\end{equation}
Let $\overline{f}$ denote the weak limit of $f_{\varepsilon}$ as $\varepsilon\rightarrow0$. Let $P_{1,\delta}(\rho,n)$, $P_{2,\delta}(\rho,n)$, $(A_{\rho,n},B_{\rho,n})$ and $T_{k}(f)$ be defined by $(\ref{P1P2})_{1}$, $(\ref{P1P2})_{2}$, $(\ref{AB})$ and $(\ref{Tk})$, respectively. It can be shown by (\ref{PP1P2}), (\ref{AB1}), (\ref{effectvar}) and (\ref{342})-(\ref{343}) that
\begin{equation}\label{3433}
\begin{split}
&\int_{\mathbb{T}^{d}}(\overline{\vartheta \log{\vartheta} }-\vartheta\log{\vartheta})dx\\
&\quad\leq  \lim_{\varepsilon\rightarrow 0}\int_{0}^{t}\int_{\mathbb{T}^{d}}(\vartheta\dive u-\vartheta_{\varepsilon} \dive u_{\varepsilon} )dxd\tau=\sum_{i=1}^{4}I_{i}^{1},\quad\text{a.e.}~t\in(0,T),
\end{split}
\end{equation}
where $I_{i}^4$ $(i=1,2,3,4)$ are given by
\begin{equation}\nonumber
\begin{split}
&I_{1}^1:=\lim_{\varepsilon\rightarrow 0} \int_{0}^{t}\int_{\mathbb{T}^{d}}\big{[} \big{(} \vartheta-\overline{T_{k}(\vartheta)} \big{)}\dive u-\big{(} \vartheta_{\varepsilon} -T_{k}(\vartheta_{\varepsilon}) \big{)}\dive u_{\varepsilon}\big{]}dxd\tau,\\
&I_{2}^{1}:=\frac{1}{2\mu+\lambda}\lim_{\varepsilon\rightarrow 0}\int_{0}^{t}\int_{\mathbb{T}^{d}}\big{[}P_{\delta}(\rho_{\varepsilon},n_{\varepsilon})-P_{\delta}(   A_{\rho,n}\vartheta_{\varepsilon}  ,  B_{\rho,n}\vartheta_{\varepsilon} )\big{]} \big{(} \overline{T_{k}(\vartheta)}-T_{k}(\vartheta_{\varepsilon}) \big{)} dxd\tau,\\
&I_{3}^1:=\frac{1}{2\mu+\lambda}\lim_{\varepsilon\rightarrow 0}\int_{0}^{t}\int_{\mathbb{T}^{d}} \big{[}P_{1,\delta}(   A_{\rho,n}\vartheta_{\varepsilon}  ,  B_{\rho,n}\vartheta_{\varepsilon} ) -P_{1,\delta}(\rho, n) \big{]}(\overline{T_{k} \big{(} \vartheta)}-T_{k}(\vartheta_{\varepsilon}) \big{)}dxd\tau,\\
&I_{4}^1:=\frac{1}{2\mu+\lambda}\lim_{\varepsilon\rightarrow 0}\int_{0}^{t}\int_{\mathbb{T}^{d}}\big{[}P_{2,\delta}(   A_{\rho,n}\vartheta_{\varepsilon}  ,  B_{\rho,n}\vartheta_{\varepsilon} ) -P_{2,\delta}(\rho, n) \big{]}\big{(} T_{k}(\vartheta_{\varepsilon})-\overline{T_{k}(\vartheta)} \big{)}dxd\tau.
\end{split}
\end{equation}
It follows from (\ref{rvar1}), (\ref{limitvar1}) and lower semi-continuity of the weak limit $\vartheta-\overline{T_{k}(\vartheta)}$ that
\begin{equation}\nonumber
\begin{split}
& I_{1}^1\leq \underset{\varepsilon\rightarrow 0}{\sup\lim}~\big{(} \|\vartheta-\overline{T_{k}(\vartheta)}\|_{L^2(0,T;L^2)}\|\dive u\|_{L^2(0,T;L^2)}+\|\vartheta_{\varepsilon} -T_{k}(\vartheta_{\varepsilon})\|_{L^2(0,T;L^2)}\|\dive u_{\varepsilon}\|_{L^2(0,T;L^2)} \big{)}\\
&\quad\leq C_{\delta}~\underset{\varepsilon\rightarrow 0}{\sup\lim}~ \|\vartheta_{\varepsilon} -T_{k}(\vartheta_{\varepsilon})\|_{L^2(0,T;L^2)}\\
&\quad\leq \frac{C_{\delta} \|\vartheta_{\delta}\|_{L^{p_{0}+1}(0,T;L^{p_{0}+1})}^{\frac{p_{0}+1}{2}}}{k^{\frac{p_{0}-1}{2}}} \leq \frac{C_{\delta}}{k^{\frac{p_{0}-1}{2}}}. 
\end{split}
\end{equation}
For any $x_{1},x_{2},y_{1},y_{2}\geq 0$ and $F(x,y)\in C^{1}(\mathbb{R}_{+}\times\mathbb{R}_{+})$, we have
\begin{equation}
\begin{split}
&F(x_{1},y_{1})-F(x_{2},y_{2})=(x_{1}-x_{2})\int_{0}^{1}\partial_{x}F\big{(} x_{2}+\theta(x_{1}-x_{2}),y_{2}+\theta(y_{1}-y_{2}) \big{)}d\theta\\
&\quad\quad\quad\quad\quad\quad\quad\quad\quad~+(y_{1}-y_{2})\int_{0}^{1} \partial_{y}F\big{(} x_{2}+\theta(x_{1}-x_{2}),y_{2}+\theta(y_{1}-y_{2}) \big{)}d\theta,\label{P11}
\end{split}
\end{equation}
which together with  (\ref{P}), $(\ref{Pvar})$, $(\ref{rvar1})_{4}$ and $(\ref{strongABvar})$ yields
\begin{equation}\label{I11}
\begin{split}
&I_{2}^{1}\leq Ck\lim_{\varepsilon\rightarrow 0}\int_{0}^{t}\int_{\mathbb{T}^{d}}(\vartheta_{\varepsilon}+1)^{p_{0}-1}\vartheta_{\varepsilon}^{1-\frac{1}{p_{0}}}\vartheta_{\varepsilon}^{\frac{1}{p_{0}}}(|A_{\rho_{\varepsilon},n_{\varepsilon}}-A_{\rho,n}|+|B_{\rho_{\varepsilon},n_{\varepsilon}}-B_{\rho,n}|)dxd\tau\\
&\quad\leq Ck~\underset{\varepsilon\rightarrow 0}{\sup\lim}~\Big{(}\int_{0}^{t}\int_{\mathbb{T}^{d}}(\vartheta_{\varepsilon}+1)^{p_{0}+1 }dxd\tau\Big{)}^{\frac{p_{0}-1}{p_{0}}}\\
&\quad\quad\times \lim_{\varepsilon\rightarrow 0}\Big{(}\int_{0}^{t}\int_{\mathbb{T}^{d}}\vartheta_{\varepsilon} \big{(} |A_{\rho_{\varepsilon},n_{\varepsilon}}-A_{\rho,n}|^{p_{0}}+|B_{\rho_{\varepsilon},n_{\varepsilon}}-B_{\rho,n}|^{p_{0}} \big{)} dxd\tau\Big{)}^{\frac{1}{p_{0}}}=0.
\end{split}
\end{equation}
Noticing that both $T_{k}(s)$ and $P_{1,\delta}(A_{\rho,n}s,B_{\rho,n}s)$ are monotone increasing with respect to $s\geq 0$, we apply \cite[Theorem 10.19]{feireisl4} to have
\begin{equation}\label{I12}
\begin{split}
&I_{3}^{1}\leq 0.
\end{split}
\end{equation}
Substituting the above estimates of $I_{i}^1$ $(i=1,2,3)$ into (\ref{3433}) and taking the limit as $k\rightarrow\infty$, we deduce
\begin{equation}\label{34331}
\begin{split}
&\int_{\mathbb{T}^{d}}(\overline{\vartheta \log{\vartheta} }-\vartheta\log{\vartheta})dx\\
&\quad\leq\frac{1}{2\mu+\lambda}\lim_{\varepsilon\rightarrow 0}\int_{0}^{t}\int_{\mathbb{T}^{d}}\big{[}P_{2,\delta}(   A_{\rho,n}\vartheta_{\varepsilon}  ,  B_{\rho,n}\vartheta_{\varepsilon} ) -P_{2,\delta}(\rho, n) \big{]}\big{(}\vartheta_{\varepsilon}-\vartheta \big{)}dxd\tau\\
&\quad= \frac{1}{2\mu+\lambda}\lim_{\varepsilon\rightarrow 0}\int_{0}^{t}\int_{\mathbb{T}^{d}}\Big( P_{2,\delta}(A_{\rho,n}\vartheta_{\varepsilon} , B_{\rho,n}\vartheta_{\varepsilon} )\vartheta_{\varepsilon}-P_{2,\delta}(A_{\rho,n}\vartheta, B_{\rho,n}\vartheta) \vartheta\\
&\quad\quad\quad\quad\quad\quad\quad\quad\quad\quad+\big{[}P_{2,\delta}(A_{\rho,n}\vartheta, B_{\rho,n}\vartheta) -P_{2,\delta}(A_{\rho,n}\vartheta_{\varepsilon} , B_{\rho,n}\vartheta_{\varepsilon} )\big{]}\vartheta \Big)dxd\tau.
\end{split}
\end{equation}
Since $P_{2,\delta}(sA_{\rho,n},sB_{\rho,n})$ has compact support in $\{s\in\mathbb{R}_{+} ~|~ s\leq 2C_{*}c_{\delta}\}$ due to $(\ref{P1P2})_{2}$, there is a constant $C_{*,\delta}>0$ such that both $C_{*,\delta} s\log{s}-sP_{2,\delta}(A_{\rho,n} s,B_{\rho,n} s)$ and $C_{*,\delta} s\log{s}+P_{2,\delta}(A_{\rho,n} s,B_{\rho,n} s)$ are strictly convex for any $s\geq0$ and therefore we get
\begin{equation}\label{I14}
\begin{split}
&\lim_{\varepsilon\rightarrow 0}\int_{0}^{t}\int_{\mathbb{T}^{d}}  \big{[} P_{2,\delta}(A_{\rho,n}\vartheta_{\varepsilon} , B_{\rho,n}\vartheta_{\varepsilon} )\vartheta_{\varepsilon}-P_{2,\delta}(\rho, n)\vartheta\big{]}dxd\tau\\
&\quad\leq C_{*,\delta} \int_{0}^{t}\int_{\mathbb{T}^{d}}(\overline{\vartheta \log{\vartheta} }-\vartheta\log{\vartheta})dxd\tau,
\end{split}
\end{equation}
and
\begin{equation}\label{I15}
\begin{split}
&\lim_{\varepsilon\rightarrow 0}\int_{0}^{t}\int_{\mathbb{T}^{d}} \big{[} P_{2,\delta}(A_{\rho,n}\vartheta, B_{\rho,n}\vartheta) -P_{2,\delta}(A_{\rho,n}\vartheta_{\varepsilon} , B_{\rho,n}\vartheta_{\varepsilon} )\big{]}\vartheta dxd\tau\\
&\quad\leq \lim_{\varepsilon\rightarrow 0}\int_{\{(x,\tau)\in\mathbb{T}^{d}\times[0,t]~ |~ \vartheta\leq 2C_{*}c_{\delta}\}}\big{[}P_{2,\delta}(A_{\rho,n}\vartheta, B_{\rho,n}\vartheta)- P_{2,\delta}(A_{\rho,n}\vartheta_{\varepsilon} , B_{\rho,n}\vartheta_{\varepsilon} )\big{]}\vartheta dxd\tau\\
&\quad\leq C_{*,\delta} \lim_{\varepsilon\rightarrow 0}\int_{\{(x,\tau)\in\mathbb{T}^{d}\times[0,t]~ |~ \vartheta\leq 2 C_{*}c_{\delta}\}}(\vartheta_{\varepsilon}\log{\vartheta_{\varepsilon}}-\vartheta\log{\vartheta})\vartheta dxd\tau\\
&\quad\leq 2 C_{*,\delta}C_{*}c_{\delta}  \int_{0}^{t}\int_{\mathbb{T}^{d}}(\overline{\vartheta \log{\vartheta} }-\vartheta\log{\vartheta})dxd\tau,
\end{split}
\end{equation}
where one has used (\ref{AB1}), $P_{2,\delta}(sA_{\rho,n},sB_{\rho,n})\geq 0$, the compact support of $P_{2,\delta}(sA_{\rho,n},sB_{\rho,n})$ and the convexity of $ C_{*,\delta} s\log{s}-P_{2,\delta}(sA_{\rho,n},sB_{\rho,n})$. By (\ref{34331})-(\ref{I15}), we have
\begin{equation}\label{345}
\begin{split}
&\int_{\mathbb{T}^{d}}(\overline{\vartheta \log{\vartheta} }-\vartheta\log{\vartheta})dx\leq  \frac{C_{*,\delta}(2C_{*}c_{\delta}+1)}{2\mu+\lambda}\int_{0}^{t}\int_{\mathbb{T}^{d}}(\overline{\vartheta \log{\vartheta} }-\vartheta\log{\vartheta})dx d\tau.
\end{split}
\end{equation}
The combination of $(\ref{rvar1})_{4}$, $(\ref{rhonifvar})$, (\ref{345}), the Gr${\rm{\ddot{o}}}$nwall inequality and the convexity of  $s\log{s}$ gives rise to $(\ref{strongpnvar})$.  The proof of Lemma \ref{lemma34} is complete.
\end{proof}

With the help of Proposition \ref{prop22} and Lemmas \ref{lemma31}-\ref{lemma34}, we have the global existence of weak solutions to the IVP (\ref{twodelta})-(\ref{ddelta}).
\begin{prop}\label{prop31}
Let $p_{0}>\gamma+\widetilde{\gamma}+\alpha+\widetilde{\alpha}+1$ and $\delta\in (0,1)$. Then, under the assumptions of either Theorem \ref{theorem11} or Theorem \ref{theorem12}, there exists a global weak solution $(\rho_{\delta}, n_{\delta}, (\rho_{\delta}+n_{\delta})u_{\delta})$ to the IVP $(\ref{twodelta})$-$(\ref{ddelta})$ satisfying for any $T>0$ that
\begin{equation}\label{rdelta1}
\left\{
\begin{split}
&\|\rho_{\delta}\|_{L^{\infty}(0,T;L^{ \gamma})}+\|n_{\delta}\|_{L^{\infty}(0,T;L^{ \alpha})}+\delta^{\frac{1}{p_{0}}}\|(\rho_{\delta},n_{\delta})\|_{L^{\infty}(0,T;L^{p_{0}})}\leq C,\\
&\|\sqrt{\rho_{\delta}+n_{\delta}}u_{\delta}\|_{L^{\infty}(0,T;L^2)}+\|u_{\delta}\|_{L^2(0,T;H^1)}\leq C,\\
&\rho_{\delta}(x,t)\geq 0,\quad n_{\delta}(x,t)\geq 0,\quad \text{a.e.}~~(x,t)\in\mathbb{T}^{d}\times(0,T),\\
&\underline{c}\rho_{\delta}(x,t)\leq n_{\delta}(x,t)\leq \overline{c}\rho_{\delta}(x,t),\quad\text{if}~~\underline{c}\rho_{0}(x)\leq n_{0}(x)\leq \overline{c}\rho_{0}(x),\quad \text{a.e.}~(x,t)\in\mathbb{T}^{d}\times(0,T),
\end{split}
\right.
\end{equation}
where $C>0$ is a constant independent of $\delta$.

Furthermore, it holds
\begin{equation}\label{energydelta}
\begin{split}
&\int_{\mathbb{T}^{d}}[\frac{1}{2}(\rho_{\delta}+n_{\delta})|u_{\delta}|^2+G_{\delta}(\rho_{\delta},n_{\delta})]dx+\int_{0}^{t}\int_{\mathbb{T}^d}[\mu|\nabla u_{\delta}|^2+(\mu+\lambda)(\dive{u_{\delta}})^2]dxd\tau\\
&\quad \leq\int_{\mathbb{T}^{d}}[\frac{1}{2}(\rho_{0,\delta}+n_{0,\delta})|u_{0,\delta}|^2+G_{\delta}(\rho_{0,\delta},n_{0,\delta})]dx,\quad \text{a.e.} ~t\in (0,T),
\end{split}
\end{equation}
where $G_{\delta}(\rho,n)$ is given by $(\ref{Gdelta})$.
\end{prop}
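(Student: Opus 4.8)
The plan is to pass to the limit $\varepsilon\rightarrow 0$ in the weak formulation of $(\ref{twoapp})$-$(\ref{dapp})$, using the family $(\rho_{\varepsilon},n_{\varepsilon},(\rho_{\varepsilon}+n_{\varepsilon})u_{\varepsilon})$ from Proposition \ref{prop22} together with the convergences already established in Lemmas \ref{lemma31} and \ref{lemma34}. Concretely, Lemma \ref{lemma34} gives $\rho_{\varepsilon}\to\rho$ and $n_{\varepsilon}\to n$ strongly in $L^{p}(0,T;L^{p}(\mathbb{T}^{d}))$ for every $p<p_{0}+1$, hence, along a subsequence, a.e.\ on $\mathbb{T}^{d}\times(0,T)$; Lemma \ref{lemma31} provides $u_{\varepsilon}\rightharpoonup u$ in $L^{2}(0,T;H^{1}(\mathbb{T}^{d}))$, $(\rho_{\varepsilon}+n_{\varepsilon})u_{\varepsilon}\to(\rho+n)u$ in $C([0,T];H^{-1}(\mathbb{T}^{d}))$, and $\varepsilon(\nabla\rho_{\varepsilon},\nabla n_{\varepsilon})\to 0$ in $L^{2}(0,T;L^{2}(\mathbb{T}^{d}))$. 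These are exactly the ingredients needed to identify the limit of each term, and the limiting initial data in $(\ref{ddelta})$ are inherited from the weak continuity in time of $\rho_{\varepsilon},n_{\varepsilon}$ and $(\rho_{\varepsilon}+n_{\varepsilon})u_{\varepsilon}$.

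First I would pass to the limit in the two continuity equations $(\ref{twoapp})_{1}$-$(\ref{twoapp})_{2}$: the fluxes $\rho_{\varepsilon}u_{\varepsilon}$, $n_{\varepsilon}u_{\varepsilon}$ converge in $\mathcal{D}'(\mathbb{T}^{d}\times(0,T))$ to $\rho u$, $n u$ as products of the strongly convergent densities with the weakly convergent velocity, and the artificial diffusion $\varepsilon\Delta\rho_{\varepsilon},\varepsilon\Delta n_{\varepsilon}$ vanish because $\varepsilon(\nabla\rho_{\varepsilon},\nabla n_{\varepsilon})\to 0$ in $L^{2}(0,T;L^{2})$. In the momentum equation $(\ref{twoapp})_{3}$ the time-derivative term passes since $(\rho_{\varepsilon}+n_{\varepsilon})u_{\varepsilon}\to(\rho+n)u$ in $C([0,T];H^{-1})$, the linear viscous term passes by weak $H^{1}$ convergence, and the artificial term is controlled by $\|\varepsilon\nabla u_{\varepsilon}\cdot\nabla(\rho_{\varepsilon}+n_{\varepsilon})\|_{L^{1}(0,T;L^{1})}\le\varepsilon\|\nabla u_{\varepsilon}\|_{L^{2}(0,T;L^{2})}\|\nabla(\rho_{\varepsilon}+n_{\varepsilon})\|_{L^{2}(0,T;L^{2})}\le C_{\delta}\varepsilon^{1/2}\to 0$ by $(\ref{rvar1})_{1}$-$(\ref{rvar1})_{2}$. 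For the pressure, $(\ref{Pvar})$ and $(\ref{rvar1})_{3}$ furnish a uniform bound for $P_{\delta}(\rho_{\varepsilon},n_{\varepsilon})$ in $L^{\frac{p_{0}+1}{p_{0}}}(0,T;L^{\frac{p_{0}+1}{p_{0}}})$, while the a.e.\ convergence of $(\rho_{\varepsilon},n_{\varepsilon})$ and the continuity of $P_{\delta}$ yield, via Vitali's theorem, $P_{\delta}(\rho_{\varepsilon},n_{\varepsilon})\to P_{\delta}(\rho,n)$ in $L^{1}(0,T;L^{1})$. The only genuinely nonlinear step is the convective term: writing $(\rho_{\varepsilon}+n_{\varepsilon})u_{\varepsilon}\otimes u_{\varepsilon}=\big(\sqrt{\rho_{\varepsilon}+n_{\varepsilon}}\,u_{\varepsilon}\big)\otimes\big(\sqrt{\rho_{\varepsilon}+n_{\varepsilon}}\,u_{\varepsilon}\big)$ and using that $\sqrt{\rho_{\varepsilon}+n_{\varepsilon}}\to\sqrt{\rho+n}$ strongly in $L^{q}$ for suitable $q$ (from Lemma \ref{lemma34} and $(\ref{rvar1})_{1}$) together with $u_{\varepsilon}\rightharpoonup u$ in $L^{2}(0,T;H^{1})$, one obtains $\sqrt{\rho_{\varepsilon}+n_{\varepsilon}}\,u_{\varepsilon}\rightharpoonup\sqrt{\rho+n}\,u$; combined with the strong $C([0,T];H^{-1})$ convergence of $(\rho_{\varepsilon}+n_{\varepsilon})u_{\varepsilon}$ this gives, by the Lions--Feireisl compactness argument (cf.\ \cite{lions2,feireisl1,novotny2}), $(\rho_{\varepsilon}+n_{\varepsilon})u_{\varepsilon}\otimes u_{\varepsilon}\to(\rho+n)u\otimes u$ in $\mathcal{D}'$.

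Finally I would pass to the limit in the energy inequality $(\ref{energyvar})$: the dissipation $\int_{0}^{t}\!\!\int(\mu|\nabla u_{\varepsilon}|^{2}+(\mu+\lambda)(\dive u_{\varepsilon})^{2})$ and the kinetic energy $\frac{1}{2}\int_{\mathbb{T}^{d}}(\rho_{\varepsilon}+n_{\varepsilon})|u_{\varepsilon}|^{2}dx$ are weakly lower semicontinuous (for the latter, $\sqrt{\rho_{\varepsilon}+n_{\varepsilon}}\,u_{\varepsilon}\rightharpoonup\sqrt{\rho+n}\,u$ in $L^{2}$), $G_{\delta}(\rho_{\varepsilon},n_{\varepsilon})\to G_{\delta}(\rho,n)$ a.e.\ so Fatou's lemma (recall $G_{\delta}\ge-C_{\delta}$ by $(\ref{energyk11})_{1}$) gives $\int G_{\delta}(\rho,n)\le\liminf_{\varepsilon}\int G_{\delta}(\rho_{\varepsilon},n_{\varepsilon})$, and the right-hand side of $(\ref{energyvar})$ converges because it involves only the fixed regularized data plus the error $C_{\delta}\varepsilon\to0$; this establishes $(\ref{energydelta})$. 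The bounds $(\ref{rdelta1})_{1}$-$(\ref{rdelta1})_{2}$ then follow by combining $(\ref{energydelta})$ with the lower bounds $(\ref{P2})$ and $(\ref{energyk11})_{1}$ for $G_{\delta}$, with the $\delta$-uniform control of the initial energy from $(\ref{dappu})$, and with the normalization of $j_{\delta}$ in $(\ref{jdelta})$; the sign and domination properties $(\ref{rdelta1})_{3}$-$(\ref{rdelta1})_{4}$ pass to the limit thanks to $(\ref{rvar1})_{4}$-$(\ref{rvar1})_{5}$, and the renormalized form of the limiting continuity equations follows from the DiPerna--Lions theory (cf.\ \cite{diperna1,lions1}), since $\rho_{\delta},n_{\delta}\in L^{2}(0,T;L^{2})$ (because $p_{0}+1>2$) and $u_{\delta}\in L^{2}(0,T;H^{1})$. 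The hard part here is solely the convective term discussed above; the rest reduces to the standard Lions--Feireisl machinery once Lemma \ref{lemma34} is granted.
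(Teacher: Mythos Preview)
Your proposal is correct and follows essentially the same approach as the paper, which simply states that Proposition \ref{prop31} follows from Proposition \ref{prop22} and Lemmas \ref{lemma31}--\ref{lemma34} without spelling out the details. Your write-up faithfully supplies those details: the standard Lions--Feireisl limit passage (strong convergence of densities from Lemma \ref{lemma34}, weak $H^1$ convergence of velocities, Vitali for the pressure, the usual product argument for the convective term, and lower semicontinuity for the energy inequality) is exactly what the paper has in mind.
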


\section{Vanishing artificial pressure }

\subsection{Higher integrability of densities}

In this section, we prove Theorem \ref{theorem11} and Theorem \ref{theorem12} by taking the limit in $(\ref{twodelta})$ as $\delta\rightarrow 0$.
Since the uniform estimate $(\ref{rdelta1})$ are not enough to show the convergence of $P_{\delta}(\rho_{\delta},n_{\delta})$ to $P(\rho,n)$ as $\delta\rightarrow 0$, we need the higher integrability of $\rho_{\delta}$ and $n_{\delta}$ uniformly in $\delta$ as follows.
\begin{lemma}\label{lemma41}
Let $T>0$, $p_{0}>\gamma+\widetilde{\gamma}+\alpha+\widetilde{\alpha}+1$, $(\rho_{\delta},n_{\delta},(\rho_{\delta}+n_{\delta})u_{\delta})$ be the weak solution to the IVP $(\ref{twodelta})$-$(\ref{ddelta})$ for $\delta\in(0,1)$ given by Proposition \ref{prop31}, and $\vartheta_{\delta}:=\rho_{\delta}+n_{\delta}$. Then, under the assumptions of Theorem \ref{theorem11}, it holds
\begin{equation}\label{rhonLp}
\begin{split}
&\int_{0}^{T}\int_{\mathbb{T}^{d}}(\vartheta_{\delta}^{\max\{\gamma,\alpha\}+\theta}+\delta \vartheta_{\delta}^{p_{0}+\theta})dxdt\leq C,
\end{split}
\end{equation}
with $C>0$ a constant independent of $\delta$ and
$$
\theta:=\frac{2}{d}\max\{\gamma,\alpha\}-1>0.
$$

In addition, under the assumptions of Theorem \ref{theorem12}, we have
\begin{equation}\label{rhonLpc}
\begin{split}
&\int_{0}^{T}\int_{\mathbb{T}^{d}}(\rho_{\delta}^{\gamma+\theta_{1}}+n_{\delta}^{\alpha+\theta_{2}}+\delta \vartheta_{\delta}^{p_{0}+\theta_{1}}+\delta \vartheta_{\delta}^{p_{0}+\theta_{2}})dxdt\leq C,
\end{split}
\end{equation}
with
$$
\theta_{1}:=\frac{2}{d}\gamma-\frac{\gamma}{\min{\{\gamma,\alpha\}}}>0,\quad\quad \theta_{2}:=\frac{2}{d}\alpha-\frac{\alpha}{\min{\{\gamma,\alpha\}}}>0.
$$
\end{lemma}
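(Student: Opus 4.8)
I would proceed by the Bogovskii-type (inverse-divergence) test-function argument of Lions and Feireisl for the compressible Navier--Stokes equations, carried over to the present two-density system. Set $\vartheta_\delta:=\rho_\delta+n_\delta$. Under the hypotheses of Theorem \ref{theorem11} the propagated domination $(\ref{rdelta1})_4$ forces $\rho_\delta\sim n_\delta\sim\vartheta_\delta$ pointwise, so it suffices to bound $\vartheta_\delta$ in $L^{\max\{\gamma,\alpha\}+\theta}$; one tests the momentum equation $(\ref{twodelta})_3$ with $\phi_\delta:=\nabla\Delta^{-1}\bigl(\vartheta_\delta^{\theta}-\langle\vartheta_\delta^{\theta}\rangle\bigr)$, where $\langle\cdot\rangle$ denotes the spatial mean and $\Delta^{-1}$ acts on mean-zero functions on $\mathbb{T}^d$; by Calder\'on--Zygmund and Sobolev estimates one has $\|\nabla\phi_\delta\|_{L^q}\lesssim\|\vartheta_\delta\|_{L^{q\theta}}^{\theta}$ and $\|\phi_\delta\|_{L^{q^\ast}}\lesssim\|\vartheta_\delta\|_{L^{q\theta}}^{\theta}$ for $1<q<\infty$. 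Under the hypotheses of Theorem \ref{theorem12} no such comparison is available, so one runs the argument with $\nabla\Delta^{-1}(\rho_\delta^{\theta_1}-\langle\rho_\delta^{\theta_1}\rangle)$ to control $\rho_\delta$ and with $\nabla\Delta^{-1}(n_\delta^{\theta_2}-\langle n_\delta^{\theta_2}\rangle)$ to control $n_\delta$, recovering the artificial-pressure contributions $\delta\vartheta_\delta^{p_0+\theta_i}$ at the end by a further test with $\nabla\Delta^{-1}(\vartheta_\delta^{\theta_i}-\langle\vartheta_\delta^{\theta_i}\rangle)$ and using the bounds just obtained to handle its lower-order terms.

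Testing and integrating in time over $[0,t]$, one arrives, in the setting of Theorem \ref{theorem11}, at an identity of the form
\begin{equation}\nonumber
\begin{split}
\int_0^t\!\!\int_{\mathbb{T}^d}P_\delta(\rho_\delta,n_\delta)\,\vartheta_\delta^{\theta}\,dxd\tau
&=\langle\vartheta_\delta^{\theta}\rangle\int_0^t\!\!\int_{\mathbb{T}^d}P_\delta(\rho_\delta,n_\delta)\,dxd\tau
+\Bigl[\int_{\mathbb{T}^d}\vartheta_\delta u_\delta\cdot\phi_\delta\,dx\Bigr]_0^t\\
&\quad-\int_0^t\!\!\int_{\mathbb{T}^d}\vartheta_\delta u_\delta\cdot\partial_\tau\phi_\delta\,dxd\tau
-\int_0^t\!\!\int_{\mathbb{T}^d}\vartheta_\delta(u_\delta\otimes u_\delta):\nabla\phi_\delta\,dxd\tau\\
&\quad+\int_0^t\!\!\int_{\mathbb{T}^d}\bigl[\mu\nabla u_\delta:\nabla\phi_\delta+(\mu+\lambda)\dive u_\delta\,\dive\phi_\delta\bigr]\,dxd\tau,
\end{split}
\end{equation}
where $\partial_\tau\phi_\delta$ is given meaning via the renormalized form of $(\ref{twodelta})_1$--$(\ref{twodelta})_2$ for $\vartheta_\delta$, namely $\partial_\tau\vartheta_\delta^{\theta}=-\dive(\vartheta_\delta^{\theta}u_\delta)-(\theta-1)\vartheta_\delta^{\theta}\dive u_\delta$, so that $\partial_\tau\phi_\delta=\nabla\Delta^{-1}\bigl(-\dive(\vartheta_\delta^{\theta}u_\delta)-(\theta-1)\vartheta_\delta^{\theta}\dive u_\delta-\text{mean}\bigr)$. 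On the left, the pressure lower bound in $(\ref{P})$ together with the domination yields $P_\delta(\rho_\delta,n_\delta)\vartheta_\delta^{\theta}\gtrsim\vartheta_\delta^{\max\{\gamma,\alpha\}+\theta}+\delta\vartheta_\delta^{p_0+\theta}-C\vartheta_\delta^{\theta}$, with $\int_0^T\!\!\int\vartheta_\delta^{\theta}$ controlled by $(\ref{rdelta1})$ since $\theta\le\max\{\gamma,\alpha\}$; the boundary term is estimated using $\|\sqrt{\vartheta_\delta}u_\delta\|_{L^\infty L^2}$, the Sobolev embedding for $\phi_\delta$ and $(\ref{dappu})$ at the initial time, the viscous term by $\|u_\delta\|_{L^2H^1}\|\nabla\phi_\delta\|_{L^2L^2}$, and $\langle\vartheta_\delta^{\theta}\rangle\int_0^t\!\!\int P_\delta$ by the energy bound $(\ref{energydelta})$; each of these costs at most a lower (interpolated) power of the quantity being estimated and so is absorbed.

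The main obstacle is the convective term $\int_0^t\!\!\int\vartheta_\delta(u_\delta\otimes u_\delta):\nabla\phi_\delta$ (and, with the same bookkeeping, $\int_0^t\!\!\int\vartheta_\delta u_\delta\cdot\partial_\tau\phi_\delta$). Here one writes $\vartheta_\delta|u_\delta|^2=|\sqrt{\vartheta_\delta}u_\delta|^2$, combines $\sqrt{\vartheta_\delta}u_\delta\in L^\infty_tL^2_x$ with $\sqrt{\vartheta_\delta}u_\delta\in L^2_tL^{q_0}_x$ (coming from $u_\delta\in L^2_tL^{2^\ast}_x$, any finite exponent when $d=2$, and $\sqrt{\vartheta_\delta}\in L^\infty_tL^{2\max\{\gamma,\alpha\}}_x$) by interpolation to place $\vartheta_\delta|u_\delta|^2$ in an appropriate $L^a_tL^b_x$, and bounds $\|\nabla\phi_\delta\|_{L^{a'}_tL^{b'}_x}\lesssim\|\vartheta_\delta\|_{L^{a'\theta}_tL^{b'\theta}_x}^{\theta}$; H\"older's and Young's inequalities then close the estimate, absorbing a small multiple of $\int_0^T\!\!\int\vartheta_\delta^{\max\{\gamma,\alpha\}+\theta}$ into the left-hand side exactly because $\theta=\tfrac2d\max\{\gamma,\alpha\}-1$ is the borderline value at which all the H\"older indices fit. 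For Theorem \ref{theorem12} the only new feature is that, without the domination condition, $\sqrt{\rho_\delta+n_\delta}\,u_\delta\in L^\infty_tL^2_x$ controls the sum $\vartheta_\delta$ only in $L^{\min\{\gamma,\alpha\}}$, whereas the pressure lower bound now supplies $\int_0^T\!\!\int\rho_\delta^{\gamma+\theta_1}$ (resp. $\int_0^T\!\!\int n_\delta^{\alpha+\theta_2}$); matching the H\"older indices between these two facts forces precisely the corrected thresholds $\theta_1=\tfrac2d\gamma-\tfrac\gamma{\min\{\gamma,\alpha\}}$ and $\theta_2=\tfrac2d\alpha-\tfrac\alpha{\min\{\gamma,\alpha\}}$, while the cross terms $n_\delta^\alpha\rho_\delta^{\theta_1}$ and $\rho_\delta^\gamma n_\delta^{\theta_2}$ in the pressure are nonnegative and are simply discarded. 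Collecting all bounds and absorbing the top-order terms gives $(\ref{rhonLp})$ and $(\ref{rhonLpc})$.
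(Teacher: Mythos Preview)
Your overall strategy---testing $(\ref{twodelta})_3$ with $\nabla(-\Delta)^{-1}\bigl(\vartheta_\delta^{\theta}-\langle\vartheta_\delta^{\theta}\rangle\bigr)$ (resp.\ $\rho_\delta^{\theta_1}$, $n_\delta^{\theta_2}$) and absorbing the top-order pressure term---is exactly the paper's approach, and for $d\ge 3$ your H\"older bookkeeping for the convective term coincides with the paper's estimate of $I_4^2$.

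There is, however, a genuine gap when $d=2$. Your sketch places $\vartheta_\delta|u_\delta|^2$ in some $L^a_tL^b_x$ by interpolating $\sqrt{\vartheta_\delta}u_\delta\in L^\infty_tL^2_x$ against $\sqrt{\vartheta_\delta}u_\delta\in L^2_tL^{q_0}_x$, then bounds $\|\nabla\phi_\delta\|_{L^{a'}_tL^{b'}_x}$. If one runs the exponents, closing requires $q_0=2\gamma_0$, i.e.\ $u_\delta\in L^2_tL^\infty_x$, and the embedding $H^1(\mathbb{T}^2)\hookrightarrow L^\infty(\mathbb{T}^2)$ fails; for any finite $q_0<2\gamma_0$ the space exponent $\theta b'$ needed on $\vartheta_\delta$ strictly exceeds both $\gamma_0$ and $2\gamma_0-1$, so neither the energy bound nor the quantity being estimated suffices. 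The paper circumvents this by writing the convective contribution in commutator form $\mathcal{R}_i\mathcal{R}_j(\vartheta_\delta u_\delta^i u_\delta^j)-u_\delta^j\mathcal{R}_i\mathcal{R}_j(\vartheta_\delta u_\delta^i)$ and invoking the Coifman--Rochberg--Weiss estimate together with $H^1(\mathbb{T}^2)\hookrightarrow\mathcal{BMO}(\mathbb{T}^2)$, which trades the missing $L^\infty$ for $\mathcal{BMO}$ and closes exactly at the borderline $\theta=\gamma_0-1$. A similarly delicate (though commutator-free) interpolation involving the unknown is used for the $\partial_t\phi_\delta$ term when $d=3$; your phrase ``same bookkeeping'' hides this, though it can be made to work along the lines you indicate.
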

\begin{proof}
We are ready to show (\ref{rhonLpc}) and (\ref{rhonLp}) can be proved by similar arguments. Denote by $F_{\delta}$ the effect viscous flux as
\begin{equation}\label{effect1}
\begin{split}
&F_{\delta}:=P_{\delta}(\rho_{\delta},n_{\delta})-\frac{1}{|\mathbb{T}^d|}\int_{\mathbb{T}^d}P_{\delta}(\rho_{\delta},n_{\delta})dx-(2\mu+\lambda)\dive u_{\delta}.
\end{split}
\end{equation}
One deduces from the equation $(\ref{twodelta})_{3}$ that
\begin{equation}\label{effect11}
\begin{split}
&F_{\delta}=(-\Delta)^{-1}\dive [\partial_{t} (\vartheta_{\delta} u_{\delta})+\dive(\vartheta_{\delta}u_{\delta}\otimes u_{\delta})],
\end{split}
\end{equation}
where the operator $(-\Delta)^{-1}: W^{k-2,p}(\mathbb{T}^{d}) \rightarrow W^{k,p}(\mathbb{T}^{d})$ for $k\in\mathbb{R}$ and $p\in(1,\infty)$ is defined by
\begin{equation}\label{deltam1}
\begin{split}
&(-\Delta)^{-1} f=g,\quad g~\text{is the solution of the problem}~-\Delta g=f,\quad \int_{\mathbb{T}^{d}}f=0.
\end{split}
\end{equation}
It can be verified by the Sobolev inequality and $L^{p}(\mathbb{T}^{d})$-$L^{p}(\mathbb{T}^{d})$ boundedness of the operator $\nabla(-\Delta)^{-1} \dive $ that
\begin{equation}\label{LpLp}
\begin{split}
&\| (-\Delta)^{-1}\dive f\|_{L^{\frac{pd}{d-p}}}\leq C\|\nabla (-\Delta )^{-1}\dive f\|_{L^{p}}\leq C\|f\|_{L^{p}},\quad\forall f\in L^{p},\quad p\in(1,d).
\end{split}
\end{equation}
By (\ref{twodelta}) and (\ref{effect11}), we obtain
\begin{equation}\label{411}
\begin{split}
&\int_{0}^{T}\int_{\mathbb{T}^{d}}\rho^{\theta_{1}}_{\delta}P_{\delta}(\rho_{\delta},n_{\delta})dxdt =\sum_{i=1}^{4} I_{i}^2,
\end{split}
\end{equation}
with
\begin{equation}\nonumber
\begin{split}
&I_{1}^{2}:=\int_{0}^{T}\int_{\mathbb{T}^{d}}\rho^{\theta_{1}}_{\delta}\Big{(}\frac{1}{|\mathbb{T}^{d}|}\int_{\mathbb{T}^d}P_{\delta}(\rho_{\delta},n_{\delta})dx+(2\mu+\lambda) \dive u_{\delta} \Big{)}dxdt,\\
&I_{2}^{2}:=\int_{\mathbb{T}^d} \rho^{\theta_{1}}_{\delta} (-\Delta)^{-1}\dive ( \vartheta_{\delta}u_{\delta})dx\Big{|}_{0}^{T},\\
&I_{3}^{2}:=(\theta-1)\int_{0}^{T}\int_{\mathbb{T}^{d}} \rho_{\delta}^{\theta_{1}}\dive u_{\delta} (-\Delta)^{-1} \dive (\vartheta_{\delta} u_{\delta})dxdt,\\
&I_{4}^{2}:=\sum_{i,j=1}^{d}\int_{0}^{T}\int_{\mathbb{T}^{d}}\rho^{\theta_{1}}_{\delta} [\mathcal{R}_{i}\mathcal{R}_{j}( \vartheta_{\delta}u_{\delta}^{i}u_{\delta}^{j})-u_{\delta}^{j}\mathcal{R}_{i}\mathcal{R}_{j}(\vartheta_{\delta} u_{\delta}^{i})]dxdt,\quad R_{i}:=(-\Delta)^{-\frac{1}{2}}\partial_{i},\quad i=1,..., d.
\end{split}
\end{equation}
First, one deduces by 
$(\ref{rdelta1})$ and $2\theta_{1}<\gamma+\theta_{1}$ that
\begin{equation}\nonumber
\begin{split}
&I_{1}^{2}\leq T\|\rho_{\delta}^{\theta_{1}}\|_{L^{\infty}(0,T;L^{1})}\|P_{\delta}(\rho_{\delta},n_{\delta})\|_{L^{\infty}(0,T;L^{1})}+(2\mu+\lambda)T^{\frac{1}{2}}\|\rho_{\delta}^{\theta_{1}}\|_{L^{2}(0,T;L^{2})}\|\dive u_{\delta}\|_{L^2(0,T;L^2)}\\
&\quad\leq C(1+\frac{1}{\varepsilon_{0}})+\varepsilon_{0}\int_{0}^{T}\int_{\mathbb{T}^{d}}\rho_{\delta}^{\gamma+\theta_{1}}dxdt,
\end{split}
\end{equation}
with the constant $\varepsilon_{0}>0$ to be chosen later. By $(\ref{rdelta1})$, one has
\begin{equation}\label{r0}
\begin{split}
&\|\vartheta_{\delta}\|_{L^{\infty}(0,T;L^{\gamma_{0}})}+\|\vartheta_{\delta} u_{\delta}\|_{L^{\infty}(0,T;L^{\frac{2\gamma_{0}}{\gamma_{0}+1}})}\leq C,
\end{split}
\end{equation}
where we used the convenient notation
\begin{equation}\nonumber
\begin{split}
\gamma_{0}:=\min\{\gamma,\alpha\}.
\end{split}
\end{equation}
For the term $I_{2}^2$, making use of $(\ref{dappu})$, $(\ref{rdelta1})$, (\ref{LpLp}), (\ref{r0}) and $\frac{1}{d}+1-\frac{\theta_{1}}{\gamma}>\frac{\gamma_{0}+1}{2\gamma_{0}}$, we have
\begin{equation}\nonumber
\begin{split}
&I_{2}^2\leq \|\rho_{\delta}^{\theta_{1}}\|_{L^{\infty}(0,T;L^{\frac{\gamma}{\theta_{1}}})}\| (-\Delta)^{-1}\dive ( \vartheta_{\delta}u_{\delta})\|_{L^{\infty}(0,T;L^{\frac{2\gamma_{0}d}{(d-2)\gamma_{0}+d}})}\\
&\quad\quad+\|\rho_{0,\delta}\|_{L^{\frac{ \gamma}{\theta_{1}}}}\| (-\Delta)^{-1}\dive \big{(} (\rho_{0,\delta}+n_{0,\delta})u_{0,\delta}\big{)}\|_{L^{\frac{2\gamma_{0}d}{(d-2)\gamma_{0}+d}}}\\
&\quad\leq C\|\rho_{\delta}\|_{L^{\infty}(0,T;L^{ \gamma })}^{\theta_{1}} \|  \vartheta_{\delta}u_{\delta}\|_{L^{\infty}(0,T;L^{\frac{2\gamma_{0}}{\gamma_{0}+1}})}+C\|\rho_{0,\delta}\|_{L^{ \gamma}}^{\theta_{1}} \|  (\rho_{0,\delta}+n_{0,\delta})u_{0,\delta}\|_{L^{\frac{2\gamma_{0}}{\gamma_{0}+1}} }\leq C.
\end{split}
\end{equation}
As in \cite{lions2}, we analyse the terms $I_{i}^2$ $(i=3,4)$ for two cases $d\geq 3$ and $d=2$ separately:

\vspace{1ex}
 \begin{itemize}
\item{Case 1: $d\geq 3.$}
\end{itemize}
Since it holds $\frac{\theta_{1}}{\gamma}+\frac{1}{\gamma_{0}}+\frac{d-2}{2d}+\frac{d-2}{2d}=1$, we obtain by $(\ref{rdelta1})$, (\ref{r0}), the Sobolev embedding $H^1(\mathbb{R}^{d}) \hookrightarrow L^{\frac{2d}{d-2}}(\mathbb{R}^{d})$ and the boundedness of Riesz operator that
\begin{equation}\nonumber
\begin{split}
&I_{4}^2\leq C\|\rho_{\delta}^{\theta_{1}}\|_{L^{\infty}(0,T;L^{\frac{\gamma}{\theta_{1}}})}\|\vartheta_{\delta}\|_{L^{\infty}(0,T;L^{ \gamma_{0} })}\|u_{\delta}\|_{L^2(0,T;L^{\frac{2d}{d-2}})}^2\leq C.
\end{split}
\end{equation}
Similarly, the term $I_{3}^2$ for $d\geq4$ can be estimated by
\begin{equation}\nonumber
\begin{split}
&I_{3}^2\leq \|\rho_{\delta}^{\theta_{1}}\|_{L^{\infty}(0,T;L^{ \frac{\gamma}{\theta_{1}} })}\|\dive u_{\delta}\|_{L^2(0,T;L^2)}\|(-\Delta)^{-1}\dive (\vartheta_{\delta} u_{\delta})\|_{L^{2}(0,T;L^{ \frac{2\gamma_{0}d}{(d-4)\gamma_{0}+2d} })}\\
&\quad\leq C\|\rho_{\delta}\|_{L^{\infty}(0,T;L^{ \gamma })}^{\theta_{1}}\|\dive u_{\delta}\|_{L^2(0,T;L^2)}\|\vartheta_{\delta}\|_{L^{\infty}(0,T;L^{ \gamma_{0} })}\|u_{\delta}\|_{L^2(0,T;L^{\frac{2d}{d-2}})}\leq C.
\end{split}
\end{equation}
For $d=3$, it follows from (\ref{rdelta1}), (\ref{LpLp}), (\ref{r0}) and the Sobolev embedding $H^1(\mathbb{T}^3) \hookrightarrow L^{6}(\mathbb{T}^3)$ that
\begin{equation}\nonumber
\begin{split}
&I_{3}^2\leq C \|\rho_{\delta}^{\theta_{1}}\|_{L^{\frac{5\gamma_{0}-3}{2\gamma_{0}-3}}(0,T;L^{\frac{5\gamma_{0}-3}{2\gamma_{0}-3}})}\|\dive u_{\delta}\|_{L^2(0,T;L^2)} \|\vartheta_{\delta} u_{\delta}\|_{L^{\frac{2(5\gamma_{0}-3)}{\gamma_{0}+3}}(0,T;L^{ \frac{6(5\gamma_{0}-3)}{13\gamma_{0}+3}})}\\
&\quad\leq C\|\rho_{\delta}^{\theta_{1}}\|_{L^{\frac{\gamma+\theta_{1}}{\theta_{1}}}(0,T;L^{\frac{\gamma+\theta_{1}}{\theta_{1}}})} \|\dive u_{\delta}\|_{L^2(0,T;L^2)} \|\vartheta_{\delta} \|_{L^{\infty}(0,T;L^{\gamma_{0}})}^{\frac{\gamma_{0}+3}{5\gamma_{0}-3}}\|u_{\delta} \|_{L^2(0,T;L^{6})}^{\frac{\gamma_{0}+3}{5\gamma_{0}-3}}\|\vartheta_{\delta} u_{\delta}\|_{L^{\infty}(0,T;L^{\frac{2\gamma_{0}}{\gamma_{0}+1}})}^{\frac{2(2\gamma_{0}-3)}{5\gamma_{0}-3}}\\
&\quad\leq C(1+\frac{1}{\varepsilon_{0}})+\varepsilon_{0}\int_{0}^{T}\int_{\mathbb{T}^{3}}\rho_{\delta}^{\gamma+\theta_{1}}dxdt,
\end{split}
\end{equation}
 \begin{itemize}
\item{Case 2: $d=2.$}
\end{itemize}
One can show by (\ref{rdelta1}) and (\ref{LpLp}) that
\begin{equation}\nonumber
\begin{split}
&I_{3}^{2}\leq C\|\rho_{\delta}^{\theta_{1}}\|_{L^{\frac{2\gamma_{0}-1}{\gamma_{0}-1}}(0,T;L^{\frac{2\gamma_{0}-1}{\gamma_{0}-1}})} \|\dive u_{\delta}\|_{L^{2}(0,T;L^2)} \|\vartheta_{\delta} u_{\delta}\|_{L^{2(2\gamma_{0}-1)}(0,T;L^{\frac{2\gamma_{0}}{\gamma_{0}+1}})}\\
&\quad\leq C\|\rho_{\delta}\|_{L^{\gamma+\theta_{1}}(0,T;L^{\gamma+\theta_{1}})}^{\theta_{1}} \|\dive u_{\delta}\|_{L^{2}(0,T;L^2)}\|\sqrt{\vartheta_{\delta}}\|_{L^{2(2\gamma_{0}-1)}(0,T;L^{2(2\gamma_{0}-1)})} \|\sqrt{\vartheta_{\delta}} u_{\delta}\|_{L^{\infty}(0,T;L^{2})}\\
&\quad\leq C(1+\frac{1}{\varepsilon_{0}})+\varepsilon_{0}\int_{0}^{T}\int_{\mathbb{T}^{2}}(\rho_{\delta}^{\gamma+\theta_{1}}+n_{\delta}^{\alpha+\theta_{2}})dxdt.
\end{split}
\end{equation}
In addition, one deduces by the critical Sobolev embedding $H^{1}(\mathbb{T}^2)\hookrightarrow \mathcal{BMO}(\mathbb{T}^2)$ for $d=2$ and the commutator estimate (\ref{risz1}) that
\begin{equation}\nonumber
\begin{split}
&I_{4}^2\leq \sum_{i,j=1}^{2} \|\rho_{\delta}^{\theta_{1}}\|_{L^{\frac{2\gamma_{0}-1}{\gamma_{0}-1}}(0,T;L^{\frac{2\gamma_{0}-1}{\gamma_{0}-1}})}\|\mathcal{R}_{i}\mathcal{R}_{j}( \vartheta_{\delta}u_{\delta}^{i}u_{\delta}^{j})-u_{\delta}^{j}\mathcal{R}_{i}\mathcal{R}_{j}(\vartheta_{\delta} u_{\delta}^{i})\|_{L^{\frac{2\gamma_{0}-1}{\gamma_{0}}}(0,T;L^{\frac{2\gamma_{0}-1}{\gamma_{0}}})}\\
&\quad\leq C\|\rho_{\delta}\|_{L^{\gamma+\theta_{1}}(0,T;L^{\gamma+\theta_{1}})}^{\theta_{1}}\|u_{\delta}\|_{L^2(0,T;\mathcal{BMO})}\|\vartheta_{\delta} u_{\delta}\|_{L^{2(2\gamma_{0}-1)}(0,T;L^{\frac{2\gamma_{0}-1}{\gamma_{0}}})}\\
&\quad\leq C \|\rho_{\delta}\|_{L^{\gamma+\theta_{1}}(0,T;L^{\gamma+\theta_{1}})}^{\theta_{1}}\|\sqrt{\vartheta_{\delta}}\|_{L^{2(2\gamma_{0}-1)}(0,T;L^{2(2\gamma_{0}-1)})} \|u_{\delta}\|_{L^2(0,T;H^1)}\|\sqrt{\vartheta_{\delta}} u_{\delta}\|_{L^{\infty}(0,T;L^2)}\\
&\quad\leq C(1+\frac{1}{\varepsilon_{0}})+\varepsilon_{0}\int_{0}^{T}\int_{\mathbb{T}^{2}}(\rho_{\delta}^{\gamma+\theta_{1}}+n_{\delta}^{\alpha+\theta_{2}})dxdt.
\end{split}
\end{equation}
Substituting the above estimates of $I_{i}^2$ $(i=1,2,3,4)$ into (\ref{411}), we obtain
\begin{equation}\label{412}
\begin{split}
&\int_{0}^{T}\int_{\mathbb{T}^{d}}\rho^{\theta_{1}}_{\delta}P_{\delta}(\rho_{\delta},n_{\delta})dxdt\leq C(1+\frac{1}{\varepsilon_{0}})+3\varepsilon_{0}\int_{0}^{T}\int_{\mathbb{T}^{d}}(\rho_{\delta}^{\gamma+\theta_{1}}+n_{\delta}^{\alpha+\theta_{2}})dxdt.
\end{split}
\end{equation}
Similarly, one can show
\begin{equation}\label{4121}
\begin{split}
&\int_{0}^{T}\int_{\mathbb{T}^{d}}n^{\theta_{2}}_{\delta}P_{\delta}(\rho_{\delta},n_{\delta})dxdt\leq C(1+\frac{1}{\varepsilon_{0}})+3\varepsilon_{0}\int_{0}^{T}\int_{\mathbb{T}^{d}}(\rho_{\delta}^{\gamma+\theta_{1}}+n_{\delta}^{\alpha+\theta_{2}})dxdt.
\end{split}
\end{equation}
By (\ref{P}) and (\ref{Pvar}), it holds
\begin{equation}\label{413}
\begin{split}
&(\rho_{\delta}^{\theta_{1}}+n_{\delta}^{\theta_{2}})P_{\delta}(\rho_{\delta},n_{\delta})\\
&\quad\geq (\rho_{\delta}^{\theta_{1}}+n_{\delta}^{\theta_{2}})\big{(}P(\rho_{\delta},n_{\delta})+\delta(\rho_{\delta}+n_{\delta})^{p_{0}}\big{)}-\mathbf{1}_{\vartheta_{\delta}\leq \delta}(\rho_{\delta}^{\theta_{1}}+n_{\delta}^{\theta_{2}})P(\rho_{\delta},n_{\delta})\\
&\quad\geq \frac{1}{2C_{0}} (\rho_{\delta}^{\gamma+\theta_{1}}+n_{\delta}^{\alpha+\theta_{2}})+\delta (\rho_{\delta}^{p_{0}+\theta_{1}}+n_{\delta}^{p_{0}+\theta_{2}})-C.
\end{split}
\end{equation}
Combining (\ref{412})-(\ref{413}) together and choosing $\varepsilon_{0}=\frac{1}{24C_{1}}$, we derive $(\ref{rhonLpc})$. The proof of Lemma \ref{lemma41} is complete.
\end{proof}

By (\ref{rdelta1}) and (\ref{rhonLp}), we have the convergence of weak solutions to the IVP $(\ref{twodelta})$-$(\ref{ddelta})$.
\begin{lemma}\label{lemma42} 
Let $T>0$, $p_{0}>\gamma+\widetilde{\gamma}+\alpha+\widetilde{\alpha}+1$, and $(\rho_{\delta},n_{\delta},(\rho_{\delta}+n_{\delta})u_{\delta})$ be the weak solution to the IVP $(\ref{twodelta})$-$(\ref{ddelta})$ for $\delta\in(0,1)$ given by Proposition \ref{prop31}. Then, under the assumptions of either Theorem \ref{theorem11} or Theorem \ref{theorem12}, there is a limit $(\rho,n,(\rho+n)u)$ such that as $\delta\rightarrow0$, it holds up to a subsequence (still denoted by $(\rho_{\delta},n_{\delta},(\rho_{\delta}+n_{\delta})u_{\delta}))$ that
\begin{equation}\label{limitdelta1}
\left\{
\begin{split}
&(\rho_{\delta}, n_{\delta})\overset{\ast}{\rightharpoonup}(\rho, n)\quad\quad\quad\quad\text{in}~L^{\infty}(0,T;L^{\gamma}(\mathbb{T}^{d}) )\times L^{\infty}(0,T;L^{\alpha}(\mathbb{T}^{d}) ),\\
&\delta  (\rho_{\delta}+ n_{\delta})^{p_{0}}\rightarrow 0\quad\quad\quad~~\text{in}~L^1(0,T;L^1(\mathbb{T}^{d}) ),\\
&u_{\delta}\rightharpoonup u\quad\quad\quad\quad\quad\quad\quad\quad\text{in}~L^2(0,T;H^1(\mathbb{T}^{d}) ),\\
&(\rho_{\delta},n_{\delta})\rightarrow (\rho,n) \quad\quad\quad\quad\text{in}~C([0,T];L^{\gamma}_{weak}(\mathbb{T}^{d}) )\times C([0,T];L^{\alpha}_{weak}(\mathbb{T}^{d}) ),\\
&(\rho_{\delta},n_{\delta})\rightarrow (\rho,n)\quad\quad\quad\quad\text{in}~C([0,T];H^{-1} (\mathbb{T}^{d}))\times C([0,T];H^{-1}(\mathbb{T}^{d}) ),\\
&(\rho_{\delta}+n_{\delta}) u_{\delta}\rightarrow (\rho+n) u\quad\text{in}~C([0,T];L^{\frac{2\min\{\gamma,\alpha\}}{\min\{\gamma,\alpha\}+1}}_{weak}(\mathbb{T}^{d}) )\cap C([0,T];H^{-1}(\mathbb{T}^{d}) ).
\end{split}
\right.
\end{equation}
\end{lemma}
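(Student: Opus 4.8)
The plan is to pass to the limit $\delta\rightarrow0$ by weak-$\ast$ compactness, using the uniform bounds of Proposition~\ref{prop31} together with the higher integrability of Lemma~\ref{lemma41}, and then to refine the convergences to the spaces claimed in $(\ref{limitdelta1})$ by controlling the time derivatives through the equations $(\ref{twodelta})$ and invoking the Aubin--Lions and Arzel\`a--Ascoli lemmas. First, the Banach--Alaoglu theorem applied to the bounds $\|\rho_{\delta}\|_{L^{\infty}(0,T;L^{\gamma})}+\|n_{\delta}\|_{L^{\infty}(0,T;L^{\alpha})}+\|u_{\delta}\|_{L^2(0,T;H^1)}\leq C$ of $(\ref{rdelta1})$ produces a subsequence along which $(\rho_{\delta},n_{\delta})\overset{\ast}{\rightharpoonup}(\rho,n)$ in $L^{\infty}(0,T;L^{\gamma})\times L^{\infty}(0,T;L^{\alpha})$ and $u_{\delta}\rightharpoonup u$ in $L^2(0,T;H^1)$, which gives $(\ref{limitdelta1})_{1}$ and $(\ref{limitdelta1})_{3}$. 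For $(\ref{limitdelta1})_{2}$, writing $\vartheta_{\delta}:=\rho_{\delta}+n_{\delta}$ and interpolating, $\delta\vartheta_{\delta}^{p_{0}}=\delta^{\theta/(p_{0}+\theta)}\big(\delta\vartheta_{\delta}^{p_{0}+\theta}\big)^{p_{0}/(p_{0}+\theta)}$, so H\"older's inequality together with the bound $\int_{0}^{T}\!\int_{\mathbb{T}^{d}}\delta\vartheta_{\delta}^{p_{0}+\theta}\,dxdt\leq C$ from Lemma~\ref{lemma41} yields $\|\delta\vartheta_{\delta}^{p_{0}}\|_{L^1(0,T;L^1)}\leq C\delta^{\theta/(p_{0}+\theta)}\rightarrow0$; under the hypotheses of Theorem~\ref{theorem12} one argues identically with $\theta_{1}$, $\theta_{2}$ in place of $\theta$.

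Next, I would use the continuity equations $(\ref{twodelta})_{1}$--$(\ref{twodelta})_{2}$ to obtain $(\ref{limitdelta1})_{4}$--$(\ref{limitdelta1})_{5}$. From $\partial_{t}\rho_{\delta}=-\dive(\rho_{\delta}u_{\delta})$ and a bound of $\rho_{\delta}u_{\delta}$ in $L^2(0,T;L^{r})$ for a suitable $r>1$ — obtained from $\rho_{\delta}\in L^{\infty}(0,T;L^{\gamma})$, the higher integrability of $\rho_{\delta}$ in Lemma~\ref{lemma41}, the Sobolev embedding $H^1\hookrightarrow L^{2d/(d-2)}$ applied to $u_{\delta}$ ($d\geq3$, with the obvious modification for $d=2$), and, in the framework of Theorem~\ref{theorem11}, the comparability $\underline{c}\rho_{\delta}\leq n_{\delta}\leq\overline{c}\rho_{\delta}$ of $(\ref{rdelta1})$ which forces $\vartheta_{\delta}\in L^{\infty}(0,T;L^{\max\{\gamma,\alpha\}})$ — it follows that $\partial_{t}\rho_{\delta}$ is bounded in $L^2(0,T;W^{-1,r})$. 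Since $\gamma,\alpha>\tfrac{2d}{d+2}$ under either set of assumptions, the embedding $L^{\gamma}\hookrightarrow\hookrightarrow H^{-1}$ is compact, so the Aubin--Lions lemma combined with the uniform $L^{\infty}(0,T;L^{\gamma})$ bound gives $\rho_{\delta}\rightarrow\rho$ in $C([0,T];H^{-1})$; moreover, the standard weak-continuity argument (equicontinuity of $t\mapsto\int_{\mathbb{T}^{d}}\rho_{\delta}(t)\varphi\,dx$ for smooth $\varphi$, via Arzel\`a--Ascoli, together with the $L^{\infty}(0,T;L^{\gamma})$ bound) upgrades this to $\rho_{\delta}\rightarrow\rho$ in $C([0,T];L^{\gamma}_{weak})$. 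The same reasoning applied to $n_{\delta}$ completes $(\ref{limitdelta1})_{4}$--$(\ref{limitdelta1})_{5}$.

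For the momentum convergence $(\ref{limitdelta1})_{6}$, I would read off from $(\ref{twodelta})_{3}$ that $\partial_{t}(\vartheta_{\delta}u_{\delta})=-\dive(\vartheta_{\delta}u_{\delta}\otimes u_{\delta})-\nabla P_{\delta}(\rho_{\delta},n_{\delta})+\mu\Delta u_{\delta}+(\mu+\lambda)\nabla\dive u_{\delta}$, and estimate each term on the right in some $L^{q}(0,T;W^{-1,s})$ with $q,s>1$: the convective term because $\vartheta_{\delta}u_{\delta}\otimes u_{\delta}=(\sqrt{\vartheta_{\delta}}u_{\delta})\otimes(\sqrt{\vartheta_{\delta}}u_{\delta})$ with $\sqrt{\vartheta_{\delta}}u_{\delta}\in L^{\infty}(0,T;L^2)$ and $u_{\delta}\in L^2(0,T;L^{2d/(d-2)})$ (using again the comparability of $(\ref{rdelta1})$ under Theorem~\ref{theorem11}); the pressure term because $P_{\delta}(\rho_{\delta},n_{\delta})$ is bounded in $L^{p}(0,T;L^{p})$ for some $p>1$ by $(\ref{P})$, $(\ref{Pvar})$, Lemma~\ref{lemma41} and the already-established $\delta\vartheta_{\delta}^{p_{0}}\rightarrow0$; and the viscous terms trivially. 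Combined with $\vartheta_{\delta}u_{\delta}\in L^{\infty}(0,T;L^{\frac{2\min\{\gamma,\alpha\}}{\min\{\gamma,\alpha\}+1}})$ from $(\ref{r0})$ — upgraded, under the domination condition, to the $\max$-exponent so that the corresponding Lebesgue space embeds compactly into $H^{-1}$ — the Aubin--Lions and Arzel\`a--Ascoli arguments give a subsequential limit $\vartheta_{\delta}u_{\delta}\rightarrow\overline{\vartheta u}$ in $C([0,T];L^{\frac{2\min\{\gamma,\alpha\}}{\min\{\gamma,\alpha\}+1}}_{weak})\cap C([0,T];H^{-1})$. To identify the limit, note that $\vartheta_{\delta}\rightarrow\rho+n$ strongly in $C([0,T];H^{-1})$ while $u_{\delta}\rightharpoonup u$ weakly in $L^2(0,T;H^1)$, hence $\vartheta_{\delta}u_{\delta}\rightarrow(\rho+n)u$ in $\mathcal{D}'(\mathbb{T}^{d}\times(0,T))$, which forces $\overline{\vartheta u}=(\rho+n)u$ and yields $(\ref{limitdelta1})_{6}$.

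The step I expect to be the main obstacle is the low-integrability regime of Theorem~\ref{theorem11}, in which $\min\{\gamma,\alpha\}$ may equal $1$ so that the $L^{\infty}(0,T;L^{2\min\{\gamma,\alpha\}/(\min\{\gamma,\alpha\}+1)})$ bound on $\vartheta_{\delta}u_{\delta}$ degenerates to $L^{\infty}(0,T;L^1)$, which fails to embed into $H^{-1}$, and the convective term is no longer directly controllable. The remedy is to exploit that the domination condition $(\ref{dom})$ propagates to $\underline{c}\rho_{\delta}\leq n_{\delta}\leq\overline{c}\rho_{\delta}$ by the comparison principle (Proposition~\ref{prop31}), whence $\vartheta_{\delta}$ inherits the bound in $L^{\infty}(0,T;L^{\max\{\gamma,\alpha\}})$ with $\max\{\gamma,\alpha\}>d/2$, restoring both the compact embedding into $H^{-1}$ and the control of $\vartheta_{\delta}u_{\delta}\otimes u_{\delta}$; for Theorem~\ref{theorem12} the assumption $\gamma,\alpha>d/2$ makes this automatic, at the cost of using the split higher-integrability bound $(\ref{rhonLpc})$ in place of $(\ref{rhonLp})$.
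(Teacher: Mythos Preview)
Your proposal is correct and follows precisely the ``standard compactness arguments'' that the paper invokes without spelling out: the paper simply writes ``By $(\ref{rdelta1})$ and $(\ref{rhonLp})$, we have the convergence'' and states the lemma with no further proof, so your Banach--Alaoglu/Aubin--Lions/Arzel\`a--Ascoli outline is exactly what is intended. One small slip worth tightening: your sentence ``Since $\gamma,\alpha>\tfrac{2d}{d+2}$ under either set of assumptions'' is false for $\alpha$ under Theorem~\ref{theorem11} (there only $\alpha\geq 1$ is assumed), so the embedding $L^{\alpha}\hookrightarrow\hookrightarrow H^{-1}$ and the $L^{\frac{2\min\{\gamma,\alpha\}}{\min\{\gamma,\alpha\}+1}}\hookrightarrow\hookrightarrow H^{-1}$ step can fail as written---but you already diagnose and repair this in your final paragraph via the propagated domination $\underline{c}\rho_{\delta}\leq n_{\delta}\leq\overline{c}\rho_{\delta}$ from $(\ref{rdelta1})_{4}$, which places $n_{\delta}$ and $\vartheta_{\delta}u_{\delta}$ in the $L^{\gamma}$- and $L^{\frac{2\max\{\gamma,\alpha\}}{\max\{\gamma,\alpha\}+1}}$-based spaces respectively and restores the compact embeddings; just move that remark earlier so the body of the argument is internally consistent.
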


\subsection{Strong convergence of densities}

By similar arguments as used in Lemma \ref{lemma33}, the strong convergence of two densities $\rho_{\delta}$ and $n_{\delta}$ and the strong convergence of their sum $\rho_{\delta}+n_{\delta}$ are equivalent.
\begin{lemma}\label{lemma43}
Let $T>0$, $(\rho_{\delta},n_{\delta},(\rho_{\delta}+n_{\delta})u_{\delta})$ be the weak solution to the IVP $(\ref{twodelta})$-$(\ref{ddelta})$ for $\delta\in(0,1)$ given by Proposition \ref{prop31}, and $(\rho,n,(\rho+n)u)$ be the limit obtained by Lemma \ref{lemma42}. Then, under the assumptions of either Theorem \ref{theorem11} or Theorem \ref{theorem12}, as $\delta\rightarrow0$, it holds
\begin{equation}\label{strongABdelta}
\left\{
\begin{split}
&\rho_{\delta}-(\rho_{\delta}+n_{\delta})A_{\rho,n}\rightarrow 0\quad\text{in}~L^1(0,T;L^1(\mathbb{T}^{d}) ),\\
&n_{\delta}-(\rho_{\delta}+n_{\delta})B_{\rho,n}\rightarrow 0\quad\text{in}~L^1(0,T;L^1(\mathbb{T}^{d}) ),
\end{split}
\right.
\end{equation}
where $(A_{\rho, n},B_{\rho,n})$ is defined by $(\ref{AB})$.

Furthermore, as $\delta\rightarrow0$, we have
\begin{equation}\label{rhonifdelta}
\begin{split}
&\quad\quad~(\rho_{\delta},n_{\delta})\rightarrow (\rho,n)\quad\text{in}~L^1(0,T;L^1(\mathbb{T}^{d}))\times L^1(0,T;L^1(\mathbb{T}^{d}))\\
&\Longleftrightarrow \rho_{\delta}+n_{\delta}\rightarrow \rho+n\quad\text{in}~L^1(0,T;L^1(\mathbb{T}^{d}) ).
\end{split}
\end{equation}
\end{lemma}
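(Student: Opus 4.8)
The plan is to run the argument of Lemma \ref{lemma33} essentially verbatim, the only two differences being that $(\rho_{\delta},n_{\delta},u_{\delta})$ now solves the inviscid continuity equations $(\ref{twodelta})_{1}$-$(\ref{twodelta})_{2}$ (so no artificial‑viscosity terms $\varepsilon\Delta(\cdot)$ appear, and the analogues of $(\ref{rho2n})$-$(\ref{rho2n1})$ become exact identities rather than inequalities), and that the regularized initial data $(\rho_{0,\delta},n_{0,\delta})$ now vary with $\delta$, which will be handled by a dominated‑convergence argument.

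\textbf{Step 1 (transport estimate for the sum).} For fixed $\delta$ the pair $(\rho_{\delta},n_{\delta})$ is a renormalized solution of the transport system (by Lemma \ref{lemma41} it lies in $L^{2}((0,T)\times\mathbb{T}^{d})$, the exponents $\max\{\gamma,\alpha\}+\theta$, resp. $\gamma+\theta_{1}$ and $\alpha+\theta_{2}$, being $\geq2$ in the admissible range), and the limit $(\rho,n,u)$ solves $(\ref{twodelta})_{1}$-$(\ref{twodelta})_{2}$ in $\mathcal{D}'$ with $(\rho,n)|_{t=0}=(\rho_{0},n_{0})$: indeed $(\ref{limitdelta1})$ gives $\rho_{\delta}\to\rho$, $n_{\delta}\to n$ in $C([0,T];H^{-1})$ and $u_{\delta}\rightharpoonup u$ in $L^{2}(0,T;H^{1})$, which lets one pass to the limit in the products $\rho_{\delta}u_{\delta}$, $n_{\delta}u_{\delta}$, and then $\rho,n\in L^{2}$ makes $(\rho,n)$ renormalized by the DiPerna--Lions lemma. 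Proceeding exactly as in $(\ref{renrho})$-$(\ref{rho2n1})$ — mollify with $\eta_{\sigma}\ast$, multiply by $\partial F_{\sigma}$ with $F_{\sigma}(\rho,n):=\rho^{2}/(\rho+n+\sigma)$, use $(\ref{F})$ and the vanishing of the Friedrichs commutators in $L^{1}(0,T;L^{1})$, and let $\sigma\to0$ — one obtains
\[
\int_{0}^{T}\!\!\int_{\mathbb{T}^{d}}\frac{\rho_{\delta}^{2}}{\rho_{\delta}+n_{\delta}}\,dx\,dt=T\!\int_{\mathbb{T}^{d}}\frac{\rho_{0,\delta}^{2}}{\rho_{0,\delta}+n_{0,\delta}}\,dx,\qquad
\int_{0}^{T}\!\!\int_{\mathbb{T}^{d}}\frac{\rho^{2}}{\rho+n}\,dx\,dt=T\!\int_{\mathbb{T}^{d}}\frac{\rho_{0}^{2}}{\rho_{0}+n_{0}}\,dx .
\]
Since $\rho_{0,\delta}=\rho_{0}\ast j_{\delta}+\delta\to\rho_{0}$ in $L^{\gamma}$, $n_{0,\delta}=n_{0}\ast j_{\delta}+\delta\to n_{0}$ in $L^{\alpha}$ and $0\leq\rho_{0,\delta}^{2}/(\rho_{0,\delta}+n_{0,\delta})\leq\rho_{0,\delta}$, the dominated convergence theorem gives $T\int\rho_{0,\delta}^{2}/(\rho_{0,\delta}+n_{0,\delta})\to T\int\rho_{0}^{2}/(\rho_{0}+n_{0})$, hence $\int_{0}^{T}\!\int\rho_{\delta}^{2}/(\rho_{\delta}+n_{\delta})\to\int_{0}^{T}\!\int\rho^{2}/(\rho+n)$, and symmetrically for $n_{\delta}$.

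\textbf{Step 2 (density convergence and the equivalence).} Testing the weak convergences $\rho_{\delta}\rightharpoonup\rho$ and $\rho_{\delta}+n_{\delta}\rightharpoonup\rho+n$ (in the Lebesgue space furnished by Lemma \ref{lemma41}, hence uniformly integrable on $(0,T)\times\mathbb{T}^{d}$) against the bounded weights $A_{\rho,n}$ and $A_{\rho,n}^{2}$, and using $(\ref{AB1})$ together with Step 1, one reproduces the chain of Lemma \ref{lemma33}:
\[
\limsup_{\delta\to0}\int_{0}^{T}\!\!\int_{\mathbb{T}^{d}}(\rho_{\delta}+n_{\delta})\,|A_{\rho_{\delta},n_{\delta}}-A_{\rho,n}|^{2}\,dx\,dt\leq\int_{0}^{T}\!\!\int_{\mathbb{T}^{d}}\Big(\frac{\rho^{2}}{\rho+n}-2\rho A_{\rho,n}+(\rho+n)A_{\rho,n}^{2}\Big)\,dx\,dt=0 ,
\]
the last equality being just $\rho^{2}/(\rho+n)=(\rho+n)A_{\rho,n}^{2}=\rho A_{\rho,n}$. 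By the Cauchy--Schwarz inequality and $0\leq A_{\rho_{\delta},n_{\delta}},A_{\rho,n}\leq1$ (see $(\ref{AB2})$) this upgrades to $(\rho_{\delta}+n_{\delta})|A_{\rho_{\delta},n_{\delta}}-A_{\rho,n}|\to0$ in $L^{1}(0,T;L^{1})$, which is $(\ref{strongABdelta})_{1}$ because $(\rho_{\delta}+n_{\delta})A_{\rho_{\delta},n_{\delta}}=\rho_{\delta}$; the same computation with $B$ gives $(\ref{strongABdelta})_{2}$. For $(\ref{rhonifdelta})$ one implication is trivial, and if $\rho_{\delta}+n_{\delta}\to\rho+n$ in $L^{1}(0,T;L^{1})$ then, using $(\ref{strongABdelta})_{1}$, $\rho=(\rho+n)A_{\rho,n}$ and $0\leq A_{\rho,n}\leq1$,
\[
\|\rho_{\delta}-\rho\|_{L^{1}(0,T;L^{1})}\leq\|\rho_{\delta}-(\rho_{\delta}+n_{\delta})A_{\rho,n}\|_{L^{1}(0,T;L^{1})}+\|\rho_{\delta}+n_{\delta}-\rho-n\|_{L^{1}(0,T;L^{1})}\longrightarrow0 ,
\]
and likewise $n_{\delta}\to n$ in $L^{1}(0,T;L^{1})$.

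The point that is genuinely more delicate than in Lemma \ref{lemma33} is Step 1: there the $\varepsilon$‑limit was automatically in $L^{2}$ because $p_{0}+1>2$, whereas here one must invoke the higher‑integrability estimate of Lemma \ref{lemma41} in order to renormalize the transport equations of the $\delta$‑limit $(\rho,n)$ and thus obtain the \emph{identity} $\int_{0}^{T}\!\int\rho^{2}/(\rho+n)=T\int\rho_{0}^{2}/(\rho_{0}+n_{0})$; combined with the dominated‑convergence fact that $\int\rho_{0,\delta}^{2}/(\rho_{0,\delta}+n_{0,\delta})\to\int\rho_{0}^{2}/(\rho_{0}+n_{0})$, this is precisely what is needed to drive the chain of Step 2. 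Everything else is a line‑by‑line transcription of $(\ref{renrho})$-$(\ref{rhonifvar})$.
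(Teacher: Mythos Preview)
Your proof is correct and follows exactly the route the paper intends: the paper itself does not give a separate proof of Lemma \ref{lemma43} but simply says ``by similar arguments as used in Lemma \ref{lemma33}'', and you have faithfully carried out that transcription, correctly identifying the two needed modifications (absence of the $\varepsilon\Delta$ term turns the analogue of $(\ref{rho2n})$ into an equality, and the $\delta$-dependence of the initial data is absorbed by a Vitali/dominated-convergence argument, while Lemma \ref{lemma41} supplies the $L^{2}$ integrability required to renormalize both the approximate and the limiting transport equations).
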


As in \cite{belgacem1,bresch1,bresch3}, introduce the symmetric periodic kernel
\begin{equation}\label{mathcalK}
\mathcal{K}_{h}(x):=
\begin{cases}
\frac{1}{(h+|x|)^{d}},
& \mbox{if\quad$0\leq |x|\leq \frac{1}{2},$ } \\
\text{smooth},
& \mbox{if\quad$\frac{1}{2}\leq|x|\leq \frac{2}{3},$}\\
\text{smooth and independent of}~h,
& \mbox{if\quad$\frac{2}{3}\leq|x|\leq \frac{3}{4}$,}\\
0,
& \mbox{if\quad$\frac{3}{4}\leq|x|\leq 1$.}
\end{cases}
\end{equation}
It is easy to show for a constant $C>1$ independent of $h$ and a suitably small constant $h_{0}\in(0,1)$ that
\begin{equation}\label{K}
\begin{split}
&|x||\nabla \mathcal{K}_{h}(x)|\leq C\mathcal{K}_{h}(x),\quad \frac{1}{C}|\log{h}|\leq\|\mathcal{K}_{h}\|_{L^1}\leq C|\log{h}|,\quad h\in(0,h_{0}).
\end{split}
\end{equation}
 Define the functional
\begin{equation}\label{L}
\begin{split}
&L_{h,p}(f):=\int_{\mathbb{T}^{2d}}\overline{\mathcal{K}}_{h}(x-y)|\Lambda[f]|^{p}dxdy,
\end{split}
\end{equation}
where we use the convenient notations
\begin{equation}\label{fx}
\left\{
 \begin{split}
&\Lambda[f]:=f^{x}-f^{y},\\
& f^{x}:=f(x,t),\\
&\overline{\mathcal{K}}_{h}:=\frac{\mathcal{K}_{h}}{\|\mathcal{K}_{h}\|_{L^1}}.
  \end{split}
  \right.
  \end{equation}
By virtue of Lemma \ref{lemma63} below, to derive the strong convergence of $\vartheta_{\delta}:=\rho_{\delta}+n_{\delta}$, we need
$$
\lim_{h\rightarrow 0}\limsup_{\delta\rightarrow 0}~\underset{t\in[0, T]}{{\rm{ess~sup}}}~L_{h,1}(\vartheta_{\delta})=0.
$$
It is shown in Lemma \ref{lemma44} below that the above estimate is the same thing as
\begin{equation}\nonumber
\begin{split}
\lim_{h\rightarrow 0}\limsup_{\delta\rightarrow 0}~\underset{t\in[0, T]}{{\rm{ess~sup}}}~\int_{\mathbb{T}^{2d}}\overline{\mathcal{K}}_{h}(x-y)\chi(\Lambda[\vartheta_{\delta}])(w^{x}_{\delta}+w^{y}_{\delta})dxdy=0,
\end{split}
\end{equation}
where $w_{\delta}$ is given by (\ref{w}) below and $\chi$ is the function
\begin{equation}\label{chi}
\begin{split}
\chi(s):=
\begin{cases}
|s|^2,
& \mbox{if\quad$|s|\leq 1,$ } \\
\text{smooth},
& \mbox{if\quad$1\leq|s|\leq 2,$}\\
|s|,
& \mbox{if\quad$|s|\geq2$,}
\end{cases}
\end{split}
\end{equation}
which satisfies
\begin{equation}\label{chi1}
\left\{
\begin{split}
&|\chi(s)-\frac{1}{2}\chi'(s)s|\leq \frac{1}{2}\chi'(s)s,\quad 0\leq \chi'(s)s\leq C\chi(s)\leq C|s|,\quad  s\in\mathbb{R},\\
&\chi(s)\geq \frac{1}{C}|s|,\quad  |s|\geq1.
\end{split}
\right.
\end{equation}
\begin{lemma}\label{lemma44}
Let $T>0$, $h\in(0,h_{0})$, $(\rho_{\delta},n_{\delta},(\rho_{\delta}+n_{\delta})u_{\delta})$ be the weak solution to the IVP $(\ref{twodelta})$-$(\ref{ddelta})$ for $\delta\in(0,1)$ given by Proposition \ref{prop31}, $\vartheta_{\delta}:=\rho_{\delta}+n_{\delta}$, and $w_{\delta}$ be the solution to the auxiliary problem
\begin{equation}\label{w}
\left\{
\begin{split}
&\partial_{t}w_{\delta}+u_{\delta}\cdot\nabla w_{\delta}+\lambda_{0} \Xi_{\delta}w_{\delta}=0,\quad x\in\mathbb{T}^{d},\quad t\in (0,T],\\
&\Xi_{\delta}:=\vartheta_{\delta} |\dive u_{\delta}|+|\dive u_{\delta}|+M|\nabla u_{\delta}|+\rho_{\delta}^{\gamma}+\rho_{\delta}^{\widetilde{\gamma}}+n_{\delta}^{\alpha}+n_{\delta}^{\widetilde{\alpha}}+1,\\
&w_{\delta}(x,0)=e^{-\lambda_{0} (\rho_{0,\delta}+n_{0,\delta})(x)},\quad \quad~ x\in\mathbb{T}^{d},
\end{split}
\right.
\end{equation}
where $\lambda_{0}\geq 1$ is a constant to be chosen and $M$ is the localized maximal operator defined by $(\ref{M})$. Then, under the assumptions of either Theorem \ref{theorem11} or Theorem \ref{theorem12}, we have
\begin{equation}\label{wbound}
\left\{
\begin{split}
&0\leq w_{\delta}\leq e^{-\lambda_{0} \vartheta_{\delta}}\leq 1,\\
 &\underset{t\in[0, T]}{{\rm{ess~sup}}}\int_{\mathbb{T}^d}\vartheta_{\delta}\log{(1+|\log w_{\delta}|)}dx\leq C(1+\lambda_{0}),
\end{split}
\right.
\end{equation}
and for $\sigma_{*}>0$ to be chosen later,
\begin{equation}
\begin{split}
\underset{t\in[0, T]}{{\rm{ess~sup}}}~\big{(} L_{h,1}(\vartheta _{\delta}) \big{)}^2\leq \frac{C(1+\lambda_{0})}{\log{(1+|\log{\sigma_{*}}|)}}+\frac{1}{\sigma_{*}}~\underset{t\in[0, T]}{{\rm{ess~sup}}}~\int_{\mathbb{T}^{2d}}\overline{\mathcal{K}}_{h}(x-y)\chi(\Lambda[\vartheta_{\delta}])(w^{x}_{\delta}+w^{y}_{\delta})dxdy,\label{Ew}
\end{split}
\end{equation}
where $L_{h,1}(f)$, $\Lambda[f]$, $f^{x}$, $\overline{\mathcal{K}}_{h}$ and $\chi$ are given by $(\ref{L})$, $(\ref{fx})_{1}$, $(\ref{fx})_{2}$, $(\ref{fx})_{3}$ and $(\ref{chi})$, respectively, and $C>0$ is a constant independent of $\delta$, $h$ and $\sigma_{*}$.
\end{lemma}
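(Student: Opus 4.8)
The plan is to prove the three assertions in order. For \eqref{wbound}$_1$, I would observe that \eqref{w}$_1$ is a linear transport equation for $w_\delta$ along the flow of $u_\delta$, so $w_\delta$ stays nonnegative and, by comparison, $w_\delta(x,t)\le w_\delta(x,0)\,\exp(-\lambda_0\int_0^t\Xi_\delta\,d\tau)$ along characteristics. The point is that the mass equation $\partial_t\vartheta_\delta+\mathrm{div}(\vartheta_\delta u_\delta)=0$ gives $\frac{d}{dt}(\vartheta_\delta\circ X)= -(\vartheta_\delta\,\mathrm{div}\,u_\delta)\circ X$ along the flow $X$, and since $\Xi_\delta\ge \vartheta_\delta|\mathrm{div}\,u_\delta|$ we control the growth of $\vartheta_\delta$ by $\Xi_\delta$; combined with $w_\delta(x,0)=e^{-\lambda_0(\rho_{0,\delta}+n_{0,\delta})}$ this yields $w_\delta\le e^{-\lambda_0\vartheta_\delta}\le 1$. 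For \eqref{wbound}$_2$, I would set $v_\delta:=|\log w_\delta|=-\log w_\delta$, which solves $\partial_t v_\delta+u_\delta\cdot\nabla v_\delta=\lambda_0\Xi_\delta$, and estimate $\frac{d}{dt}\int_{\mathbb T^d}\vartheta_\delta\log(1+v_\delta)\,dx$. Using the renormalized continuity equation for $\vartheta_\delta$ and the transport equation for $v_\delta$, the time derivative reduces to $\int \vartheta_\delta\frac{\lambda_0\Xi_\delta}{1+v_\delta}\,dx\le \lambda_0\int\vartheta_\delta\Xi_\delta\,dx$ (dropping $1/(1+v_\delta)\le 1$); then I would bound $\int_0^T\!\!\int\vartheta_\delta\Xi_\delta$ uniformly in $\delta$ using the energy bound \eqref{rdelta1}, the higher integrability \eqref{rhonLp}/\eqref{rhonLpc}, the hypothesis $\widetilde\gamma,\widetilde\alpha\le\max\{\gamma,\alpha\}+\theta$ (resp.\ the analogous bounds in Theorem~\ref{theorem12}), and the $L^2$-boundedness of the maximal operator $M$ so that $\|M|\nabla u_\delta|\|_{L^2_{t,x}}\lesssim\|\nabla u_\delta\|_{L^2_{t,x}}$.

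The heart of the matter is \eqref{Ew}. The strategy is to split, for each fixed $(x,y,t)$, the contribution $\chi(\Lambda[\vartheta_\delta])$ according to whether $w^x_\delta+w^y_\delta$ is large or small relative to a threshold governed by $\sigma_*$. Precisely, I would write
\begin{equation}\nonumber
\overline{\mathcal K}_h(x-y)\chi(\Lambda[\vartheta_\delta])=\overline{\mathcal K}_h(x-y)\chi(\Lambda[\vartheta_\delta])\mathbf 1_{\{w^x_\delta+w^y_\delta\ge\sigma_*\}}+\overline{\mathcal K}_h(x-y)\chi(\Lambda[\vartheta_\delta])\mathbf 1_{\{w^x_\delta+w^y_\delta<\sigma_*\}}.
\end{equation}
On the set where $w^x_\delta+w^y_\delta\ge\sigma_*$ one has $\chi(\Lambda[\vartheta_\delta])\le \sigma_*^{-1}\chi(\Lambda[\vartheta_\delta])(w^x_\delta+w^y_\delta)$, which produces the second term on the right of \eqref{Ew} after taking the essential supremum in $t$. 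On the complementary set at least one of $w^x_\delta,w^y_\delta$ is $<\sigma_*$, hence at that point $|\log w_\delta|>|\log\sigma_*|$, i.e.\ $v_\delta$ is large there; using the pointwise bound $\chi(\Lambda[\vartheta_\delta])\le |\Lambda[\vartheta_\delta]|\le \vartheta^x_\delta+\vartheta^y_\delta$ together with the symmetry of $\mathcal K_h$ and $\|\overline{\mathcal K}_h\|_{L^1}=1$, this piece is controlled by
\begin{equation}\nonumber
2\int_{\mathbb T^{2d}}\overline{\mathcal K}_h(x-y)\,\vartheta^x_\delta\,\mathbf 1_{\{v^x_\delta>|\log\sigma_*|\}}\,dxdy\le 2\int_{\mathbb T^d}\vartheta_\delta\,\mathbf 1_{\{v_\delta>|\log\sigma_*|\}}\,dx.
\end{equation}
Finally I would apply Chebyshev with the weight $\log(1+v_\delta)$: on $\{v_\delta>|\log\sigma_*|\}$ we have $1\le \log(1+v_\delta)/\log(1+|\log\sigma_*|)$, so the last integral is bounded by $\big(\log(1+|\log\sigma_*|)\big)^{-1}\int\vartheta_\delta\log(1+|\log w_\delta|)\,dx\le C(1+\lambda_0)/\log(1+|\log\sigma_*|)$ by \eqref{wbound}$_2$. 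Adding the two pieces and noting $L_{h,1}(\vartheta_\delta)\le (L_{h,1}(\vartheta_\delta))^{1/2}\cdot(\text{something})$—or more simply squaring after using $\chi(s)\ge |s|$ for $|s|\ge 1$ and $\chi(s)=|s|^2$ for $|s|\le1$ to pass between $L_{h,1}$ and $L_{h}$ of $\chi$—gives \eqref{Ew}; the elementary inequalities \eqref{chi1} are what make this interpolation between the linear and quadratic regimes of $\chi$ work.

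The main obstacle I anticipate is the bookkeeping in the last step: one must be careful that the quantity actually controlled by the "large $w$" set is $L_{h,1}$ (the $L^1$-type functional $\int\overline{\mathcal K}_h|\Lambda[\vartheta_\delta]|$) and not $L_{h,2}$, since the comparison $\chi(s)\le\sigma_*^{-1}\chi(s)(w^x+w^y)$ keeps $\chi$ linear in $|\Lambda[\vartheta_\delta]|$ for large arguments, while the "small $w$" estimate naturally sees $|\Lambda[\vartheta_\delta]|\le\vartheta^x+\vartheta^y$ rather than $|\Lambda[\vartheta_\delta]|^2$. Squaring $L_{h,1}(\vartheta_\delta)$—rather than estimating it directly—is precisely the device that lets both pieces be absorbed: one factor of $L_{h,1}$ is bounded by $\int\overline{\mathcal K}_h(\vartheta^x_\delta+\vartheta^y_\delta)\le 2\|\vartheta_\delta\|_{L^\infty_tL^1_x}\le C$ via \eqref{rdelta1}, and the other factor carries the smallness. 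I would also need to justify the renormalization/transport manipulations rigorously (the regularized densities from Proposition~\ref{prop31} are only weak solutions of \eqref{twodelta}$_{1,2}$, so one works with a Friedrichs-mollified version as in the proof of Lemma~\ref{lemma33} and passes to the limit), but this is routine given the machinery already set up in the paper.
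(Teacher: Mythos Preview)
Your argument for \eqref{wbound}$_1$ and for \eqref{Ew} is essentially the same as the paper's (the paper also phrases the first via a comparison principle for $e^{-\lambda_0\vartheta_\delta}$ as a supersolution, and splits the $\chi$-integral exactly along $\{w^x_\delta\le\sigma_*\ \text{and}\ w^y_\delta\le\sigma_*\}$ versus its complement, then uses the Chebyshev bound with $\log(1+|\log w_\delta|)$ and the Cauchy--Schwarz step $L_{h,1}(\vartheta_\delta)\le C\big(\int\overline{\mathcal K}_h\chi(\Lambda[\vartheta_\delta])\big)^{1/2}$ to pass to $(L_{h,1})^2$). Minor point: on your ``small'' set $\{w^x_\delta+w^y_\delta<\sigma_*\}$ both weights are $<\sigma_*$, not just one; your subsequent estimate is fine regardless.

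There is, however, a genuine gap in your proof of \eqref{wbound}$_2$. You drop the factor $1/(1+v_\delta)\le 1$ and propose to bound $\lambda_0\int_0^T\!\int_{\mathbb T^d}\vartheta_\delta\,\Xi_\delta\,dx\,dt$ uniformly in $\delta$. But $\vartheta_\delta\Xi_\delta$ contains, for instance, $\vartheta_\delta^2|\dive u_\delta|$ and $\vartheta_\delta\rho_\delta^{\gamma}$, and neither is controlled by the available estimates at the critical endpoints: under Theorem~\ref{theorem11} with $d=3$ and $\gamma=\alpha=\tfrac95$ one has only $\vartheta_\delta\in L^2_{t,x}$ (since $\max\{\gamma,\alpha\}+\theta=2$), so $\vartheta_\delta^2|\dive u_\delta|\notin L^1_{t,x}$ uniformly, and $\vartheta_\delta\rho_\delta^{\gamma}\lesssim\vartheta_\delta^{\gamma+1}$ requires $\gamma+1\le\gamma+\theta$, i.e.\ $\theta\ge1$, which fails for $\gamma<d$. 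The paper avoids this by \emph{not} throwing away $1/(1+v_\delta)$: it first uses \eqref{wbound}$_1$ to get $v_\delta=|\log w_\delta|\ge\lambda_0\vartheta_\delta$, hence
\[
\frac{\lambda_0\vartheta_\delta\,\Xi_\delta}{1+v_\delta}\ \le\ \frac{\lambda_0\vartheta_\delta}{1+\lambda_0\vartheta_\delta}\,\Xi_\delta\ \le\ \Xi_\delta,
\]
and $\Xi_\delta$ \emph{is} uniformly in $L^1_{t,x}$ (each term is: $\vartheta_\delta|\dive u_\delta|\in L^1$ by $\vartheta_\delta\in L^2$ and $\dive u_\delta\in L^2$; $M|\nabla u_\delta|\in L^2$; $\rho_\delta^{\gamma},n_\delta^{\alpha}\in L^\infty_tL^1_x$; $\rho_\delta^{\widetilde\gamma},n_\delta^{\widetilde\alpha}\in L^1_{t,x}$ by \eqref{gamma1}--\eqref{gamma2} and Lemma~\ref{lemma41}). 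This cancellation of the extra factor $\vartheta_\delta$ against $1/(1+\lambda_0\vartheta_\delta)$ is the key step you are missing; once you insert it, your argument goes through and yields the stated bound $C(1+\lambda_0)$ (the $\lambda_0$ coming only from the initial data $\int\vartheta_{0,\delta}\log(1+\lambda_0\vartheta_{0,\delta})\,dx$).
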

\begin{proof}
First, it follows from the maximal principle for the transport equation $(\ref{w})_{1}$ that
\begin{equation}
\begin{split}
0\leq w_{\delta}\leq 1,\label{w01}
\end{split}
\end{equation}
which leads to $|\log{w_{\delta}}|=-\log{w_{\delta}}$. Since there holds
\begin{equation}
\begin{split}
\partial_{t}\vartheta_{\delta}+u_{\delta}\cdot\nabla\vartheta_{\delta} +\vartheta_{\delta}\dive u_{\delta}=0\quad\text{in}~\mathcal{D}'(\mathbb{T}^{d}\times(0,T)),\label{varrhoapp}
\end{split}
\end{equation}
one derives by $(\ref{w})_{2}$ and the argument of renormalized solutions as in $(\ref{varthetavar})$-$(\ref{342})$ that
\begin{equation}\label{etheta}
\begin{split}
&\partial_{t}e^{-\lambda_{0} \vartheta_{\delta}}+u_{\delta}\cdot\nabla e^{-\lambda_{0} \vartheta_{\delta}}+\lambda_{0} \Xi_{\delta}e^{-\lambda_{0} \vartheta_{\delta}}=\lambda_{0} (\Xi_{\delta}+ \vartheta_{\delta} \dive u_{\delta})e^{-\lambda_{0} \vartheta_{\delta}}\geq 0 \quad\text{in}~\mathcal{D}'(\mathbb{T}^{d}\times(0,T)).
\end{split}
\end{equation}
From the comparison principle for $(\ref{w})_{1}$ and $(\ref{etheta})$, we have $(\ref{wbound})_{1}$.

Next, by $(\ref{wbound})_{1}$ and the argument of renormalized solutions (formally multiplying $(\ref{w})_{1}$ by $-\vartheta_{\delta}(1+|\log{w_{\delta}|})w_{\delta}^{-1}$), we obtain
\begin{equation}\label{441}
\begin{split}
&\partial_{t}\big{(}\vartheta_{\delta} \log{(1+|\log{w_{\delta}}|)}\big{)}+\dive\big{(}u_{\delta}\vartheta_{\delta} \log{(1+|\log{w_{\delta}}|)} \big{)}\\
&\quad=\frac{\lambda_{0} \vartheta_{\delta} \Xi_{\delta}}{1+|\log{w_{\delta}}|}\leq \frac{\lambda_{0} \vartheta_{\delta} \Xi_{\delta}}{1+\lambda_{0}\vartheta_{\delta}} \leq  \Xi_{\delta}  \quad\text{in}~\mathcal{D}'(\mathbb{T}^{d}\times(0,T)).
\end{split}
\end{equation}
Since we have
$$
\int_{\mathbb{T}^{d}}\vartheta_{\delta} \log{(1+|\log{w_{\delta}}|)}\big{|}_{t=0}dx\leq C\lambda_{0},
$$
and the term $\Xi_{\delta}$ on the right-hand side of (\ref{441}) is uniformly bounded in $L^1(0,T;L^1(\mathbb{T}^{d}) )$ due to $(\ref{gamma1})_{2}$, $(\ref{gamma2})_{2}$-$(\ref{gamma2})_{3}$, $(\ref{rdelta1})$ and $(\ref{rhonLp})$-$(\ref{rhonLpc})$, one shows $(\ref{wbound})_{2}$ after integrating (\ref{441}) over $\mathbb{T}^{d}\times[0,t]$.

Finally, one has by (\ref{chi}) and $(\ref{chi1})_{2}$ that
\begin{equation}\label{4422}
\begin{split}
&L_{h,1}(\vartheta_{\delta})=\Big{(} \int_{\{(x,y)\in \mathbb{T}^{2d}~|~|\Lambda[\vartheta_{\delta}]|\leq 1 \}} +\int_{\{(x,y)\in \mathbb{T}^{2d}~|~|\Lambda[\vartheta_{\delta}]|\geq 1\}}\Big{)}\overline{\mathcal{K}}_{h}(x-y)|\Lambda[\vartheta_{\delta}]| dxdy\\
&\quad\quad~~\quad\leq C\Big{(} \int_{\mathbb{T}^{2d}}\overline{\mathcal{K}}_{h}(x-y)\chi(\Lambda[\vartheta_{\delta}])dxdy\Big{)}^{\frac{1}{2}}.
\end{split}
\end{equation}
Owing to $(\ref{chi})_{1}$, the Young inequality of convolution type and $w_{\delta}^{x}+w_{\delta}^{y}\geq \sigma_{*}$ for $w_{\delta}^{x}\geq \sigma_{*}$ or $w_{\delta}^{y}\geq \sigma_{*}$, we obtain
\begin{equation}\nonumber
\begin{split}
&\int_{\mathbb{T}^{2d}}\overline{\mathcal{K}}_{h}(x-y)\chi(\Lambda[\vartheta_{\delta}])dxdy\\
&\quad=\Big{(} \int_{\{(x,y)\in \mathbb{T}^{2d}~|~w_{\delta}^{x}\leq \sigma_{*}~\text{and}~w_{\delta}^{y}\leq \sigma_{*}\}} +\int_{\{(x,y)\in \mathbb{T}^{2d}~|~w_{\delta}^{x}\geq \sigma~\text{or}~w_{\delta}^{y}\geq \sigma_{*}\}}\Big{)}\overline{\mathcal{K}}_{h}(x-y)\chi(\Lambda[\vartheta_{\delta}]) dxdy\\
&\quad\leq \frac{C}{\log{(1+|\log{\sigma_{*}}|)}}\int_{\mathbb{T}^{d}}\vartheta_{\delta} \log{(1+|\log{w_{\delta}}|)}dx+\frac{1}{\sigma_{*}}\int_{\mathbb{T}^{2d}}\overline{\mathcal{K}}_{h}(x-y)\chi(\Lambda[\vartheta_{\delta}])(w^{x}_{\delta}+w^{y}_{\delta})dxdy,
\end{split}
\end{equation}
which together with $(\ref{wbound})_{2}$ and (\ref{4422}) leads to (\ref{Ew}).
\end{proof}

With the help of the continuity equation (\ref{varrhoapp}), we have the quantitative regularity estimate with weight.

\begin{lemma}\label{lemma45}
Let $T>0$, $h\in(0,h_{0})$, $\lambda_{0}\geq1$, $(\rho_{\delta},n_{\delta},(\rho_{\delta}+n_{\delta})u_{\delta})$ be the weak solution to the IVP $(\ref{twodelta})$-$(\ref{ddelta})$ for $\delta\in(0,1)$ given by Proposition \ref{prop31}, $\vartheta_{\delta}:=\rho_{\delta}+n_{\delta}$, and $w_{\delta}$ be the solution to the IVP $(\ref{w})$. Then, under the assumptions of either Theorem \ref{theorem11} or Theorem \ref{theorem12}, it holds for $t\in[ 0,T]$ that
\begin{equation}\label{Ewdiv}
\begin{split}
& \int_{\mathbb{T}^{2d}}\overline{\mathcal{K}}_{h}(x-y)\chi(\Lambda[\vartheta_{\delta}])(w^{x}_{\delta}+w^{y}_{\delta})dxdy\\
&\quad\leq  L_{h,1}(\rho _{0,\delta}+n_{0,\delta})+\frac{C}{|\log{h}|^{\frac{1}{2}}}+(C-2\lambda_{0})\int_{0}^{t}\int_{\mathbb{T}^{2d}}\overline{\mathcal{K}}_{h}(x-y)\chi(\Lambda[\vartheta_{\delta}]) \Xi_{\delta}^{x}w_{\delta}^{x}dxdyd\tau\\
&\quad\quad-2\int_{0}^{t}\int_{\mathbb{T}^{2d}}\overline{\mathcal{K}}_{h}(x-y)\Lambda[\dive u_{\delta}]\big{(}\chi'(\Lambda[\vartheta_{\delta}])\vartheta_{\delta}^{y}+\chi(\Lambda[\vartheta_{\delta}]) \big{)} w_{\delta}^{x}dxdyd\tau,
\end{split}
\end{equation}
where $L_{h,1}(f)$, $\Lambda[f]$, $f^{x}$, $\overline{\mathcal{K}}_{h}$, $\chi$ and $\Xi_{\delta}$ are defined by $(\ref{L})$, $(\ref{fx})_{2}$, $(\ref{fx})_{1}$, $(\ref{fx})_{3}$, $(\ref{chi})$ and $(\ref{w})_{2}$, respectively, and $C>0$ is a constant independent of $\delta$, $h$ and $\lambda_{0}$.
\end{lemma}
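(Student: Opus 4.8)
The argument is of Bresch--Jabin type: one differentiates in time the weighted functional $\mathcal{G}_\delta(t):=\int_{\mathbb{T}^{2d}}\overline{\mathcal{K}}_h(x-y)\chi(\Lambda[\vartheta_\delta])(w_\delta^x+w_\delta^y)\,dxdy$, uses the continuity equation $(\ref{varrhoapp})$ for $\vartheta_\delta$ and the transport equation $(\ref{w})_1$ for $w_\delta$, transfers the spatial derivatives generated by the convective operators onto the kernel by integration by parts, and then exploits the bound $|x||\nabla\mathcal{K}_h(x)|\le C\mathcal{K}_h(x)$ from $(\ref{K})$, the pointwise maximal-function inequality $|u_\delta^x-u_\delta^y|\le C|x-y|(M|\nabla u_\delta|^x+M|\nabla u_\delta|^y)$, and the fact that $\Xi_\delta$ dominates $|\dive u_\delta|$ and $M|\nabla u_\delta|$, so that the benign contributions are absorbed into the damping term $-2\lambda_0\int\!\!\int\overline{\mathcal{K}}_h\chi\Xi_\delta^x w_\delta^x$ and the remaining pieces are exactly the $\dive u_\delta$ term kept in $(\ref{Ewdiv})$ plus a residual that vanishes like $|\log h|^{-1/2}$ as $h\to0$.

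In detail, I would first regularize by applying $\eta_\sigma\ast$ to $(\ref{varrhoapp})$ and $(\ref{w})_1$, as in $(\ref{renrho})$--$(\ref{342})$, so that all chain rules are licit for the mollified quantities, derive the identity for $\sigma>0$, and let $\sigma\to0$ using the Friedrichs commutator estimates together with the uniform bounds of Proposition $\ref{prop31}$ and Lemma $\ref{lemma41}$. Since $\overline{\mathcal{K}}_h$ is symmetric and $\chi$ even, $\mathcal{G}_\delta=2\int\!\!\int\overline{\mathcal{K}}_h\chi(\Lambda[\vartheta_\delta])w_\delta^x$, so it suffices to differentiate $\mathcal{H}_\delta:=\int\!\!\int\overline{\mathcal{K}}_h\chi(\Lambda[\vartheta_\delta])w_\delta^x$. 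Substituting $\partial_t\vartheta_\delta=-\dive(u_\delta\vartheta_\delta)$ and $\partial_tw_\delta=-u_\delta\cdot\nabla w_\delta-\lambda_0\Xi_\delta w_\delta$, the terms with $u_\delta^x\cdot\nabla_x$ combine to $-\int\!\!\int\overline{\mathcal{K}}_h\,u_\delta^x\cdot\nabla_x(\chi\,w_\delta^x)$ and the term with $u_\delta^y\cdot\nabla_y$ to $-\int\!\!\int\overline{\mathcal{K}}_h\,u_\delta^y\cdot\nabla_y\chi\;w_\delta^x$; integrating by parts in $x$ and in $y$ and using $\nabla_x\mathcal{K}_h(x-y)=-\nabla_y\mathcal{K}_h(x-y)$ produces a commutator term $\|\mathcal{K}_h\|_{L^1}^{-1}\int\!\!\int\nabla\mathcal{K}_h(x-y)\cdot(u_\delta^x-u_\delta^y)\chi(\Lambda[\vartheta_\delta])w_\delta^x$, a term $\int\!\!\int\overline{\mathcal{K}}_h(\dive u_\delta^x+\dive u_\delta^y)\chi w_\delta^x$, the two $\vartheta_\delta\dive u_\delta$ contributions from the continuity equation, and the damping $-\lambda_0\int\!\!\int\overline{\mathcal{K}}_h\chi\Xi_\delta^x w_\delta^x$. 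The $\dive u_\delta$ terms are rearranged algebraically via $\dive u_\delta^x\vartheta_\delta^x-\dive u_\delta^y\vartheta_\delta^y=\dive u_\delta^x\,\Lambda[\vartheta_\delta]+\Lambda[\dive u_\delta]\,\vartheta_\delta^y$ into $\int\!\!\int\overline{\mathcal{K}}_h\dive u_\delta^x(2\chi-\chi'\Lambda[\vartheta_\delta])w_\delta^x-\int\!\!\int\overline{\mathcal{K}}_h\Lambda[\dive u_\delta](\chi+\chi'\vartheta_\delta^y)w_\delta^x$: the second is (one half of) the last integral in $(\ref{Ewdiv})$, and the first is bounded by $C\int\!\!\int\overline{\mathcal{K}}_h\Xi_\delta^x\chi w_\delta^x$ since $|2\chi(s)-\chi'(s)s|=2|\chi(s)-\tfrac{1}{2}\chi'(s)s|\le\chi'(s)s\le C\chi(s)$ by $(\ref{chi1})$ and $|\dive u_\delta|\le\Xi_\delta$, hence after the overall multiplication by $2$ it combines with the damping to form the $(C-2\lambda_0)$-term.

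For the commutator term, $(\ref{K})$ and the maximal-function bound give $\le C\int\!\!\int\overline{\mathcal{K}}_h(M|\nabla u_\delta|^x+M|\nabla u_\delta|^y)\chi w_\delta^x$; the $x$-part is $\le C\int\!\!\int\overline{\mathcal{K}}_h\Xi_\delta^x\chi w_\delta^x$ and is again absorbed into the $(C-2\lambda_0)$-term. The main obstacle is the off-diagonal piece $\int\!\!\int\overline{\mathcal{K}}_h(x-y)M|\nabla u_\delta|^y\chi(\Lambda[\vartheta_\delta])w_\delta^x\,dxdy$, which is not controlled by $\int\!\!\int\overline{\mathcal{K}}_h\Xi_\delta^x\chi w_\delta^x$; here I would use $\chi(s)\le|s|$ from $(\ref{chi1})$ and $\vartheta_\delta w_\delta\le\vartheta_\delta e^{-\lambda_0\vartheta_\delta}\le C$ from $(\ref{wbound})_1$ to bound $\chi(\Lambda[\vartheta_\delta])w_\delta^x\le C(1+\vartheta_\delta^y)$, and then a Cauchy--Schwarz split of the kernel weight together with the moment bounds $\int_{\mathbb{T}^d}\overline{\mathcal{K}}_h(z)|z|^\beta\,dz\le C\|\mathcal{K}_h\|_{L^1}^{-1}\le C|\log h|^{-1}$ ($\beta\ge1$, a consequence of $(\ref{K})$), the $L^2$-boundedness of the localized maximal operator, and the uniform bounds $\|\nabla u_\delta\|_{L^2(0,T;L^2)}+\|\vartheta_\delta\|_{L^2(0,T;L^2)}\le C$ of Lemma $\ref{lemma41}$, which after integration over $[0,t]$ produces the term $C|\log h|^{-1/2}$. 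Collecting everything, multiplying by $2$, integrating in time, and using $\mathcal{G}_\delta(0)\le L_{h,1}(\rho_{0,\delta}+n_{0,\delta})$ (valid since $w_\delta(0)=e^{-\lambda_0(\rho_{0,\delta}+n_{0,\delta})}\le1$ and $\chi(s)\le|s|$) then yields $(\ref{Ewdiv})$. The only remaining point is the rigorous justification of the renormalization and chain-rule manipulations for $\vartheta_\delta$ and $w_\delta$, which is routine once the $\eta_\sigma$-mollification and the corresponding commutator estimates are invoked.
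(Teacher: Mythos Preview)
Your overall architecture is correct and matches the paper's proof: differentiate the weighted functional, use symmetry to reduce to the $w_\delta^x$ contribution, integrate by parts to produce a kernel-commutator term, and rearrange the $\dive u_\delta$ pieces algebraically into the two terms displayed on the right of $(\ref{Ewdiv})$. Your treatment of the damping term, the $(2\chi-\chi'\Lambda)\dive u_\delta^x$ term via $(\ref{chi1})$, and the initial-data bound is fine.

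The gap is in your handling of the off-diagonal part of the commutator term. Once you apply $|x|\,|\nabla\mathcal{K}_h(x)|\le C\mathcal{K}_h(x)$ together with the pointwise maximal bound $|u_\delta^x-u_\delta^y|\le C|x-y|(M|\nabla u_\delta|^x+M|\nabla u_\delta|^y)$, the factor $|x-y|$ has been completely consumed, and the $y$-piece you are left with is
\[
\int_0^t\int_{\mathbb{T}^{2d}}\overline{\mathcal{K}}_h(x-y)\,M|\nabla u_\delta|^y\,\chi(\Lambda[\vartheta_\delta])\,w_\delta^x\,dxdy\,d\tau.
\]
After your bound $\chi(\Lambda[\vartheta_\delta])w_\delta^x\le C(1+\vartheta_\delta^y)$ and $\int\overline{\mathcal{K}}_h=1$, this reduces to $C\int_0^t\!\int M|\nabla u_\delta|(1+\vartheta_\delta)$, which is $O(1)$ uniformly in $h$ but carries \emph{no} decay in $h$; there is no residual $|z|^\beta$ factor to which your ``moment bound'' $\int\overline{\mathcal{K}}_h(z)|z|^\beta dz\le C|\log h|^{-1}$ could be applied. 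So the claimed $|\log h|^{-1/2}$ does not follow from this line of argument.

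The paper avoids this by \emph{not} passing immediately to the maximal function. It uses instead the finer averages $D_{|x-y|}u_\delta$ of Lemma~\ref{lemma64}: from $(\ref{D})$ one gets $D_{|x-y|}u_\delta^x+D_{|x-y|}u_\delta^y$ in place of $M|\nabla u_\delta|^x+M|\nabla u_\delta|^y$, and then writes $D_{|x-y|}u_\delta^x+D_{|x-y|}u_\delta^y\le |\Lambda[D_{|x-y|}u_\delta]|+2D_{|x-y|}u_\delta^x$. The diagonal piece $2D_{|x-y|}u_\delta^x\le CM|\nabla u_\delta|^x$ is absorbed into $\Xi_\delta^x$ exactly as you do. The crucial point is that the remaining \emph{difference} $|\Lambda[D_{|x-y|}u_\delta]|$ is small in an averaged sense: by $(\ref{DH1})$,
\[
\int_{\mathbb{T}^d}\mathcal{K}_h(z)\,\|D_{|z|}u_\delta(\cdot+z)-D_{|z|}u_\delta\|_{L^2}\,dz\le C\|u_\delta\|_{H^1}\,|\log h|^{1/2},
\]
which after dividing by $\|\mathcal{K}_h\|_{L^1}\sim|\log h|$ and pairing against $\|\vartheta_\delta\|_{L^2}$ yields exactly the $C|\log h|^{-1/2}$ term. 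In short, the $|\log h|^{-1/2}$ gain comes from translation regularity of $D_{|z|}u_\delta$ encoded in $(\ref{DH1})$, not from a kernel moment; replacing $D_r$ by $M$ too early destroys this structure.
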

\begin{proof}
It can be verified by $(\ref{varrhoapp})$ that
\begin{equation}
\begin{split}
&\partial_{t}\Lambda[\vartheta_{\delta}]+\dive_{x}(\Lambda[\vartheta_{\delta}]u_{\delta}^x)+\dive_{y}(\Lambda[\vartheta_{\delta}]u_{\delta}^y)\\
&\quad=-\vartheta _{\delta}^y\dive_{x}u_{\delta}^x+\vartheta _{\delta}^x\dive_{y}u_{\delta}^y\\
&\quad=\frac{1}{2}\Lambda[\vartheta_{\delta}](\dive_{x}u_{\delta}^x+\dive_{y}u_{\delta}^y)-\frac{1}{2}(\vartheta _{\delta}^x+\vartheta _{\delta}^y)\Lambda[\dive u_{\delta}]\quad\text{in}~\mathcal{D}'(\mathbb{T}^{d}\times(0,T)).\label{dxy}
\end{split}
\end{equation}
By virtue of (\ref{w}), (\ref{dxy}) and the argument of renormalized solutions as in $(\ref{varthetavar})$-$(\ref{342})$ (formally multiplying $(\ref{dxy})$ by $ \overline{\mathcal{K}}_{h}(x-y)\chi'(\Lambda[\vartheta_{\delta}])(w_{\delta}^x+w_{\delta}^y)$ and integrating the resulting equation over $\mathbb{T}^{2d}\times[0,t]$), one can show
\begin{equation}\nonumber
\begin{split}
&\int_{\mathbb{T}^{2d}}\overline{\mathcal{K}}_{h}(x-y)\chi(\Lambda[\vartheta_{\delta}]) (w^{x}_{\delta}+w^{y}_{\delta})dxdy\Big{|}^{t}_{0}\\
&\quad=\int_{0}^{t}\int_{\mathbb{T}^{2d}}\nabla\overline{\mathcal{K}}_{h}(x-y) \cdot \Lambda[ u_{\delta}] \chi(\Lambda[\vartheta_{\delta}] )  (w_{\delta}^x+w_{\delta}^y)dxdyd\tau\\
&\quad\quad+\int_{0}^{t}\int_{\mathbb{T}^{2d}}\overline{\mathcal{K}}_{h}(x-y) \chi(\Lambda[\vartheta_{\delta}]) (\partial_{t}w_{\delta}^x+u_{\delta}^x\cdot\nabla_{x}w _{\delta}^x+\partial_{t}w_{\delta}^y+u_{\delta}^y\cdot\nabla_{y}w_{\delta}^y)dxdyd\tau\\
&\quad\quad+\frac{1}{2}\int_{0}^{t}\int_{\mathbb{T}^{2d}}\overline{\mathcal{K}}_{h}(x-y) \big{(} 2\chi(\Lambda[\vartheta_{\delta}])-\chi'(\Lambda[\vartheta_{\delta}])\Lambda[\vartheta_{\delta}]\big{)} (\dive_{x}u_{\delta}^x+\dive_{y}u_{\delta}^y)(w_{\delta}^x+w_{\delta}^y)dxdyd\tau\\
&\quad\quad-\frac{1}{2}\int_{0}^{t}\int_{\mathbb{T}^{2d}}\overline{\mathcal{K}}_{h}(x-y)\chi'(\Lambda[\vartheta_{\delta}])(\vartheta_{\delta}^x+\vartheta _{\delta}^y)\Lambda[\dive u_{\delta}](w_{\delta}^x+w_{\delta}^y)dxdyd\tau.
\end{split}
\end{equation}
Making use of symmetry, we have
\begin{equation}
\begin{split}
&\int_{\mathbb{T}^{2d}}\overline{\mathcal{K}}_{h}(x-y)\chi(\Lambda[\vartheta_{\delta}])(w^{x}_{\delta}+w^{y}_{\delta})dxdy\Big{|}^{t}_{0}=\sum_{i=1}^{4}I_{i}^{3},\label{ddte}
\end{split}
\end{equation}
where $I_{i}^{3}$ $(i=1,2,3,4)$ are given by
\begin{equation}\nonumber
\begin{split}
&I_{1}^{3}:=2\int_{0}^{t}\int_{\mathbb{T}^{2d}} \nabla\overline{\mathcal{K}}_{h}(x-y) \cdot \Lambda[ u_{\delta}] \chi(\Lambda[\vartheta_{\delta}] ) w_{\delta}^xdxdyd\tau,\\
&I_{2}^{3}:=2\int_{0}^{t}\int_{\mathbb{T}^{2d}}\overline{\mathcal{K}}_{h}(x-y) \chi(\Lambda[\vartheta_{\delta}] )(\partial_{t}w_{\delta}^x+u_{\delta}^x\cdot\nabla_{x}w _{\delta}^x)dxdyd\tau,\\
&I_{3}^{3}:=\int_{0}^{t}\int_{\mathbb{T}^{2d}}\overline{\mathcal{K}}_{h}(x-y)\big{(} 2\chi(\Lambda[\vartheta_{\delta}])-\chi'(\Lambda[\vartheta_{\delta}])\Lambda[\vartheta_{\delta}]\big{)}(\dive_{x}u_{\delta}^x+\dive_{y}u_{\delta}^y)w_{\delta}^xdxdyd\tau,\\
&I_{4}^{3}:=-\int_{0}^{t}\int_{\mathbb{T}^{2d}}\overline{\mathcal{K}}_{h}(x-y)(\vartheta _{\delta}^x+\vartheta _{\delta}^y)\chi'(\Lambda[\vartheta_{\delta}]) \Lambda[\dive u_{\delta}]w_{\delta}^xdxdyd\tau.
\end{split}
\end{equation}
By (\ref{K}), $(\ref{chi1})_{1}$, $(\ref{wbound})_{1}$, (\ref{D}) and (\ref{DH1}), it holds
\begin{equation}\nonumber
\begin{split}
&I_{1}^3\leq C\int_{0}^{t}\int_{\mathbb{T}^{2d}}\overline{\mathcal{K}}_{h}(x-y)  \big{(} D_{|x-y| } u_{\delta}^x+D_{|x-y|}u_{\delta}^y \big{)} \chi(\Lambda[\vartheta_{\delta}])  w_{\delta}^xdxdyd\tau\\
&\quad\leq C\int_{0}^{t}\int_{\mathbb{T}^{2d}} \overline{\mathcal{K}}_{h}(x-y) \big{(} |\Lambda[ D_{|x-y|}u_{\delta}]|+2D_{|x-y|}u_{\delta}^{x} \big{)}  (\vartheta_{\delta}^{x}+\vartheta_{\delta}^{y}) w_{\delta}^xdxdyd\tau\\
&\quad \leq C\int_{0}^{t}\|\vartheta _{\delta}(\tau)\|_{L^2}\int_{\mathbb{T}^{d}} \overline{\mathcal{K}}_{h}(z) \| \big{(}D_{|z|}u_{\delta}^{\cdot-z}-D_{|z|}u_{\delta}^{\cdot} \big{)}(\tau)\|_{L^2}dzd\tau\\
&\quad\quad+C\int_{0}^{t}\int_{\mathbb{T}^{2d}}\overline{\mathcal{K}}_{h}(x-y) \chi(\Lambda[\vartheta_{\delta}])  M|\nabla_{x} u_{\delta}^x| w_{\delta}^xdxdyd\tau\\
&\quad \leq \frac{C\|\vartheta _{\delta}\|_{L^{2}(0,T;L^2)}\|u_{\delta}\|_{L^2(0,T;H^1)}}{|\log{h}|^{\frac{1}{2}}}+C\int_{0}^{t}\int_{\mathbb{T}^{2d}}\overline{\mathcal{K}}_{h}(x-y) \chi(\Lambda[\vartheta_{\delta}])  M|\nabla_{x} u_{\delta}^x| w_{\delta}^xdxdyd\tau,
\end{split}
\end{equation}
where $D_{|x|}f$ and $Mf$ are defined by (\ref{Maximal}) and (\ref{M}), respectively. For $I_{2}^{3}$, it follows from $(\ref{w})_{1}$ that
\begin{equation}\nonumber
\begin{split}
I_{2}^3=-2\lambda_{0}\int_{0}^{t}\int_{\mathbb{T}^{2d}}\overline{\mathcal{K}}_{h}(x-y) \chi(\Lambda[\vartheta_{\delta}]) \Xi_{\delta}^xw_{\delta}^xdxdyd\tau .
\end{split}
\end{equation}
In addition, by $(\ref{chi1})_{1}$ and the fact
\begin{equation}\nonumber
\begin{split}
&\big{(} 2\chi(\Lambda[\vartheta_{\delta}])-\chi'(\Lambda[\vartheta_{\delta}])\Lambda[\vartheta_{\delta}]\big{)}(\dive_{x}u_{\delta}^x+\dive_{y}u_{\delta}^y)+(\vartheta _{\delta}^x+\vartheta _{\delta}^y)\chi'(\Lambda[\vartheta_{\delta}]) \Lambda[\dive u_{\delta}]\\
&\quad=2 \big{(} 2\chi(\Lambda[\vartheta_{\delta}])-\chi'(\Lambda[\vartheta_{\delta}])\Lambda[\vartheta_{\delta}]\big{)} \dive_{x}u_{\delta}^x-2\big{(} \chi'(\Lambda[\vartheta_{\delta}]) \vartheta_{\delta}^{y}+\chi(\Lambda[\vartheta_{\delta}])\big{)} \Lambda[\dive u_{\delta}],
\end{split}
\end{equation}
 we infer
\begin{equation}\nonumber
\begin{split}
&I_{3}^3+I_{4}^3\leq C\int_{0}^{t}\int_{\mathbb{T}^{2d}}\overline{\mathcal{K}}_{h}(x-y)\chi(\Lambda[\vartheta_{\delta}] ) |\dive_{x}u_{\delta}^x|w_{\delta}^xdxdyd\tau\\
&\quad\quad\quad~- 2\int_{0}^{t}\int_{\mathbb{T}^{2d}}\overline{\mathcal{K}}_{h}(x-y)\Lambda[\dive u_{\delta}] \big{(} \chi'(\Lambda[\vartheta_{\delta}]) \vartheta_{\delta}^{y}+\chi(\Lambda[\vartheta_{\delta}])\big{)} w_{\delta}^{x}dxdyd\tau.
\end{split}
\end{equation}
One obtain (\ref{Ewdiv}) after substituting the above estimates of $I_{i}^{3}$ $(i=1,2,3,4)$ into (\ref{ddte}). 
\end{proof}

 Next, the truncated part of $\Lambda[\dive u_{\delta}]$ can be decomposed the pressure part and the effect viscous part.
\begin{lemma}\label{lemma46}
Let $T>0$, $p_{0}>\gamma+\widetilde{\gamma}+\alpha+\widetilde{\alpha}+1$, $h\in(0,h_{0})$, $(\rho_{\delta},n_{\delta},(\rho_{\delta}+n_{\delta})u_{\delta})$ be the weak solution to the IVP $(\ref{twodelta})$-$(\ref{ddelta})$ for $\delta\in(0,1)$ given by Proposition \ref{prop31}, $\vartheta_{\delta}:=\rho_{\delta}+n_{\delta}$, and $w_{\delta}$ be the solution to the IVP $(\ref{w})$. Then, under the assumptions of either Theorem \ref{theorem11} or Theorem \ref{theorem12}, it holds for $k\geq1$ and $t\in[ 0,T]$ that
\begin{equation}\label{dived}
\begin{split}
&-\int_{0}^{t}\int_{\mathbb{T}^{2d}}\overline{\mathcal{K}}_{h}(x-y)\Lambda[\dive u_{\delta}]\big{(}\chi'(\Lambda[\vartheta_{\delta}])\vartheta_{\delta}^{y}+\chi(\Lambda[\vartheta_{\delta}]) \big{)} w_{\delta}^{x}dxdyd\tau\\
&\quad\leq \frac{C}{k}-\frac{1}{2\mu+\lambda}\int_{0}^{t}\int_{\mathbb{T}^{2d}}\overline{\mathcal{K}}_{h}(x-y)\Lambda[P_{\delta}(\rho_{\delta}, n_{\delta}]\Psi_{\delta,k}dxdyd\tau\\
&\quad\quad\quad~+\frac{1}{2\mu+\lambda} \int_{0}^{t}\int_{\mathbb{T}^{2d}}\overline{\mathcal{K}}_{h}(x-y)\Lambda[F_{\delta}]\Psi_{\delta,k}dxdyd\tau,
\end{split}
\end{equation}
with
\begin{align}
\Psi_{\delta,k}:=\big{(}\chi'(\Lambda[\vartheta_{\delta}])\vartheta_{\delta}^{y}+\chi(\Lambda[\vartheta_{\delta}]) \big{)} \mathbf{1}_{\vartheta_{\delta}^x\leq k}\mathbf{1}_{\vartheta_{\delta}^y\leq k} w_{\delta}^{x},\label{Psi}
\end{align}
where $P_{\delta}$ $F_{\delta}$, $L_{h,1}(f)$, $\Lambda[f]$, $f^{x}$, $\overline{\mathcal{K}}_{h}$ and $\chi$ are given by $(\ref{Pvar})$, $(\ref{effect1})$, $(\ref{L})$, $(\ref{fx})_{1}$, $(\ref{fx})_{2}$, $(\ref{fx})_{3}$ and $(\ref{chi})$, respectively, and $C>0$ is a constant independent of $\delta$, $h$, $k$ and $\lambda_{0}$.
\end{lemma}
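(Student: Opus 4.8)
The plan is to replace $\Lambda[\dive u_\delta]$ by the effective viscous flux through the momentum equation and then to control the truncation error this substitution introduces. Recalling the definition $(\ref{effect1})$ of $F_\delta$, equation $(\ref{twodelta})_{3}$ gives the pointwise identity $(2\mu+\lambda)\dive u_\delta=P_\delta(\rho_\delta,n_\delta)-|\mathbb{T}^d|^{-1}\int_{\mathbb{T}^d}P_\delta(\rho_\delta,n_\delta)\,dx-F_\delta$; since the spatial mean does not depend on $x$, it cancels under $\Lambda$, so that $(2\mu+\lambda)\Lambda[\dive u_\delta]=\Lambda[P_\delta(\rho_\delta,n_\delta)]-\Lambda[F_\delta]$. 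Writing $g_\delta:=\big(\chi'(\Lambda[\vartheta_\delta])\vartheta_\delta^y+\chi(\Lambda[\vartheta_\delta])\big)w_\delta^x$, so that $\Psi_{\delta,k}=g_\delta\,\mathbf{1}_{\vartheta_\delta^x\le k}\mathbf{1}_{\vartheta_\delta^y\le k}$ by $(\ref{Psi})$, I would split the left-hand side of $(\ref{dived})$ as $-\int_0^t\int_{\mathbb{T}^{2d}}\overline{\mathcal{K}}_h(x-y)\Lambda[\dive u_\delta]\Psi_{\delta,k}\,dxdyd\tau+R_{\delta,k}$, where $R_{\delta,k}$ carries the factor $1-\mathbf{1}_{\vartheta_\delta^x\le k}\mathbf{1}_{\vartheta_\delta^y\le k}$. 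Inserting the flux identity into the first term produces exactly the two pressure and effective-flux integrals appearing on the right of $(\ref{dived})$ (only $(\ref{effect1})$ is needed here; the representation $(\ref{effect11})$ will instead be used for the later estimate of the $F_\delta$-term). Hence the claim reduces to showing $R_{\delta,k}\le C/k$.

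For the error $R_{\delta,k}$, the crucial observation is that $g_\delta$ is bounded pointwise by a constant independent of $\delta,h,\lambda_0,k$. Indeed, by $(\ref{chi})$, whenever $s=a-b$ with $a,b\ge0$ and $|s|\ge2$ one has $\chi'(s)b+\chi(s)=\pm a$, so on $\{|\Lambda[\vartheta_\delta]|\ge2\}$ we get $g_\delta=\pm\vartheta_\delta^x w_\delta^x$, while on $\{|\Lambda[\vartheta_\delta]|<2\}$ we have $\vartheta_\delta^y\le\vartheta_\delta^x+2$ and, by $(\ref{chi1})$, $|g_\delta|\le C(1+\vartheta_\delta^x)w_\delta^x$; combining with $(\ref{wbound})_{1}$, i.e. $0\le w_\delta^x\le e^{-\lambda_0\vartheta_\delta^x}$, and $\lambda_0\ge1$ gives $|g_\delta|\le C(1+\vartheta_\delta^x)e^{-\lambda_0\vartheta_\delta^x}\le C$. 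Then, using $1-\mathbf{1}_{\vartheta_\delta^x\le k}\mathbf{1}_{\vartheta_\delta^y\le k}\le\mathbf{1}_{\vartheta_\delta^x>k}+\mathbf{1}_{\vartheta_\delta^y>k}$ and $|\Lambda[\dive u_\delta]|\le|\dive u_\delta^x|+|\dive u_\delta^y|$, I would bound $|R_{\delta,k}|$ by finitely many terms of the form $\int_0^t\int_{\mathbb{T}^{2d}}\overline{\mathcal{K}}_h(x-y)|\dive u_\delta^x|\,\mathbf{1}_{\vartheta_\delta^y>k}\,dxdyd\tau$; by Young's convolution inequality (using $\|\overline{\mathcal{K}}_h\|_{L^1}=1$) each such term is at most $\|\dive u_\delta\|_{L^2(0,T;L^2)}\,|\{\vartheta_\delta>k\}|^{1/2}$, the set being understood in the space–time measure, and by $(\ref{rdelta1})$, $(\ref{rhonLp})$ (resp. $(\ref{rhonLpc})$) together with Chebyshev's inequality, $|\{\vartheta_\delta>k\}|\le C/k^{\max\{\gamma,\alpha\}+\theta}$ (resp. $\le C/k^{\min\{\gamma+\theta_1,\alpha+\theta_2\}}$). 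Since $\max\{\gamma,\alpha\}+\theta\ge2$ under $(\ref{gamma1})$ and $\gamma+\theta_1,\alpha+\theta_2\ge2$ under $(\ref{gamma2})$, and $k\ge1$, this gives $|R_{\delta,k}|\le C/k$, as required.

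The one genuinely non-mechanical point is the pointwise bound $|g_\delta|\le C$: it rests on the precise algebraic form of the pairing in $(\ref{Psi})$, in which the $\vartheta_\delta^y$ multiplying $\chi'(\Lambda[\vartheta_\delta])$ and the $\vartheta_\delta^y$ sitting inside $\chi(\Lambda[\vartheta_\delta])$ cancel for large density differences, leaving only the $x$-quantity $\vartheta_\delta^x$, which is then absorbed by the exponential decay of the weight $w_\delta^x$ transported along $(\ref{w})$. Verifying this cancellation across the quadratic, transitional and linear regimes of $\chi$ and checking that the resulting constant is independent of $\lambda_0$ — which is essential, since $\lambda_0$ is later sent to infinity — is the delicate step; the remainder is a routine combination of Young's inequality with the higher-integrability estimates of Lemma \ref{lemma41} and the energy bound $(\ref{rdelta1})$.
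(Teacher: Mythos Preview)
Your argument is correct and follows essentially the same route as the paper: split off the truncation, use the pointwise identity $(2\mu+\lambda)\Lambda[\dive u_\delta]=\Lambda[P_\delta]-\Lambda[F_\delta]$ coming from $(\ref{effect1})$, and bound the remainder via the cancellation $\chi'(\Lambda[\vartheta_\delta])\vartheta_\delta^y+\chi(\Lambda[\vartheta_\delta])=\pm\vartheta_\delta^x$ for $|\Lambda[\vartheta_\delta]|\ge2$ together with $\vartheta_\delta^x w_\delta^x\le \vartheta_\delta^x e^{-\lambda_0\vartheta_\delta^x}\le 1$. The only cosmetic difference is in extracting the factor $1/k$: the paper uses $\mathbf{1}_{\{\vartheta_\delta^x\ge k\ \text{or}\ \vartheta_\delta^y\ge k\}}\le(\vartheta_\delta^x+\vartheta_\delta^y)/k$ and then $\|\dive u_\delta\|_{L^2_{t,x}}\|\vartheta_\delta\|_{L^2_{t,x}}$, whereas you use Chebyshev on the level set; both rely on the same $L^2$ bound for $\vartheta_\delta$. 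One small inaccuracy: $\lambda_0$ is not sent to infinity but fixed equal to $C/2+1$ at the end of Section~4; nevertheless your insistence that the constant be independent of $\lambda_0$ is exactly what is needed there.
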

\begin{proof}
By $(\ref{rdelta1})$, $(\ref{rhonLp})$-$(\ref{rhonLpc})$, $(\ref{chi})_{1}$ and $(\ref{wbound})_{1}$, we have
\begin{equation}\label{461}
\begin{split}
&-\int_{0}^{t}\int_{\mathbb{T}^{2d}}\overline{\mathcal{K}}_{h}(x-y)\Lambda[\dive u_{\delta}]\big{(}\chi'(\Lambda[\vartheta_{\delta}])\vartheta_{\delta}^{y}+\chi(\Lambda[\vartheta_{\delta}]) \big{)} (1-\mathbf{1}_{\vartheta_{\delta}^x\leq k}\mathbf{1}_{\vartheta_{\delta}^y\leq k} ) w_{\delta}^xdxdyd\tau\\
&\quad\leq C\int_{\{(x,y,\tau)\in\mathbb{T}^{2d}~|~\vartheta_{\delta}^{x}\geq k~\text{or}~\vartheta_{\delta}^{y}\geq k\}}\overline{\mathcal{K}}_{h}(x-y) (|\dive_{x}u_{\delta}^x|+|\dive_{y}u_{\delta}^{y}|) (\vartheta_{\delta}^{x}w_{\delta}^x+1)dxdyd\tau\\
&\quad\leq \frac{C}{ k}\int_{\mathbb{T}^{2d}}\overline{\mathcal{K}}_{h}(x-y) (|\dive_{x}u_{\delta}^x|+|\dive_{y}u_{\delta}^{y}|) (\vartheta_{\delta}^x+\vartheta_{\delta}^{y}) dxdyd\tau\\
&\quad\leq \frac{C}{k} \|\dive u_{\delta}\|_{L^2(0,T;L^2)}\|\vartheta_{\delta}\|_{L^{2}(0,T;L^{2})}\leq  \frac{C}{k},
\end{split}
\end{equation}
where one has used the facts $|\chi'(\Lambda[\vartheta_{\delta}])|\vartheta^{y}_{\delta}\leq C(\vartheta^{x}_{\delta}+1)$ if $|\lambda[\vartheta_{\delta}]|\leq 2$, $\chi'(\Lambda[\vartheta_{\delta}])\vartheta_{\delta}^{y}+\chi(\Lambda[\vartheta_{\delta}])= \vartheta_{\delta}^{x} {\rm{sign}} \Lambda[\vartheta_{\delta}]$ for $|\lambda[\vartheta_{\delta}]|\geq 2$, $\vartheta _{\delta}^x+\vartheta _{\delta}^y\geq k$ for either $\vartheta _{\delta}^x\geq k$ or $\vartheta _{\delta}^y\geq k$ and 
\begin{equation}\nonumber
\begin{split}
\vartheta_{\delta}w_{\delta}\leq \vartheta_{\delta}e^{-\lambda_{0} \vartheta_{\delta}}\leq  \frac{1}{\lambda_{0}}\leq 1.
\end{split}
\end{equation}
By (\ref{effect1}) and (\ref{461}), (\ref{dived}) holds.
\end{proof}

We are ready to estimate the key pressure part.
\begin{lemma}\label{lemma47}
Let $T>0$, $p_{0}>\gamma+\widetilde{\gamma}+\alpha+\widetilde{\alpha}+1$, $h\in(0,h_{0})$, $(\rho_{\delta},n_{\delta},(\rho_{\delta}+n_{\delta})u_{\delta})$ be the weak solution to the IVP $(\ref{twodelta})$-$(\ref{ddelta})$ for $\delta\in(0,1)$ given by Proposition \ref{prop31}, $\vartheta_{\delta}:=\rho_{\delta}+n_{\delta}$, and $w_{\delta}$ be a solution to the IVP $(\ref{w})$. Then, for any $\zeta>0$, there is a constant $\delta_{1}(\zeta)\in(0,1)$ such that it holds for $\delta\in (0,\delta_{1}(\zeta))$, $k\geq1$ and $t\in[0,T]$ that
\begin{equation}
\begin{split}
&-\int_{0}^{t}\int_{\mathbb{T}^{2d}}\overline{\mathcal{K}}_{h}(x-y)\Lambda[P_{\delta}(\rho_{\delta}, n_{\delta}]\Psi_{\delta,k}dxdyd\tau\\
&\quad\leq Ck^{p_{0}}\Big{(} \delta^{\beta}+\zeta+\int_{0}^{t}\big{(} L_{h,1}(A_{\rho,n})+L_{h,1}(B_{\rho,n}) \big{)} d\tau\Big{)} \\
&\quad\quad+C\int_{0}^{t}\int_{\mathbb{T}^{2d}}\overline{\mathcal{K}}_{h}(x-y) \chi(\Lambda[\vartheta_{\delta}])\big{(} (\rho_{\delta}^{x})^{\gamma}+(\rho_{\delta}^{x})^{\widetilde{\gamma}}+(n_{\delta}^{x})^{\alpha}+(n_{\delta}^{x})^{\widetilde{\alpha}}+ 1\big{)}w_{\delta}^xdxdyd\tau,\label{PF}
\end{split}
\end{equation}
where $(A_{\rho,n},B_{\rho,n})$, $L_{h,1}(f)$, $\Lambda[f]$, $f^{x}$, $\overline{\mathcal{K}}_{h}$, $\chi$ and $\Psi_{\delta,k}$ are given by $(\ref{AB})$, $(\ref{L})$, $(\ref{fx})_{1}$, $(\ref{fx})_{2}$, $(\ref{fx})_{3}$, $(\ref{chi})$ and $(\ref{Psi})$, respectively, and $C>0$ is a constant independent of $\delta$, $h$, $k$ and $\lambda_{0}$.
\end{lemma}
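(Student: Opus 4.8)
The plan is to feed the algebraic splitting (\ref{ErPdelta}) of $\Lambda[P_\delta(\rho_\delta,n_\delta)]$ into the left-hand side of (\ref{PF}) and bound the four resulting groups separately, always working on $\mathrm{supp}\,\Psi_{\delta,k}\subset\{\vartheta_\delta^x\le k\}\cap\{\vartheta_\delta^y\le k\}$, where by (\ref{chi1}) and (\ref{wbound}) one has $0\le w_\delta^x\le1$ and $|\Psi_{\delta,k}|\le Ckw_\delta^x$, where $\int_{\mathbb{T}^d}\overline{\mathcal{K}}_h(x-y)\,dy=1$ turns integrals of $x$-functions against $\overline{\mathcal{K}}_h$ into $L^1$ norms and integrals of $|\Lambda[f]|$ into $L_{h,1}(f)$, and where by (\ref{P}), (\ref{Pvar}) and $\widetilde{\gamma},\widetilde{\alpha}<p_0$ the map $P_\delta$ is Lipschitz on $\{\rho+n\le k\}$ with constant $\le Ck^{p_0-1}$ uniformly in $\delta$, the only exception being the jump of $\mathbf{1}_{\rho+n\ge\delta}$, which is supported in $\{\delta\le\rho+n\le2\delta\}$ and, together with the $\delta(\rho+n)^{p_0}$ term, contributes merely an error $O(k^{p_0}\delta^{\beta})$ after Lemma \ref{lemma41}; since each of the four groups is a difference of values of $P_\delta$ at two points, the additive constant $C_1$ in (\ref{P}) drops out. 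The first two groups, $P_\delta(\rho_\delta^x,n_\delta^x)-P_\delta(\vartheta_\delta^xA_{\rho,n}^x,\vartheta_\delta^xB_{\rho,n}^x)$ and its $y$-analogue, measure the deviation of $(\rho_\delta,n_\delta)$ from the limiting ray: by the mean-value identity (\ref{P11}) they are $\le Ck^{p_0-1}(|\rho_\delta^x-\vartheta_\delta^xA_{\rho,n}^x|+|n_\delta^x-\vartheta_\delta^xB_{\rho,n}^x|+(x\!\leftrightarrow\!y))$ on the cut-off set, so after multiplying by $\overline{\mathcal{K}}_h|\Psi_{\delta,k}|\le Ck$ and invoking the strong convergences $\rho_\delta-\vartheta_\delta A_{\rho,n}\to0$, $n_\delta-\vartheta_\delta B_{\rho,n}\to0$ in $L^1(0,T;L^1)$ of Lemma \ref{lemma43}, one fixes $\delta_1(\zeta)$ so that these contribute $\le Ck^{p_0}(\zeta+\delta^{\beta})$ for $\delta<\delta_1(\zeta)$. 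The third group, $P_\delta(\vartheta_\delta^yA_{\rho,n}^x,\vartheta_\delta^yB_{\rho,n}^x)-P_\delta(\vartheta_\delta^yA_{\rho,n}^y,\vartheta_\delta^yB_{\rho,n}^y)$, involves only the difference of the two rays; both arguments having sum $\le\vartheta_\delta^y\le k$, (\ref{P11}) bounds it by $Ck^{p_0-1}\vartheta_\delta^y(|\Lambda[A_{\rho,n}]|+|\Lambda[B_{\rho,n}]|)$ and hence its contribution by $Ck^{p_0}\int_0^t(L_{h,1}(A_{\rho,n})+L_{h,1}(B_{\rho,n}))\,d\tau$. Together these account for the whole first bracket of (\ref{PF}).

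The crux is the fourth group, $P_\delta(\vartheta_\delta^xA_{\rho,n}^x,\vartheta_\delta^xB_{\rho,n}^x)-P_\delta(\vartheta_\delta^yA_{\rho,n}^x,\vartheta_\delta^yB_{\rho,n}^x)=g^x(\vartheta_\delta^x)-g^x(\vartheta_\delta^y)$, where $g^x(s):=P_\delta(sA_{\rho,n}^x,sB_{\rho,n}^x)$ is a pressure law in the one variable $s=\vartheta_\delta$; here I adapt the Bresch--Jabin scheme of \cite{bresch1}. On $[0,k]$ I decompose $g^x=g_1^x-g_2^x$ with $g_1^x$ nondecreasing and $g_2^x\in C^1([0,k])$, $0\le(g_2^x)'(s)\le C(1+(sA_{\rho,n}^x)^{\widetilde{\gamma}-1}+(sB_{\rho,n}^x)^{\widetilde{\alpha}-1})$: the artificial part $\delta s^{p_0}$ and a linear term are absorbed into $g_1^x$, and the correction only has to dominate the negative part of the ray-derivative of the $P(\cdot)$-part, which by (\ref{P}) is controlled by the above right-hand side with a constant independent of $\delta$. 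Because (\ref{chi1}) forces $\chi'(\Lambda[\vartheta_\delta])\vartheta_\delta^y+\chi(\Lambda[\vartheta_\delta])=\chi'(\Lambda[\vartheta_\delta])\vartheta_\delta^x+(\chi(\Lambda[\vartheta_\delta])-\chi'(\Lambda[\vartheta_\delta])\Lambda[\vartheta_\delta])$ to have the sign of $\Lambda[\vartheta_\delta]$, the monotone part satisfies $-\Lambda[g_1^x(\vartheta_\delta)]\Psi_{\delta,k}\le0$ pointwise. For the remainder I write $\Lambda[g_2^x(\vartheta_\delta)]=\Lambda[\vartheta_\delta]\int_0^1(g_2^x)'(\vartheta_\delta^y+\theta\Lambda[\vartheta_\delta])\,d\theta$, estimate $|\Lambda[\vartheta_\delta]|\,|\Psi_{\delta,k}|\le C\chi(\Lambda[\vartheta_\delta])\max(\vartheta_\delta^x,\vartheta_\delta^y)w_\delta^x$ from (\ref{chi1}), use the weight bound $\vartheta_\delta w_\delta\le\vartheta_\delta e^{-\lambda_0\vartheta_\delta}\le C$ (from (\ref{wbound})) to neutralize the factor $\max(\vartheta_\delta^x,\vartheta_\delta^y)$, replace $\vartheta_\delta^xA_{\rho,n}^x$, $\vartheta_\delta^xB_{\rho,n}^x$ by $\rho_\delta^x$, $n_\delta^x$ up to a further $Ck^{p_0}(\zeta+\delta^{\beta})$ via Lemma \ref{lemma43}, and close with Young's inequality together with the exponent restrictions (\ref{gamma1})/(\ref{gamma2}); this produces precisely the second term $C\int_0^t\!\int_{\mathbb{T}^{2d}}\overline{\mathcal{K}}_h(x-y)\chi(\Lambda[\vartheta_\delta])((\rho_\delta^x)^\gamma+(\rho_\delta^x)^{\widetilde{\gamma}}+(n_\delta^x)^\alpha+(n_\delta^x)^{\widetilde{\alpha}}+1)w_\delta^x$ of (\ref{PF}).

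The main obstacle is exactly this fourth group. The non-monotonicity of $P$, the $x$-dependence of the ray $(A_{\rho,n}^x,B_{\rho,n}^x)$, and the fact that the artificial pressure only restores monotonicity of $g^x$ beyond a $\delta$-dependent threshold all conspire, so one cannot simply reuse the global splitting $P_\delta=P_{1,\delta}-P_{2,\delta}$ of (\ref{P1P2}) (whose bounded part is $\delta$-dependent). Instead the decomposition has to be made on the fixed window $[0,k]$ with a $\delta$-uniform $C^1$ bound on $g_2^x$, and then the off-ray quantities $\vartheta_\delta A_{\rho,n}$, $\vartheta_\delta B_{\rho,n}$ must be traded for $\rho_\delta$, $n_\delta$ via Lemma \ref{lemma43} and the residual forced into the $\Xi_\delta$-shaped term of Lemmas \ref{lemma44}--\ref{lemma45}. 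Keeping every constant in front of that $\Xi_\delta$-term free of $k$ (so that it is later absorbed by the $-2\lambda_0\int_{\mathbb{T}^{2d}}\overline{\mathcal{K}}_h(x-y)\chi(\Lambda[\vartheta_\delta])\Xi_\delta^xw_\delta^x$ term in (\ref{Ewdiv}) upon choosing $\lambda_0$ large), while confining all $k^{p_0}$-growth to the first bracket (which vanishes after $\delta\to0$ and $h\to0$), is the delicate bookkeeping that makes the whole argument close.
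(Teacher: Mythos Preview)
Your treatment of the first three groups matches the paper's (with the minor difference that the paper localizes via Egorov to a set $Q_{\zeta,T}$ on which $|\rho_\delta-\vartheta_\delta A_{\rho,n}|+|n_\delta-\vartheta_\delta B_{\rho,n}|\le\zeta$ pointwise, rather than using only the $L^1$ smallness). The gap is in the fourth group. Your sentence ``use the weight bound $\vartheta_\delta w_\delta\le\vartheta_\delta e^{-\lambda_0\vartheta_\delta}\le C$ to neutralize the factor $\max(\vartheta_\delta^x,\vartheta_\delta^y)$'' is the failure point: the weight in $\Psi_{\delta,k}$ is $w_\delta^x$, so it neutralizes $\vartheta_\delta^x$ but \emph{not} $\vartheta_\delta^y$. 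When $\vartheta_\delta^y>2\vartheta_\delta^x$ (so $\Lambda[\vartheta_\delta]<0$, $|\Lambda|\ge\vartheta_\delta^x$), the mean-value estimate on $g_2^x$ forces you to evaluate $(g_2^x)'$ at arguments of size $\sim\vartheta_\delta^y$, producing a factor $(A_{\rho,n}^x\vartheta_\delta^y)^{\widetilde\gamma-1}+(B_{\rho,n}^x\vartheta_\delta^y)^{\widetilde\alpha-1}$ that, after multiplying by $|\Lambda||\Psi_{\delta,k}|$, cannot be bounded by a $k$-independent multiple of $\chi(\Lambda[\vartheta_\delta])\,\Xi_\delta^x\,w_\delta^x$. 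On the cutoff set this term is only $\le Ck^{\widetilde\gamma}$ (or similar), which would put a $k$-power in front of the term that must later be absorbed by $-2\lambda_0$ in (\ref{Ewdiv}); that destroys the closing step you yourself flag at the end.

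The paper avoids this by \emph{not} using a monotone/remainder decomposition of $g^x$. Instead it treats $I_6^4$ by a pointwise trichotomy on $(x,y)$: Case~1, where $(g^x(\vartheta_\delta^x)-g^x(\vartheta_\delta^y))\Lambda[\vartheta_\delta]\ge0$, gives a nonpositive integrand (this is your $g_1^x$ argument, but applied to $g^x$ itself wherever the sign is right); Case~2, wrong sign with $\vartheta_\delta^y\le 2\vartheta_\delta^x$, uses the derivative bound (\ref{P}) exactly as you propose, since here every $\vartheta_\delta^y$ is dominated by $2\vartheta_\delta^x$ and the $w_\delta^x$ weight survives; Case~3, wrong sign with $\vartheta_\delta^y>2\vartheta_\delta^x$, is the missing idea: one discards the derivative and uses instead the lower bound $P\ge -C_1$ from (\ref{P}), writing $g^x(\vartheta_\delta^x)-g^x(\vartheta_\delta^y)\le g^x(\vartheta_\delta^x)+C_1\le C((A_{\rho,n}^x\vartheta_\delta^x)^\gamma+(B_{\rho,n}^x\vartheta_\delta^x)^\alpha+1)$, which depends only on $x$; combined with $\vartheta_\delta^y<2|\Lambda[\vartheta_\delta]|$ and $|\chi'(\Lambda)|\,|\Lambda|\le C\chi(\Lambda)$ this yields the bound $C((\rho_\delta^x)^\gamma+(n_\delta^x)^\alpha+1)\chi(\Lambda[\vartheta_\delta])w_\delta^x$. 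This is why both pairs $(\gamma,\alpha)$ and $(\widetilde\gamma,\widetilde\alpha)$ appear in (\ref{PF}) and in $\Xi_\delta$.
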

\begin{proof}
To begin with, as $\delta\rightarrow0$, we deduce by (\ref{strongABdelta}) that
\begin{equation}\nonumber
\left\{
\begin{split}
&\rho_{\delta}-A_{\rho,n}\vartheta _{\delta}\rightarrow 0 \quad \text{a.e.}\quad~\text{in}~\mathbb{T}^{d}\times(0,T),\\
&n_{\delta}-B_{\rho,n}\vartheta _{\delta}\rightarrow 0 \quad \text{a.e.}\quad~\text{in}~ \mathbb{T}^{d}\times(0,T),
\end{split}
\right.
\end{equation}
which together with the Egorov theorem implies that for any $\zeta>0$, there exist a constant $\delta_{1}(\zeta)\in(0,1)$ and a domain $Q_{\zeta,T}\subset \mathbb{T}^{d}\times (0,T)$ such that
\begin{equation}\label{egg}
\left\{
\begin{split}
&|\mathbb{T}^{d}\times(0,T)/Q_{\zeta,T}|\leq \zeta,\\
& |(\rho_{\delta}-A_{\rho,n}\vartheta _{\delta})(x,t)|+|(n_{\delta}-B_{\rho,n}\vartheta _{\delta})(x,t)|\leq \zeta, \quad (x,t)\in Q_{\zeta,T},~\delta\in (0,\delta_{1}(\zeta)).
\end{split}
\right.
\end{equation}
Denoting
$$
\mathbb{Q}_{\zeta,t}=\big{\{}(x,y,\tau)\in\mathbb{T}^{2d}\times (0,t) ~ |~ (x,\tau)\in Q_{\zeta,t}~\text{and}~(y,\tau)\in Q_{\zeta,t} \big{\}},\quad  t\in [0,T],
$$
we obtain by $(\ref{Pvar})$, $(\ref{wbound})_{1}$ and (\ref{egg}) that
\begin{equation}\nonumber
\begin{split}
&-\int_{\mathbb{T}^{2d}\times (0,t)/ \mathbb{Q}_{\zeta,t}}\overline{\mathcal{K}}_{h}(x-y)\Lambda[P_{\delta}(\rho_{\delta},n_{\delta})]\Psi_{\delta,k}dxdyd\tau\\
&\quad\leq C(k^{\gamma}+k^{\alpha}+\delta k^{p_{0}}+1)\int_{\mathbb{T}^{2d}\times(0,t)/\mathbb{Q}_{\zeta,t}}\overline{\mathcal{K}}_{h}(x-y)dxdyd\tau\leq Ck^{p_{0}}\zeta.
\end{split}
\end{equation}
According to (\ref{ErPdelta}), the integration on $\mathbb{Q}_{\zeta,t}$ can be decomposed by
\begin{equation}\label{I34}
\begin{split}
&-\int_{ \mathbb{Q}_{\zeta,t}}\overline{\mathcal{K}}_{h}(x-y)\Lambda[P_{\delta}(\rho_{\delta},n_{\delta})]\Psi_{\delta,k}dxdyd\tau= \sum_{i=1}^{6}I_{i}^4,
\end{split}
\end{equation}
where $I_{i}^4$ $(i=1,...,6)$ are given by
\begin{equation}\nonumber
\begin{split}
&I_{1}^4:=-\int_{\mathbb{Q}_{\zeta,t}}\overline{\mathcal{K}}_{h}(x-y) \big{(}P_{\delta}(\rho_{\delta}^{x},n_{\delta}^{x})-P_{\delta}(A_{\rho^x,n^x}\vartheta _{\delta}^x,B_{\rho^x,n^x}\vartheta _{\delta}^x) \big{)}\Psi_{\delta,k}dxdyd\tau,\\
&I_{2}^4:=-\int_{\mathbb{Q}_{\zeta,t}}\overline{\mathcal{K}}_{h}(x-y)  \big{(} P_{\delta}(A_{\rho^{y},n^{y}}\vartheta _{\delta}^{y},B_{\rho^{y},n^{y}}\vartheta _{\delta}^{y})-P_{\delta}(\rho_{\delta}^{y},n_{\delta}^{y}) \big{)}\Psi_{\delta,k}dxdyd\tau,\\
&I_{3}^4:=-\int_{\mathbb{Q}_{\zeta,t}}\overline{\mathcal{K}}_{h}(x-y)\big{(} P_{\delta}(A_{\rho^{x},n^{x}}\vartheta _{\delta}^y,B_{\rho^x,n^x}\vartheta _{\delta}^y)-P_{\delta}(A_{\rho^y,n^y}\vartheta _{\delta}^y,B_{\rho^y,n^y}\vartheta _{\delta}^y)\big{)}\Psi_{\delta,k}dxdyd\tau,\\
&I_{4}^4:=-\int_{\mathbb{Q}_{\zeta,t}}\overline{\mathcal{K}}_{h}(x-y)\big{(} \mathbf{1}_{\vartheta_{\delta}^x\leq \delta}P(A_{\rho^x,n^x}\vartheta _{\delta}^x,B_{\rho^x,n^x}\vartheta _{\delta}^x)-\mathbf{1}_{\vartheta_{\delta}^y\leq \delta}P(A_{\rho^x,n^x}\vartheta _{\delta}^y,B_{\rho^x,n^x}\vartheta _{\delta}^y)\big{)}\Psi_{\delta,k}dxdyd\tau,\\
&I_{5}^4:=-\delta\int_{\mathbb{Q}_{\zeta,t}}\overline{\mathcal{K}}_{h}(x-y) \big{(} (A_{\rho^x,n^x}\vartheta _{\delta}^x)^{p_{0}}-(A_{\rho^x,n^x}\vartheta _{\delta}^y)^{p_{0}}+(B_{\rho^x,n^x}\vartheta _{\delta}^x)^{p_{0}}-(B_{\rho^x,n^x}\vartheta _{\delta}^y)^{p_{0}}\big{)}\Psi_{\delta,k}dxdyd\tau,\\
&I_{6}^4:=-\int_{\mathbb{Q}_{\zeta,t}}\overline{\mathcal{K}}_{h}(x-y) \big{(} P(A_{\rho^x,n^x}\vartheta _{\delta}^x,B_{\rho^x,n^x}\vartheta _{\delta}^x)-P(A_{\rho^x,n^x}\vartheta _{\delta}^y,B_{\rho^x,n^x}\vartheta _{\delta}^y)\big{)}\Psi_{\delta,k}dxdyd\tau .
\end{split}
\end{equation}
It follows from (\ref{P}), (\ref{AB1})-(\ref{AB2}), (\ref{P11}), $(\ref{wbound})_{1}$ and (\ref{egg}) that
\begin{equation}\nonumber
\begin{split}
&I_{1}^4\leq C\int_{\mathbb{Q}_{\zeta,t}}\overline{\mathcal{K}}_{h}(x-y)  \big{[} \big{(} (\rho_{\delta}^x+A_{\rho^x,n^x}\vartheta _{\delta}^x)^{\widetilde{\gamma}-1}+\delta(\rho_{\delta}^x+A_{\rho^x,n^x}\vartheta _{\delta}^x)^{p_{0}-1} +1\big{)} |\rho_{\delta}^x-A_{\rho^x,n^x}\vartheta _{\delta}^x|\\
&\quad\quad+\big{(} (n_{\delta}^x+B_{\rho^x,n^x}\vartheta ^x_{\delta})^{\widetilde{\alpha}-1}+\delta(n_{\delta}^x+B_{\rho^x,n^x}\vartheta ^x_{\delta})^{p_{0}-1}+1\big{)} |n_{\delta}^x-B_{\rho^x,n^x}\vartheta _{\delta}^x |\big{]}( \vartheta_{\delta}^x+\vartheta_{\delta}^y) \mathbf{1}_{\vartheta_{\delta}^x\leq k}  \mathbf{1}_{\vartheta_{\delta}^y\leq k} dxdyd\tau\\
&\quad\leq Ck^{p_{0}}\zeta.
\end{split}
\end{equation}
Similarly, one has
\begin{equation}\nonumber
\begin{split}
&I_{2}^4\leq Ck^{p_{0}}\zeta.
\end{split}
\end{equation}
For the term $I_{3}^4$, we have by (\ref{P}), $(\ref{AB2})$, $(\ref{P11})$ and $(\ref{wbound})_{1}$ that
\begin{equation}\nonumber
\begin{split}
&I_{3}^4\leq Ck^{p_{0}}\big{(} L_{h,1}( A_{\rho,n})+L_{h,1}( B_{\rho,n}) \big{)}.
\end{split}
\end{equation}
Additionally, it is easy to prove
\begin{equation}\nonumber
\begin{split}
&I_{4}^4+I_{5}^{4}\leq C k^{p_{0}}\delta^{\beta} .
\end{split}
\end{equation}

The key term $I_{6}^{4}$ can be controlled by the damping term in $(\ref{w})_{1}$ with respect to $x$, so we need to overcome the difficulties caused by the possible dependence of $I_{6}^{5}$ on $y$. To this end, in the spirit of \cite{bresch1}, we analyze $I_{6}^{4}$ in the following three cases: 

\vspace{1ex}
\begin{itemize}
\item{Case 1: $ \big{(} P(A_{\rho^x,n^x}\vartheta _{\delta}^x,B_{\rho^x,n^x}\vartheta _{\delta}^x)-P(A_{\rho^x,n^x}\vartheta _{\delta}^y,B_{\rho^x,n^x}\vartheta _{\delta}^y)  \big{)}\Lambda [\vartheta_{\delta}]\geq 0.$}
\end{itemize}
Note that if $P(\rho,n)$ is monotone increasing in both $\rho$ and $n$, then all the cases reduce to Case 1. In this case, since $P(A_{\rho^x,n^x}\vartheta _{\delta}^x,B_{\rho^x,n^x}\vartheta _{\delta}^x)-P(A_{\rho^x,n^x}\vartheta _{\delta}^y,B_{\rho^x,n^x}\vartheta _{\delta}^y)$, $\Lambda [\vartheta_{\delta}]$ and $\chi'(\Lambda [\vartheta_{\delta}])$ have a same sign, it holds by $(\ref{chi1})_{1}$ that
\begin{equation}\nonumber
\begin{split}
&-\big{(} P(A_{\rho^x,n^x}\vartheta _{\delta}^x,B_{\rho^x,n^x}\vartheta _{\delta}^x)-P(A_{\rho^x,n^x}\vartheta _{\delta}^y,B_{\rho^x,n^x}\vartheta _{\delta}^y)\big{)}\Psi_{\delta,k}\\
& \quad\leq \big{ |} P(A_{\rho^x,n^x}\vartheta _{\delta}^x,B_{\rho^x,n^x}\vartheta _{\delta}^x)-P(A_{\rho^x,n^x}\vartheta _{\delta}^y,B_{\rho^x,n^x}\vartheta _{\delta}^y) \big{|}  \\\
&\quad\quad\times\big{(}-\frac{1}{2}\big{|} \chi'(\Lambda[\vartheta_{\delta}]) \big{|}(\vartheta_{\delta}^{x}+\vartheta_{\delta}^{y})+\big{|}\chi(\Lambda[\vartheta_{\delta}])- \frac{1}{2}\chi'(\Lambda[\vartheta_{\delta}])\Lambda[\vartheta_{\delta}] \big{|} \big{)}\leq 0,\quad (x,y,\tau)\in  \mathbb{Q}_{\zeta,t}.
\end{split}
\end{equation}

\begin{itemize}
\item{Case 2:  $ \big{(} P(A_{\rho^x,n^x}\vartheta _{\delta}^x,B_{\rho^x,n^x}\vartheta _{\delta}^x)-P(A_{\rho^x,n^x}\vartheta _{\delta}^y,B_{\rho^x,n^x}\vartheta _{\delta}^y)  \big{)}\Lambda [\vartheta_{\delta}]<0$ and $\vartheta_{\delta}^{y}\leq 2\vartheta _{\delta}^x$.} 
\end{itemize}
We make use of (\ref{P}), $(\ref{AB2})$, (\ref{P11}) and $(\ref{egg})_{2}$ to obtain
 \begin{equation}\nonumber
 \begin{split}
 &-\big{(} P(A_{\rho^x,n^x}\vartheta_{\delta}^x,B_{\rho^x,n^x}\vartheta_{\delta}^x)-P(A_{\rho^x,n^x}\vartheta_{\delta}^y,B_{\rho^x,n^x}\vartheta_{\delta}^y) \big{)}\Psi_{\delta,k}\\
 &\quad\leq C \big{(} (A_{\rho^x,n^x}\vartheta_{\delta}^x+A_{\rho^x,n^x}\vartheta_{\delta}^y)^{\widetilde{\gamma}-1}+1\big{)}A_{\rho^{x},n^{x}} |\Lambda[\vartheta_{\delta}]| \big{(} \chi'(\Lambda[\vartheta_{\delta}])\vartheta_{\delta}^{y}+\chi(\Lambda[\vartheta_{\delta}]) \big{)}w_{\delta}^x \\
 &\quad\quad+C\big{(} (B_{\rho^x,n^x}\vartheta _{\delta}^x+B_{\rho^x,n^x}\vartheta_{\delta}^y)^{\widetilde{\alpha}-1}+1\big{)} B_{\rho^{x},n^{x}} |\Lambda[ \vartheta_{\delta}]| \big{(} \chi'(\Lambda[\vartheta_{\delta}])\vartheta_{\delta}^{y}+\chi(\Lambda[\vartheta_{\delta}]) \big{)} w_{\delta}^{x}  \\
&\quad\leq C \big{(} (A_{\rho^x,n^x}\vartheta_{\delta}^x)^{\widetilde{\gamma}}+A_{\rho^x,n^x}\vartheta_{\delta}^x+(B_{\rho^x,n^x}\vartheta_{\delta}^x)^{\widetilde{\alpha}}+B_{\rho^x,n^x}\vartheta_{\delta}^x\big{)} \big{(} \chi'(\Lambda[\vartheta_{\delta}])\Lambda[\vartheta_{\delta}]+\chi(\Lambda[\vartheta_{\delta}]) \big{)} w_{\delta}^x\\
 &\quad\leq C\big{(} (\rho_{\delta}^x+\zeta)^{\widetilde{\gamma}}+\rho_{\delta}^x+\zeta+(n_{\delta}^x+\zeta)^{\widetilde{\alpha}} +n_{\delta}^x+\zeta\big{)}\chi(\Lambda[ \vartheta _{\delta}])w_{\delta}^x ,\quad (x,y,\tau)\in  \mathbb{Q}_{\zeta,t}.
 \end{split}
 \end{equation}

\begin{itemize}
\item{Case 3:  $ \big{(} P(A_{\rho^x,n^x}\vartheta _{\delta}^x,B_{\rho^x,n^x}\vartheta _{\delta}^x)-P(A_{\rho^x,n^x}\vartheta _{\delta}^y,B_{\rho^x,n^x}\vartheta _{\delta}^y)  \big{)}\Lambda [\vartheta_{\delta}]<0$ and $\vartheta_{\delta}^{y}> 2\vartheta _{\delta}^x$.} 
\end{itemize}
In this case, one has
\begin{equation}\nonumber
\left\{
\begin{split}
&|\Lambda[ \vartheta _{\delta}]|=\vartheta_{\delta}^y-\vartheta_{\delta}^x>\vartheta _{\delta}^x\geq0 , \quad \chi'(\Lambda(\vartheta_{\delta}))<0,\quad \vartheta_{\delta}^{y}=|\Lambda[ \vartheta _{\delta}]|+\vartheta_{\delta}^{x}< 2|\Lambda[ \vartheta _{\delta}]|, \\
&P(A_{\rho^x,n^x}\vartheta _{\delta}^x,B_{\rho^x,n^x}\vartheta _{\delta}^x)>P(A_{\rho^x,n^x}\vartheta _{\delta}^y,B_{\rho^x,n^x}\vartheta _{\delta}^y),
\end{split}
\right.
\end{equation}
which gives rise to
\begin{equation}\nonumber
\begin{split}
 &-\big{(} P(A_{\rho^x,n^x}\vartheta _{\delta}^x,B_{\rho^x,n^x}\vartheta _{\delta}^x)-P(A_{\rho^x,n^x}\vartheta _{\delta}^y,B_{\rho^x,n^x}\vartheta _{\delta}^y) \big{)}\Psi_{\delta,k}\\
&\quad=\big{(} P(A_{\rho^x,n^x}\vartheta _{\delta}^x,B_{\rho^x,n^x}\vartheta _{\delta}^x)-P(A_{\rho^x,n^x}\vartheta _{\delta}^y,B_{\rho^x,n^x}\vartheta _{\delta}^y) \big{)}\big{(} \big{|} \chi'(\Lambda[\vartheta_{\delta}]) \big{|}\vartheta_{\delta}^{y}-\chi(\Lambda[\vartheta_{\delta}]) \big{)}w_{\delta}^x\\
&\quad\leq \big{(} P(A_{\rho^x,n^x}\vartheta _{\delta}^x,B_{\rho^x,n^x}\vartheta _{\delta}^x)+C_{1}\big{)}\big{(} 2 \big{|}\chi'(\Lambda[\vartheta_{\delta}]) \big{|} \big{|}\Lambda[\vartheta_{\delta}] \big{|}+\chi(\Lambda[\vartheta_{\delta}]) \big{)} w_{\delta}^{x}\\
&\quad\leq C\big{(} (\rho_{\delta}^x+\zeta)^{\gamma}+(n_{\delta}^x+\zeta)^{\alpha}+1 \big{)} \chi(\Lambda[ \vartheta _{\delta}]) w_{\delta}^{x},\quad  (x,y,\tau)\in  \mathbb{Q}_{\zeta,t}.\label{Pcase2}
\end{split}
\end{equation}
Combining the above three cases together, we get
\begin{equation}\nonumber
\begin{split}
I_{6}^4\leq C\int_{0}^{t}\int_{\mathbb{T}^{2d}}\overline{\mathcal{K}}_{h}(x-y)\big{(} (\rho_{\delta}^x)^{\gamma}+(\rho_{\delta}^x)^{\widetilde{\gamma}}+(n_{\delta}^x)^{\alpha}+(n_{\delta}^x)^{\widetilde{\alpha}}+1\big{)} \chi(\Lambda[ \vartheta _{\delta}] )w_{\delta}^{x}dxdyd\tau.
\end{split}
\end{equation}
Substituting the above estimates of $I_{i}^{4}$ ($i=1,...,6$) into $(\ref{I34})$, we derive (\ref{PF}). The proof of Lemma \ref{lemma47} is complete.
\end{proof}
\begin{remark}
Let~$\Pi(\rho)$ be the general pressure laws for compressible Navier-Stokes equations given in {\rm{\cite{bresch1}}}. For $P(\rho,n)=\Pi(\rho+n)$, one can repeat same arguments as in {\rm{\cite{bresch1}}} to estimate the pressure part $(\ref{PF})$. However, even for $P(\rho,n)=\Pi(\rho)+\Pi(n)$, we have to reduce the two variables $\rho_{\delta}$ and $n_{\delta}$ into the one variable $\vartheta_{\delta}$ by applying the decomposition {\rm{(\ref{ErPdelta})}}.
\end{remark}

Finally, different from the previous work by Bresch-Jabin \cite{bresch1}, we make use of the structures of the equations $(\ref{twodelta})$ and the commutator estimates (\ref{risz1})-(\ref{risz2}) of the Riesz operator $\mathcal{R}_{i}=(-\Delta)^{-\frac{1}{2}}\partial_{i}$ to control the effect viscous flux part.
\begin{lemma}\label{lemma48}
Let $T>0$, $p_{0}>\gamma+\widetilde{\gamma}+\alpha+\widetilde{\alpha}+1$, $h\in(0,h_{0})$, $(\rho_{\delta},n_{\delta},(\rho_{\delta}+n_{\delta})u_{\delta})$ be the weak solution to the IVP $(\ref{twodelta})$-$(\ref{ddelta})$ for $\delta\in(0,1)$ given by Proposition \ref{prop31}, $\vartheta_{\delta}:=\rho_{\delta}+n_{\delta}$, and $w_{\delta}$ be the solution to the IVP $(\ref{w})$. Then, under the assumptions of either Theorem \ref{theorem11} or Theorem \ref{theorem12}, it holds for $k\geq1$ and $t\in[0,T]$ that
\begin{equation}
\begin{split}
&\int_{0}^{t}\int_{\mathbb{T}^{2d}}\overline{\mathcal{K}}_{h}(x-y)\Lambda[F_{\delta}]\Psi_{\delta,k}dxdyd\tau\leq \frac{C\lambda_{0}k^{p_{0}}}{|\log{h}|^{\frac{1}{2}\min\{\frac{1}{2},\frac{2\gamma_{0}-d}{\gamma_{0}+d}\}}},\quad\quad \gamma_{0}:=\min\{\gamma,\alpha\},\label{EVF}
\end{split}
\end{equation}
where $F_{\delta}$, $\Lambda[f]$, $f^{x}$, $\overline{\mathcal{K}}_{h}$ and $\Psi_{\delta,k}$ are given by $(\ref{effect1})$, $(\ref{fx})_{1}$, $(\ref{fx})_{2}$, $(\ref{fx})_{3}$ and $(\ref{Psi})$, respectively, and $C>0$ is a constant independent of $\delta$, $h$, $k$ and $\lambda_{0}$.
\end{lemma}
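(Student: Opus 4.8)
The plan is to exploit the momentum‑equation representation $(\ref{effect11})$ of the effective flux. First I would split
\begin{equation*}
F_{\delta}=\partial_{t}v_{\delta}+\sum_{i,j=1}^{d}\mathcal{R}_{i}\mathcal{R}_{j}(\vartheta_{\delta}u_{\delta}^{i}u_{\delta}^{j}),\qquad v_{\delta}:=(-\Delta)^{-1}\dive(\vartheta_{\delta}u_{\delta}),
\end{equation*}
noting that $\nabla v_{\delta}$ is, up to sign, a composition of Riesz transforms applied to $\vartheta_{\delta}u_{\delta}$, hence bounded in $L^{\infty}(0,T;L^{q_{0}}(\mathbb{T}^{d}))$ with $q_{0}>1$ the exponent furnished by the uniform density bound $(\ref{r0})$ (with $q_{0}=\tfrac{2\max\{\gamma,\alpha\}}{\max\{\gamma,\alpha\}+1}$ under the domination condition, by $(\ref{P1})$), using $(\ref{LpLp})$ and the $L^{q_{0}}$-boundedness of $\nabla(-\Delta)^{-1}\dive$. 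Correspondingly the left‑hand side of $(\ref{EVF})$ equals $J_{1}+J_{2}$, the contributions of $\partial_{t}v_{\delta}$ and of the convective term.

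For $J_{2}$ I would use the Coifman--Rochberg--Weiss commutator structure already exploited in the proof of Lemma~\ref{lemma41}: write $\mathcal{R}_{i}\mathcal{R}_{j}(\vartheta_{\delta}u_{\delta}^{i}u_{\delta}^{j})=u_{\delta}^{j}\mathcal{R}_{i}\mathcal{R}_{j}(\vartheta_{\delta}u_{\delta}^{i})+[\mathcal{R}_{i}\mathcal{R}_{j},u_{\delta}^{j}](\vartheta_{\delta}u_{\delta}^{i})$, and inside $\Lambda[\cdot]$ further split $\Lambda[u_{\delta}^{j}\mathcal{R}_{i}\mathcal{R}_{j}(\vartheta_{\delta}u_{\delta}^{i})]=\Lambda[u_{\delta}^{j}]\big(\mathcal{R}_{i}\mathcal{R}_{j}(\vartheta_{\delta}u_{\delta}^{i})\big)^{x}+(u_{\delta}^{j})^{y}\Lambda[\mathcal{R}_{i}\mathcal{R}_{j}(\vartheta_{\delta}u_{\delta}^{i})]$. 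The commutator term gains one spatial derivative through the Riesz commutator estimates $(\ref{risz1})$--$(\ref{risz2})$, and the $\Lambda[u_{\delta}]$-term is a first difference of an $H^{1}$-function handled by $(\ref{D})$; combined with $|z||\nabla\mathcal{K}_{h}(z)|\leq C\mathcal{K}_{h}(z)$ from $(\ref{K})$, with $|\Psi_{\delta,k}|\leq Ck$ and $0\leq w_{\delta}\leq1$ (from $(\ref{Psi})$, $(\ref{chi1})$ and $(\ref{wbound})$), with the first‑moment bound $\int_{\mathbb{T}^{d}}\overline{\mathcal{K}}_{h}(z)|z|\,dz\leq C/|\log h|$ and with the difference estimate $(\ref{DH1})$ (and a Cauchy--Schwarz in $z$), these pieces are bounded by $C\,k^{p_{0}}|\log h|^{-\frac12\min\{\frac12,(2\gamma_{0}-d)/(\gamma_{0}+d)\}}$; the exponent restriction $\gamma_{0}>d/2$ is precisely what makes the Hölder exponents in the products $\vartheta_{\delta}\cdot u_{\delta}\cdot u_{\delta}$ close. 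The remaining piece $(u_{\delta}^{j})^{y}\Lambda[\mathcal{R}_{i}\mathcal{R}_{j}(\vartheta_{\delta}u_{\delta}^{i})]$ is reorganised using the symmetry of $\overline{\mathcal{K}}_{h}$, the bound $\vartheta_{\delta}w_{\delta}\leq\lambda_{0}^{-1}$ and a $w_{\delta}^{x}+w_{\delta}^{y}$-splitting, so that its only singular contribution reduces again to a $\Lambda[u_{\delta}]$-difference of the previous type.

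For $J_{1}$ I would integrate by parts in $\tau$. The boundary terms $\big[\int_{\mathbb{T}^{2d}}\overline{\mathcal{K}}_{h}(x-y)\Lambda[v_{\delta}]\Psi_{\delta,k}\,dxdy\big]_{0}^{t}$ are controlled with $(\ref{D})$, $\|\nabla v_{\delta}\|_{L^{\infty}(0,T;L^{q_{0}})}\leq C$ and $\int\overline{\mathcal{K}}_{h}(z)|z|\,dz\leq C/|\log h|$, giving a contribution $\leq Ck^{p_{0}}/|\log h|$. In the bulk term $-\int_{0}^{t}\int_{\mathbb{T}^{2d}}\overline{\mathcal{K}}_{h}(x-y)\Lambda[v_{\delta}]\partial_{\tau}\Psi_{\delta,k}\,dxdyd\tau$ I would use the renormalised transport equations $(\ref{varrhoapp})$ for $\vartheta_{\delta}$ and $(\ref{w})_{1}$ for $w_{\delta}$ (legitimate as in $(\ref{varthetavar})$--$(\ref{342})$, since the cut‑offs in $(\ref{Psi})$ are smooth) to write $\partial_{\tau}\Psi_{\delta,k}=-\dive_{x}(u_{\delta}^{x}\cdots)-\dive_{y}(u_{\delta}^{y}\cdots)+(\text{lower order terms in }\dive u_{\delta}\text{ and }\lambda_{0}\Xi_{\delta}w_{\delta})$; integrating the transport parts by parts in $x$ and $y$ transfers the derivatives onto $\overline{\mathcal{K}}_{h}$ (handled by $(\ref{K})$, $(\ref{D})$, $(\ref{DH1})$, $(\ref{Maximal})$--$(\ref{M})$ exactly as in the proof of Lemma~\ref{lemma45}) and onto $v_{\delta}$ (where $\nabla v_{\delta}$ is again an $L^{q_{0}}$ Riesz quantity), which brings everything back to the estimates met for $J_{2}$; the factor $\lambda_{0}$ in $(\ref{EVF})$ originates from the $\lambda_{0}\Xi_{\delta}w_{\delta}$ contribution together with the uniform $L^{1}(0,T;L^{1})$-bound on $\Xi_{\delta}$ coming from $(\ref{rdelta1})$ and $(\ref{rhonLp})$--$(\ref{rhonLpc})$. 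Summing the estimates of $J_{1}$ and $J_{2}$ yields $(\ref{EVF})$.

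The step I expect to be the main obstacle is the integrability bookkeeping encoded in the exponent $\min\{\frac12,(2\gamma_{0}-d)/(\gamma_{0}+d)\}$: because $\vartheta_{\delta}u_{\delta}\otimes u_{\delta}$ only lies in a low Lebesgue space uniformly in $\delta$, the Riesz and maximal‑function bounds, the Sobolev embeddings $H^{1}\hookrightarrow L^{2d/(d-2)}$ ($d\geq3$) and $H^{1}\hookrightarrow\mathcal{BMO}$ ($d=2$), and the kernel estimates $(\ref{K})$, $(\ref{DH1})$ have to be balanced with some care, and it is exactly here that $\gamma_{0}>d/2$ is used; in addition, the differentiation of $\Psi_{\delta,k}$ in time and the use of the renormalised continuity equations must be justified at the level of the $\delta$-approximation before any limit is taken.
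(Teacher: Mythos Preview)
Your overall plan matches the paper's: write $F_\delta=\partial_t v_\delta+(-\Delta)^{-1}\dive\dive(\vartheta_\delta u_\delta\otimes u_\delta)$, integrate the time derivative by parts against $\Psi_{\delta,k}$, feed in the transport equation satisfied by $\Psi_{\delta,k}$, and close with Riesz commutator estimates. However, your execution has a genuine gap at the point you flag as ``the remaining piece''.

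The problem is that $(u_\delta^{j})^{y}\Lambda[\mathcal{R}_i\mathcal{R}_j(\vartheta_\delta u_\delta^{i})]$ in $J_2$ and the terms $\overline{\mathcal K}_h\,\Lambda[u_\delta\!\cdot\!\nabla v_\delta]\,\Psi_{\delta,k}$ coming from the spatial integration by parts in $J_1$ \emph{cannot} be estimated separately: $\nabla v_\delta=\mathcal{R}_i\mathcal{R}_j(\vartheta_\delta u_\delta^{i})$ is only uniformly in $L^\infty_t L^{2\gamma_0/(\gamma_0+1)}_x$, carries no extra regularity, and therefore $\Lambda[\nabla v_\delta]$ alone does not produce any $|\log h|$-decay. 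Your proposed ``symmetry / $w_\delta^x+w_\delta^y$-splitting'' cannot create that decay. What actually happens is that these two contributions \emph{cancel} into the pure commutator
\[
I_4^5=\int_0^t\!\!\int_{\mathbb{T}^{2d}}\overline{\mathcal K}_h(x-y)\,\Lambda\bigl[\mathcal{R}_i\mathcal{R}_j(\vartheta_\delta u_\delta^{i}u_\delta^{j})-u_\delta^{j}\mathcal{R}_i\mathcal{R}_j(\vartheta_\delta u_\delta^{i})\bigr]\Psi_{\delta,k}\,dx\,dy\,d\tau,
\]
because $u_\delta\!\cdot\!\nabla v_\delta=u_\delta^{j}\mathcal{R}_i\mathcal{R}_j(\vartheta_\delta u_\delta^{i})$. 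This is the structural identity you are missing; the paper obtains it by doing the time and space integrations by parts \emph{before} any commutator decomposition, so that $I_4^5$ appears directly.

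Two further technical ingredients in the paper that are absent from your sketch: (i) the boundary, $R_{\delta,k}$ and $\nabla\overline{\mathcal K}_h$ terms ($I_1^5$--$I_3^5$) are controlled through the functional $L_{h,p}\bigl((-\Delta)^{-1}\dive(\vartheta_\delta u_\delta)\bigr)$ via $(\ref{LNW})$, not through the first-moment bound $\int\overline{\mathcal K}_h|z|\,dz$ you quote---this is how the exponent $\min\{\tfrac14,\tfrac{2\gamma_0-d}{2\gamma_0}\}$ arises; (ii) for $I_4^5$ with $\gamma_0\le d$ the commutator estimate $(\ref{risz2})$ cannot be applied directly (the H\"older exponents do not close), so the paper introduces an additional truncation $\vartheta_\delta=\vartheta_\delta\mathbf 1_{\vartheta_\delta\le L}+\vartheta_\delta\mathbf 1_{\vartheta_\delta\ge L}$ and optimises in $L$, which is what produces the exponent $\tfrac{2\gamma_0-d}{2(\gamma_0+d)}$ in $(\ref{EVF})$.
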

\begin{proof}
By the argument of renormalized solutions for $(\ref{w})_{1}$ and $(\ref{varrhoapp})$, one has
\begin{equation}
\begin{split}
&\partial_{t}\Psi_{\delta,k}+\dive_{x}(\Psi_{\delta,k}u^x_{\delta})+\dive_{y}(\Psi_{\delta,k} u_{\delta}^y)=R_{\delta,k}\quad\text{in}~\mathcal{D}'(\mathbb{T}^{2d}\times(0,T)),\label{Psieq}
\end{split}
\end{equation}
with
\begin{equation}\nonumber
\begin{split}
&R_{\delta,k}:=(\Psi_{\delta,k}-\partial_{\vartheta_{\delta}^{x}}\Psi_{\delta,k})\dive_{x}u_{\delta}^{x}+(\Psi_{\delta,k}-\partial_{\vartheta_{\delta}^{y}}\Psi_{\delta,k})\dive_{y}u_{\delta}^{y}-\lambda_{0} \partial_{w_{\delta}^{x}}\Psi_{\delta,k} \Xi_{\delta}^x w_{\delta}^x.
\end{split}
\end{equation}
By $(\ref{w})_{2}$ and $(\ref{wbound})_{1}$, it is easy to verify
\begin{equation}\label{PsiR}
\left\{
\begin{split}
&\|\Psi_{\delta,k}\|_{L^{\infty}(0,T;L^{\infty}(\mathbb{T}^{2d}))}\leq Ck,\\
& \|R_{\delta,k}\|_{L^{\infty}(0,T;L^{\infty}(\mathbb{T}^{2d}))}\leq C\lambda_{0}k^{p_{0}}\big{(} |\dive_{x}u_{\delta}^x|+|\dive_{y}u_{\delta}^y|+M|\nabla_{x} u_{\delta}^x|+1\big{)},
\end{split}
\right.
\end{equation}
where the constant $C>0$ is independent of $\delta, k,$ and $\lambda_{0}$ and the localized maximal operator $M: L^{p}(\mathbb{T}^{d})\rightarrow L^{p}(\mathbb{T}^{d})$ for any $p\in[1,\infty)$ is defined by (\ref{Maximal}). From $(\ref{effect11})$, we have
\begin{equation}\label{psi11}
\begin{split}
&\Lambda[F_{\delta}]=\partial_{t}\Lambda[(-\Delta)^{-1}\dive(\vartheta_{\delta} u_{\delta})]+\Lambda[(-\Delta)^{-1}\dive\dive (\vartheta_{\delta}u_{\delta}\otimes u_{\delta})].
\end{split}
\end{equation}
One deduces by (\ref{Psieq}), $(\ref{psi11})$ and integration by parts that
\begin{equation}\label{481}
\begin{split}
&\int_{0}^{t}\int_{\mathbb{T}^{2d}}\overline{\mathcal{K}}_{h}(x-y)\Lambda[F_{\delta}]\Psi_{\delta,k}dxdyd\tau=\sum_{i=1}^{4}I_{i}^{5},
\end{split}
\end{equation}
where $I_{i}^{5}$ $(i=1,2,3,4)$ are given by
\begin{equation}\nonumber
\begin{split}
&I_{1}^{5}:=\int_{\mathbb{T}^{2d}} \overline{\mathcal{K}}_{h}(x-y) \Lambda[(-\Delta)^{-1}\dive (\vartheta_{\delta} u_{\delta})] \Psi_{\delta,k} dxdy \Big{|}^{t}_{0} ,\\
&I_{2}^5:=-\int_{0}^{t}\int_{\mathbb{T}^{2d}} \overline{\mathcal{K}}_{h}(x-y) \Lambda[(-\Delta)^{-1}\dive (\vartheta_{\delta} u_{\delta})]   R_{\delta,k}dxdyd\tau ,\\
&I_{3}^5:=\int_{0}^{t}\int_{\mathbb{T}^{2d}}\nabla  \overline{\mathcal{K}}_{h}(x-y) \Lambda[ u_{\delta}] \cdot\Lambda[(-\Delta)^{-1}\dive (\vartheta_{\delta} u_{\delta})] \Psi_{\delta,k}dxdyd\tau ,\\
&I_{4}^5:=\sum_{i,j=1}^{d}\int_{0}^{t}\int_{\mathbb{T}^{2d}}  \overline{\mathcal{K}}_{h}(x-y)  \Lambda[\mathcal{R}_{i}\mathcal{R}_{j}((\vartheta_{\delta} u^{i}_{\delta}) u^{j}_{\delta})-u^{j}_{\delta} \mathcal{R}_{i}\mathcal{R}_{j}(\vartheta_{\delta} u^{i}_{\delta})] \Psi_{\delta,k}dxdyd\tau.
\end{split}
\end{equation}
First, we estimate the term $I_{1}^5$. It follows from (\ref{r0}), (\ref{LNW}) and the boundedness of the operator $\nabla (-\Delta)^{-1} \dive$ that
\begin{equation}\label{Lh1}
\begin{split}
&L_{h,p}\big{(} (-\Delta)^{-1}\dive (\vartheta_{\delta} u_{\delta})\big{)}\\
&\quad\leq \frac{C\|(-\Delta)^{-1}\dive (\vartheta_{\delta} u_{\delta})\|_{L^{p}}}{|\log{h}|^{\frac{1}{2}}}+\frac{C\|\nabla(-\Delta)^{-1}\dive (\vartheta_{\delta} u_{\delta})\|_{L^{\frac{2\gamma_{0}}{\gamma_{0}+1} }}}{|\log{h}|^{pd\min\{0, \frac{1}{p}-\frac{\gamma_{0}+1}{2\gamma_{0}}\}+p}}\\
&\quad\leq C\big{(} \frac{1}{|\log{h}|^{\frac{1}{2}}}+\frac{1}{|\log{h}|^{pd\min\{0, \frac{1}{p}-\frac{\gamma_{0}+1}{2\gamma_{0}}\}+p}} \big{)},\quad\quad p\in[1,\frac{2\gamma_{0} d(\gamma_{0}+1)}{d(\gamma_{0}+1)-2\gamma_{0}}),
\end{split}
\end{equation}
which together with $(\ref{dappu})$, (\ref{r0}) and $(\ref{PsiR})_{2}$ implies
\begin{equation}\nonumber
\begin{split}
&I_{1}^5\leq Ck ~\underset{t\in[0, T]}{{\rm{ess~sup}}}~  L_{h,1}\big{(} (-\Delta)^{-1}\dive (\vartheta_{\delta} u_{\delta})\big{)}+ Ck L_{h,1} \big{(} (-\Delta)^{-1}\dive( (\rho_{0,\delta}+n_{0,\delta})u_{0,\delta} )\big{)}\leq \frac{Ck}{|\log{h}|^{\frac{1}{2}}}.
\end{split}
\end{equation}
To control the term $I_{2}^5$, we obtain by $(\ref{rdelta1})$, (\ref{r0}), $(\ref{PsiR})_{2}$ and (\ref{Lh1}) that
\begin{equation}\nonumber
\begin{split}
&I_{2}^5\leq Ck^{p_{0}}\lambda_{0}\int_{0}^{t}\int_{\mathbb{T}^{2d}} \overline{\mathcal{K}}_{h}(x-y) |\Lambda[(-\Delta)^{-1}\dive (\vartheta_{\delta} u_{\delta})]| \big{(} |\dive_{x}u_{\delta}^x|+|\dive_{y}u_{\delta}^y| +M|\nabla_{x} u_{\delta}^x|+1\big{)}dxdyd\tau\\
&\quad\leq Ck^{p_{0}}\lambda_{0}\|\nabla u_{\delta}\|_{L^2(0,T;L^2)} \underset{t\in[0, T]}{{\rm{ess~sup}}}~ L_{h,2}\big{(} (-\Delta)^{-1}\dive (\vartheta_{\delta} u_{\delta})\big{)}^{\frac{1}{2}}\\
&\quad\quad+Ck^{p_{0}}\lambda_{0}~\underset{t\in[0, T]}{{\rm{ess~sup}}}~ L_{h,1}\big{(} (-\Delta)^{-1}\dive (\vartheta_{\delta} u_{\delta})\big{)}\leq \frac{Ck^{p_{0}}\lambda_{0}}{|\log{h}|^{\min\{\frac{1}{4},\frac{2\gamma_{0}-d}{2\gamma_{0}}\}}}.
\end{split}
\end{equation}
Similarly to the estimate of $I^3_{1}$ in (\ref{ddte}), since $(-\Delta)^{-1}\dive (\vartheta_{\delta}  u_{\delta} )$ is uniformly bounded in $L^2(0,T;L^2(\mathbb{T}^{d}))$ due to $\gamma_{0}>\frac{d}{2}$, one can prove
\begin{equation}\label{i26}
\begin{split}
&I_{3}^5\leq Ck\int_{0}^{t}\int_{\mathbb{T}^{2d}}\overline{\mathcal{K}}_{h}(x-y) |\Lambda[(-\Delta)^{-1}\dive (\vartheta_{\delta}  u_{\delta} )]|  M|\nabla_{x} u_{\delta}^x| dxdyd\tau+\frac{Ck}{|\log{h}|^{\frac{1}{2}}}.
\end{split}
\end{equation}
By  (\ref{rdelta1}), (\ref{r0}) and (\ref{Lh1}), we have
\begin{equation}\label{pdxx1}
\begin{split}
&\int_{0}^{t}\int_{\mathbb{T}^{2d}} \overline{\mathcal{K}}_{h}(x-y) |\Lambda[(-\Delta)^{-1}\dive (\vartheta_{\delta}  u_{\delta} )]|  M|\nabla_{x} u_{\delta}^x| dxdyd\tau\\
&\quad\leq C\| M|\nabla u_{\delta}|\|_{L^2(0,T;L^2)} \underset{t\in[0, T]}{{\rm{ess~sup}}}~ L_{h,2}\big{(} (-\Delta)^{-1}\dive (\vartheta_{\delta} u_{\delta}) \big{)}^{\frac{1}{2}}\\
&\quad\leq\frac{C}{|\log{h}|^{\min\{\frac{1}{4},\frac{2\gamma_{0}-d}{2\gamma_{0}}\}}}.
\end{split}
\end{equation}
We obtain from $(\ref{i26})$-$(\ref{pdxx1})$ that
\begin{equation}\nonumber
\begin{split}
&I_{3}^5\leq\frac{Ck}{|\log{h}|^{\min\{\frac{1}{4},\frac{2\gamma_{0}-d}{2\gamma_{0}}\}}}.
\end{split}
\end{equation}

Finally, we are ready to estimate the last term $I_{4}^5$. Inspired by \cite{lions2}, we make use of the commutator estimate (\ref{risz2}) to have
\begin{equation}\label{6161}
\begin{split}
&\|\nabla\big{(}\mathcal{R}_{i}\mathcal{R}_{j}(\vartheta_{\delta} u_{\delta}^i u_{\delta}^j)-u_{\delta}^j \mathcal{R}_{i}\mathcal{R}_{j}(\vartheta_{\delta} u_{\delta}^i)\big{)}(t)\|_{L^{p}}\leq C\|\nabla u_{\delta}^j(t)\|_{L^2}\|\vartheta_{\delta}  u_{\delta}^i(t)\|_{L^q},\quad \text{a.e.}~t\in(0,T),
\end{split}
\end{equation}
where the constants $p\in(1,\infty)$ and $q\in(2,\infty)$ satisfy $\frac{1}{2}+\frac{1}{q}=\frac{1}{p}<1$. Note that one may not apply (\ref{6161}) directly for $\gamma_{0}\leq d$. For example, $\vartheta_{\delta} u_{\delta}$ is uniformly bounded in $L^2(0,T;L^{\frac{6\gamma_{0}}{6+\gamma_{0}}}(\mathbb{T}^{d}))$ for $d=3$, but we have $\frac{1}{2}+\frac{6+\gamma_{0}}{6\gamma_{0}}\geq1$ provided $\gamma_{0}\leq 3$. To overcome this difficulty, we truncate $\vartheta_{\delta}$ by using $1=\mathbf{1}_{\vartheta_{\delta}\leq L}+\mathbf{1}_{\vartheta_{\delta}\geq L}$ for $L\geq1$ leading to $I_{4}^5=I_{1}^6+I_{2}^6$ with
\begin{equation}\nonumber
\begin{split}
&I_{1}^6:=\sum_{i,j=1}^{d}\int_{0}^{t}\int_{\mathbb{T}^{2d}}\overline{\mathcal{K}}_{h}(x-y)\Lambda\big{[}\mathcal{R}_{i}\mathcal{R}_{j} ( \vartheta_{\delta}\mathbf{1}_{\vartheta_{\delta}\leq L}  u^{i}_{\delta}u^{j}_{\delta})-u^{j}_{\delta}\mathcal{R}_{i}\mathcal{R}_{j}( \vartheta_{\delta} \mathbf{1}_{\vartheta_{\delta}\leq L} u^i_{\delta})\big{]} \Psi_{\delta,k} dxdyd\tau,\\
&I_{2}^6:=\sum_{i,j=1}^{d}\int_{0}^{t}\int_{\mathbb{T}^{2d}}\overline{\mathcal{K}}_{h}(x-y)\Lambda\big{[}\mathcal{R}_{i}\mathcal{R}_{j} ( \vartheta_{\delta}\mathbf{1}_{\vartheta_{\delta}\geq L}  u^{i}_{\delta}u^{j}_{\delta})-u^{j}_{\delta}\mathcal{R}_{i}\mathcal{R}_{j}( \vartheta_{\delta}\mathbf{1}_{\vartheta_{\delta}\geq L} u^i_{\delta})\big{]} \Psi_{\delta,k} dxdyd\tau.
\end{split}
\end{equation}
The terms $I_{1}^6$ and $I_{2}^6$ can be estimated in the case of $d\geq 3$ and the case of $d=2$ as follows:

\vspace{1ex}
\begin{itemize}
\item{Case 1: $d\geq 3.$}
\end{itemize}
We show after using $(\ref{PsiR})_{1}$, (\ref{LNW}), (\ref{risz2}) and $H^1(\mathbb{T}^{d}) \hookrightarrow L^{\frac{2d}{d-2}}(\mathbb{T}^{d}) $ that
\begin{equation}\nonumber
\begin{split}
&I_{1}^6\leq Ck\sum_{i,j=1}^{d}\int_{0}^{t} L_{h,1}\big{(} \mathcal{R}_{i}\mathcal{R}_{j} (\vartheta_{\delta}\mathbf{1}_{\vartheta_{\delta}\leq L} u^{i}_{\delta}u^{j}_{\delta})-u^{j}_{\delta}\mathcal{R}_{i}\mathcal{R}_{j}( \mathbf{1}_{\vartheta_{\delta}\leq L}\vartheta_{\delta} u^i_{\delta})\big{)}d\tau\\
&\quad\leq Ck\Big{(}\frac{1}{|\log{h}|^{\frac{1}{2}}}+\frac{1}{|\log{h}|^2}\sum_{i,j=1}^d\|\nabla [\mathcal{R}_{i}\mathcal{R}_{j} \big{(}\vartheta_{\delta}\mathbf{1}_{\vartheta_{\delta}\leq L} u^{i}_{\delta}u^{j}_{\delta}\big{)}-u^{j}_{\delta}\mathcal{R}_{i}\mathcal{R}_{j}\big{(} \mathbf{1}_{\vartheta_{\delta}\leq L}\vartheta_{\delta}u^i_{\delta}\big{)}]\|_{L^1(0,T;L^{\frac{d}{d-1}})}\Big{)}\\
&\quad\leq \frac{Ck}{|\log{h}|^{\frac{1}{2}}}\big{(}1+\|\nabla u_{\delta}\|_{L^{2}(0,T;L^2)}\|\vartheta_{\delta}\mathbf{1}_{\vartheta_{\delta}\leq L} u_{\delta}\|_{L^2(0,T;L^{\frac{2d}{d-2}})}\big{)}\\
&\quad\leq \frac{CkL}{|\log{h}|^{\frac{1}{2}}}.
\end{split}
\end{equation}
For the term $I_{2}^6$, it holds
\begin{equation}\nonumber
\begin{split}
&I_{2}^6\leq Ck\sum_{i,j=1}^{d}\big{(}\|\mathcal{R}_{i}\mathcal{R}_{j} (\vartheta_{\delta}\mathbf{1}_{\vartheta_{\delta}\geq L} u^{i}_{\delta}u^{j}_{\delta})\|_{L^{1}(0,T;L^{p_{2}})}+\|u^{j}_{\delta}\mathcal{R}_{i}\mathcal{R}_{j}(\vartheta_{\delta} \mathbf{1}_{\vartheta_{\delta}\geq L}u^{i}_{\delta})\|_{L^{1}(0,T;L^{p_{2}})}\big{)}\\
&\quad\leq Ck\| \vartheta_{\delta} \mathbf{1}_{\vartheta_{\delta}\geq L}\|_{L^{\infty}(0,T;L^{p_{3}})}\|u_{\delta}\|_{L^2(0,T;L^{\frac{2d}{d-2}})}^2 \\
&\quad\leq\frac{Ck\|\vartheta_{\delta}\|_{L^{\infty}(0,T;L^{\gamma_{0}})}^{\frac{\gamma_{0}}{p_{3}}}\|u_{\delta}\|_{L^2(0,T;H^1)}^2}{L^{\frac{1}{d}(\gamma_{0}-\frac{d}{2})}}\leq  \frac{Ck}{L^{\frac{1}{d}(\gamma_{0}-\frac{d}{2})}},
\end{split}
\end{equation}
with the constants $p_{2}\in(1,\infty)$ and $p_{3}\in (\frac{d}{2},\gamma_{0})$ satisfying
$$
\frac{1}{p_{2}}+\frac{d-2}{d}=\frac{1}{p_{2}},\quad \quad \frac{\gamma_{0}-p_{3}}{p_{3}}=\frac{1}{d}(\gamma_{0}-\frac{d}{2}).
$$
 \begin{itemize}
\item{Case 2: $d=2.$}
\end{itemize}
By $(\ref{PsiR})_{1}$, (\ref{LNW}), (\ref{risz2}) and $H^1(\mathbb{T}^2)\hookrightarrow L^{p}(\mathbb{T}^2)$ for any $p\in[1,\infty)$, the term $I_{1}^6$ can be estimated by
\begin{equation}\nonumber
\begin{split}
&I_{1}^6\leq Ck\sum_{i,j=1}^{2}\int_{0}^{t}L_{h,1}\big{(} \mathcal{R}_{i}\mathcal{R}_{j} (\vartheta_{\delta}\mathbf{1}_{\vartheta_{\delta}\leq L} u^{i}_{\delta}u^{j}_{\delta})-u^{j}_{\delta}\mathcal{R}_{i}\mathcal{R}_{j}( \mathbf{1}_{\vartheta_{\delta}\leq L}\vartheta_{\delta} u^i_{\delta})\big{)}d\tau\\
&\quad\leq Ck\Big{(}\frac{1}{|\log{h}|^{\frac{1}{2}}}+\frac{1}{|\log{h}|^{\frac{3}{2}}}\sum_{i,j=1}^2\|\nabla\big{ [}\mathcal{R}_{i}\mathcal{R}_{j} \big{(}\vartheta_{\delta}\mathbf{1}_{\vartheta_{\delta}\leq L} u^{i}_{\delta}u^{j}_{\delta}\big{)}-u^{j}_{\delta}\mathcal{R}_{i}\mathcal{R}_{j}\big{(} \mathbf{1}_{\vartheta_{\delta}\leq L}\vartheta_{\delta}u^i_{\delta}\big{)} \big{]}\|_{L^1(0,T;L^{\frac{4}{3}})}\Big{)}\\
&\quad\leq \frac{Ck}{|\log{h}|^{\frac{1}{2}}}\big{(}1+\|\nabla u_{\delta}\|_{L^{2}(0,T;L^2)}\|\vartheta_{\delta}\mathbf{1}_{\vartheta_{\delta}\leq L} u_{\delta}\|_{L^2(0,T;L^{4})}\big{)}\\
&\quad\leq \frac{Ck(1+L\|u_{\delta}\|_{L^2(0,T;H^1)}^2)}{|\log{h}|^{\frac{1}{2}}}\leq \frac{CkL}{|\log{h}|^{\frac{1}{2}}}.
\end{split}
\end{equation}
 For the term $I_{2}^6$, we make use of $(\ref{PsiR})_{1}$, (\ref{LNW}), (\ref{risz1}) and $H^1(\mathbb{T}^2)\hookrightarrow \mathcal{BMO}(\mathbb{T}^2)\hookrightarrow L^{p}(\mathbb{T}^{2})$ for any $p\in[1,\infty)$ to obtain
\begin{equation}\nonumber
\begin{split}
&I_{2}^6\leq Ck\sum_{i,j=1}^2\| \mathcal{R}_{i}\mathcal{R}_{j} (\vartheta_{\delta}\mathbf{1}_{\vartheta_{\delta}\leq L} u^{i}_{\delta}u^{j}_{\delta})-u^{j}_{\delta}\mathcal{R}_{i}\mathcal{R}_{j}( \mathbf{1}_{\vartheta_{\delta}\leq L}\vartheta_{\delta} u^i_{\delta})\|_{L^1(0,T;L^{\frac{4\gamma_{0}}{3\gamma_{0}+1}})}\\
&\quad\leq Ck\|u_{\delta}\|_{L^2(0,T;\mathcal{BMO})}\|\vartheta_{\delta}\mathbf{1}_{\vartheta_{\delta}\geq L} u_{\delta}\|_{L^2(0,T;L^{\frac{4\gamma_{0}}{3\gamma_{0}+1}})}\\
&\quad\leq Ck\| u_{\delta}\|_{L^{2}(0,T;H^1)}\|\vartheta_{\delta}\mathbf{1}_{\vartheta_{\delta}\geq L}\|_{L^{\infty}(0,T;L^{\frac{2\gamma_{0}}{\gamma_{0}+1}})}\|u_{\delta}\|_{L^2(0,T;L^{\frac{4\gamma_{0}}{\gamma_{0}-1}})}\\
&\quad \leq \frac{Ck}{L^{\frac{\gamma_{0}-1}{2}}}\|u_{\delta}\|_{L^2(0,T;H^1)}^2\|\vartheta_{\delta}\|_{L^{\infty}(0,T;L^{\gamma_{0}})}^{\frac{\gamma_{0}+1}{2}}\leq \frac{Ck}{L^{\frac{\gamma_{0}-1}{2}}}
\end{split}
\end{equation}
Combining the above two cases about the estimates of $I_{i}^6$ $(i=1,2)$ together, we derive
\begin{equation}\label{I466}
\begin{split}
&I_{4}^5\leq Ck\big{(}\frac{L}{|\log{h}|^{\frac{1}{2}}}+\frac{1}{L^{\frac{1}{d}(\gamma_{0}-\frac{d}{2})}} \big{)},\quad\quad d\geq 2.
\end{split}
\end{equation}
Thence one can choose $L=|\log{h}|^{\frac{1}{2(\frac{\gamma_{0}}{d}+\frac{1}{2})}}$ in (\ref{I466}) to have
 \begin{equation}\nonumber
 \begin{split}
 &I_{4}^5\leq \frac{Ck}{|\log{h}|^{\frac{1}{2}\min\{1, \frac{2\gamma_{0}-d}{2\gamma_{0}+d}\}}}.
 \end{split}
 \end{equation}
 Substituting the above estimates of $I_{i}^{5}$ $(i=1,2,3,4)$ into (\ref{481}), we prove $(\ref{EVF})$. The proof of Lemma \ref{lemma48} is complete.
  \end{proof}
 
 \begin{remark}
 It should be noted that Lemma \ref{lemma48} about the compactness of effect viscous flux indeed holds for any adiabatic constants $\gamma,\alpha>\frac{d}{2}$.
 \end{remark}
 
 \underline{\it\textbf{Proof of Theorem \ref{theorem11} and Theorem \ref{theorem12}:~}} Let $T>0$, $(\rho_{\delta},n_{\delta},(\rho_{\delta}+n_{\delta})u_{\delta})$ be the weak solution to the IVP $(\ref{twodelta})$-$(\ref{ddelta})$ for $\delta\in(0,1)$ given by Proposition \ref{prop31}, $\vartheta_{\delta}:=\rho_{\delta}+n_{\delta}$, $w_{\delta}$ be the solution to the IVP $(\ref{w})$, and $(\rho, u,(\rho+n)u)$ be the limit obtained by Lemma \ref{lemma42}. Then, with the help of the uniform estimates obtained from Lemmas \ref{lemma45} to \ref{lemma48}, for any $\zeta>0$, there exists a constant $\delta_{1}(\zeta)\in(0,1)$ such that for $\delta\in (0,\delta_{1}(\zeta))$, $h\in(0,h_{0})$, $k\geq1$ and $\lambda_{0}\geq1$, we have
 \begin{equation}\nonumber
\begin{split}
&\int_{\mathbb{T}^{2d}}\overline{\mathcal{K}}_{h}(x-y)\chi(\Lambda[\vartheta_{\delta}])(w^{x}_{\delta}+w^{y}_{\delta})dxdy\\
&\quad\leq  \frac{C}{k}+ C\lambda_{0}k^{p_{0}}h_{1}(\delta,\zeta,h) +(C-2\lambda_{0})\int_{0}^{t}\int_{\mathbb{T}^{2d}}\overline{\mathcal{K}}_{h}(x-y)|\Lambda[\vartheta_{\delta}]| \Xi_{\delta}^{x}w_{\delta}^{x}dxdyd\tau,
 \end{split}
\end{equation}
where $C>0$ is a constant independent of $\delta$, $h$, $\zeta$ and $\lambda_{0}$ and $h_{1}(\delta,\zeta,h)\in(0,1)$ is given by
 \begin{equation}\nonumber
\begin{split}
&h_{1}(\delta,\zeta,h):= L_{h,1}(\rho_{0,\delta}+n_{0,\delta})+\frac{1}{|\log{h}|^{\frac{1}{2}\min\{\frac{1}{2},\frac{2\gamma_{0}-d}{\gamma_{0}+d}\}}}\\
&\quad\quad\quad\quad\quad\quad+\delta^{\beta}+\zeta+\int_{0}^{T}\big{(}L_{h,1}(A_{\rho,n})+L_{h,1}(B_{\rho,n})\big{)}dt,\quad \quad \gamma_{0}:=\min\{\gamma,\alpha\}.
 \end{split}
\end{equation}
Choosing
  \begin{equation}\nonumber
\begin{split}
&\lambda_{0}=\frac{C}{2}+1,\quad\quad k=h_{1}(\delta,\zeta,h)^{-\frac{1}{p_{0}+1}},
  \end{split}
\end{equation}
 we have
  \begin{equation}\nonumber
\begin{split}
&\int_{\mathbb{T}^{2d}}\overline{\mathcal{K}}_{h}(x-y)\chi(\Lambda[\vartheta_{\delta}])(w^{x}_{\delta}+w^{y}_{\delta})dxdy\leq C h_{1}(\delta,\zeta,h)^{\frac{1}{p_{0}+1}},
  \end{split}
\end{equation}
 which together with $(\ref{Ew})$ implies for any $\sigma_{*}>0$ that
 \begin{equation}\nonumber
 \begin{split}
 &\big{(} L_{h,1}(\vartheta_{\delta}) \big{)}^2\leq \frac{C}{\log{(1+|\log{\sigma_{*}}|})}+\frac{Ch_{1}(\delta,\zeta,h)^{\frac{1}{p_{0}+1}}}{\sigma_{*}}.
 \end{split}
 \end{equation}
 Thus, one can choose $\sigma_{*}=h_{1}(\delta,\zeta,h)^{\frac{1}{2(p_{0}+1)}}$ to obtain
  \begin{equation}\nonumber
 \begin{split}
  &\big{(} L_{h,1}(\vartheta_{\delta}) \big{)}^2\leq \frac{C}{\log{(1+|\log{h_{1}(\delta,\zeta,h)}|)}}+h_{1}(\delta,\zeta,h)^{\frac{1}{2(p_{0}+1)}}.
  \end{split}
 \end{equation}
 Since it follows $\lim_{ h \rightarrow 0}\limsup_{\delta\rightarrow 0} h_{1}(\delta,\zeta,h)=\zeta$ for any small $\zeta>0$, we gain
 \begin{equation}\label{final}
 \begin{split}
 &\lim_{ h \rightarrow 0}\limsup_{\delta\rightarrow 0} \underset{t\in[0, T]}{{\rm{ess~sup}}}~L_{h,1}(\vartheta _{\delta})=0.
 \end{split}
 \end{equation}
Due to $(\ref{rhonifdelta})$, (\ref{final}), Lemma \ref{lemma63} below and the fact that $\partial_{t}\vartheta_{\delta}$ is uniformly bounded in $L^{\infty}(0,T;W^{-1,\frac{2\gamma_{0}}{\gamma_{0}+1}}(\mathbb{T}^{d}))$, it holds up to a subsequence (still denoted by $(\rho_{\delta},n_{\delta})$) that
\begin{equation}\label{strongf}
\begin{split}
(\rho_{\delta},n_{\delta})\rightarrow (\rho,n)\quad\text{in}~L^1(0,T;L^1(\mathbb{T}^{d}))\times L^1(0,T;L^1(\mathbb{T}^{d}))\quad\text{as}~\delta\rightarrow 0,
 \end{split}
 \end{equation}
 which together with Lemma \ref{lemma41} and the Egorov theorem gives rise to
 \begin{equation}\label{strongfinal}
 \begin{split}
 &P_{\delta}(\rho_{\delta},n_{\delta})\rightarrow P(\rho,n)\quad\text{in}~L^1(0,T;L^1(\mathbb{T}^{d}))\quad\text{as}~\delta\rightarrow 0.
 \end{split}
 \end{equation}
 By $(\ref{limitdelta1})$ and $(\ref{strongfinal})$,  one can show that $(\rho, n, (\rho+n)u)$ satisfies the properties (1)-(3) in Definition \ref{defn11}. Finally, it is easy to show the energy inequality $(\ref{energy})$ by the lower semi-continuity of weak limits. Theorem \ref{theorem11} and Theorem \ref{theorem12} are proved.

\section{Appendix}
In this section, we present some technical lemmas which are useful for our analysis.

\begin{lemma}\label{lemma61}
Let $p\in[1,\infty)$, $\varepsilon>0$ and $f_{\varepsilon}$ be uniformly bounded in $L^{p}(\mathbb{T}^{d})$. Then, for any $h\in(0,h_{0})$, it holds
\begin{align}
&\sup_{|z|<h}\int_{\mathbb{T}^{d}}|f_{\varepsilon}(x+z)-f_{\varepsilon}(x)|^{p}dx\leq C\Big{(}\frac{\|f_{\varepsilon}\|_{L^1}^p}{|\log{h}|^{p}}+L_{h^{\frac{1}{d+1}},p}(f_{\varepsilon})\Big{)},\label{NL}\\
&L_{h,p}(f_{\varepsilon})\leq C\Big{(}\frac{\|f_{\varepsilon}\|_{L^p}^p}{|\log{h}|^{\frac{1}{2}}}+\sup_{ |z|<\frac{1}{|\log{h}|}}\int_{\mathbb{T}^{d}}|f_{\varepsilon}(x+z)-f_{\varepsilon}(x)|^{p}dx \Big{)}, \label{LN}
\end{align}
where $L_{h,p}(f)$ is defined by $(\ref{L})$ and $C>0$ is a constant independent of $h$ and $\varepsilon$.

Furthermore, if $f_{\varepsilon}$ is uniformly bounded in $W^{1,q}(\mathbb{T}^{d})$ for $q\in [1,d)$, then we have
\begin{equation}
\begin{split}
L_{h,p}(f_{\varepsilon})\leq C\Big{(}\frac{\|f_{\varepsilon}\|_{L^p}^p}{|\log{h}|^{\frac{1}{2}}}+\frac{\|\nabla f_{\varepsilon}\|_{L^{q}}^p}{|\log{h}|^{pd\min\{0,\frac{1}{p}-\frac{1}{q}\}+p}}\Big{)},\quad  p\in [1,\frac{qd}{d-q}). \label{LNW}
\end{split}
\end{equation}
\end{lemma}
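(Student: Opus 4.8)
The plan is to prove the three bounds in the order (\ref{LN}), (\ref{LNW}), (\ref{NL}), all built on the reformulation obtained from the substitution $z=x-y$ together with Fubini and the symmetry of $\mathcal{K}_{h}$,
\[
L_{h,p}(f_{\varepsilon})=\int_{\mathbb{T}^{d}}\overline{\mathcal{K}}_{h}(z)\,\bigl\|f_{\varepsilon}-f_{\varepsilon}(\cdot-z)\bigr\|_{L^{p}(\mathbb{T}^{d})}^{p}\,dz ,
\]
and on the kernel facts from (\ref{K}): $\int_{\mathbb{T}^{d}}\overline{\mathcal{K}}_{h}=1$, $\mathcal{K}_{h}(z)\le|z|^{-d}$ for $0\le|z|\le\tfrac12$, and $\tfrac1C|\log h|\le\|\mathcal{K}_{h}\|_{L^{1}}\le C|\log h|$.

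For (\ref{LN}) I would split the $z$-integral at the scale $1/|\log h|$. On $\{|z|\le 1/|\log h|\}$ one bounds $\|f_{\varepsilon}-f_{\varepsilon}(\cdot-z)\|_{L^{p}}^{p}$ by its supremum over $|z|<1/|\log h|$ and uses $\int\overline{\mathcal{K}}_{h}\le1$, which produces the second term of (\ref{LN}). On $\{1/|\log h|\le|z|<1\}$ one uses the crude bound $\|f_{\varepsilon}-f_{\varepsilon}(\cdot-z)\|_{L^{p}}^{p}\le 2^{p}\|f_{\varepsilon}\|_{L^{p}}^{p}$ and the elementary computation
\[
\int_{\{1/|\log h|\le|z|<1\}}\mathcal{K}_{h}(z)\,dz\le\int_{\{1/|\log h|\le|z|\le1/2\}}|z|^{-d}\,dz+C\le C\log|\log h| ,
\]
so that, after dividing by $\|\mathcal{K}_{h}\|_{L^{1}}\ge\tfrac1C|\log h|$, this contribution is at most $C\log|\log h|/|\log h|\le C/|\log h|^{1/2}$ for $h$ small, yielding the first term. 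This step is routine.

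Estimate (\ref{LNW}) follows from (\ref{LN}) once the translation modulus $\sup_{|z|<1/|\log h|}\|f_{\varepsilon}-f_{\varepsilon}(\cdot+z)\|_{L^{p}}^{p}$ of a $W^{1,q}$-bounded family is controlled; recall $\|f_{\varepsilon}-f_{\varepsilon}(\cdot+z)\|_{L^{q}}\le|z|\,\|\nabla f_{\varepsilon}\|_{L^{q}}$. If $p\le q$ then, since $|\mathbb{T}^{d}|=1$, $\|f_{\varepsilon}-f_{\varepsilon}(\cdot+z)\|_{L^{p}}\le C\|f_{\varepsilon}-f_{\varepsilon}(\cdot+z)\|_{L^{q}}\le C|z|\,\|\nabla f_{\varepsilon}\|_{L^{q}}$, which gives the exponent $pd\min\{0,\tfrac1p-\tfrac1q\}+p=p$ in the denominator. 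If $q<p<qd/(d-q)$ I would interpolate between $L^{q}$ and $L^{qd/(d-q)}$: with $\tfrac1p=\tfrac{1-\beta}{q}+\tfrac{\beta(d-q)}{qd}$, that is $\beta=d(\tfrac1q-\tfrac1p)\in(0,1)$, the Sobolev inequality on the torus gives $\|f_{\varepsilon}-f_{\varepsilon}(\cdot+z)\|_{L^{qd/(d-q)}}\le C\|\nabla f_{\varepsilon}\|_{L^{q}}$, so $\|f_{\varepsilon}-f_{\varepsilon}(\cdot+z)\|_{L^{p}}\le C|z|^{1-\beta}\|\nabla f_{\varepsilon}\|_{L^{q}}$ with $(1-\beta)p=p+d-dp/q=pd(\tfrac1p-\tfrac1q)+p$, exactly matching the exponent in (\ref{LNW}). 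The term $\|f_{\varepsilon}\|_{L^{p}}^{p}/|\log h|^{1/2}$ is inherited directly from (\ref{LN}).

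The subtle estimate is (\ref{NL}), where the supremum runs over the genuinely sub-scale translations $|z|<h\ll h^{1/(d+1)}$. Mollifying $f_{\varepsilon}$ with an arbitrary bump at scale $h^{1/(d+1)}$ would cost a spurious factor $|\log h|$ on the $L_{h^{1/(d+1)},p}$-term, rendering the estimate useless; the remedy I would use is to mollify with the \emph{normalised kernel itself}. Set $h'=h^{1/(d+1)}$ and $g_{\varepsilon}:=f_{\varepsilon}\ast\overline{\mathcal{K}}_{h'}$. Since $\int\overline{\mathcal{K}}_{h'}=1$, Jensen's inequality gives the lossless bound $\|f_{\varepsilon}-g_{\varepsilon}\|_{L^{p}}^{p}\le\int\overline{\mathcal{K}}_{h'}(y)\|f_{\varepsilon}-f_{\varepsilon}(\cdot-y)\|_{L^{p}}^{p}\,dy=L_{h',p}(f_{\varepsilon})$, whence, by translation-invariance of the $L^{p}$ norm,
\[
\bigl\|f_{\varepsilon}(\cdot+z)-f_{\varepsilon}\bigr\|_{L^{p}}\le 2\bigl\|f_{\varepsilon}-g_{\varepsilon}\bigr\|_{L^{p}}+\bigl\|g_{\varepsilon}(\cdot+z)-g_{\varepsilon}\bigr\|_{L^{p}} ,
\]
and it remains to bound the last term for $|z|<h$. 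For this I would use $\|g_{\varepsilon}(\cdot+z)-g_{\varepsilon}\|_{L^{p}}\le|z|\,\|\nabla g_{\varepsilon}\|_{L^{p}}$, then Young's inequality $\|\nabla g_{\varepsilon}\|_{L^{p}}=\|f_{\varepsilon}\ast\nabla\overline{\mathcal{K}}_{h'}\|_{L^{p}}\le\|f_{\varepsilon}\|_{L^{1}}\|\nabla\overline{\mathcal{K}}_{h'}\|_{L^{p}}$, and finally the scaling $\|\nabla\overline{\mathcal{K}}_{h'}\|_{L^{p}}\le C(h')^{\,d/p-d-1}/|\log h'|$, which follows from $|\nabla\mathcal{K}_{h'}(z)|\le C(h'+|z|)^{-d-1}$ and (\ref{K}). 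Since $|z|<h$ and $h'=h^{1/(d+1)}$ give $h\,(h')^{\,d/p-d-1}=h^{\,d/(p(d+1))}$, one obtains $\|g_{\varepsilon}(\cdot+z)-g_{\varepsilon}\|_{L^{p}}^{p}\le C h^{\,d/(d+1)}\|f_{\varepsilon}\|_{L^{1}}^{p}/|\log h|^{p}\le C\|f_{\varepsilon}\|_{L^{1}}^{p}/|\log h|^{p}$ for $h$ small, which is the first term of (\ref{NL}); the supremum over $|z|<h$ is free, since this bound is uniform in such $z$. (A harmless point: $h^{1/(d+1)}$ must lie below the threshold $h_{0}$ of (\ref{K}), which is arranged by replacing $h_{0}$ with $h_{0}^{d+1}$.) The main obstacle throughout is precisely this lossless control of $\|f_{\varepsilon}-g_{\varepsilon}\|_{L^{p}}$ in (\ref{NL}); convolving against $\overline{\mathcal{K}}_{h^{1/(d+1)}}$ rather than a generic mollifier is what resolves it.
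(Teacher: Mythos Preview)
Your proposal is correct and follows essentially the same route as the paper: for (\ref{NL}) both you and the paper mollify $f_{\varepsilon}$ by the normalised kernel $\overline{\mathcal{K}}_{\sigma}$ at scale $\sigma=h^{1/(d+1)}$ (the paper uses the $L^{\infty}$ bound $\|\nabla\mathcal{K}_{\sigma}\|_{L^{\infty}}\le C\sigma^{-d-1}$ where you use the $L^{p}$ version via Young, a cosmetic difference), and for (\ref{LNW}) both interpolate the translation modulus between $L^{q}$ and $L^{qd/(d-q)}$ exactly as you do. The only noteworthy deviation is in (\ref{LN}): the paper splits the outer region $\{1/|\log h|\le|z|<1/2\}$ dyadically and sums a geometric series to reach $C\|f_{\varepsilon}\|_{L^{p}}^{p}/|\log h|^{1/2}$, whereas you bypass this by computing $\int_{\{1/|\log h|\le|z|\le1/2\}}|z|^{-d}\,dz\le C\log|\log h|$ directly and then using $\log|\log h|\le C|\log h|^{1/2}$; your argument is shorter and equally valid.
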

\begin{proof}
 First, for any $|z|<h$ and $\sigma\in (0,1)$, it can be shown by (\ref{K}) and the fact $\|\nabla \mathcal{K}_{\sigma}\|_{L^{\infty}}\leq \frac{C}{\sigma^{d+1}}$ that
\begin{equation}\nonumber
\begin{split}
&\int_{\mathbb{T}^{d}}|f_{\varepsilon}(x+z)-f_{\varepsilon}(x)|^pdx\\
&\quad\leq C\int_{\mathbb{T}^d}\Big{|}  \frac{\mathcal{K}_{\sigma}}{\|\mathcal{K}_{h}\|_{L^1}}\ast f_{\varepsilon}(x+z)-\frac{\mathcal{K}_{\sigma}}{\|\mathcal{K}_{h}\|_{L^1}}\ast f_{\varepsilon}(x) \Big{|}^pdx+C\int_{\mathbb{T}^d} \Big{|} f_{\varepsilon}(x)-\frac{\mathcal{K}_{\sigma}}{\|\mathcal{K}_{h}\|_{L^1}}\ast f_{\varepsilon}(x) \Big{|}^pdx\\
&\quad\leq \frac{C}{\|\mathcal{K}_{\sigma}\|_{L^1}^{p}}\int_{\mathbb{T}^d} \Big{|} \int_{\mathbb{T}^d}\big{(} \mathcal{K}_{\sigma}(x+z-y)-\mathcal{K}_{\sigma}(x-y) \big{)}f_{\varepsilon}(y)dy \Big{|}^pdx\\
&\quad\quad+\frac{C}{\|\mathcal{K}_{\sigma}\|_{L^1}^{p}}\int_{\mathbb{T}^d} \Big{|} \int_{\mathbb{T}^d}\mathcal{K}_{\sigma}(x-y)\big{(}f_{\varepsilon}(x)-f_{\varepsilon}(y)\big{)}dy \Big{|}^pdx\\
&\quad\leq \frac{C|z|^p}{|\log{\sigma}|^p}\|\nabla \mathcal{K}_{\sigma}\|_{L^{\infty}}^p\|f_{\varepsilon}\|_{L^1}^p+\frac{C}{|\log{\sigma}|}\int_{\mathbb{T}^{2d}}\mathcal{K}_{\sigma}(x-y)|f_{\varepsilon}(x)-f_{\varepsilon}(y)|^pdxdy\\
&\quad\leq \frac{Ch^p\|f_{\varepsilon}\|_{L^1}^p}{\sigma^{p(d+1)}|\log{\sigma}|^p}+L_{\sigma,p}(f_{\varepsilon}).
\end{split}
\end{equation}
 One may choose $\sigma=h^{\frac{1}{d+1}}$ in the above inequality to prove (\ref{NL}).

Next, we show (\ref{LN}). It follows
\begin{equation}\label{ppdda}
\begin{split}
L_{h,p}(f_{\varepsilon})&= \int_{\{|x-y|\geq \frac{1}{2}\}}\overline{\mathcal{K}}_{h}(x-y)|f_{\varepsilon}(x)-f_{\varepsilon}(y)|^pdxdy\\
&\quad+\int_{\{\frac{1}{|\log{h}|}\leq |x-y|<\frac{1}{2}\}}\overline{\mathcal{K}}_{h}(x-y)|f_{\varepsilon}(x)-f_{\varepsilon}(y)|^pdxdy\\
&\quad+\int_{\{0\leq |x-y|<\frac{1}{|\log{h}|}\}}\overline{\mathcal{K}}_{h}(x-y)|f_{\varepsilon}(x)-f_{\varepsilon}(y)|^pdxdy ,\\
\end{split}
\end{equation}
Due to (\ref{K}) and $\mathcal{K}_{h}(x)\leq C$ for any $|x|\geq \frac{1}{2}$, one can bound $I_{1}^8$ by
\begin{equation}\nonumber
\begin{split}
\int_{\{|x-y|\geq \frac{1}{2}\}}\overline{\mathcal{K}}_{h}(x-y)|f_{\varepsilon}(x)-f_{\varepsilon}(y)|^pdxdy\leq\frac{C\|f_{\varepsilon}\|_{L^p}^p}{|\log{h}|} .
\end{split}
\end{equation}
For the second term on the right-hand side of \eqref{ppdda}, we have
\begin{equation}\nonumber
\begin{split}
&\int_{\{\frac{1}{|\log{h}|}\leq |x-y|<\frac{1}{2}\}}\overline{\mathcal{K}}_{h}(x-y)|f_{\varepsilon}(x)-f_{\varepsilon}(y)|^pdxdy\\
&\quad\leq C\sum_{n=0}^{\log{|\log{h}|}-1} \frac{1}{|\log{h}|}\int_{\{\frac{1}{2}e^{-n-1}\leq |z|\leq \frac{1}{2}e^{-n}\} }e^{n d}\int_{\mathbb{T}^d}|f_{\varepsilon}(x)-f_{\varepsilon}(x-z)|^pdxdz\\
&\quad\leq \frac{C\|f_{\varepsilon}\|_{L^p}^p}{|\log{h}|^{\frac{1}{2}}}\sum_{n=0}^{\infty}e^{-\frac{n}{2}}\leq \frac{C\|f_{\varepsilon}\|_{L^p}^p}{|\log{h}|^{\frac{1}{2}}},
\end{split}
\end{equation}
where we have used the facts $\mathcal{K}_{h}(z)\leq \frac{C}{|z|^{d}}$ for any $|z|>0$ and $|\log{h}|^{-\frac{1}{2}}\leq e^{-\frac{1}{2}(n+1)}$ for any integer $0\leq n\leq \log{|\log{h}|}-1$. Finally, it is easy to obtain
\begin{equation}\nonumber
\begin{split}
&\int_{\{0\leq |x-y|<\frac{1}{|\log{h}|}\}}\overline{\mathcal{K}}_{h}(x-y)|f_{\varepsilon}(x)-f_{\varepsilon}(y)|^pdxdy\\
&\quad=\int_{\{0\leq|z|< \frac{1}{|\log{h}|}\}}\overline{\mathcal{K}}_{h}(z)dz\int_{\mathbb{T}^d}|f_{\varepsilon}(x)-f_{\varepsilon}(x-z)|^pdx\leq \sup_{ |z|< \frac{1}{|\log{h}|}}\int_{\mathbb{T}^{d}}|f_{\varepsilon}(x+z)-f_{\varepsilon}(x)|^{p}dx .
\end{split}
\end{equation}
Combing the above estimates together, we derive (\ref{LN}).

Moreover, for any $|z|<h$, due to the Sobolev inequality and
$$
f_{\varepsilon}(x+z)-f_{\varepsilon}(x)=\int_{0}^{1}z \cdot \nabla f_{\varepsilon}(x+\omega z)d\omega,
$$
it holds
\begin{equation}\nonumber
\begin{split}
&\big{(} \int_{\mathbb{T}^{d}}|f_{\varepsilon}(x+z)-f_{\varepsilon}(x)|^{p}dx \big{)}^{\frac{1}{p}}\leq \big{(} \int_{\mathbb{T}^{d}}|f_{\varepsilon}(x+z)-f_{\varepsilon}(x)|^{q}dx \big{)}^{\frac{1}{q}} \leq  |z| \|\nabla f_{\varepsilon}\|_{L^q},\quad p\in[1,q],
\end{split}
\end{equation}
and
\begin{equation}\nonumber
\begin{split}
&\big{(} \int_{\mathbb{T}^{d}}|f_{\varepsilon}(x+z)-f_{\varepsilon}(x)|^{p}dx \big{)}^{\frac{1}{p}}\\
&\quad\leq \Big{(}\int_{\mathbb{T}^{d}}|f_{\varepsilon}(x+z)-f_{\varepsilon}(x)|^{\frac{qd}{d-q}}dx\Big{)}^{\frac{(d-q)\theta }{qd}}  \Big{(}\int_{\mathbb{T}^{d}}\int_{0}^{1}|z|^{q} | \nabla f_{\varepsilon}(x+\omega z)|^{q}d\omega dx\Big{)}^{\frac{1-\theta}{q}}\\
& \quad \leq C|z|^{1-\theta} \|f_{\varepsilon}- \frac{1}{|\mathbb{T}^{d}|}\int_{\mathbb{T}^{d}} fdx\|_{L^{\frac{qd}{d-q}}}^{\theta } \|\nabla f_{\varepsilon}\|_{L^{q}}^{1-\theta}\\
&\quad\leq C|z|^{1-\theta}\|\nabla f_{\varepsilon}\|_{L^q},\quad p\in(q,\frac{qd}{d-q}), 
\end{split}
\end{equation}
where the constant $\theta\in(0,1)$ satisfies $
\frac{1}{p}=\frac{d-q}{qd}\theta+\frac{1}{q}(1-\theta)$. Thus, (\ref{LNW}) follows by (\ref{LN}) and the above two estimates. The proof of Lemma \ref{lemma61} is complete.
\end{proof}

The following compactness criterion is a consequence of Lemma \ref{lemma61} and the Riesz-Fr$\rm{\acute{e}}$chet-Kolmogorov criterion (cf. \cite[page 111]{brezis1}).
\begin{lemma}\label{lemma62}
Let $\varepsilon>0$ and $f_{\varepsilon}$ be uniformly bounded in $L^{p}(\mathbb{T}^{d})$ for $p\in[1,\infty)$. Then the sequence $f_{\varepsilon}$ is strongly compact in $L^{p}(\mathbb{T}^{d})$ if and only if
\begin{equation}
\begin{split}
\lim_{h\rightarrow 0}\limsup_{\varepsilon\rightarrow 0}L_{h,p}(f_{\varepsilon})=0,\label{knormc1}
\end{split}
\end{equation}
where $L_{h,p}(f_{\varepsilon})$ is defined by $(\ref{L})$.
\end{lemma}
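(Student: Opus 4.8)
The plan is to deduce the statement from the two quantitative inequalities (\ref{NL})--(\ref{LN}) of Lemma~\ref{lemma61} together with the Riesz--Fr\'echet--Kolmogorov compactness criterion on the torus: a bounded family $\mathcal{F}\subset L^{p}(\mathbb{T}^{d})$ is relatively compact in $L^{p}(\mathbb{T}^{d})$ if and only if its translations are equicontinuous, i.e. $\sup_{f\in\mathcal{F}}\int_{\mathbb{T}^{d}}|f(x+z)-f(x)|^{p}dx\to0$ as $|z|\to0$ (no tightness is required since $\mathbb{T}^{d}$ is compact). Thus both implications amount to passing between the functional $L_{h,p}$ defined in (\ref{L}) and the $L^{p}$-modulus of continuity of translations.

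For the direct implication I would argue as follows. Assuming that $f_{\varepsilon}$ is strongly compact in $L^{p}(\mathbb{T}^{d})$, the criterion above gives that $\omega(\rho):=\sup_{\varepsilon}\sup_{|z|<\rho}\int_{\mathbb{T}^{d}}|f_{\varepsilon}(x+z)-f_{\varepsilon}(x)|^{p}dx\to0$ as $\rho\to0$. Plugging this into (\ref{LN}) yields $L_{h,p}(f_{\varepsilon})\le C\big(\|f_{\varepsilon}\|_{L^{p}}^{p}|\log h|^{-1/2}+\omega(1/|\log h|)\big)$, and since $f_{\varepsilon}$ is uniformly bounded in $L^{p}(\mathbb{T}^{d})$ and $1/|\log h|\to0$ as $h\to0$, the right-hand side tends to $0$ uniformly in $\varepsilon$; in particular (\ref{knormc1}) holds (in fact with $\limsup_{\varepsilon\to0}$ replaced by $\sup_{\varepsilon}$).

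For the converse, I would regard $\varepsilon$ as ranging over a sequence $\varepsilon_{k}\downarrow0$ and verify the translation equicontinuity of $\{f_{\varepsilon_{k}}\}_{k}$, which by Riesz--Fr\'echet--Kolmogorov gives the claimed compactness. Applying (\ref{NL}) with the scale $h\mapsto h^{1/(d+1)}$ gives, for $|z|<h$, the bound $\int_{\mathbb{T}^{d}}|f_{\varepsilon_{k}}(x+z)-f_{\varepsilon_{k}}(x)|^{p}dx\le C\big(\|f_{\varepsilon_{k}}\|_{L^{1}}^{p}|\log h|^{-p}+L_{h^{1/(d+1)},p}(f_{\varepsilon_{k}})\big)$. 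Given $\eta>0$, the uniform $L^{p}$ bound (hence a uniform $L^{1}$ bound, $\mathbb{T}^{d}$ being compact) lets me choose $h$ so small that the first term is below $\eta$ for all $k$, while hypothesis (\ref{knormc1}) lets me shrink $h$ further so that $\limsup_{k}L_{h^{1/(d+1)},p}(f_{\varepsilon_{k}})<\eta$; hence $\sup_{|z|<h}\int_{\mathbb{T}^{d}}|f_{\varepsilon_{k}}(x+z)-f_{\varepsilon_{k}}(x)|^{p}dx$ is bounded by a fixed multiple of $\eta$ for all $k$ beyond some $K$. The remaining finitely many functions $f_{\varepsilon_{1}},\dots,f_{\varepsilon_{K-1}}$ are fixed elements of $L^{p}(\mathbb{T}^{d})$, so the elementary continuity of translation in $L^{p}$ lets me shrink $h$ once more to cover them with the same bound, yielding the equicontinuity of the whole sequence.

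The analysis is essentially routine once Lemma~\ref{lemma61} is available; the only point requiring care — and the ``obstacle'' such as it is — is that (\ref{knormc1}) only constrains the \emph{tail} of the sequence, so the argument for the converse must separately absorb the finitely many early members via translation continuity of individual $L^{p}$ functions, while one also keeps track of the exponent change $h\mapsto h^{1/(d+1)}$ introduced by (\ref{NL}) when matching scales.
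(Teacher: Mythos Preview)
Your proposal is correct and follows precisely the route the paper indicates: the paper does not spell out a proof but simply states that Lemma~\ref{lemma62} is a consequence of Lemma~\ref{lemma61} (i.e.\ inequalities (\ref{NL})--(\ref{LN})) together with the Riesz--Fr\'echet--Kolmogorov criterion, which is exactly what you implement. Your handling of the finitely many early members of the sequence, needed because (\ref{knormc1}) only constrains the tail, is the appropriate way to close the argument.
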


For the compactness in time and space, we have the following lemma of Lions-Aubin type.
\begin{lemma}\label{lemma63}
Let $T>0$, $\varepsilon>0$, $f_{\varepsilon}$ be uniformly bounded in $L^{p}(0,T;L^p(\mathbb{T}^{d}) )$ for $p\in[1,\infty)$ and $\partial_{t}f_{\varepsilon}$ be uniform bounded in $L^{q}(0,T;W^{-m,1}(\mathbb{T}^{d}) )$ for $q>1$ and $m\geq0$. Then $f_{\varepsilon}$ is strongly compact in $L^{p}(0,T;L^{p}(\mathbb{T}^{d}))$ if and only if
\begin{equation}
\begin{split}
\lim_{h\rightarrow 0}\limsup_{\varepsilon\rightarrow 0}\int_{0}^{T}L_{h,p}(f_{\varepsilon})dt=0,\label{knormc}
\end{split}
\end{equation}
where $L_{h,p}(f_{\varepsilon})$ is defined by $(\ref{L})$.
\end{lemma}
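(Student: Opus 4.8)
The plan is to prove the two implications of the equivalence separately, in each case reducing the assertion to the Riesz-Fr\'{e}chet-Kolmogorov compactness criterion on the space-time domain $(0,T)\times\mathbb{T}^{d}$ together with the quantitative bounds of Lemma \ref{lemma61}. Since $L^{p}(0,T;L^{p}(\mathbb{T}^{d}))=L^{p}((0,T)\times\mathbb{T}^{d})$, strong compactness there amounts to the uniform $L^{p}$ bound (already assumed) plus equicontinuity of the space and of the time translations; the role of the hypothesis on $\partial_{t}f_{\varepsilon}$ is precisely to supply the latter.

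\emph{Necessity.} Suppose $f_{\varepsilon}$ is relatively compact in $L^{p}(0,T;L^{p}(\mathbb{T}^{d}))$. Applying $(\ref{LN})$ for a.e. fixed $t$ and integrating over $(0,T)$ gives
\[
\int_{0}^{T}L_{h,p}(f_{\varepsilon})\,dt\leq C\Big(\frac{\|f_{\varepsilon}\|_{L^{p}(0,T;L^{p})}^{p}}{|\log{h}|^{\frac{1}{2}}}+\int_{0}^{T}\sup_{|z|<\frac{1}{|\log{h}|}}\int_{\mathbb{T}^{d}}|f_{\varepsilon}(x+z,t)-f_{\varepsilon}(x,t)|^{p}\,dx\,dt\Big).
\]
The first term is uniformly bounded and tends to $0$ as $h\to0$. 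For the second term, the Riesz-Fr\'{e}chet-Kolmogorov criterion yields equicontinuity of the spatial translates of $\{f_{\varepsilon}\}$ in $L^{p}((0,T)\times\mathbb{T}^{d})$; combined with a dominated-convergence argument (using that on the torus translation preserves the $L^{p}$ norm, so the inner integrand is dominated uniformly in $\varepsilon$ and $z$) this forces the second term to $0$ as $h\to0$, uniformly in $\varepsilon$. Taking $\limsup_{\varepsilon\to0}$ and then $h\to0$ gives $(\ref{knormc})$.

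\emph{Sufficiency.} This is the substantive direction. Assume $(\ref{knormc})$ holds, let $\eta_{\sigma}$ be a standard spatial mollifier, and decompose $f_{\varepsilon}=(f_{\varepsilon}-f_{\varepsilon}\ast\eta_{\sigma})+f_{\varepsilon}\ast\eta_{\sigma}$. For the first piece, estimate $(\ref{NL})$ applied pointwise in $t$ and integrated shows
\[
\sup_{|z|<\sigma}\|f_{\varepsilon}(\cdot+z,\cdot)-f_{\varepsilon}\|_{L^{p}(0,T;L^{p})}^{p}\leq C\Big(\frac{\|f_{\varepsilon}\|_{L^{p}(0,T;L^{1})}^{p}}{|\log{\sigma}|^{p}}+\int_{0}^{T}L_{\sigma^{\frac{1}{d+1}},p}(f_{\varepsilon})\,dt\Big),
\]
so, using the uniform $L^{p}$ bound and $(\ref{knormc})$, $\limsup_{\varepsilon\to0}\|f_{\varepsilon}-f_{\varepsilon}\ast\eta_{\sigma}\|_{L^{p}(0,T;L^{p})}\to0$ as $\sigma\to0$. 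For the second piece, fix $\sigma>0$: Young's inequality gives $\|f_{\varepsilon}\ast\eta_{\sigma}\|_{L^{p}(0,T;W^{1,p})}\leq C_{\sigma}$ with $C_{\sigma}$ independent of $\varepsilon$, while writing any $g\in W^{-m,1}(\mathbb{T}^{d})$ as $g=\sum_{|\alpha|\leq m}\partial^{\alpha}g_{\alpha}$ with $g_{\alpha}\in L^{1}$ yields $\|g\ast\eta_{\sigma}\|_{L^{p}(\mathbb{T}^{d})}\leq\|g\ast\eta_{\sigma}\|_{L^{\infty}}\leq C_{\sigma}\|g\|_{W^{-m,1}}$, hence $\|\partial_{t}(f_{\varepsilon}\ast\eta_{\sigma})\|_{L^{q}(0,T;L^{p})}=\|(\partial_{t}f_{\varepsilon})\ast\eta_{\sigma}\|_{L^{q}(0,T;L^{p})}\leq C_{\sigma}$. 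Since $q>1$ and the Sobolev embedding $W^{1,p}(\mathbb{T}^{d})\hookrightarrow L^{p}(\mathbb{T}^{d})$ is compact (Rellich), the Aubin-Lions-Simon lemma shows that $\{f_{\varepsilon}\ast\eta_{\sigma}\}_{\varepsilon}$ is relatively compact in $L^{p}(0,T;L^{p}(\mathbb{T}^{d}))$ for each fixed $\sigma$. Combining the two pieces, $\{f_{\varepsilon}\}_{\varepsilon}$ is totally bounded in $L^{p}(0,T;L^{p}(\mathbb{T}^{d}))$, hence relatively compact.

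The main obstacle is the sufficiency direction: one must upgrade a purely \emph{spatial} quantitative regularity estimate, $(\ref{knormc})$, together with a merely distributional control of $\partial_{t}f_{\varepsilon}$, into genuine compactness simultaneously in space and time. The mollification step is what bridges this gap -- it trades the non-compact spatial information carried by $L_{h,p}$ for a $\sigma$-dependent Sobolev bound amenable to Aubin-Lions, at the price of an error that $(\ref{knormc})$ renders uniformly small as $\sigma\to0$. The delicate points to watch are: keeping every constant $C_{\sigma}$ independent of $\varepsilon$; processing the negative index $m$ through the mollifier via the representation of $W^{-m,1}$-distributions; and, in the necessity direction, the legitimate exchange of the supremum over $z$ with the time integral.
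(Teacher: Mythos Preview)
Your proof is correct in both directions. The paper in fact only writes out the sufficiency direction and leaves necessity implicit, so your treatment is more complete. For sufficiency, the paper and you share the same core idea---spatially mollify, control the mollification error via $(\ref{NL})$ and $(\ref{knormc})$, and use the $\partial_{t}f_{\varepsilon}$ bound on the smoothed piece---but you package the last step differently: the paper verifies the Riesz--Fr\'echet--Kolmogorov criterion on $(0,T)\times\mathbb{T}^{d}$ by hand, estimating $\|f_{\varepsilon}(x+z,t+h)-f_{\varepsilon}(x,t)\|_{L^{p}}$ through the four-term telescoping splitting and bounding the time-shift of $f_{\varepsilon}\ast\eta_{\delta}$ via $h^{1-1/q}\|\partial_{t}f_{\varepsilon}\ast\eta_{\delta}\|_{L^{q}(0,T;L^{p})}$, whereas you invoke Aubin--Lions--Simon as a black box on $\{f_{\varepsilon}\ast\eta_{\sigma}\}_{\varepsilon}$. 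Your route is cleaner and more modular; the paper's has the minor advantage of being self-contained. One small remark on your necessity argument: the phrase ``dominated-convergence argument'' undersells what is actually needed to pass from equicontinuity of translates in $L^{p}((0,T)\times\mathbb{T}^{d})$ to a uniform bound on $\int_{0}^{T}\sup_{|z|<\cdot}\|f_{\varepsilon}(\cdot+z,t)-f_{\varepsilon}(\cdot,t)\|_{L^{p}}^{p}\,dt$ (the sup is inside the time integral). The clean way is a finite-$\eta$-net argument using compactness of $\{f_{\varepsilon}\}$, plus the single-function case handled by dominated convergence; you flag this as a delicate point, which is appropriate.
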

\begin{proof}
For any $|z|<h<\frac{T}{2}$, it holds
\begin{equation}\nonumber
\begin{split}
&\|f_{\varepsilon}(x+z,t+h)-f_{\varepsilon}(x,t)\|_{L^{p}(0,\frac{T}{2};L^{p})}\\
&\quad=\|f_{\varepsilon}(x+z,t+h)-f_{\varepsilon}(x,t+h)\|_{L^{p}(0,\frac{T}{2};L^{p})}+\|f_{\varepsilon}\ast \eta_{\delta}(x,t+h)-f_{\varepsilon}\ast \eta_{\delta} (x,t)\|_{L^{p}(0,\frac{T}{2};L^{p})}\\
&\quad\quad+\|f_{\varepsilon}(x,t+h)-f_{\varepsilon}\ast \eta_{\delta}(x,t+h)\|_{L^{p}(0,\frac{T}{2};L^{p})}+\|f_{\varepsilon}(x,t)-f_{\varepsilon}\ast \eta_{\delta}(x,t)\|_{L^{p}(0,\frac{T}{2};L^{p})}
\end{split}
\end{equation}
where $\eta_{\delta}\in C^{\infty}_{c}(\mathbb{T}^{d})$ for $\delta\in(0,1)$ is the Friedrichs mollifier. It can be verified by $(\ref{NL})$ and (\ref{knormc1}) that
\begin{equation}\nonumber
\begin{split}
&\lim_{h\rightarrow 0}\limsup_{\varepsilon\rightarrow 0}\|f_{\varepsilon}(x+z,t+h)-f_{\varepsilon}(x,t+h)\|_{L^{p}(0,\frac{T}{2};L^{p})}=0,\\
&\lim_{h\rightarrow 0}\limsup_{\varepsilon\rightarrow 0} \|f_{\varepsilon}\ast \eta_{\delta}(x,t+h)-f_{\varepsilon}\ast \eta_{\delta} (x,t)\|_{L^{p}(0,\frac{T}{2};L^{p})}\\
&\quad\leq \lim_{h\rightarrow 0}\limsup_{\varepsilon\rightarrow 0} h^{1-\frac{1}{q}} \|\partial_{t} f_{\varepsilon}\ast \eta_{\delta}\|_{L^{q}(0,T;L^{p})}=0,\quad \delta\in(0,1),\\
\end{split}
\end{equation}
and
\begin{equation}\nonumber
\begin{split}
&\lim_{\delta\rightarrow 0}\lim_{h\rightarrow 0}\limsup_{\varepsilon\rightarrow 0}\big(\|f_{\varepsilon}(x,t+h)-f_{\varepsilon}\ast \eta_{\delta}(x,t+h)\|_{L^{p}(0,\frac{T}{2};L^{p})}+\|f_{\varepsilon}(x,t)-f_{\varepsilon}\ast \eta_{\delta}(x,t)\|_{L^{p}(0,\frac{T}{2};L^{p})}\big)\\
&\quad\leq 2\lim_{\delta\rightarrow 0}\limsup_{\varepsilon\rightarrow 0} \sup_{|z|<\delta}\|f_{\varepsilon}(x+z,t)-f_{\varepsilon}(x,t)\|_{L^{p}(0,T;L^{p})}=0.
\end{split}
\end{equation}
By the above estimates and the Riesz-Fr$\rm{\acute{e}}$chet-Kolmogorov criterion, $f_{\varepsilon}$ is strongly compact in $L^{p}(0,\frac{T}{2};L^{p}(\mathbb{T}^{d}))$. Repeating same arguments for $\tilde{f}_{\varepsilon}(x,t):=f_{\varepsilon}(x,T-t)$, we derive the compactness of $f_{\varepsilon}$ in $L^{p}(\frac{T}{2},T;L^{p}(\mathbb{T}^{d}))$.
\end{proof}

The next lemma is useful in Lemmas \ref{lemma46} and \ref{lemma48}.
\begin{lemma}[\!\!\cite{belgacem1, bresch3}]\label{lemma64}
For any $f\in H^1(\mathbb{T}^{d})$, it holds
\begin{equation}
\begin{split}
|f(x)-f(y)|\leq C|x-y|\big{(} D_{|x-y|}f(x)+D_{|x-y|} f(y) \big{)},\label {D}
\end{split}
\end{equation}
where $C>0$ is a constant depending only on $d$ and $D_{r}f(x)$ is denoted as
\begin{equation}\label{Maximal}
\begin{split}
D_{r}f(x):=\frac{1}{r}\int_{|z|\leq r}\frac{|\nabla f(x+z)|}{|z|^{d-1}}dz,
\end{split}
\end{equation}
which satisfies
\begin{equation}
\begin{split}
D_{r} f(x)\leq CM|\nabla f|(x),\label{DM}
\end{split}
\end{equation}
where the localized maximal operator $M: L^p(\mathbb{T}^{d})\rightarrow L^p(\mathbb{T}^{d})$ for any $p\in[1,\infty)$ is defined by
\begin{equation}\label{M}
\begin{split}
Mf(x):=\sup_{r\in(0,1]}\frac{1}{|B(0,r)|}\int_{B(0,r)}f(x+z)dz.
\end{split}
\end{equation}
In addition, we have
\begin{equation}
\begin{split}
\int_{\mathbb{T}^{d}}\mathcal{K}_{h}(z)\|D_{|z|}f-D_{|z|}f(\cdot +z)\|_{L^2}dz\leq C\|f\|_{H^1}|\log{h}|^{\frac{1}{2}},\label{DH1}
\end{split}
\end{equation}
where the kernel $\mathcal{K}_{h}$ is given by $(\ref{mathcalK})$.
\end{lemma}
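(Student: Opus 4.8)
The plan is to prove the three assertions in turn: $(\ref{D})$ and $(\ref{DM})$ by elementary real-variable estimates, and the main inequality $(\ref{DH1})$ by a Fourier-multiplier bound combined with Cauchy--Schwarz against $\mathcal{K}_h$. For $(\ref{D})$, set $r:=|x-y|$ and integrate over the lens $E:=B(x,r)\cap B(y,r)$, of volume $|E|=c_dr^d$ with $c_d>0$ dimensional (since $|x-y|=r$). Writing $f(x)-f(w)=-\int_0^1\nabla f(x+t(w-x))\cdot(w-x)\,dt$, averaging over $w\in E$, and then for fixed $t$ substituting $\zeta=t(w-x)$ (so $\zeta\in B(0,tr)$ because $E-x\subset B(0,r)$) and using Fubini in $(t,\zeta)$, I obtain
\[
\Big|f(x)-\tfrac{1}{|E|}\int_Ef\Big|\le\frac{r}{|E|}\int_{B(0,r)}|\nabla f(x+\zeta)|\Big(\int_{|\zeta|/r}^1 t^{-d}\,dt\Big)d\zeta\le\frac{r}{|E|}\cdot\frac{r^{d-1}}{d-1}\int_{B(0,r)}\frac{|\nabla f(x+\zeta)|}{|\zeta|^{d-1}}\,d\zeta=C_d\,r\,D_rf(x),
\]
using $\int_{B(0,r)}|\nabla f(x+\zeta)|\,|\zeta|^{1-d}\,d\zeta=rD_rf(x)$. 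By symmetry the same bound holds with $x$ replaced by $y$, and the triangle inequality gives $(\ref{D})$.

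For $(\ref{DM})$, I decompose $\{|\zeta|\le r\}=\bigcup_{j\ge0}\{2^{-j-1}r<|\zeta|\le2^{-j}r\}$, bound $|\zeta|^{-(d-1)}$ by $(2^{-j-1}r)^{-(d-1)}$ on each shell, and use $\int_{|\zeta|\le\rho}|\nabla f(x+\zeta)|\,d\zeta\le C\rho^dM|\nabla f|(x)$ for $\rho\le1$ to get $D_rf(x)\le C\sum_{j\ge0}2^{-j}M|\nabla f|(x)=CM|\nabla f|(x)$; the $L^p$-boundedness of the localized maximal operator on $\mathbb{T}^d$ is classical. For $(\ref{DH1})$, I first apply Cauchy--Schwarz in $z$ and invoke $\|\mathcal{K}_h\|_{L^1}\le C|\log h|$ from $(\ref{K})$, which reduces matters to the \emph{$h$-uniform} bound $\int_{\mathbb{T}^d}\mathcal{K}_h(z)\|D_{|z|}f-D_{|z|}f(\cdot+z)\|_{L^2}^2\,dz\le C\|f\|_{H^1}^2$. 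Writing $D_rf=g_r*|\nabla f|$ with $g_r(w):=\mathbf{1}_{|w|\le r}\,(r|w|^{d-1})^{-1}$ (so $\|g_r\|_{L^1}=\omega_{d-1}$), the region $|z|\ge\tfrac12$ is harmless ($\mathcal{K}_h$ is bounded there uniformly in $h$ and $\|D_{|z|}f\|_{L^2}\le\|g_{|z|}\|_{L^1}\|\nabla f\|_{L^2}$), and for $|z|<\tfrac12$ Plancherel on the torus gives
\[
\|D_{|z|}f-D_{|z|}f(\cdot+z)\|_{L^2}^2=\sum_{\xi\in\mathbb{Z}^d}\big|e^{2\pi i\xi\cdot z}-1\big|^2\,\big|\widehat g_{|z|}(\xi)\big|^2\,\big|\widehat{|\nabla f|}(\xi)\big|^2.
\]

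The key step is the multiplier estimate $|\widehat g_r(\xi)|\le C\min\{1,(r|\xi|)^{-1}\}$. The bound by $\|g_r\|_{L^1}$ is immediate; for the decay, rescaling $w\mapsto w/|\xi|$ gives $\widehat g_r(\xi)=\frac{1}{r|\xi|}\int_{|v|\le r|\xi|}e^{-2\pi i(\xi/|\xi|)\cdot v}|v|^{-(d-1)}\,dv$, and the last integral remains bounded for $r|\xi|\ge1$ because $|v|^{-(d-1)}$ is homogeneous of degree $-(d-1)$, whose Fourier transform is homogeneous of degree $-1$, so the truncated integral converges — at rate $O((r|\xi|)^{-1})$ — to a constant; equivalently one may use the subordination identity $|v|^{-(d-1)}=c\int_0^\infty e^{-\pi t|v|^2}t^{(d-1)/2-1}\,dt$. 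Combining this with $|e^{2\pi i\xi\cdot z}-1|^2\le C\min\{1,|\xi|^2|z|^2\}$ yields $|e^{2\pi i\xi\cdot z}-1|^2|\widehat g_{|z|}(\xi)|^2\le C\min\{|z|^2|\xi|^2,(|z||\xi|)^{-2}\}$.

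It then remains to integrate in $z$: for fixed $\xi\ne0$, splitting at $|z|=|\xi|^{-1}$ and using $\mathcal{K}_h(z)\le|z|^{-d}$ gives both $\int_{|z|<|\xi|^{-1}}\mathcal{K}_h(z)|z|^2|\xi|^2\,dz\le C$ and $\int_{|z|\ge|\xi|^{-1}}\mathcal{K}_h(z)(|z||\xi|)^{-2}\,dz\le C$, uniformly in $\xi$ and $h$. Summing against $|\widehat{|\nabla f|}(\xi)|^2$ and using $\||\nabla f|\|_{L^2}=\|\nabla f\|_{L^2}$ gives the $h$-uniform bound, hence $(\ref{DH1})$. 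I expect the multiplier decay $|\widehat g_r(\xi)|\lesssim(r|\xi|)^{-1}$ to be the main obstacle: the naive estimate via Young's inequality for $\tau_zg_{|z|}-g_{|z|}$ only yields $\|\tau_zg_{|z|}-g_{|z|}\|_{L^1}\le C$ with no smallness in $z$, which after integration against $\mathcal{K}_h$ loses a full power of $|\log h|$ and produces $C\|f\|_{H^1}|\log h|$ rather than the sharp $|\log h|^{1/2}$; it is precisely the cancellation in $\widehat g_r$ that makes the weighted $z$-integral converge uniformly in $h$.
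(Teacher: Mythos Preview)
The paper does not supply its own proof of this lemma; it is stated with citations to \cite{belgacem1,bresch3} and used as a black box. Your arguments for $(\ref{D})$ and $(\ref{DM})$ are the standard ones and are correct (the lens--averaging trick for $(\ref{D})$ and the dyadic shell decomposition for $(\ref{DM})$ are exactly what one finds in the cited references).

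For $(\ref{DH1})$ your Fourier--multiplier route is correct but genuinely different from the physical--space argument in \cite{belgacem1} (and in Bresch--Jabin). There one writes $D_{|z|}f(\cdot)-D_{|z|}f(\cdot+z)$ as a convolution of $|\nabla f|$ against the difference kernel $g_{|z|}-\tau_{-z}g_{|z|}$, splits this difference into a near part ($|w|$ comparable to $|z|$) and a far part, and controls the resulting $L^2$ norm directly; the key cancellation is exploited by noting that on the far part the supports of $g_{|z|}$ and $\tau_{-z}g_{|z|}$ overlap and the pointwise difference of the kernels gains a factor $|z|/|w|$. After Cauchy--Schwarz in $z$ against $\mathcal{K}_h$ one obtains the same $h$--uniform $L^2$ bound you prove, hence $(\ref{DH1})$. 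Your approach replaces this geometric splitting by the single multiplier estimate $|\widehat{g}_r(\xi)|\le C\min\{1,(r|\xi|)^{-1}\}$, which encodes the same cancellation on the Fourier side; the verification via the spherical integral and the Dirichlet--type bound for $\int_1^R \rho^{-(d-1)/2}\cos(2\pi\rho+\phi)\,d\rho$ is valid in every dimension $d\ge 2$. The gain of your method is conceptual cleanliness (one multiplier bound plus a routine $z$--integral), while the physical--space proof avoids any appeal to Bessel asymptotics and makes the dependence on $|\nabla f|$ more explicit pointwise. Both yield the sharp $|\log h|^{1/2}$; as you correctly observe, the naive Young bound $\|\tau_{-z}g_{|z|}-g_{|z|}\|_{L^1}\le C$ loses exactly one half--power of $|\log h|$.
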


The following lemma about the commutator estimates of the Riesz operator $\mathcal{R}_{i}:=(-\Delta)^{-\frac{1}{2}}\partial_{i}$ was introduced by Coifman-Rochberg-Weiss \cite{coifman2} and Coifman-Meyer \cite{coifman1} and applied to the global existence of weak solution for isentropic compressible Navier-Stokes equations by Lions \cite{lions2}.
\begin{lemma}[\!\!\cite{coifman2,coifman1}]\label{lemma66}
 For $p\in(1,\infty)$, it holds
\begin{equation}
\begin{split}
\|f\mathcal{R}_{i}\mathcal{R}_{j} g-\mathcal{R}_{i}\mathcal{R}_{j}(fg)\|_{L^{p}}\leq C\|f\|_{\mathcal{BMO}}\|g\|_{L^{p}},\quad  f\in \mathcal{BMO}(\mathbb{T}^{d}) ,~g\in  L^{p}(\mathbb{T}^{d}) ,\label{risz1}
\end{split}
\end{equation}
where $\mathcal{BMO}(\mathbb{T}^{d})$ denotes the bounded mean oscillation space and  $C>0$ is a constant depending only on $p$ and $d$.

Moreover, for $q_{i}\in (1,\infty)~(i=1,2,3)$ satisfying $\frac{1}{q_{1}}=\frac{1}{q_{2}}+\frac{1}{q_{3}}$, it holds
\begin{equation}
\begin{split}
\|\nabla[f\mathcal{R}_{i}\mathcal{R}_{j}g-\mathcal{R}_{i}\mathcal{R}_{j}(fg)]\|_{L^{q_{1}}}\leq C\|\nabla f\|_{L^{q_{2}}}\|g\|_{L^{q_{3}}}\quad  f\in W^{1,q_{2}}(\mathbb{T}^{d}) ,~g\in  L^{q_{3}}(\mathbb{T}^{d}) ,\label{risz2}
\end{split}
\end{equation}
where  $C>0$ is a constant depending only on $q_{i}~(i=1,2)$ and $d$.
\end{lemma}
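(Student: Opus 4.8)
The plan is to reduce both estimates to classical facts about Calder\'on--Zygmund operators, since $T:=\mathcal{R}_{i}\mathcal{R}_{j}$ is a Fourier multiplier with symbol $m(\xi)=-\xi_{i}\xi_{j}/|\xi|^{2}$, which is smooth and homogeneous of degree $0$ on $\mathbb{R}^{d}\setminus\{0\}$; hence its convolution kernel $K$ is smooth away from the origin, homogeneous of degree $-d$, with vanishing mean over spheres, and $T$ is a Calder\'on--Zygmund operator bounded on every $L^{p}(\mathbb{T}^{d})$ with $1<p<\infty$ (one may work on $\mathbb{R}^{d}$ and periodize, or argue directly with the periodic kernel). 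Throughout I would use the pointwise representation
\[
[f,T]g(x):=f(x)Tg(x)-T(fg)(x)=\mathrm{p.v.}\!\int K(x-y)\big(f(x)-f(y)\big)g(y)\,dy,
\]
valid for smooth $f,g$ and extended by density.

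For $(\ref{risz1})$ I would invoke the Coifman--Rochberg--Weiss commutator theorem, whose proof I would recall as follows. One first establishes the sharp maximal function bound
\[
M^{\#}\big([f,T]g\big)(x)\le C_{r}\,\|f\|_{\mathcal{BMO}}\Big(\big(M|Tg|^{r}\big)^{1/r}(x)+\big(M|g|^{r}\big)^{1/r}(x)\Big),\qquad 1<r<\infty,
\]
by fixing a ball $B\ni x$, subtracting the average $f_{B}$, splitting $g=g\mathbf{1}_{2B}+g\mathbf{1}_{(2B)^{c}}$, using the $L^{r}$ boundedness of $T$ on the local part and the kernel smoothness together with the John--Nirenberg inequality (to control the averages of $|f-f_{B}|$) on the tail; the Fefferman--Stein inequality $\|h\|_{L^{p}}\le C\|M^{\#}h\|_{L^{p}}$, combined with the $L^{p}$ boundedness of the Hardy--Littlewood maximal function and of $T$, then yields $(\ref{risz1})$. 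Equivalently one may use the analytic family $T_{z}g:=e^{zf}T(e^{-zf}g)$, uniformly bounded on $L^{2}$ for $|z|\le\varepsilon_{0}\|f\|_{\mathcal{BMO}}^{-1}$ because $e^{\pm 2\varepsilon_{0}f}$ lies in the Muckenhoupt class $A_{2}$ by John--Nirenberg, and recover $[f,T]=\tfrac{d}{dz}T_{z}\big|_{z=0}$ by Cauchy's formula.

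For $(\ref{risz2})$ I would differentiate the kernel representation to obtain
\[
\partial_{x_{k}}\big([f,T]g\big)(x)=\partial_{k}f(x)\,Tg(x)+\mathrm{p.v.}\!\int (\partial_{k}K)(x-y)\big(f(x)-f(y)\big)g(y)\,dy.
\]
The first term is bounded by $\|\partial_{k}f\|_{L^{q_{2}}}\|Tg\|_{L^{q_{3}}}\le C\|\nabla f\|_{L^{q_{2}}}\|g\|_{L^{q_{3}}}$ using H\"older with $\tfrac{1}{q_{1}}=\tfrac{1}{q_{2}}+\tfrac{1}{q_{3}}$ and the $L^{q_{3}}$ boundedness of $T$. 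The second term is a first-order commutator of Calder\'on type: $\partial_{k}K$ is smooth off the origin and homogeneous of degree $-d-1$, while $f(x)-f(y)$ vanishes to first order on the diagonal, so the kernel has effective size $|x-y|^{-d}$, and writing $f(x)-f(y)=\int_{0}^{1}\nabla f\big(x+\tau(y-x)\big)\cdot(y-x)\,d\tau$ exhibits the term as a superposition over $\tau\in[0,1]$ of bilinear operators in $(\nabla f,g)$ that, by the Calder\'on--Coifman--Meyer theory of commutators / multilinear Calder\'on--Zygmund operators, are bounded from $L^{q_{2}}\times L^{q_{3}}$ into $L^{q_{1}}$ for all $q_{i}\in(1,\infty)$ with $\tfrac{1}{q_{1}}=\tfrac{1}{q_{2}}+\tfrac{1}{q_{3}}$, uniformly in $\tau$. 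Adding the two contributions gives $(\ref{risz2})$.

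The hard part, such as it is, is not the functional-analytic input --- both statements are precisely the cited results of Coifman--Rochberg--Weiss and Coifman--Meyer --- but the bookkeeping needed to check that the auxiliary kernels ($K$ itself, and the regularized kernel behind the Calder\'on commutator) genuinely define bounded (multi)linear Calder\'on--Zygmund operators on the torus, namely the cancellation conditions and the convergence of the principal values; for compositions of Riesz transforms this is routine given the explicit homogeneity and smoothness of $m(\xi)=-\xi_{i}\xi_{j}/|\xi|^{2}$. Accordingly, in the write-up I would simply cite \cite{coifman2,coifman1} (and \cite{lions2} for the use of these estimates in the compressible flow setting) and record the representation formula above.
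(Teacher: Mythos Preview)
Your proposal is correct and aligns with the paper's treatment: the paper does not prove this lemma at all but simply states it as a citation of the classical results of Coifman--Rochberg--Weiss and Coifman--Meyer, exactly as you conclude you would do in your final sentence. Your sketch of the underlying arguments (sharp maximal function bound plus Fefferman--Stein for \eqref{risz1}, differentiation of the kernel representation plus Calder\'on commutator theory for \eqref{risz2}) is accurate and more than the paper itself provides.
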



\noindent
\textbf{Acknowledgments.} The authors would like to thank the referees for their valuable suggestions and comments on the manuscript. The research of the paper is supported by National Natural Science Foundation of China (No.11931010, 11671384 and 11871047) and by the key research project of Academy for Multidisciplinary Studies, Capital Normal University and by the Capacity Building for Sci-Tech Innovation-Fundamental Scientific Research Funds (No.007/20530290068).





\end{document}